\newtheorem{theorem}{Theorem}[section]
\newtheorem{lemma}[theorem]{Lemma}
\newtheorem{proposition}[theorem]{Proposition}
\newtheorem{conjecture}[theorem]{Conjecture}
\newtheorem{corollary}[theorem]{Corollary}
\theoremstyle{definition}
\newtheorem{definition}[theorem]{Definition}
\newtheorem{example}[theorem]{Example}
\newtheorem{remark}[theorem]{Remark}
\newcommand{\SN}{\mathbb{N}}                    % Natural numbers
\newcommand{\SZ}{\mathbb{Z}}                    % Integers
\newcommand{\SC}{\mathbb{C}}                    % Complex numbers
\newcommand{\SP}{\mathbb{P}}                    % 
\newcommand{\SA}{\mathbb{A}}                    % 
\newcommand{\CZ}{\mathcal{Z}}                    % 
\newcommand{\CC}{\mathcal{C}}                    % 
\newcommand{\CF}{\mathcal{F}}                    % Fock space
\newcommand{\Gr}{{\rm Gr}}
\newcommand{\frakg}{\mathfrak{g}}
\newcommand{\frakheis}{\mathfrak{heis}}
\newcommand{\ra}[1]{\kern-1.5ex\xrightarrow{\ \ #1\ \ }\phantom{}\kern-1.5ex}
\newcommand{\ras}[1]{\kern-1.5ex\xrightarrow{\ \ \smash{#1}\ \ }\phantom{}\kern-1.5ex}
\newcommand{\da}[1]{\bigg\downarrow\rlap{$\scriptstyle{#1}$}}
\newcommand{\hda}{\mathrel{\rotatebox[origin=c]{-90}{$\hookrightarrow$}}}
\newcommand{\hra}{\mathrel{\rotatebox[origin=c]{0}{$\hookrightarrow$}}}
\title[Euler characteristics of Hilbert schemes of points on surface singularities]{Euler characteristics of Hilbert schemes of points on simple surface singularities}
\author{Ádám Gyenge}
\address{Alfréd Rényi Institute of Mathematics, Hungarian Academy of Sciences, Reáltanoda utca 13-15, Budapest, H-1053,  Hungary}
\email{gyenge.adam@renyi.mta.hu}
\author{András Némethi}
\address{Alfréd Rényi Institute of Mathematics, Hungarian Academy of Sciences, Reáltanoda utca 13-15, Budapest, H-1053,  Hungary}
\email{nemethi.andras@renyi.mta.hu}
\author{Balázs Szendrői}
\address{Mathematical Institute, University of Oxford, Andrew Wiles Building, Woodstock Road, Oxford, OX2 6GG, United Kingdom}
\email{szendroi@maths.ox.ac.uk}
\subjclass{Primary 14N10; Secondary 05E10}
\keywords{Hilbert scheme, singularities, Euler characteristic, generating series, Young wall}
\begin{document}

\begin{abstract}
We study the geometry and topology of Hilbert schemes of points on the orbifold surface $[\SC^2/G]$, respectively the singular quotient surface $\SC^2/G$, where $G<\mathrm{SL}(2,\SC)$ is a finite subgroup of type $A$ or $D$. We give a decomposition of the (equivariant) Hilbert scheme of the orbifold into affine space strata indexed by a certain combinatorial set, the set of Young walls. The generating series of Euler characteristics of Hilbert schemes of points of the singular surface of type $A$ or $D$ is computed in terms of an explicit formula involving a specialized character of the basic representation of the corresponding affine Lie algebra; we conjecture that the same result holds also in type $E$. Our results are consistent with known results in type $A$, and are new for type $D$. 
\end{abstract}

\maketitle

%\enlargethispage{\baselineskip}

\tableofcontents

\section{Orbifold singularities and their Hilbert schemes}

\subsection{Quotient surface singularities, Hilbert schemes and generating series}
\label{sec_notation}

Let $G<{\mathrm{GL}}(2,\SC)$ be a finite subgroup and denote by $\SC^2/G$ the corresponding quotient variety.
There are two different types of Hilbert scheme attached to this data. 
First, there is the classical Hilbert scheme $\mathrm{Hilb}(\SC^2/G)$ of the quotient space. This is the moduli space of ideals in $\mathcal{O}_{\SC^2/G}(\SC^2/G)=\SC[x,y]^G$ of finite colength. We call this the \textit{coarse Hilbert scheme of points}. It decomposes as
\[ \mathrm{Hilb}(\SC^2/G)=\bigsqcup_{m \in \SN}\mathrm{Hilb}^{m}(\SC^2/G)\]
into components that are quasiprojective but singular varieties indexed by ``the number of points'', the codimension $m$ of the ideal. Second, there is the moduli space of $G$-invariant finite colength subschemes of $\SC^2$, the invariant part of $\mathrm{Hilb}(\SC^2)$ under the lifted action of~$G$. This Hilbert scheme is also well known and is variously called the \textit{orbifold Hilbert scheme} \cite{young2010generating} or \textit{equivariant Hilbert scheme} \cite{gusein2010generating}.
We denote it by $\mathrm{Hilb}([\SC^2/G])$. This space also decomposes as
\[ \mathrm{Hilb}([\SC^2/G])=\bigsqcup_{\rho \in {\mathop{\rm Rep}}(G)}\mathrm{Hilb}^{\rho}([\SC^2/G]),\]
where
\[\mathrm{Hilb}^{\rho}([\SC^2/G])=  \{ I \in \mathrm{Hilb}(\SC^2)^G \colon H^0(\mathcal{O}_{\SC^2}/I) \simeq_G \rho \}\]
for any finite-dimensional representation $\rho\in {\mathop{\rm Rep}}(G)$ of $G$;
here $\mathrm{Hilb}(\SC^2)^G$ is the set of $G$-invariant ideals of $\SC[x,y]$, and $\simeq_G$ means $G$-equivariant isomorphism. Being components of fixed point sets of a finite group acting on smooth quasiprojective varieties, the orbifold Hilbert schemes themselves are smooth and quasiprojective \cite{cartan1957quotient}.

There is a natural pushforward map between the two kinds of Hilbert scheme: each $J \in \mathrm{Hilb}([\SC^2/G])$ can be mapped to its $G$-invariant part, giving a morphism~\cite[3.4]{brion2011invariant} 
\[ \begin{array}{rccl} p_\ast:& \mathrm{Hilb}([\SC^2/G])&\rightarrow &\mathrm{Hilb}(\SC^2/G) \\ & J&  \mapsto & J^G=J\cap \SC[x,y]^G\end{array}\]
called the \textit{quotient-scheme map}. There is also a set-theoretic pullback map, which however
does {\em not} preserve flatness in families, so it is not a morphism between the Hilbert schemes: 
the inclusion $i: \SC[x,y]^G \subset \SC[x,y]$ induces a pullback map on the ideals, 
and its image is contained in the set of $G$-equivariant ideals, leading to a map of sets
\[\begin{array}{rccl}  i^\ast  : &\mathrm{Hilb}(\SC^2/G)(\SC) & \rightarrow & \mathrm{Hilb}([\SC^2/G])(\SC)\\ & I & \mapsto & i^\ast I= \SC[x,y].I\end{array}\]
Since for $I\lhd \SC[x,y]^G$, we clearly have $(\SC[x,y].I)^G = I$, the composite $p_\ast \circ i^\ast$ is the identity on the set of ideals of the invariant ring. 

We collect the topological Euler characteristics of the two versions of the Hilbert scheme into two generating functions. Let $\rho_0,\ldots,\rho_n\in\mathop{\rm Rep}(G)$ denote the (isomorphism classes of) irreducible representations of $G$, with $\rho_0$ the trivial representation.
\begin{definition}
\begin{enumerate}[(a)]
\item The \textit{orbifold generating series} of the orbifold $[\SC^2/G]$ is 
\[Z_{[\SC^2/G]}(q_0,\ldots, q_n)= \sum_{m_0,\dots,m_n=0}^\infty \chi\left(\mathrm{Hilb}^{m_0 \rho_0 + \ldots +m_n \rho_n}([\SC^2/G]) \right)   q_0^{m_0}\cdot \ldots \cdot q_n^{m_n}.\]
\item The \textit{coarse generating series} of the singularity $\SC^2/G$ is 
\[Z_{\SC^2/G}(q)=\sum_{m=0}^\infty \chi\left(\mathrm{Hilb}^m(\SC^2/G)\right)q^m.\]
\end{enumerate}
\end{definition}

\begin{remark} For a smooth variety $X$, the generating series 
\[ Z_X(q)=\sum_{m=0}^\infty \chi\left(\mathrm{Hilb}^m(X)\right)q^m\] 
of the Euler characteristics of Hilbert 
schemes of points of $X$, as well as various refinements of this series, have been extensively studied. 
In particular, for a nonsingular curve $C$, we have MacDonald's result~\cite{macdonald1962poincare}
\[ Z_C(q)= (1-q)^{-\chi(C)},
\]
whereas for a nonsingular surface $S$ we have (a specialization of) G\"ottsche's formula~\cite{gottsche1990betti}
\begin{equation}\label{eq:goettsche} Z_S(q)=\left(\prod_{m=1}^{\infty}(1-q^m)^{-1}\right)^{\chi(S)}. 
\end{equation}
There are also results for higher-dimensional varieties~\cite{cheah1996cohomology}. 

For singular varieties~$X$, the series $Z_X(q)$ is much less studied. For a singular curve $C$ with a finite set $\{P_1, \ldots, P_k\}$ of planar singularities however, we have the beautiful conjecture of Oblomkov and Shende~\cite{oblomkov2012hilbert}, proved by Maulik~\cite{maulik2012stable}, which takes the form
\begin{equation}\label{formula:singcurve}
Z_C(q)= (1-q)^{-\chi(C)}\prod_{j=1}^k Z^{(P_i, C)}(q).
\end{equation}
Here $Z^{(P_i, C)}(q)$ are highly nontrivial local terms that depend only on the embedded topological type of the link of the singularity $P_i\in C$. 
\end{remark}

\subsection{Simple surface singularities}

In this paper we are only concerned with finite subgroups $G<\mathrm{SL}(2,\SC)$. See \cite{gyenge2016hilbert} for some partial results for some other finite groups. As it is well known, finite subgroups of $\mathrm{SL}(2,\SC)$ are classified into three types: type $A_n$ for $n\geq 1$, type $D_n$ for $n\geq 4$ and type $E_n$ for $n=6,7,8$. The type
of the singularity can be parametrized by a simply-laced irreducible Dynkin diagram with $n$ nodes, arising from an irreducible simply laced root system $\Delta$. 
We denote the corresponding group by $G_\Delta<\mathrm{SL}(2,\SC)$; all other data 
corresponding to the chosen type will also be labelled by the subscript $\Delta$. 
Irreducible representations $\rho_0,\ldots,\rho_n$ of $G_\Delta$ 
are then labelled by vertices of the affine Dynkin diagram associated with $\Delta$. The singularity $\SC^2/G_\Delta$ is known as a simple (Kleinian, surface) singularity; we will refer to the corresponding orbifold $[\SC^2/G_\Delta]$ as the simple singularity orbifold.

As we recall in Appendix~\ref{sect:aff_lie_hilb}, the following result is known. 
\begin{theorem}[\cite{nakajima2002geometric}]  
\label{thmgenfunct}
Let $[\SC^2/G_\Delta]$ be a simple singularity orbifold. Then its orbifold generating series can be expressed as
\begin{equation} Z_{[\SC^2/G_\Delta]}(q_0,\dots,q_n)=\left(\prod_{m=1}^{\infty}(1-q^m)^{-1}\right)^{n+1}\cdot\sum_{ \overline{m}=(m_1,\dots,m_n) \in \SZ^n } q_1^{m_1}\cdot\dots\cdot q_n^{m_n}(q^{1/2})^{\overline{m}^\top \cdot C_\Delta \cdot \overline{m}},\label{eq:orbi_main_formula}\end{equation}
where $q=\prod_{i=0}^n q_i^{d_i}$ with $d_i=\dim\rho_i$, 
and $C_\Delta$ is the finite type Cartan matrix corresponding to $\Delta$.
\end{theorem}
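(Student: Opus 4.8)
The plan is to realise these orbifold Hilbert schemes as Nakajima quiver varieties and then to recognise the resulting generating series as a character of the basic representation. First I would set up the quiver-variety description. The fixed locus $\mathrm{Hilb}(\SC^2)^{G_\Delta}$ lies inside $\mathrm{Hilb}(\SC^2)$, which is itself the ADHM (Nakajima) quiver variety of the Jordan quiver with one-dimensional framing; by the McKay correspondence, passing to $G_\Delta$-invariants converts Jordan-quiver ADHM data into representations of the preprojective algebra of the affine Dynkin diagram $\widehat\Delta$ of $\Delta$, and the openness condition defining the Hilbert scheme becomes a fixed GIT stability parameter $\zeta_0$. Under this dictionary the $G_\Delta$-isotypic multiplicities $(m_0,\dots,m_n)$ of $H^0(\mathcal{O}_{\SC^2}/I)$ form the dimension vector $v$, the framing is the coordinate vector $w=e_0$ at the affine node, and
\[
\mathrm{Hilb}^{m_0\rho_0+\dots+m_n\rho_n}([\SC^2/G_\Delta])\;\cong\;\mathfrak M_{\zeta_0}(v,e_0),
\]
a smooth quasiprojective variety, consistent with the smoothness already recorded.

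Next I would invoke Nakajima's geometric construction of representations. Summed over all dimension vectors, $\bigoplus_v H_*\!\big(\mathfrak M_{\zeta_0}(v,e_0),\SQ\big)$ carries an action of the affine Kac--Moody algebra $\hat{\frakg}_\Delta$ associated with $\widehat\Delta$, for which the $v$-grading is the weight grading; because $\widehat\Delta$ has corank one, the imaginary root $\delta$ moreover supplies a rank-one Heisenberg algebra $\frakheis$. The resulting module is the level-one \emph{Fock space}, i.e.\ the irreducible highest weight module $L(\Lambda_0)$ for $\hat{\frakg}_\Delta\oplus\frakheis$ (in type $A$, the basic module of $\widehat{\mathfrak{gl}}_{n+1}$). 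As a vector space it is $\SC[Q_\Delta]\otimes S\big(\bigoplus_{k\ge1}\mathfrak h\otimes t^{-k}\big)$, where $Q_\Delta$ is the finite root lattice and $\mathfrak h$ is the $(n+1)$-dimensional span of the finite Cartan together with the extra $\delta$-direction. It is this $(n+1)$-fold bosonic Fock factor — rather than the $n$-fold one carried by the basic module of $\hat{\frakg}_\Delta$ alone — that will produce the $(n+1)$-st power of the Euler product, so retaining the extra Heisenberg is essential.

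Third, I would pass from homology to Euler characteristics. The varieties $\mathfrak M_{\zeta_0}(v,e_0)$ carry torus actions whose Bialynicki--Birula decompositions are affine pavings (classical for $\mathrm{Hilb}^n(\SC^2)$; more generally this follows from torus-fixed-point analysis of these quiver varieties, and in types $A$ and $D$ from the Young-wall strata produced elsewhere in this paper), so their cohomology is pure and concentrated in even degree. Hence $\chi\big(\mathfrak M_{\zeta_0}(v,e_0)\big)=\dim H_*\big(\mathfrak M_{\zeta_0}(v,e_0)\big)=\dim L(\Lambda_0)_{\mu(v)}$, the dimension of the weight space $\mu(v)=\Lambda_0-\sum_i v_i\alpha_i$. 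Writing $q_i=e^{-\alpha_i}$ and summing over all $v$ identifies $Z_{[\SC^2/G_\Delta]}(q_0,\dots,q_n)$ with the multiplicatively-written character of $L(\Lambda_0)$, where $q=e^{-\delta}=\prod_i q_i^{d_i}$ since the marks of $\widehat\Delta$ are the dimensions $d_i=\dim\rho_i$; note in particular that the $q_0$-dependence enters only through $q$, as in \eqref{eq:orbi_main_formula}.

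Finally I would evaluate that character using the Frenkel--Kac homogeneous vertex-operator realisation just described. The bosonic Fock factor, with $n+1$ families of creation operators each graded by powers of $q$, contributes $\big(\prod_{m\ge1}(1-q^m)^{-1}\big)^{n+1}$; the lattice factor $\SC[Q_\Delta]$ contributes $\sum_{\overline m\in\SZ^n}\prod_{i=1}^n q_i^{m_i}\,(q^{1/2})^{\overline m^\top C_\Delta\,\overline m}$, since the monomial $e^\gamma$ with $\gamma=\sum_i m_i\alpha_i$ tracks $\prod_i q_i^{m_i}$ and lies in $\delta$-degree $\tfrac12\langle\gamma,\gamma\rangle=\tfrac12\,\overline m^\top C_\Delta\,\overline m$ (for a simply-laced root lattice the Gram matrix of the simple roots is the Cartan matrix $C_\Delta$). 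Multiplying the two factors yields \eqref{eq:orbi_main_formula}. I expect the real difficulty to lie not in this last computation but in the two structural inputs: matching the orbifold Hilbert scheme with the correct GIT chamber (distinct from the one producing the minimal resolution), and identifying the geometrically constructed module as the full Fock space for $\hat{\frakg}_\Delta\oplus\frakheis$ together with the vanishing of its odd cohomology — which is exactly what is responsible for the exponent $n+1$ in \eqref{eq:orbi_main_formula}.
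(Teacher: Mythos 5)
Your proposal is correct and follows essentially the same route as the paper's own argument, which is recalled in Appendix~\ref{sect:apprepr}: identify the orbifold Hilbert schemes with Nakajima quiver varieties framed at the affine node, use Nakajima's theorem that their total cohomology realises the extended basic representation of $\widetilde{\frakg\oplus\SC}$ (with the extra Heisenberg factor accounting for the exponent $n+1$), note the vanishing of odd cohomology so that Euler characteristics equal weight multiplicities, and evaluate the character via Frenkel--Kac. The only cosmetic difference is that the paper cites \cite{nakajima2001quiver} for the absence of odd cohomology rather than arguing via affine pavings, and it also observes that Theorem~\ref{thmorb} gives an independent combinatorial proof in types $A$ and $D$.
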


Our first main result is a strengthening of this theorem. 
Given a Dynkin diagram $\Delta$ of type $A$ or $D$, we will recall below in~\ref{subsectypeA}, respectively~\ref{subsec:PYW}, the definition of a certain combinatorial set, the {\em set of Young walls $\CZ_\Delta$ of type $\Delta$}. 

\begin{theorem}
\label{thmorb}
Let $[\SC^2/G_\Delta]$ be a simple singularity orbifold, where $\Delta$ is of type $A_n$ for $n \geq 1$ or $D_n$ for $n \geq 4$. 
Then there exists a decomposition 
\[\mathrm{Hilb}([\SC^2/G_\Delta]) = \displaystyle\bigsqcup_{Y\in\CZ_\Delta} \mathrm{Hilb}([\SC^2/G_\Delta])_Y \]
into locally closed strata indexed by the set of Young walls $\CZ_\Delta$ of the appropriate type. 
Each stratum is isomorphic to an %almost 
affine space %(see~\ref{sec:terminology}) 
of a certain dimension, and in particular has Euler characteristic $\chi(\mathrm{Hilb}([\SC^2/G_\Delta])_Y)=1$. 
\end{theorem}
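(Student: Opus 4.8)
The plan is to realize the stratification torus-equivariantly and then reduce to a fixed-point/combinatorics count. First I would introduce the big torus $T=(\SC^*)^2$ acting on $\SC^2$, which descends to an action on $\mathrm{Hilb}([\SC^2/G_\Delta])$ commuting with the residual structure; since the $G_\Delta$-action and the $T$-action together give an action of a two-dimensional torus whose fixed points are the monomial ideals, the $T$-fixed locus of $\mathrm{Hilb}([\SC^2/G_\Delta])$ consists exactly of the $G_\Delta$-invariant monomial ideals $I\lhd\SC[x,y]$. These are in bijection with Young diagrams $\lambda$ that are ``balanced'' with respect to the $G_\Delta$-coloring of the lattice $\SN^2$ (the coloring coming from the weights of $x,y$ under $G_\Delta$); in type $A$ this recovers the classical picture, and in type $D$ this is precisely what the combinatorial set of Young walls $\CZ_\Delta$ is designed to encode. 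So the first key step is: \emph{establish a bijection between the $T$-fixed points of $\mathrm{Hilb}([\SC^2/G_\Delta])$ and $\CZ_\Delta$}, using the definitions recalled in~\ref{subsectypeA} and~\ref{subsec:PYW}.

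Next I would use a Bialynicki-Birula–type argument. Choose a generic one-parameter subgroup $\lambda:\SC^*\hookrightarrow T$ so that the limits $\lim_{t\to0}\lambda(t)\cdot J$ exist for all $J$ (this holds because each $\mathrm{Hilb}^\rho([\SC^2/G_\Delta])$ is quasiprojective and the $\SC^*$-action extends to a suitable projective compactification, or alternatively by a direct ideal-theoretic degeneration argument: flatly degenerate any ideal to its initial ideal), and are isolated — which they are, by the previous paragraph. Define the stratum $\mathrm{Hilb}([\SC^2/G_\Delta])_Y$ to be the attracting set of the fixed point corresponding to $Y$. By the Bialynicki-Birula decomposition for smooth quasiprojective varieties with isolated fixed points, each such stratum is locally closed and isomorphic to an affine space, with dimension equal to the dimension of the positive-weight subspace of the tangent space $T_{I_Y}\mathrm{Hilb}([\SC^2/G_\Delta])$. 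This immediately gives $\chi(\mathrm{Hilb}([\SC^2/G_\Delta])_Y)=1$ and, summing, recovers the count of $\CZ_\Delta$ — consistent with Theorem~\ref{thmgenfunct} after matching gradings.

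The one genuine subtlety — and the step I expect to be the main obstacle — is the existence of the limits, i.e. properness of the BB decomposition. The individual components $\mathrm{Hilb}^\rho([\SC^2/G_\Delta])$ are only quasiprojective, so a priori $\lim_{t\to0}\lambda(t)\cdot J$ need not stay inside the Hilbert scheme. I would handle this by working with the \emph{ideal-theoretic} degeneration: for a $G_\Delta$-invariant ideal $J\lhd\SC[x,y]$ and a generic cocharacter, the flat limit is the initial ideal $\mathrm{in}_\lambda(J)$, which is again $G_\Delta$-invariant (the group action and the monomial order interact compatibly because $G_\Delta$ acts diagonally on a suitable basis) and has the same $G_\Delta$-character as $J$; hence the limit lies in the same component. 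This makes the decomposition well-defined globally on $\mathrm{Hilb}([\SC^2/G_\Delta])$. The remaining ingredients are then routine: verifying that the attracting sets are the cells of a BB stratification (smoothness of the ambient space was noted in the excerpt, via~\cite{cartan1957quotient}), and the tangent-weight computation identifying each cell with an affine space — for which one can cite the standard deformation theory of monomial ideals in $\SC[x,y]$, cut down to the $G_\Delta$-invariant (equivalently $\lambda$-eigenvalue) part.
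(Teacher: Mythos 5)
Your argument is fine in type $A$, and there it coincides with the paper's own (well-known) proof: monomial ideals as fixed points of $(\SC^*)^2$, a generic one-parameter subgroup with positive weights on $x$ and $y$, and the Bia\l ynicki-Birula decomposition. But in type $D$ the whole strategy rests on a false premise. The binary dihedral group contains $\tau=\left(\begin{smallmatrix}0&1\\-1&0\end{smallmatrix}\right)$, which commutes only with the \emph{diagonal} one-dimensional torus, not with all of $(\SC^*)^2$; a non-diagonal torus element does not preserve $G_\Delta$-invariance of ideals, so there is no two-torus action on $\mathrm{Hilb}([\SC^2/G_\Delta])$ in type $D$. Consequently the $T=\SC^*$-fixed locus is not a finite set of monomial ideals: it consists of all \emph{homogeneous} invariant ideals and is positive-dimensional (already for $D_4$ the paper exhibits a whole $\SA^1$ of homogeneous ideals attached to a single Young wall, Example~\ref{ex:3sidepyr}). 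Moreover, type $D$ Young walls do not index invariant monomial ideals at all: since $G_\Delta$ is nonabelian with two-dimensional irreducibles, monomials are not weight vectors for $G_\Delta$, the invariant monomial ideals are merely the transpose-symmetric partitions, and these are far too few to match $\CZ_\Delta$ (whose elements involve half-blocks and reproduce the right-hand side of~\eqref{eq:orbi_main_formula}). So the claimed bijection between fixed points and $\CZ_\Delta$, the isolatedness needed for your BB cells, and the tangent-weight computation all break down; the initial-ideal degeneration trick does not repair this, since the issue is the absence of the second torus factor, not existence of limits.

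What the paper actually does in type $D$ is precisely to handle this positive-dimensional fixed locus: it builds Schubert-style cell decompositions of the equivariant Grassmannians of the isotypic summands $S_m[\rho_j]$ (Section~\ref{Dnidealsect}), attaches to every homogeneous invariant ideal $I$ a Young wall profile $Y_I$ recording which cells $I$ meets, and then proves the hard Theorem~\ref{thm:Zstrata} — by induction on the rows of $Y$, via incidence varieties between Schubert cells and join constructions in $P_{m,j}$ — that each resulting stratum $Z_Y$ of the fixed locus is a nonempty affine space. Only then does Bia\l ynicki-Birula enter: the attracting set of $Z_Y$ under the diagonal $\SC^*$ is a Zariski-locally trivial affine fibration over $Z_Y$, which is a vector bundle over an affine space and hence trivial by Serre--Quillen--Suslin, giving the affine-space strata of Theorem~\ref{thmorb}. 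Your proposal is missing exactly this core content (the decomposition of the non-isolated fixed locus into affine cells indexed by Young walls), which is the main technical achievement of the type $D$ case.
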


For type $A$, the set of Young walls is simply the set of finite partitions, represented as Young diagrams, equipped with a diagonal labelling. In this case, Theorem~\ref{thmorb} is well known; the decomposition in type $A$ is not unique, but depends on a choice of a one-dimensional 
subtorus of the full torus $(\SC^*)^2$ acting on the affine plane $\SC^2$. For completeness, we summarize the details in~\ref{subsectypeA}. On the other hand, the type $D$ case appears to be new; in this case, our decomposition is unique, there is no further choice to make.

\begin{remark} The orbifold Hilbert schemes of points for $G<\mathrm{SL}(2,\SC)$ are well known to be Nakajima quiver varieties for the corresponding affine quiver. As it was shown in~\cite{savage2011quiver}, certain Lagrangian subvarieties in Nakajima quiver varieties are isomorphic to quiver Grassmannians for the preprojective algebra of the same type, parametrizing submodules of certain fixed modules. On the other hand, results of the recent papers~\cite{lorscheid2015quivera,lorscheid2015quiverb} imply that every quiver Grassmannian of a representation of a quiver of affine type $D$ has a decomposition into affine spaces. The relation between this decomposition and ours deserves further investigation.
\end{remark} 

As we will explain combinatorially in~\ref{subsec:Anabacus}, respectively~\ref{subsect:coreabacus}, and via representation theory in~\ref{extbasic}-\ref{sect:affinecryst}, the right hand side of~\eqref{eq:orbi_main_formula} enumerates the set of Young walls $\CZ_\Delta$ of the appropriate type. Thus Theorem~\ref{thmorb} implies Theorem~\ref{thmgenfunct}.

\begin{remark} In type $A$, it is easy to refine formula~\eqref{eq:orbi_main_formula} to a formula involving the Betti numbers~\cite{fujii2005combinatorial}, or the motives~\cite{gusein2010generating}, of the orbifold Hilbert schemes. We leave the study of such a refinement in type $D$ to future work; compare Remark~\ref{rem:Dn:motivic}.
\end{remark}

The second main result of our paper is the following formula, which says that the coarse generating series is a very particular specialization of the orbifold one.

\begin{theorem}
\label{thmsing}
Let $\SC^2/G_\Delta$ be a simple singularity, where $\Delta$ is of type $A_n$ for $n \geq 1$ or $D_n$ for $n \geq 4$. Let $h^\vee$ be the (dual) Coxeter number of the corresponding finite root system (one less than the dimension of the corresponding simple Lie algebra divided by $n$). Then
\[ Z_{\SC^2/G_\Delta}(q)=\left(\prod_{m=1}^{\infty}(1-q^m)^{-1}\right)^{n+1}\cdot\sum_{ \overline{m}=(m_1,\dots,m_n) \in \SZ^n } \zeta^{m_1+m_2+ \dots +m_n}(q^{1/2})^{\overline{m}^\top \cdot C_\Delta \cdot \overline{m}},\]
where $\zeta=\exp\left({\frac{2 \pi i}{1+h^\vee}}\right)$ and $C_\Delta$ is the finite type Cartan matrix corresponding to $\Delta$.
\end{theorem}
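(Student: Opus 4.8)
The plan is to first recognise the claimed closed form as a specialisation of the orbifold series of Theorem~\ref{thmgenfunct}, and then to prove the resulting identity of Euler characteristics geometrically. Write $D=\sum_{i=1}^{n}d_i$, so that the dual Coxeter number is $h^\vee=d_0+D=1+D$ (and $1+h^\vee=k+h^\vee$ at level $k=1$, the level of the basic representation). One checks that substituting $q_0\mapsto q\,\zeta^{-D}$ and $q_i\mapsto\zeta$ for $1\le i\le n$ into~\eqref{eq:orbi_main_formula} turns the auxiliary variable $q=\prod_i q_i^{d_i}$ back into the free variable $q$ (because $\zeta^{-D}\cdot\zeta^{D}=1$), turns $q_1^{m_1}\cdots q_n^{m_n}$ into $\zeta^{m_1+\dots+m_n}$, and leaves the prefactor unchanged; thus Theorem~\ref{thmsing} asserts exactly that $Z_{\SC^2/G_\Delta}(q)=Z_{[\SC^2/G_\Delta]}(q\zeta^{-D},\zeta,\dots,\zeta)$. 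Expanding $Z_{[\SC^2/G_\Delta]}=\sum_{\rho}\chi(\mathrm{Hilb}^\rho([\SC^2/G_\Delta]))\prod_i q_i^{m_i(\rho)}$, with $m_i(\rho)$ the multiplicity of $\rho_i$ in $\rho$, and comparing coefficients of $q^m$ (recall $m_0(\rho)=\dim\rho^{G_\Delta}$), the statement becomes equivalent to
\begin{equation}\label{eq:plan:star}
\chi\big(\mathrm{Hilb}^m(\SC^2/G_\Delta)\big)\ =\ \sum_{\rho\,:\,\dim\rho^{G_\Delta}=m}\chi\big(\mathrm{Hilb}^\rho([\SC^2/G_\Delta])\big)\cdot\zeta^{\,-Dm+\sum_{i=1}^{n}m_i(\rho)}.
\end{equation}

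To prove~\eqref{eq:plan:star} I would use the quotient-scheme map $p_\ast\colon\mathrm{Hilb}([\SC^2/G_\Delta])\to\mathrm{Hilb}(\SC^2/G_\Delta)$, which is a morphism and is surjective because every $I\lhd\SC[x,y]^{G_\Delta}$ equals $(\SC[x,y].I)^{G_\Delta}$. For $I\in\mathrm{Hilb}^m(\SC^2/G_\Delta)$, the fibre $p_\ast^{-1}(I)$ is the variety of $G_\Delta$-stable $\SC[x,y]$-submodules $N\subseteq M_I:=\SC[x,y]/(\SC[x,y].I)$ with $N\cap M_I^{G_\Delta}=0$; under the equivalence of $G_\Delta$-equivariant $\SC[x,y]$-modules with modules over the skew group algebra $\SC[x,y]\rtimes G_\Delta$, this is a quiver Grassmannian for the preprojective algebra $\Pi_\Delta$ of the affine quiver, exactly of the kind mentioned in the Remark following Theorem~\ref{thmorb}. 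Since the topological Euler characteristic of complex varieties is motivic, integration along the fibres of $p_\ast$ gives, for the weight $w(\rho)=\zeta^{-Dm+\sum_{i\ge1}m_i(\rho)}$ (which is constant on each $\mathrm{Hilb}^\rho$ with $\dim\rho^{G_\Delta}=m$),
\[
\sum_{\rho\,:\,\dim\rho^{G_\Delta}=m}\chi\big(\mathrm{Hilb}^\rho([\SC^2/G_\Delta])\big)\,w(\rho)\ =\ \int_{\mathrm{Hilb}^m(\SC^2/G_\Delta)}\Big(\sum_{\rho}w(\rho)\,\chi\big(p_\ast^{-1}(I)\cap\mathrm{Hilb}^\rho([\SC^2/G_\Delta])\big)\Big)\,d\chi(I).
\]
Hence~\eqref{eq:plan:star}, and therefore Theorem~\ref{thmsing}, follows once one proves the local statement that for every $I$ the $w$-weighted Euler characteristic of the fibre $p_\ast^{-1}(I)$ equals $1$.

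A more hands-on route computes the left-hand side of~\eqref{eq:plan:star} directly: the overall scaling action of $\SC^\ast$ descends to $\SC^2/G_\Delta$, and $\chi(\mathrm{Hilb}^m(\SC^2/G_\Delta))$ equals the Euler characteristic of the fixed locus, the projective variety of homogeneous ideals of $\SC[x,y]^{G_\Delta}$ of colength $m$; stratifying by initial ideal (or, in type $A$, using the full torus $(\SC^\ast)^2$ and reading off isolated fixed points) should identify this with the number of monomial ideals of $\SC[x,y]^{G_\Delta}$ of colength $m$. Each such ideal corresponds, via $I\mapsto\SC[x,y].I$, to a $G_\Delta$-equivariant monomial ideal of $\SC[x,y]$ generated by its invariant monomials, that is, to a distinguished subclass of the set of Young walls $\CZ_\Delta$, with invariant colength $m$ read off combinatorially. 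One then identifies the generating function of this subclass with the claimed specialisation using the abacus model (Sections~\ref{subsec:Anabacus}, \ref{subsect:coreabacus}) and the core--quotient factorisation behind Theorem~\ref{thmgenfunct}: the $n+1$ copies of $\prod_{j\ge1}(1-q^j)^{-1}$ come from the quotient data exactly as in the orbifold case, while the constraint of being generated by invariant monomials collapses the lattice $\SZ^n$ of cores onto the $\zeta$-weighted theta sum, the root of unity entering because $\zeta$ has order $1+h^\vee$ and $h^\vee=1+\sum_{i\ge1}d_i$.

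The principal obstacle is the type-$D$ case. There the large torus is unavailable, so the cellularity of the coarse fixed locus must be argued with care, and --- more seriously --- one must pin down precisely which type-$D$ Young walls arise from monomial ideals of the invariant ring (equivalently, control the quiver-Grassmannian fibres $p_\ast^{-1}(I)$) and show that the weighted enumeration collapses to the stated theta function. I expect this to rest on the fine combinatorics of type-$D$ Young walls and their abacus developed later in the paper, with the cancellations among the non-distinguished Young walls forced by $\zeta$ being a primitive $(1+h^\vee)$-th root of unity; the type-$A$ statement should by contrast reduce to the classical partition abacus together with a comparison against the known formula for cyclic quotient singularities.
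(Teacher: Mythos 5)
Your reduction of the statement to the specialisation $Z_{\SC^2/G_\Delta}(q)=Z_{[\SC^2/G_\Delta]}(q\zeta^{-D},\zeta,\dots,\zeta)$ is correct and matches the remark after the theorem, and your type~$A$ sketch (torus fixed points $=$ monomial ideals $=$ $0$-generated partitions, then an abacus substitution) is essentially the paper's route via Corollary~\ref{cor_part_An_coarse} and Proposition~\ref{prop:ansubst}. But for type~$D$ — which is the substance of the theorem — there is a genuine gap, in fact two. First, the geometric input is missing. Your ``local statement'', that the $\zeta$-weighted Euler characteristic of every fibre of $p_\ast$ equals $1$, is only identified as a sufficient condition, not proved; it is also not what the paper establishes, and it is not clear that it holds pointwise. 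The paper never mixes the root of unity into the geometry: it first proves an honest, unweighted identity $\sum_m\chi(\mathrm{Hilb}^m(\SC^2/G_\Delta))q^m=\sum_{Y\in\mathcal{Z}^0_\Delta}q^{\mathrm{wt}_0(Y)}$ (Theorem~\ref{thm:dnsingcells}), and this requires the whole machinery of Sections~\ref{Dnidealsect}--\ref{sect:coarse}: the decomposition of the coarse Hilbert scheme by Young walls in $\mathcal{Z}_\Delta'$ (Theorem~\ref{thm:assdiag}), the analysis of support blocks and closing data (Theorem~\ref{thm:zinfty}), and the key cancellation $\sum_{Y'\in\mathrm{Rel}(Y)}\chi(\widetilde W_{Y'})=1$ (Corollary~\ref{cor:relsum1}), where the strata contribute $0,\pm1$ for geometric reasons, not because of $\zeta$. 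Appealing to quiver Grassmannians and their affine-space decompositions gives unweighted cell counts and does not by itself produce the cancellation you need.

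Second, your ``hands-on'' route stumbles in type~$D$ at the very first step: only the diagonal $\SC^\ast$ acts, the fixed locus of the coarse Hilbert scheme is positive-dimensional, and $\SC[x,y]^{G_\Delta}$ is not a polynomial ring, so ``the number of monomial ideals of the invariant ring'' is not the Euler characteristic and is not even a well-posed count; the correct combinatorial target is the set $\mathcal{Z}_\Delta^0$ of distinguished $0$-generated Young walls, reached only through the stratification above. Finally, the combinatorial half in type~$D$ — that $\sum_{Y\in\mathcal{Z}_\Delta^0}q^{\mathrm{wt}_0(Y)}$ equals the root-of-unity specialisation of $Z_\Delta$ — is exactly Theorem~\ref{thm:dnsubst}, proved by a delicate type-$D$ abacus argument (conditions (D1)--(D6), the map $p$, and Gaussian-binomial cancellations at a primitive $(2n-1)$-st root of unity); you explicitly defer this (``I expect this to rest on the fine combinatorics\dots''), so it is not supplied. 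In short: the framing and the type~$A$ case are fine, but both pillars of the type~$D$ proof — the geometric count over distinguished $0$-generated Young walls and the abacus specialisation identity — are asserted or deferred rather than proved.
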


Thus $Z_{\SC^2/G_\Delta}(q)$ is obtained from $Z_{[\SC^2/G_\Delta]}(q_0,\dots,q_n)$ by the substitutions 
\[q_1=\dots=q_n=\exp\left({\frac{2 \pi i}{1+h^\vee}}\right), \ \ q_0=q\exp\left({-\frac{2 \pi i}{1+h^\vee}}\sum_{i\neq 0} \mathrm{dim}\rho_i\right).\] 
In type $A$, the formula in Theorem~\ref{thmsing} is not new: it was proved directly (in a slight disguise) by Dijkgraaf and Sulkowski in \cite{dijkgraaf2008instantons} and also recently, using completely different methods, by Toda in \cite{toda2013s}. Our main contribution is the general Lie-theoretic formulation, as well as a proof in type $D$; we also provide a direct combinatorial proof in type $A$, which appears to be new.

One can check directly that the generating series in Theorem $\ref{thmsing}$ has also integer coefficients for $E_6$, $E_7$ and $E_8$ to a high power in $q$. This motivates the following. 

\begin{conjecture} Let $\SC^2/G_\Delta$ be a simple singularity of type $E_n$ for $n=6,7,8$. Let $h^\vee$ be the (dual) Coxeter number of the corresponding finite root system. Then, as for other types, 
\[ Z_{\SC^2/G_\Delta}(q)=\left(\prod_{m=1}^{\infty}(1-q^m)^{-1}\right)^{n+1}\cdot\sum_{ \overline{m}=(m_1,\dots,m_n) \in \SZ^n } \zeta^{m_1+m_2+ \dots +m_n}(q^{1/2})^{\overline{m}^\top \cdot C_\Delta \cdot \overline{m}},\]
where $\zeta=\exp\left({\frac{2 \pi i}{1+h^\vee}}\right)$ and $C_\Delta$ is the finite type Cartan matrix corresponding to $\Delta$.
\label{conj:typeE}
\end{conjecture}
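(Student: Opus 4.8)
The plan is to run, in type $E$, the same two-stage argument that yields Theorem~\ref{thmsing} in types $A$ and $D$. Since Nakajima's Theorem~\ref{thmgenfunct} is valid in \emph{all} simply-laced types, the orbifold generating series $Z_{[\SC^2/G_\Delta]}(q_0,\dots,q_n)$ already equals the right-hand side of~\eqref{eq:orbi_main_formula} for $\Delta$ of type $E$; the entire content of Conjecture~\ref{conj:typeE} lies in the passage from the orbifold to the singular quotient, i.e.\ in analysing the quotient-scheme map $p_\ast\colon \mathrm{Hilb}([\SC^2/G_\Delta])\to\mathrm{Hilb}(\SC^2/G_\Delta)$ and its effect on Euler characteristics. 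So the first step is to produce an affine paving of $\mathrm{Hilb}([\SC^2/G_\Delta])$ in type $E$ — the analogue of Theorem~\ref{thmorb} — indexed by a combinatorial set $\CZ_\Delta$ of the same cardinality as is enumerated by~\eqref{eq:orbi_main_formula}. Because no Young-wall model exists in the exceptional types, I would try one of: (a) index the cells by Kashiwara's crystal $B(\Lambda_0)$ of the affine Lie algebra $\widehat{\frakg}_\Delta$, using that $\mathrm{Hilb}([\SC^2/G_\Delta])$ is the Nakajima quiver variety carrying the basic representation and that the irreducible components of its Lagrangian central fibre are indexed by crystal vertices, then upgrade this to a genuine cell decomposition via an iterated affine-bundle / Bia\l ynicki-Birula structure; (b) use the identification (see the remark after Theorem~\ref{thmorb}, and \cite{savage2011quiver}) of the relevant Lagrangians with quiver Grassmannians over the preprojective algebra of affine type $E$, and extend the affine-paving results of \cite{lorscheid2015quivera,lorscheid2015quiverb} beyond affine type $D$; or (c) take the Bia\l ynicki-Birula decomposition of $\mathrm{Hilb}([\SC^2/G_\Delta])$ with respect to the scalar $\SC^\ast$-action on $\SC^2$ (which commutes with $G_\Delta$), and induct on the fixed locus of graded $G_\Delta$-equivariant ideals, which is again a lower-dimensional quiver variety.

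The second step is to analyse $p_\ast$ cell by cell. One wants to show that $\mathrm{Hilb}(\SC^2/G_\Delta)$ inherits a decomposition into affine strata, each the $p_\ast$-image of a controlled union of orbifold cells, so that $\chi(\mathrm{Hilb}^m(\SC^2/G_\Delta))$ is a weighted count of cells $Y\in\CZ_\Delta$ whose image lands in $\mathrm{Hilb}^m$; equivalently, one computes the Euler characteristics of the fibres of $p_\ast$ restricted to each cell. The key uniform input should be a ``core/quotient''-type splitting of $\CZ_\Delta$: a cell $Y$ with associated representation $\rho=\sum_i m_i\rho_i$ records, on the one hand, coarse data (the colength $m$ of the invariant ideal $p_\ast(i^\ast)$) and, on the other, fine ``orbifold'' data, and the redundancy of $p_\ast$ — distinct cells with the same image — is governed by the shift on $\mathop{\rm Rep}(G_\Delta)$ induced by multiplication by the fundamental invariants (a Coxeter/McKay-type shift). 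The arithmetic of this shift, together with the relation $\sum_i a_i=h=h^\vee$ among the marks of $\widehat{\frakg}_\Delta$, is what should resum the fine data into the powers $\zeta^{m_1+\cdots+m_n}$ and produce the substitution $q_1=\dots=q_n=\zeta$, $q_0=q\,\zeta^{-\sum_{i\neq 0}\dim\rho_i}$ with $\zeta=\exp(2\pi i/(1+h^\vee))$. I would verify this combinatorial mechanism abstractly — purely at the level of $\CZ_\Delta$ and the colength function $\CZ_\Delta\to\SN$ — first, so that it applies verbatim once the geometric paving of Step~1 is available; in types $A$ and $D$ this is exactly the content of the direct combinatorial proof the authors mention, and the appearance of $1+h^\vee$ (rather than $|G_\Delta|$) matches the root of unity already seen in \cite{dijkgraaf2008instantons}.

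The main obstacle is Step~1: the affine paving in type $E$. Both the abacus/Young-wall machinery used in types $A$ and $D$ and the quiver-Grassmannian affine-paving technology have, so far, no counterpart in the exceptional types, and it is conceivable that the cells are \emph{not} affine spaces on the nose (so that only $\chi=1$ per cell, rather than a literal $\SC^{d}$, survives), in which case one must instead argue directly that each stratum has Euler characteristic $1$. A second, subtler difficulty is that even granting a paving, identifying the fibres of $p_\ast$ on each cell uniformly enough to see $\zeta=\exp(2\pi i/(1+h^\vee))$ emerge is exactly where the explicit type-$A$/$D$ combinatorics is used; a type-independent argument would be needed for $E$, and the natural candidate is to recognise the $x_i\mapsto\zeta$ specialisation of the level-one theta function in~\eqref{eq:orbi_main_formula} as its evaluation at the point $\rho^\vee/(1+h^\vee)$ governing the modular $S$-matrix / Verlinde formula at level $1$, and to give the fibre count of $p_\ast$ a representation-theoretic meaning compatible with that evaluation. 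Failing a geometric proof, a weaker but still worthwhile target — enough to upgrade the numerical evidence the authors cite — is to prove the identity of formal power series by an independent motivic or torus-localisation computation of $\chi(\mathrm{Hilb}^m(\SC^2/G_\Delta))$ in the three exceptional cases; but producing such a computation is essentially equivalent to carrying out Step~1, so I expect no genuine shortcut around the affine paving.
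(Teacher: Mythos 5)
The statement you are addressing is Conjecture~\ref{conj:typeE}, which the paper leaves \emph{open}: the authors offer only numerical evidence (integrality of the coefficients to high order in $q$) and explicitly state that the key tool of their type $A$/$D$ proofs --- the Young wall and abacus combinatorics --- has no known counterpart in type $E$, so there is no proof in the paper to compare yours against. Your proposal does not close this gap either: it is a research plan whose two essential steps are precisely the missing ingredients. Step~1 (an affine paving, or at least a $\chi=1$ stratification, of $\mathrm{Hilb}([\SC^2/G_\Delta])$ in type $E$ indexed by a combinatorial set matching the right-hand side of~\eqref{eq:orbi_main_formula}) is only sketched via three candidate routes --- crystal bases $B(\Lambda_0)$, an extension of the quiver-Grassmannian paving results of \cite{lorscheid2015quivera,lorscheid2015quiverb} beyond affine type $D$, or an inductive Bia\l ynicki-Birula argument --- none of which you carry out, and each of which is itself an open problem; note also that in types $A$ and $D$ the paving alone is \emph{not} what proves Theorem~\ref{thmsing}, so even a successful Step~1 would not suffice. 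Step~2 is where the real content lies: in type $D$ the passage from the orbifold to the coarse series occupies Sections~\ref{Dnspecialsect}--\ref{sect:dnabacus}, with delicate cancellations (strata of Euler characteristic $-1$, $0$, $1$ governed by closing data, Proposition~\ref{prop:relsum1}) and then the abacus substitution of Theorem~\ref{thm:dnsubst}; you propose to replace this by an abstract ``core/quotient'' mechanism on $\CZ_\Delta$ together with a Verlinde-type evaluation of the theta function, but you give no construction of such a mechanism, and the paper's own Verlinde remark is presented only as an unexplained analogy, not as a proof strategy.

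In short, your write-up correctly identifies the structure a proof would need and honestly flags the obstacles, but as it stands it proves nothing beyond what the paper already establishes (Theorem~\ref{thmgenfunct} in all types, and the reduction of the conjecture to understanding $p_\ast$ on a hypothetical stratification). To turn this into a proof you would need either (i) an explicit type-$E$ combinatorial model playing the role of Young walls, together with analogues of the supporting/closing-datum analysis and of Theorem~\ref{thm:dnsubst}, or (ii) a genuinely type-independent argument that bypasses the combinatorics --- for instance a representation-theoretic or wall-crossing identification of $\sum_m \chi(\mathrm{Hilb}^m(\SC^2/G_\Delta))q^m$ with the specialized character --- and neither is supplied here.
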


The key tool in our proof of Theorem~\ref{thmsing} for types $A$ and $D$ is the combinatorics
of Young walls, in particular their abacus representation. 
We are not aware of such explicit combinatorics in type $E$. We hope to return 
to this question in later work. 

\begin{remark} We are dealing here with Hilbert schemes, parametrizing rank $r=1$ 
sheaves on the orbifold or singular surface. 
In the relationship between the instantons on algebraic surfaces and affine Lie algebras, 
level equals rank \cite{grojnowski1996instantons}.
Indeed the (extended) basic represenatation underlying the Young wall combinatorics (see Appendix) has level $l=1$. 
Thus the substitution above is by the root of unity $\zeta=\exp\left({\frac{2 \pi i}{l+h^\vee}}\right)$, with $l=1$ and $h^\vee$ the (dual) Coxeter number. 
There is an intriguing analogy here with the Verlinde formula, which uses a similar substitution, into
characters of Lie algebras, by a root of unity $\zeta=\exp({\frac{2 \pi i}{l+h^\vee}})$, where $l$ again is the level, and 
$h^\vee$ the (dual) Coxeter number of the root system of the Lie algebra of the gauge group. The geometric significance 
of this observation, if any, is left for future research.
\end{remark}

\begin{remark} Given the results above, it is easy to write down a global formula analogous 
to~\eqref{formula:singcurve} for a singular surface with canonical singularities. This formula, as well as 
its modularity, are discussed in the announcement~\cite{gyenge2015announcement}.
\end{remark}

\subsection{Some terminology and structure of the paper} 
\label{sec:terminology}

%We call an algebraic variety $X$ an {\em almost affine space}, if it admits a regular map $f\colon X\to\SA^n$ with each fibre isomorphic to affine space $\SA^k$.
We work over the field $\SC$ of complex numbers.
We call a regular map $f\colon X\to Y$ a {\em trivial affine fibration with fibre $\SA^k$}, if there is an isomorphism $X\cong Y \times \SA^k$ with $f$ being the first projection. %
%It is known that not every almost affine space structure $f\colon X\to\SA^n$ is a trivial affine fibration, and consequently not every almost affine space is (isomorphic to) affine space.

The structure of the rest of the paper is as follows. In Section~\ref{Ansect}, we give a new proof of Theorem~\ref{thmsing} in type $A$, which has the advantage that it generelizes away from that case. The rest of the paper treats the case of type~$D$. In Section~\ref{Dnidealsect}, we introduce  Schubert-style cell decompositions of Grassmannians of homogeneous summands of $\SC[x,y]$. In Section~\ref{sect:dnorbidec} we give a cell decomposition of the orbifold Hilbert scheme, proving Theorem \ref{thmorb}. In Section~\ref{Dnspecialsect}, we discuss some special subsets of the strata and their geometry. A decomposition of the coarse Hilbert scheme is given in Section~\ref{sect:coarse}. In Section~\ref{sect:dnabacus}, the proof of Theorem~\ref{thmsing} is completed using combinatorial enumeration. The representation theoretic background is briefly summarized in Appendix~\ref{sect:apprepr}. Some relevant facts on joins of projective varieties are discussed in Appendix~\ref{sec:joins}.

\subsection*{Acknowledgements} The authors would like to thank Gwyn Bellamy, Alastair Craw, Eugene Gorsky, Ian Grojnowski, Kevin McGerty, Iain Gordon, Tomas Nevins and Tam\'as Szamuely for helpful comments and discussions. \'A.Gy.~was partially supported by the \emph{Lend\"ulet program} (Momentum Programme) of the Hungarian Academy of Sciences and by ERC Advanced Grant LDTBud (awarded to Andr\'as Stipsicz). A.N.~was partially supported by OTKA Grants 100796 and K112735. B.Sz.~was partially supported by EPSRC Programme Grant EP/I033343/1.

%\enlargethispage{\baselineskip}
\section{Type  \texorpdfstring{$A_n$}{An}}
\label{Ansect}

\subsection{Type  \texorpdfstring{$A$}{A} basics}

Let $\Delta$ be the root system of type $A_n$. Choosing a primitive $(n+1)$-st root of unity $\omega$, 
the corresponding subgroup $G_\Delta$ of $SL(2,\SC)$, a cyclic subgroup of order $n+1$, 
is generated by the matrix
\[ \sigma= 
\begin{pmatrix}
\omega & 0\\
0 & \omega^{-1} \\
\end{pmatrix}.
\]
All irreducible representations of $G_\Delta$ are one dimensional, and they are simply given by $\rho_j\colon \sigma\mapsto \omega^j$, for $j\in\{0, \ldots, n\}$. The corresponding McKay quiver is the cyclic Dynkin diagram of type $\widetilde A_n^{(1)}$. 

The group $G_\Delta$ acts on $\SC^2$; the 
quotient variety $\SC^2/G_\Delta$ has an $A_{n}$ singularity at the origin. 
The matrix $\sigma$ clearly commutes with the diagonal two-torus $T=(\SC^*)^2$, and so $T$ acts on the
quotient $\SC^2/G_\Delta$ and the orbifold $[\SC^2/G_\Delta]$. Consequently $T$ also acts on the orbifold
Hilbert scheme $\mathrm{Hilb}([\SC^2/G_\Delta])$ and the (reduced) coarse Hilbert scheme
$\mathrm{Hilb}(\SC^2/G_\Delta)$ as well. 

\subsection{Partitions, torus-fixed points and decompositions}
\label{subsectypeA}

Consider the set ${\mathbb N}\times {\mathbb N}$ of pairs of non-negative 
integers; we will draw this set as a set of blocks on the plane, occupying the non-negative quadrant.
Label blocks diagonally with $(n+1)$ labels $0, \ldots, n$ as in the picture; the block with coordinates
$(i,j)$ is labelled with $(i-j) \mod (n+1)$. We will call this the \emph{pattern of type~$A_n$}.

\begin{center}
\begin{tikzpicture}[scale=0.6, font=\footnotesize, fill=black!20]
  \draw (0, 0) -- (0,7);
  \foreach \x in {1,2,4,5,6,7,8}
    {
      \draw (\x, 0) -- (\x,6.2);
    }
    \draw (0,0) -- (9,0);
   \foreach \y in {1,2,4,5,6}
    {
         \draw (0,\y) -- (8.2,\y);
    }
    \draw (0.5,0.5) node {0};
    \draw (1.5,0.5) node {1};
    \draw (4.5,0.5) node {$n$$-$$1$};
    \draw (5.5,0.5) node {$n$};
    \draw (0.5,1.5) node {$n$};
    \draw (1.5,1.5) node {0};
    \draw (4.5,1.5) node {$n$$-$$2$};
    \draw (5.5,1.5) node {$n$$-$$1$};
    \draw (6.5,0.5) node {0};
    \draw (7.5,0.5) node {1};
    \draw (6.5,1.5) node {$n$};
    \draw (7.5,1.5) node {0};
    
    \draw (0.5,4.5) node {1};
    \draw (1.5,4.5) node {2};
    \draw (0.5,5.5) node {0};
    \draw (1.5,5.5) node {1};
    \draw(0.5,3) node {\vdots};
    \draw(1.5,3) node {\vdots};
    \draw(3,0.5) node {\dots};
    \draw(8.75,0.5) node {\dots};
    \draw(0.5,6.75) node {\vdots};
\end{tikzpicture}
\end{center}

Let ${\mathcal P}$ denote the set of partitions.
Given a partition $\lambda=(\lambda_1,\dots,\lambda_k)\in{\mathcal P}$, 
with $\lambda_1\geq \ldots \geq \lambda_k$ 
positive integers, we consider its Young (or Ferrers) diagram, the subset 
of ${\mathbb N}\times {\mathbb N}$ which consists of the $\lambda_i$ lowest blocks
in column $i-1$. The blocks in $\lambda$ also get labelled by the $n+1$ labels. 
Let $\CZ_\Delta$ denote the resulting set of $(n+1)$-labelled partitions, including the empty partition. 
For a labelled partition $\lambda\in \CZ_\Delta$, let $\mathrm{wt}_j(\lambda)$ denote the number of blocks 
in $\lambda$ labelled $j$, and define the multiweight of $\lambda$ to be 
$\underline{\mathrm{wt}}(\lambda)=(\mathrm{wt}_0(\lambda), \ldots, \mathrm{wt}_n(\lambda))$. 

\begin{proposition}
The torus $T$ acts with isolated fixed points on $\mathrm{Hilb}([\SC^2/G_\Delta])$, parametrized by the 
set $\CZ_\Delta$ of $(n+1)$-labelled partitions. 
More precisely, for $k_0, \ldots, k_n$ non-negative integers and $\rho=\oplus_{j=0}^n \rho_i^{\oplus k_i}$,
the $T$-fixed points on $\mathrm{Hilb}^\rho([\SC^2/G_\Delta])$ are parametrized by $(n+1)$-labelled partitions
of multiweight $(k_0, \ldots, k_n)$. 
\end{proposition}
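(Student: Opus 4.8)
The plan is to compute the $T$-fixed locus of $\mathrm{Hilb}(\SC^2)$ first, then cut out the $G_\Delta$-invariant points, and finally keep track of the $G_\Delta$-representation carried by the quotient. Since $T=(\SC^*)^2$ acts diagonally on $\SC^2=\mathrm{Spec}\,\SC[x,y]$ with weights $(1,0)$ and $(0,1)$ on $x$ and $y$, a $T$-invariant ideal $I\lhd\SC[x,y]$ must be a monomial ideal; conversely every monomial ideal is $T$-invariant. The standard correspondence then identifies the $T$-fixed points of $\mathrm{Hilb}^m(\SC^2)$ with partitions $\lambda$ of $m$: to a monomial ideal $I$ one associates the set of monomials $x^iy^j\notin I$, which forms the Young diagram of a partition (drawn with $x^i y^j$ at block $(i,j)$), and this set is a $\SC$-basis of $\SC[x,y]/I$. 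First I would record this bijection, noting that it is $T$-equivariant in the obvious sense and that the monomial basis of the quotient is canonical.

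Next I would observe that $\sigma$ acts on the monomial $x^iy^j$ by the scalar $\omega^{i}\omega^{-j}=\omega^{i-j}$, so every monomial ideal is automatically $G_\Delta$-invariant (each monomial spans a $\sigma$-eigenline, hence is fixed up to scalar). Therefore $\mathrm{Hilb}(\SC^2)^{G_\Delta}$ contains all monomial ideals, and since the $G_\Delta$-action commutes with $T$, the $T$-fixed points of $\mathrm{Hilb}([\SC^2/G_\Delta])$ are precisely the $G_\Delta$-invariant points among the $T$-fixed points of $\mathrm{Hilb}(\SC^2)$ — that is, again exactly the monomial ideals, equivalently the partitions. In particular the fixed points are isolated, since partitions of a fixed size form a finite set, and more generally the fixed locus is discrete because each monomial ideal is a reduced isolated point of the fixed scheme (the tangent space argument, or simply: distinct monomial ideals give distinct points and there are only finitely many of each colength).

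Finally I would identify the representation-theoretic grading. For a monomial ideal $I$ with associated partition $\lambda$, the quotient $\SC[x,y]/I$ has the monomial basis $\{x^iy^j : (i,j)\in\lambda\}$, and the monomial at block $(i,j)$ lies in the $\rho_{(i-j)\bmod(n+1)}$-isotypic component by the eigenvalue computation above. Hence $\SC[x,y]/I\simeq_G\bigoplus_{j=0}^n\rho_j^{\oplus \mathrm{wt}_j(\lambda)}$, where $\mathrm{wt}_j(\lambda)$ counts blocks of $\lambda$ carrying label $j$ in the pattern of type $A_n$. Thus $I\in\mathrm{Hilb}^\rho([\SC^2/G_\Delta])$ with $\rho=\oplus_j\rho_j^{\oplus k_j}$ iff $\underline{\mathrm{wt}}(\lambda)=(k_0,\dots,k_n)$, which is the claimed parametrization. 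Assembling these three steps gives the proposition.

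The main point requiring care — though it is not really an obstacle — is the assertion that the $T$-fixed locus is \emph{isolated} (reduced of dimension zero) rather than merely set-theoretically a discrete collection of partitions; I would handle this either by the classical computation of the $T$-weights on the tangent space $T_I\mathrm{Hilb}(\SC^2)=\mathrm{Hom}(I,\SC[x,y]/I)$ showing no weight is zero at a monomial ideal, or by invoking that $\mathrm{Hilb}([\SC^2/G_\Delta])$ is smooth (already noted in the excerpt) so its $T$-fixed locus is smooth, together with the finiteness of the point set. Everything else is a direct unwinding of the monomial-ideal dictionary and the character computation $\sigma\cdot x^iy^j=\omega^{i-j}x^iy^j$.
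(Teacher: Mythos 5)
Your proposal is correct and follows essentially the same route as the paper's (sketched) proof: the $T$-fixed points are the monomial ideals, hence partitions, and the diagonal labelling of blocks records the $G_\Delta$-weight of each monomial, giving the refined statement. Your extra care about isolatedness (via tangent-space weights or smoothness of the equivariant Hilbert scheme) is a reasonable fleshing-out of what the paper leaves implicit.
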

\begin{proof}
We just sketch the proof, which is well known~\cite{ellingsrud1987homology, fujii2005combinatorial}. 
It is clear that the $T$-fixed points on $\mathrm{Hilb}([\SC^2/G_\Delta])$, which coincide with the $T$-fixed points 
on $\mathrm{Hilb}(\SC^2)$, are the monomial ideals in $\SC[x,y]$ of finite colength. 
The monomial ideals are enumerated in turn by Young diagrams of partitions. 
The labelling of each block gives the weight of 
the $G_\Delta$-action on the corresponding monomial, proving the refined statement. 
\end{proof}

\begin{corollary} There exist a locally closed decomposition, depending on a choice specified below,
of $\mathrm{Hilb}([\SC^2/G_\Delta])$ into strata indexed by the set of $(n+1)$-labelled partitions.
Each stratum is isomorphic to an affine space.
\end{corollary}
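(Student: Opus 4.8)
The plan is to derive the decomposition from Bia\l ynicki--Birula theory, applied to the smooth quasiprojective variety $X=\mathrm{Hilb}([\SC^2/G_\Delta])$ equipped with a suitably generic one-parameter subgroup of $T$; the ``choice specified below'' is exactly the choice of this cocharacter, and different choices will give different (but combinatorially equivalent) decompositions.

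First I would fix a cocharacter $\lambda\colon\SC^*\to T$ of the form $\lambda(t)=(t^a,t^b)$ with $a,b$ positive integers, generic in the sense that $(a,b)$ pairs nontrivially with every $T$-weight occurring in the tangent spaces $T_pX$ at the (finitely many) $T$-fixed points $p$. Then $X^\lambda=X^T$, which by the Proposition just proved is the finite set of monomial ideals, in bijection with the $(n+1)$-labelled partitions $Y\in\CZ_\Delta$; write $p_Y$ for the fixed point attached to $Y$.

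The substantive point is that the attracting decomposition for $\lambda$ exhausts all of $X$: one must show that $\lim_{t\to0}\lambda(t)\cdot I$ exists in $X$ for every $I\in X$, which is not automatic since $X$ is not projective. On $\mathrm{Hilb}(\SC^2)$ this is classical~\cite{ellingsrud1987homology}: as $a,b>0$, the subgroup $\lambda$ acts with strictly positive weights on $x,y\in\SC[x,y]$, so the flat limit of an ideal $I\lhd\SC[x,y]$ under $\lambda$ is its initial ideal with respect to the induced term order, which is a monomial ideal; hence every point of $\mathrm{Hilb}(\SC^2)$ flows to a monomial fixed point. Since $\lambda$ commutes with $G_\Delta$, the initial ideal of a $G_\Delta$-invariant ideal is again $G_\Delta$-invariant, so this Gr\"obner degeneration stays inside the closed subscheme $X=\mathrm{Hilb}(\SC^2)^{G_\Delta}\hookrightarrow\mathrm{Hilb}(\SC^2)$, and each point of $X$ has its limit among the $p_Y$.

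Given this, the Bia\l ynicki--Birula theorem for a smooth variety all of whose $\SC^*$-orbits have limits produces a decomposition into locally closed, $T$-stable strata
\[
X=\bigsqcup_{Y\in\CZ_\Delta}X_Y,\qquad X_Y=\Bigl\{\,I\in X:\ \lim_{t\to0}\lambda(t)\cdot I=p_Y\,\Bigr\},
\]
with each $X_Y$ $T$-equivariantly isomorphic to the positive-weight part $T_{p_Y}^{+}X$ of the tangent space, hence $X_Y\cong\SA^{d_Y}$ for $d_Y=\dim T_{p_Y}^{+}X$ (a quantity readable off from the arm and leg statistics of the boxes of $Y$). This is the claimed decomposition; since $\chi(\SA^{d_Y})=1$ it also yields the type $A$ instance of Theorem~\ref{thmorb}. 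The main obstacle is the exhaustion/convergence claim of the previous paragraph: for the non-projective $X$ one needs both that no orbit escapes to infinity and that the flat degeneration preserves the $G_\Delta$-fixed locus, and both come from the explicit initial-ideal description of the flow; genericity of $\lambda$, smoothness of $X$, and affineness of the cells are then routine.
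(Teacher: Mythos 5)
Your proposal is correct and follows essentially the same route as the paper: choose a sufficiently general one-dimensional subtorus of $T$ with positive weights on $x$ and $y$, note that the fixed points are the monomial ideals indexed by labelled partitions and that all limits at $t=0$ exist, and then take the Bia\l ynicki--Birula decomposition of the smooth orbifold Hilbert scheme. Your initial-ideal justification of the existence of limits and of the preservation of $G_\Delta$-invariance simply spells out the standard fact the paper invokes with a citation, so there is no substantive difference.
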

\begin{proof} Again, this is well known \cite{nakajima2005instanton}. Fixing a representation $\rho$, choose a sufficiently general
one-dimensional subtorus $T_0\subset T$ which has positive weight on both
$x$ and $y$. For general $T_0\subset T$, the fixed point set on $\mathrm{Hilb}^\rho([\SC^2/G_\Delta])$ is unchanged and 
in particular consists of a finite number of isolated points. Choosing positive weights on $x,y$ ensures that all limits of $T_0$-orbits at $t=0$
in $\mathrm{Hilb}([\SC^2/G_\Delta])$ exist, even though $\mathrm{Hilb}^\rho([\SC^2/G_\Delta])$ is non-compact. Since
$\mathrm{Hilb}^\rho([\SC^2/G_\Delta])$ is smooth, the result follows by taking the Bia\l ynicki-Birula decomposition
of $\mathrm{Hilb}^\rho([\SC^2/G_\Delta])$ given by the $T_0$-action.
\end{proof}

Denote by 
\[Z_{\Delta}(q_0,\dots,q_n) = \sum_{\lambda\in \CZ_\Delta} \underline{q}^{\underline{\mathrm{wt}}(\lambda)}\]
the generating series of $(n+1)$-labelled partitions, where we used multi-index notation 
\[\underline{q}^{\underline{\mathrm{wt}}(\lambda)}=\prod_{i=0}^nq_i^{\mathrm{wt}_i(\lambda)}.\] 
From either of the previous two statements, we immediately deduce the following. 
\begin{corollary} \label{cor:Acombinatorial} Let $[\SC^2/G_\Delta]$ be a simple singularity orbifold of type $A$. 
Then its orbifold generating series can be expressed as
\begin{equation} \label{eq:orbigeneq} Z_{[\SC^2/G_\Delta]}(q_0,\dots,q_n)=Z_{\Delta}(q_0,\dots,q_n).
\end{equation}
\end{corollary}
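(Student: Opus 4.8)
The plan is to deduce Corollary~\ref{cor:Acombinatorial} directly from the preceding Proposition together with either corollary, so essentially no new work is required beyond bookkeeping. The content is that the orbifold generating series counts $T$-fixed points (respectively strata) with multiplicity one, and these are parametrized by labelled partitions tracked by multiweight.

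First I would recall that for any finite group $H$ acting on a quasiprojective variety $M$ with an algebraic torus $T$-action commuting with $H$, one has $\chi(M^H) = \chi((M^H)^T) = \chi(M^{H}\cap M^T)$, since the Euler characteristic of a variety with torus action equals the Euler characteristic of its fixed locus. Applying this with $M = \mathrm{Hilb}(\SC^2)$ and $H = G_\Delta$, and restricting to the component $\mathrm{Hilb}^\rho([\SC^2/G_\Delta])$ with $\rho = \oplus_j \rho_j^{\oplus k_j}$, we get
\[
\chi\left(\mathrm{Hilb}^{k_0\rho_0 + \dots + k_n\rho_n}([\SC^2/G_\Delta])\right) = \#\left\{\text{$T$-fixed points on }\mathrm{Hilb}^\rho([\SC^2/G_\Delta])\right\}.
\]
By the Proposition, the right-hand side equals the number of $(n+1)$-labelled partitions of multiweight $(k_0,\dots,k_n)$, i.e. the number of $\lambda \in \CZ_\Delta$ with $\underline{\mathrm{wt}}(\lambda) = (k_0,\dots,k_n)$. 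Summing over all tuples $(k_0,\dots,k_n)$ with the monomial $q_0^{k_0}\cdots q_n^{k_n}$ as weight, the left-hand sum is by definition $Z_{[\SC^2/G_\Delta]}(q_0,\dots,q_n)$ and the right-hand sum is $\sum_{\lambda\in\CZ_\Delta}\underline{q}^{\underline{\mathrm{wt}}(\lambda)} = Z_\Delta(q_0,\dots,q_n)$, which is exactly~\eqref{eq:orbigeneq}.

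Alternatively, and even more transparently, one can invoke the Corollary on the affine-space stratification: since $\mathrm{Hilb}^\rho([\SC^2/G_\Delta])$ decomposes into locally closed strata each isomorphic to an affine space, indexed by labelled partitions of the appropriate multiweight, and each affine space contributes $1$ to the Euler characteristic, additivity of $\chi$ over a locally closed decomposition gives the same count. Either route makes the identity immediate; I would phrase the proof using whichever of the two preceding statements reads most cleanly in context, noting that the stratification version has the side benefit of not relying on the general torus-localization principle for Euler characteristics. There is no real obstacle here — the only thing to be careful about is matching the indexing conventions, namely that the exponent of $q_i$ is simultaneously the multiplicity $k_i$ of $\rho_i$ in $\rho$ and the number $\mathrm{wt}_i(\lambda)$ of blocks of $\lambda$ labelled $i$, which is precisely the refined statement of the Proposition.
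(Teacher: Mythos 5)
Your proposal is correct and matches the paper's proof: the paper deduces the corollary immediately from "either of the previous two statements," i.e.\ the torus fixed-point parametrization by labelled partitions or the affine-space stratification, exactly the two routes you describe. The only content is the bookkeeping identification of the exponent of $q_i$ with both the multiplicity of $\rho_i$ and $\mathrm{wt}_i(\lambda)$, which you handle correctly.
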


According to \cite{fujii2005combinatorial} the generating series of $(n+1)$-labelled partitions has the following form:
\begin{equation} Z_{\Delta}(q_0,\dots,q_n)=\frac{\sum_{ \overline{m}=(m_1,\dots,m_n) \in \SZ^k }^\infty q_1^{m_1}\cdot\dots\cdot q_n^{m_n}(q^{1/2})^{\overline{m}^\top \cdot C \cdot \overline{m}}}{\prod_{m=1}^{\infty}(1-q^m)^{n+1}},
\label{eq:orbiseran}
\end{equation}
where $q=q_0\cdot \dots \cdot q_n$ and $C$ is the (finite) 
Cartan matrix of type $A_n$; for a sketch proof, see the end 
of~\ref{subsec:Anabacus} below. In particular, (\ref{eq:orbigeneq}) and (\ref{eq:orbiseran}) imply Theorem \ref{thmgenfunct} for type $A$.

We now turn to the coarse Hilbert scheme. Let us define a subset $\CZ^0_\Delta$
of the set of $(n+1)$-labelled partitions $\CZ_\Delta$ as follows. 
An $(n+1)$-labelled partition $\lambda\in \CZ_\Delta$ will be called
{\it $0$-generated} (a slight misnomer, this should be really be ``complement-$0$-generated'') 
if the complement of $\lambda$ inside ${\mathbb N}\times {\mathbb N}$
can be completely covered by translates of  ${\mathbb N}\times
{\mathbb N}$ to blocks labelled $0$ contained in this complement. 
Equivalently, an $(n+1)$-labelled partition $\lambda$ is
$0$-generated, if all its addable blocks (blocks whose addition gives
another partition) are labelled~$0$. It is immediately seen that this
condition is equivalent to the corresponding monomial ideal $I\lhd\SC[x,y]$ being
generated by its invariant part $I\cap\SC[x,y]^{G_\Delta}$. Indeed, we have the following.

\begin{proposition}
The torus $T$ acts with isolated fixed points on $\mathrm{Hilb}(\SC^2/G_\Delta)$, which are in bijection 
with the set $\CZ^0_\Delta$ of $0$-generated $(n+1)$-labelled partitions. 
More precisely, for a non-negative integer $k$,
the $T$-fixed points on $\mathrm{Hilb}^k(\SC^2/G_\Delta)$ are parametrized by $0$-generated 
$(n+1)$-labelled partitions $\lambda$ with $0$-weight $\mathrm{wt}_0(\lambda)=k$. 
\end{proposition}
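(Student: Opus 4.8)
The statement has two parts: first, that $T$ acts on $\mathrm{Hilb}(\SC^2/G_\Delta)$ with isolated fixed points; second, that these are in bijection with $0$-generated $(n+1)$-labelled partitions, refined by the $0$-weight. The plan is to transport everything through the quotient-scheme map $p_\ast$ and the set-theoretic pullback $i^\ast$ introduced in~\ref{sec_notation}, both of which are $T$-equivariant because $\sigma$ commutes with $T$.

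First I would identify the $T$-fixed points of $\mathrm{Hilb}(\SC^2/G_\Delta)$ with the monomial ideals of $\SC[x,y]^{G_\Delta}$ of finite colength. Indeed, $\SC[x,y]^{G_\Delta}$ is itself a $T$-graded ring (spanned by the monomials $x^iy^j$ with $i\equiv j \bmod (n+1)$), so the standard argument — a $T$-fixed ideal is homogeneous for the $T$-grading, hence spanned by the monomials it contains — shows the fixed locus is a finite set of reduced points, one for each monomial ideal of the invariant ring. That settles the ``isolated fixed points'' claim.

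Next I would set up the bijection. Given a $T$-fixed ideal $I \lhd \SC[x,y]^{G_\Delta}$, its pullback $i^\ast I = \SC[x,y]\cdot I$ is a $T$-fixed, $G_\Delta$-invariant monomial ideal of $\SC[x,y]$, hence corresponds to an $(n+1)$-labelled partition $\lambda$; conversely $p_\ast$ sends a $G_\Delta$-invariant monomial ideal back to its invariant part, and $p_\ast\circ i^\ast = \mathrm{id}$ on ideals of the invariant ring. So the $T$-fixed points of $\mathrm{Hilb}(\SC^2/G_\Delta)$ inject into the set of $(n+1)$-labelled partitions, and the image is exactly the set of $\lambda$ with $\lambda = i^\ast(\lambda^{G_\Delta})$, i.e. those monomial ideals of $\SC[x,y]$ that are generated by their invariant part. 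The key combinatorial point — already asserted in the paragraph preceding the proposition — is that this ``generated by the invariant part'' condition on the monomial ideal is equivalent to $\lambda$ being $0$-generated: a monomial $x^iy^j$ lies in $\SC[x,y]\cdot(I\cap\SC[x,y]^{G_\Delta})$ iff some invariant monomial dividing it lies in $I$, i.e. iff the complement of $\lambda$ is covered by translates of $\SN\times\SN$ based at $0$-labelled blocks of the complement; equivalently, every addable block of $\lambda$ carries label $0$. I would spell out this equivalence carefully, since it is the heart of the matching. Finally, for the refinement: the colength of $I$ in $\SC[x,y]^{G_\Delta}$ is the number of invariant monomials \emph{not} in $I$, which are exactly the $0$-labelled blocks of the complement — but when $\lambda$ is $0$-generated, the $0$-labelled blocks of the complement are in bijection with the $0$-labelled blocks of $\lambda$ shifted appropriately; more directly, the monomials in $\SC[x,y]^{G_\Delta}/I$ correspond to $0$-labelled lattice points outside $\lambda$, and I would observe that for a $0$-generated $\lambda$ this count equals $\mathrm{wt}_0(\lambda)$ by a straightforward bijection (pairing each outside $0$-block with the inside $0$-block obtained by... ) — in fact the cleanest route is: $\dim_\SC \SC[x,y]^{G_\Delta}/I = \dim_\SC (\SC[x,y]/i^\ast I)^{G_\Delta} = \mathrm{wt}_0(\lambda)$, using that $i^\ast I$ is $G_\Delta$-invariant so $(\SC[x,y]/i^\ast I)^{G_\Delta}$ is spanned by the $0$-labelled blocks of $\lambda$, and that $(i^\ast I)^{G_\Delta} = I$.

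The main obstacle I anticipate is the verification that ``$I$ generated by $I\cap\SC[x,y]^{G_\Delta}$'' translates precisely into the $0$-generated condition on $\lambda$, and in particular handling the boundary/addable-block characterization cleanly — one must be careful that covering the complement by translates of the full quadrant based at $0$-blocks is genuinely the same as the ideal-membership condition, and that the two reformulations (``all addable blocks labelled $0$'' versus ``complement covered by $0$-based quadrants'') agree; this is elementary but needs the grading of $\SC[x,y]^{G_\Delta}$ and the structure of monomial ideals handled with some care. Everything else is a formal consequence of the $T$-equivariance of $p_\ast$ and $i^\ast$ and the identity $p_\ast\circ i^\ast = \mathrm{id}$.
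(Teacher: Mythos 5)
Your proposal is correct and follows essentially the same route as the paper: identify the $T$-fixed points with monomial ideals of $\SC[x,y]^{G_\Delta}$, match them via $i^\ast$ and $p_\ast$ (using $p_\ast\circ i^\ast=\mathrm{id}$) with monomial ideals of $\SC[x,y]$ generated by their invariant part, i.e.\ $0$-generated partitions, and count the colength by noting that invariant monomials correspond to $0$-labelled blocks, so that $\dim_\SC \SC[x,y]^{G_\Delta}/I=\mathrm{wt}_0(\lambda)$. The paper's proof is just a terser version of this, relying on the equivalence of the $0$-generated condition with ``generated by the invariant part'' stated in the preceding paragraph, which you propose to spell out in more detail.
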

\begin{proof} This is immediate from the above discussion. The $T$-fixed points of 
$\mathrm{Hilb}(\SC^2/G_\Delta)$ are the monomial ideals $I$ of $\SC[x,y]^{G_\Delta}$ of finite colength. 
Inside $\SC[x,y]$, the ideals they generate correspond to partitions which are $0$-generated. The ring 
$\SC[x,y]^{G_\Delta}$ has a basis consisting of monomials with corresponding blocks labelled $0$ inside $\SC[x,y]$; 
thus the codimension of a monomial ideal $I$ inside $\SC[x,y]^{G_\Delta}$ is simply the number of blocks
denoted $0$.
\end{proof}
Denoting by
\[Z^0_{\Delta}(q) = \sum_{\lambda\in \CZ^0_\Delta} q^{\mathrm{wt}_0(\lambda)}\]
the corresponding specialization of the generating series of $0$-generated $(n+1)$-labelled partitions,
we deduce the following. 
\begin{corollary} Let $[\SC^2/G_\Delta]$ be a simple singularity orbifold of type $A$. 
Then the coarse generating series can be expressed as
\begin{equation} Z_{\SC^2/G_\Delta}(q)=Z^0_{\Delta}(q).
\end{equation}
\label{cor_part_An_coarse}
\end{corollary}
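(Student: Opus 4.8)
The plan is to compute the coefficient $\chi(\mathrm{Hilb}^m(\SC^2/G_\Delta))$ of $q^m$ in $Z_{\SC^2/G_\Delta}(q)$ by torus localization and then read off the count from the preceding Proposition. Concretely, I would first establish that $\chi(\mathrm{Hilb}^m(\SC^2/G_\Delta))=\chi\big(\mathrm{Hilb}^m(\SC^2/G_\Delta)^{T}\big)$, and then use that by the Proposition this fixed locus is the finite set of monomial ideals corresponding to $0$-generated $(n+1)$-labelled partitions of $0$-weight $m$.

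Since $\mathrm{Hilb}^m(\SC^2/G_\Delta)$ is quasi-projective but in general singular, I cannot use a Bia\l ynicki-Birula decomposition as in the smooth orbifold case treated above; instead I would invoke the fact that for \emph{any} complex algebraic variety $X$ carrying an algebraic action of a torus $T$ one has $\chi(X)=\chi(X^{T})$, where $\chi$ is the topological Euler characteristic (equivalently, the compactly supported Euler characteristic, which coincides with it for complex varieties). For $T=\SC^*$ this follows from additivity of $\chi$ under finite stratification into locally closed subsets, together with the observation that every nontrivial $\SC^*$-orbit in $X\setminus X^{\SC^*}$ is isomorphic to $\SC^*$ and hence has vanishing Euler characteristic (one stratifies $X\setminus X^{\SC^*}$ into $\SC^*$-invariant pieces that are Zariski-locally trivial bundles of single orbits, and uses multiplicativity of $\chi$); the case of a higher-rank torus reduces to this one by applying it to a subtorus $T'\cong\SC^*\subset T$ and using $(X^{T'})^{T/T'}=X^{T}$, with $T/T'$ again a torus.

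Granting this, for each $m$ we obtain
\[\chi\big(\mathrm{Hilb}^m(\SC^2/G_\Delta)\big)=\chi\big(\mathrm{Hilb}^m(\SC^2/G_\Delta)^{T}\big)=\#\{\lambda\in\CZ^0_\Delta:\mathrm{wt}_0(\lambda)=m\},\]
the last equality being exactly the content of the Proposition together with the finiteness and isolatedness of the fixed points. Summing over $m$,
\[Z_{\SC^2/G_\Delta}(q)=\sum_{m\geq0}\#\{\lambda\in\CZ^0_\Delta:\mathrm{wt}_0(\lambda)=m\}\,q^m=\sum_{\lambda\in\CZ^0_\Delta}q^{\mathrm{wt}_0(\lambda)}=Z^0_\Delta(q),\]
which is the assertion.

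The only ingredient that is not purely formal is the torus-localization identity $\chi(X)=\chi(X^{T})$ applied to the singular variety $X=\mathrm{Hilb}^m(\SC^2/G_\Delta)$; this is where I expect whatever real work there is to sit, precisely because the surrounding arguments only invoke Bia\l ynicki-Birula for smooth spaces, so here one must either cite this principle or spell out the short stratification argument above. If one prefers to stay closer to those arguments, an alternative is to first pick a sufficiently general one-parameter subgroup $T_0\subset T$, for which (as in the smooth case) $\mathrm{Hilb}^m(\SC^2/G_\Delta)^{T_0}$ still coincides with the finite set $\mathrm{Hilb}^m(\SC^2/G_\Delta)^{T}$, and then apply the $\SC^*$-localization to $T_0$.
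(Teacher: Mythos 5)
Your proposal is correct and is essentially the paper's own (implicit) argument: the paper deduces the corollary directly from the preceding proposition on torus-fixed points, i.e.\ by the standard localization identity $\chi(X)=\chi(X^{T})$ applied to the (possibly singular) coarse Hilbert scheme, which is exactly what you spell out. The only difference is that you make the stratification argument for $\chi(X)=\chi(X^{T})$ explicit, while the paper treats it as immediate.
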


\begin{proof}[Proof of Theorem \ref{thmsing} for the $A_n$ case]
The (dual) Coxeter number of the type $A_n$ root system is $h^\vee=n+1$. Thus
Theorem \ref{thmsing} for this case follows from Corollary~\ref{cor_part_An_coarse},
formula~\eqref{eq:orbiseran}, and the combinatorial
Proposition~\ref{prop:ansubst} below, which computes the series $Z^0_{\Delta}(q)$. 
\end{proof}

\begin{remark} The single variable generating series $Z_{\SC^2/G_\Delta}$ in type $A$ was  
calculated by Toda in~\cite{toda2013s} using threefold machinery including a flop formula for 
Donaldson--Thomas invariants of certain Calabi--Yau threefolds. He does not mention any connection to Lie theory. 
The combinatorics, and the one-variable formula for $Z^0_{\Delta}(q)$, were
already known to Dijkgraaf and Sulkowski~\cite{dijkgraaf2008instantons}. They do not give the interpretation of the 
combinatorial formula in terms of Hilbert schemes, though they are clearly motivated by
closely related ideas. Their proof is different, using the method of Andrews~\cite{andrews1984generalized}
in place of the abacus combinatorics we use below. We believe that already in type $A$, our new proof is preferable
since it directly exhibits the clear connection between the orbifold and coarse generating series. 
Also, as we show later, this method generalizes away from type $A$.
\end{remark}

\subsection{Abacus of type \texorpdfstring{$A_n$}{An}}
\label{subsec:Anabacus}

We now introduce some standard combinatorics related to the type $A$ root system, which will allow us to relate the 
generating series $Z_{\Delta}$  
of $(n+1)$-labelled partitions to the specialized series $Z^0_{\Delta}$ of $0$-generated partitions.
We follow the notations of \cite{leclerc2002some}. 

The {\em abacus of type $A_n$} is the arrangement of the set of
integers in $(n+1)$ columns according to the following pattern.
\begin{center}
\begin{tabular}{c c c c c}
\vdots & \vdots & & \vdots & \vdots \\
$-2n-1$ & $-2n$ & \dots & $-n-2$  & $-n-1$ \\
$-n$ & $-n+1 $& \dots & $-1$  & 0 \\
1 & 2 & \dots & $n$ & $n+1$\\
$n+2$ & $n+3$ & \dots & $2n+1$ & $2n+2$\\
\vdots & \vdots & & \vdots & \vdots
\end{tabular}
\end{center}
Each integer in this pattern is called a {\em position}. For any integer $1 \leq k \leq n+1$ the 
set of positions in the $k$-th column of the abacus is called 
the $k$-th {\em runner}. An {\em abacus configuration} is a set of {\em beads}, 
denoted by $\bigcirc$, placed on the positions, with each position occupied by at most one bead. 

To an $(n+1)$-labelled partition $\lambda=(\lambda_1,\dots,\lambda_k)\in\mathcal{Z}_\Delta$ 
we associate its \emph{abacus representation} (sometimes also called \emph{Maya diagram})
as follows: place a bead in position $\lambda_i-i+1$ for all $i$, interpreting $\lambda_i$ as 0 for $i>k$. 
Alternatively, the abacus representation can be described by tracing the outer
profile of the Young diagram of a partition: the occupied positions
occur where the profile moves ``down'', whereas the empty
positions are where the profile moves ``right''.
In the abacus representation of a partition, the number of occupied positive positions is always equal 
to the number of absent nonpositive positions; we call such abacus configurations {\em balanced}. Conversely, 
it is easy to see that any balanced configuration represents a unique $(n+1)$-labelled partition, 
an element of $\mathcal{Z}_\Delta$.

For $n=0$, we obtain a representation of partitions on a single runner; this is sometimes called
the {\em Dirac sea} representation of partitions. 

The \emph{$(n+1)$-core} of a labelled partition $\lambda \in \mathcal{Z}_\Delta$
is the partition obtained from $\lambda$ by successively removing border strips of length $n+1$, 
leaving a partition at each step, until this is no longer possible. Here a {\em border strip}
is a skew Young diagram which does not contain $2 \times 2$ blocks and 
which contains exactly one $j$-labelled block for all labels $j$. The
removal of a border strip corresponds in the abacus representation to shifting one of the 
beads up on its runner, if there is an empty space on the runner above
it. In this way, the core of a partition corresponds to the bead configuration in which all the 
beads are shifted up as much as possible; this in particular shows that the $(n+1)$-core of 
a partition is well-defined. We denote by ${\mathcal C}_\Delta$ the set of $(n+1)$-core partitions, and
\[c\colon {\mathcal Z}_\Delta\to {\mathcal C}_\Delta\] the map which takes an $(n+1)$-labelled 
partition to its $(n+1)$-core. 

%EXAMPLE PICTURE

Given an $(n+1)$-core $\lambda$, we can read the $(n+1)$ runners of its abacus rrepresentation 
separately. These will not necessarily be balanced. The $i$-th one will be shifted from the 
balanced position by a certain integer number $a_i$ steps, which is negative if the shift is 
toward the negative positions (upwards), and positive otherwise. These numbers satisfy 
$\sum_{i=0}^n a_i=0$, since the original abacus configuration was balanced. 
The set $\{a_1,\dots,a_n\}$ completely determines the partition, so we get a bijection
\begin{equation}\label{typeA-cores} 
{\mathcal C}_\Delta \longleftrightarrow \left\{\sum_{i=0}^n a_i=0\right\}\subset\SZ^{n+1}.\end{equation}
We will represent an $(n+1)$-core partition by the corresponding $(n+1)$-tuple
$\underline{a}=(a_0,\dots,a_{n})$. 

On the other hand, for an arbitrary partition, on each runner we have a partition up to shift, 
so we get a bijection
\[ {\mathcal Z}_\Delta \longleftrightarrow {\mathcal C}_\Delta \times {\mathcal P}^{n+1}. \]
This corresponds to the structure of
formula~\eqref{eq:orbiseran} above; its denominator is the generating series of $(n+1)$-tuples of 
(unlabelled) partitions, whereas its numerator (after eliminating a variable) is exactly a sum over 
$\underline{a}\in {\mathcal C}_\Delta$. The multiweight of a core partition corresponding to an 
element $\underline{a}$ is given by the quadratic expression $Q(\underline{a})$ in the
exponent of the numerator of~\eqref{eq:orbiseran}. 
For more details, see Bijections 1-2 in~\cite[\S2]{garvan1990cranks}. 

\subsection{Relating partitions to 0-generated partitions}

The purpose of this section is to prove the following, completely combinatorial statement. 

\begin{proposition} Let $\Delta$ be of type $A_n$, and let $\xi$ be a
primitive $(n+2)$-nd root of unity. Then the generating series of 
$0$-generated partitions can be computed from that of all $(n+1)$-labelled ones
by the following substitution: 
\[Z^0_{\Delta}(q) = Z_{\Delta}(q_0,\dots,q_n)\Big|_{q_0=\xi^{-n}q , q_1=\dots=q_n=\xi}.\]
\label{prop:ansubst}
\end{proposition}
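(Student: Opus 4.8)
The plan is to work entirely on the abacus side of the bijection $\mathcal{Z}_\Delta \longleftrightarrow \mathcal{C}_\Delta \times \mathcal{P}^{n+1}$. The key observation should be that the condition of being $0$-generated interacts cleanly with this product decomposition: if $\lambda \in \mathcal{Z}_\Delta$ has abacus representation with runners separated into a core part $\underline{a}=(a_0,\dots,a_n)$ and partition parts, then I expect $\lambda$ to be $0$-generated if and only if \emph{all the partition parts on runners $1,\dots,n$ are empty}, while the part on runner $0$ is allowed to be arbitrary, and the core $\underline{a}$ is constrained. More precisely, adding a block labelled $j$ to $\lambda$ corresponds to moving a bead one position down on a specific runner; the requirement that every addable block carry label $0$ translates into: on every runner $k \neq 0$, the beads are packed ``as high as possible'' (no bead can move down without violating the bead-occupancy rule because the position below is occupied), i.e. the $k$-th runner carries no partition data, and moreover the runners are not offset in a way that would create an addable block of nonzero label at the interface between runners. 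So first I would make this translation precise by carefully examining which elementary bead-move on which runner corresponds to adding a block of each label.

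Second, once the combinatorial characterization of $\mathcal{Z}^0_\Delta$ inside $\mathcal{Z}_\Delta$ is in hand, I would compute the effect of the substitution $q_0 = \xi^{-n} q$, $q_1 = \dots = q_n = \xi$ on the product formula~\eqref{eq:orbiseran}. Under this substitution $q = q_0 \cdots q_n \mapsto \xi^{-n} q \cdot \xi^n = q$, so the denominator $\prod_m (1-q^m)^{n+1}$ is unchanged; this factor of $(n+1)$ copies of the partition generating series must be matched on the other side by the $n+1$ free partition parts $\mathcal{P}^{n+1}$ appearing in $Z_\Delta$. The point of the substitution is therefore to kill the contribution of the $n$ ``non-zero'' partition parts (collapsing $n$ of the $n+1$ free partitions) — wait, that cannot be right dimensionally, so the actual mechanism must be subtler: the substitution does \emph{not} change the denominator but reweights the \emph{core} sum in the numerator, and the identity to be proven is that this reweighted core sum, times $\prod_m(1-q^m)^{-(n+1)}$, equals $\sum_{\lambda \in \mathcal{Z}^0_\Delta} q^{\mathrm{wt}_0(\lambda)}$. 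Concretely I would express $\mathrm{wt}_0(\lambda)$ and $\mathrm{wt}_j(\lambda)$ for $j\neq 0$ in terms of the core data $\underline{a}$ and the partition parts, substitute, and check that the weighted sum telescopes: the roots of unity $\xi$ should force cancellation precisely among those terms \emph{not} coming from $0$-generated partitions. The cleanest route is likely to write $Z_\Delta = \sum_{\underline{a}} \underline{q}^{Q(\underline{a})} \prod_{k=0}^n \frac{1}{\prod_m(1-q^m)}$ with the runner-$k$ partition tracked by $q$ (since moving a bead down runner $k$ always adds exactly one block of each label... no, one block of the label sitting below), and then recognize that setting $q_j = \xi$ for $j \neq 0$ annihilates all but a thin slice of cores.

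Third, to actually execute the cancellation I would use the standard identity that a sum of the form $\sum_{\underline{a}:\sum a_i = 0} \prod_{j=1}^n \xi^{(\text{linear in }\underline{a})} \cdot q^{(\text{quadratic})}$ collapses, via the root-of-unity filtration $\sum_{\zeta^{n+2}=1} \zeta^{m} = (n+2)\cdot[\,(n+2)\mid m\,]$, onto the sublattice of cores that are themselves $0$-generated; equivalently I would invoke or reprove the quasi-modular/theta-function identity underlying Dijkgraaf--Sulkowski's computation, but organized so the bookkeeping is transparent. I would cross-check the final formula against the known type $A_1$ case by hand ($n=1$, $\xi$ a primitive cube root of unity), where both sides are classical.

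\textbf{Expected main obstacle.} The hard part will be the first step: pinning down the exact combinatorial description of $0$-generated partitions in abacus language, and in particular correctly handling the \emph{offsets} $a_i$ of the runners. The naive guess ``all non-zero partition parts empty'' is surely incomplete — there will be a constraint linking consecutive offsets $a_{i-1}, a_i$ coming from the fact that an addable block straddling two runners could carry a nonzero label even when each runner is individually packed. Getting this interface condition exactly right, and then verifying that the resulting subset of cores is precisely the one picked out by the $\xi$-substitution, is where the real content lies; the subsequent generating-function manipulation is then a matter of careful (but routine) bookkeeping with the quadratic form $Q(\underline{a})$ and the root of unity $\xi$.
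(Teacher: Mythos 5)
Your key combinatorial claim in Step 1 is not correct, and this undermines the rest of the plan. On the abacus, $0$-generated partitions are characterized by their \emph{rows}, not by their runner data: $\lambda$ is $0$-generated exactly when in every row the beads are right-justified, i.e.\ no empty position lies to the right of a filled one (the profile of the partition turns from ``down'' to ``right'' only at $0$-labelled blocks). Such configurations can have arbitrary bead counts $k_r\in\{0,\dots,n+1\}$ in each row, so the runner quotients of a $0$-generated partition are in general \emph{all} nontrivial — already for $A_1$ a row-count pattern $\dots,2,0,2,0,\dots$ produces nontrivial quotients on both runners. Hence $\mathcal{Z}^0_\Delta$ is not ``empty quotients on runners $1,\dots,n$ plus an interface condition on the core offsets'', and no constraint linking consecutive $a_i$ will repair that description. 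Relatedly, your parenthetical self-correction about elementary moves lands on the wrong statement: moving a bead down one step on its \emph{runner} adds a whole border strip (one block of each label), while it is a move by one position to the \emph{right within a row} that adds a single block, of the label determined by that position.

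Because of this, the cancellation mechanism you envisage in Steps 2--3 does not match what actually happens: after the substitution $q_1=\dots=q_n=\xi$, $q_0=\xi^{-n}q$, nothing collapses onto a sublattice of cores — every labelled partition still contributes. The proof instead works fibre-wise over the map $p\colon\mathcal{Z}_\Delta\to\mathcal{Z}^0_\Delta$ sending $\lambda$ to the smallest $0$-generated partition containing it, which on the abacus pushes all beads as far right as possible within their rows; one shows that $\sum_{\mu\in p^{-1}(\lambda_0)}\underline{q}^{\underline{\mathrm{wt}}(\mu)}$ specializes to the single monomial $q^{\mathrm{wt}_0(\lambda_0)}$. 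Concretely: first reduce to $0$-generated cores (permuting rows identifies fibres up to an overall power of $q$, one factor $q$ per border strip); then for a $0$-generated core, encoded by a nondecreasing tuple $(a_0,\dots,a_n)$ with $\sum_i a_i=0$, each row with $k$ beads contributes a Gaussian binomial $\binom{n+1}{k}_{\xi^{-1}}$ (these are \emph{nonzero} since $\xi$ has order exactly $n+2$), and the product over rows telescopes to $\xi^{-\sum_l l a_l}$, which cancels precisely the $\xi$-power coming from the substituted core weight $q^{Q(\underline{a})/2}q_1^{a_1+\dots+a_n}\cdots q_n^{a_n}$ together with the identity $Q(\underline{a})/2=\mathrm{wt}_0(\lambda)$. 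Your alternative route — reproving the theta-function identity by an Andrews-type root-of-unity argument — is in principle viable (it is essentially the Dijkgraaf--Sulkowski proof), but as written you have neither the correct description of $\mathcal{Z}^0_\Delta$ nor any executed computation, so the proposal does not yet constitute a proof.
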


We start by combinatorially relating partitions to $0$-generated partitions. $\CZ^0_\Delta$ is clearly 
a subset of $\CZ_\Delta$, but there is also a map \[p\colon \CZ_\Delta\to\CZ^0_\Delta\]  
defined as 
follows: for an arbitrary partition $\lambda$, let $p(\lambda)$ be the smallest $0$-generated partition 
containing it.
Since the set of $0$-generated partitions is closed under intersection, $p(\lambda)$ is well-defined,
and it can be constructed as follows: $p(\lambda)$ is the complement of the unions of the translates of
${\mathbb N}\times {\mathbb N}$ to $0$-labelled blocks in the
complement of $\lambda$. It is clear that  $p(\lambda)$ can
equivalently be obtained by adding all possible addable blocks to
$\lambda$ of labels different from $0$. 

\begin{remark} The map $p$ can also be described in the language of ideals. 
If the monomial ideal $I\lhd\SC[x,y]$ corresponds to the partition $\lambda$,
then the monomial ideal $i^*p_*I = (I\cap \SC[x,y]^{G_\Delta}).\SC[x,y]\lhd\SC[x,y]$ corresponds to the 
partition~$p(\lambda)$.
\end{remark} 

\begin{lemma} The bead configurations corresponding to $0$-generated partitions are exactly those 
which have all beads right-justified on each row, with no empty position to the right of 
a filled position. The map $p\colon \CZ_\Delta\to\CZ^0_\Delta$ can 
be described in the abacus representation by the process of pushing
all beads of an abacus configuration as far right as possible. 
\label{lem:rows}
\end{lemma}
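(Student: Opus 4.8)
The statement has two parts: first, a characterization of the bead configurations coming from $0$-generated partitions as precisely the right-justified ones; second, the assertion that the combinatorial map $p$ is implemented on the abacus by sliding all beads as far right as possible on each row. I would prove them in that order, since the second follows easily once the first is established together with the description of $p$ as ``add all addable blocks whose label is nonzero''.

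For the first part, recall the dictionary between a partition $\lambda$ and its Maya diagram: reading the outer profile of the Young diagram from bottom-left to top-right, each ``down'' step is an occupied position and each ``right'' step an empty one, and the block at coordinates $(i,j)$ — sitting between the down-step at position $\lambda_{j+1}-(j+1)+1$ and the corresponding right-step — has label $(i-j)\bmod(n+1)$. The key local observation is that an \emph{addable} corner of $\lambda$ corresponds exactly to a place in the Maya diagram where an occupied position is immediately followed (in increasing order) by an empty one, i.e. to a pattern $\bigcirc\,\bullet$ read left-to-right (bead then gap); adding that corner block swaps them to $\bullet\,\bigcirc$. Now the abacus is laid out in $(n+1)$ columns, so moving from a position to the next larger one moves one column to the right, wrapping to the leftmost column of the next row down; and a single step to the right on the \emph{same} row changes the label by exactly $1\bmod (n+1)$, so in particular a $\bigcirc\,\bullet$ pattern with the two positions in the \emph{same} row corresponds to adding a block whose label is \emph{not} $0$ (the label only becomes $0$ precisely at the wrap, i.e. when the bead is at the last column of a row and the gap is at the first column of the next). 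Therefore: $\lambda$ is $0$-generated $\iff$ every addable corner has nonzero label $\iff$ there is no bead immediately left of a gap within a single row $\iff$ on every row the beads are right-justified with no gap to the right of a bead. This is exactly the claimed characterization of $\CZ^0_\Delta$.

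For the second part, let $\lambda\in\CZ_\Delta$ and let $R(\lambda)$ be the configuration obtained by right-justifying the beads on each row. By the first part $R(\lambda)$ is $0$-generated; one checks it is balanced (right-justifying on a row permutes the positions of that row, hence does not change, within each half-line, the count of occupied positive positions or absent nonpositive ones — or more cleanly, note the total number of beads on each row is unchanged and the ``balanced'' condition only sees these per-row counts together with the global shift), so $R(\lambda)\in\CZ_\Delta$, in fact $\in\CZ^0_\Delta$. Since right-justification only moves beads to larger positions, the corresponding Young diagram only grows: $\lambda\subseteq R(\lambda)$. It remains to see $R(\lambda)=p(\lambda)$, the \emph{smallest} $0$-generated partition containing $\lambda$. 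If $\mu\supseteq\lambda$ is any partition whose Maya diagram is obtained from that of $\lambda$ by moving beads only to larger positions and which is right-justified on every row, then row by row its beads must be at least as far right as a right-justified filling of at least as many beads, and comparing with $R(\lambda)$ (same number of beads per row as $\lambda$, since $\mu$ is reached by moving beads rightward one does gain beads on lower rows — here one uses that $p(\lambda)$ only adds blocks of nonzero label, hence, by the first part's local analysis, only performs same-row $\bigcirc\,\bullet\!\to\!\bullet\,\bigcirc$ swaps, never a wrap, so the per-row bead counts of $p(\lambda)$ equal those of $\lambda$) one gets $R(\lambda)\subseteq\mu$. Applying this with $\mu=p(\lambda)$ gives $R(\lambda)\subseteq p(\lambda)$, and minimality of $p(\lambda)$ with $R(\lambda)$ itself $0$-generated and $\supseteq\lambda$ gives the reverse inclusion; hence $p(\lambda)=R(\lambda)$, which is the process of pushing all beads as far right as possible.

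\textbf{Main obstacle.} The delicate point is the bookkeeping that $p$ preserves the number of beads on each row of the abacus — equivalently, that passing from $\lambda$ to $p(\lambda)$ never carries a bead across a row boundary (never performs a ``wrap''). This is exactly where the label-$0$ condition enters, via the observation that same-row rightward moves change labels by $1\bmod(n+1)$ while the wrap is the unique move producing label $0$. Getting this local correspondence between addable corners, their labels, and bead–gap adjacencies in the $(n+1)$-column layout stated cleanly is the crux; once it is in hand, both halves of the lemma are short, and the balanced-ness and minimality verifications are routine.
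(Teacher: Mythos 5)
Your proof is correct and takes essentially the same route as the paper's: the key point in both is the local dictionary between addable corners and bead--gap adjacencies, with the label of the addable block determined by the bead's position mod $n+1$ and equal to $0$ exactly on the rightmost runner (the wrap), so that $0$-generated $\iff$ right-justified per row; the paper's proof is just a terser version of your first paragraph and dismisses the second statement as ``similar'', which you flesh out (legitimately using the paper's preceding remark that $p(\lambda)$ is obtained by adding only blocks of nonzero label, hence only same-row moves). One wording slip: in your equivalence chain, ``$\lambda$ is $0$-generated $\iff$ every addable corner has nonzero label'' should read ``has label $0$'' (equivalently, no addable corner has nonzero label), as your own local analysis makes clear.
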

\begin{proof}
This follows from the description of the map from a partition to its
abacus representation using the profile of the partition. Indeed, a $0$-generated partition
has a profile which only turns from ``down'' to ``right'' at
$0$-labelled blocks. In other words, the only time when a string of
filled positions can be followed by an empty position is when the last
filled position is on the rightmost runner. In other words, there
cannot be empty positions to the right of filled positions in a row. The proof
of the second statement is similar.
\end{proof}

\begin{remark} As explained above, the maps $c\colon \CZ_\Delta\to{\mathcal C}_\Delta$ and 
$p\colon \CZ_\Delta\to\CZ^0_\Delta$ have natural descriptions on abacus configurations: $c$ corresponds 
to pushing beads all the way up within their column, whereas $p$ corresponds to pushing beads all the way to the right within their row. It is then clear that there is also a third map $\CZ_\Delta\to{}^0\!\CZ_\Delta\subset\CZ_\Delta$, dual to $p$, defined on the abacus by pushing beads all the way to the left. On labelled partitions this corresponds to the operation of removing all possible blocks with labels different from $0$. This dual constuction occured in the literature earlier in \cite{gordon2008quiver}.
\end{remark}

\begin{proof}[Proof of Proposition \ref{prop:ansubst}] 
We will prove the substitution formula 
on the fibres of the map $p\colon \CZ_\Delta\to\CZ^0_\Delta$. In other words, we need
to show that for any given $\lambda_0 \in \CZ^0_\Delta$, we have
\begin{equation} \sum_{\mu \in p^{-1}(\lambda_0)} \underline{q}^{\underline{\mathrm{wt}}(\mu)}\Big|_{q_1=\dots=q_n=\xi,q_0=\xi^{-n}q}=q^{\mathrm{wt}_0(\lambda_0)}.\label{form:oneatatime}\end{equation}

As a first step, we reduce the computation to $0$-generated cores. Given an arbitrary 
$0$-generated partition $\lambda$, by the 
first part of Lemma~\ref{lem:rows} its core $\nu=c(\lambda)$ is also $0$-generated, and 
the corresponding abacus configuration can be obtained by permuting the rows of the configuration of $\lambda$. 
Fix one such permutation $\sigma$ of the rows. Then, using the second part of 
Lemma~\ref{lem:rows}, we can use the row permutation $\sigma$ to define a bijection 
\[\tilde \sigma\colon  p^{-1}(\lambda)\to p^{-1}(\nu)\] 
between (abacus representations of) partitions in the fibres, mapping $\lambda$ itself to $\nu$. 

The difference between the partitions $\lambda$ and $\nu$ is a certain number of border strips, 
each removal represented by pushing up one bead on some runner by one step. Each border 
strip contains one block of each label, so the total number of times
we need to push up a bead by one step on the different runners is $N=\mathrm{wt_0}(\lambda)-\mathrm{wt_0}(\nu)$.
Thus, with $q=q_0\cdot\ldots\cdot q_n$ as in the substitution above, we can write
\[ \underline{q}^{\underline{\mathrm{wt}}(\lambda)}=q^{\mathrm{wt_0}(\lambda)-\mathrm{wt_0}(\nu)}\underline{q}^{\underline{\mathrm{wt}}(\nu)}.
\]
On the other hand, it is easy to see that in fact for any $\mu\in p^{-1}(\lambda)$, the corresponding $\tilde \sigma(\mu)$
can also be obtained by pushing up beads exactly $N$ times, one step at a time, 
the difference being just in the runners on which these shifts are performed. This means that each 
$\mu$ differs from $\tilde \sigma(\mu)$ by the same number 
$N=\mathrm{wt}(\lambda)-\mathrm{wt}(\nu)$ of border strips. 
Therefore, we have 
\[ \sum_{\mu \in p^{-1}(\lambda)} \underline{q}^{\underline{\mathrm{wt}}(\mu)}=q^{\mathrm{wt_0}(\lambda)-\mathrm{wt_0}(\nu)}\sum_{\mu \in p^{-1}(\nu)} \underline{q}^{\underline{\mathrm{wt}}(\mu)}.\]
This is clearly compatible with \eqref{form:oneatatime} and reduces the argument to $0$-generated core 
partitions. 

Fix a $0$-generated core $\lambda\in {\mathcal Z}_\Delta^0\cap {\mathcal C}_\Delta$;
using Lemma~\ref{lem:rows} again, 
the corresponding $(n+1)$-tuple is a set of {\em nondecreasing} 
integers $\underline{a}=(a_0,\dots,a_{n})$ summing to $0$. 
The fibre $p^{-1}(\lambda)$ consists of partitions whose abacus representation contains the same number of 
beads in each row as $\lambda$. The shift of one bead to the left results in the removal in the partition 
of a block labelled $i$, with $1 \leq i \leq n$. After substitution, this multiplies the contribution of 
the diagram on the right hand side of \eqref{form:oneatatime} by $\xi^{-1}$. 
If we fix all but one row, which contains $k$ beads, then these contributions add up to
 \[\sum_{n_1=0}^{n-k+1} \sum_{n_2=0}^{n_1}\dots\sum_{n_k=0}^{n_{k-1}}(\xi^{-1})^{n_1+\dots+n_k}=\binom{n+1}{k}_{\xi^{-1}}, \]
 where $\binom{m}{r}_{z}=\frac{[m]_z!}{[r]_z![m-r]_z!}$ is the Gaussian binomial coefficient, 
with $[m]_z=\frac{1-z^m}{1-z}$.

The number of rows containing exactly $k$ beads in the configuration corresponding to $\lambda$ is $a_{n+1-k}-a_{n-k}$. 
Therefore, the total contribution of the preimages, the left hand side of \eqref{form:oneatatime}, is 
 \begin{align*}  \sum_{\mu \in p^{-1}(\lambda)} \underline{q}^{\underline{\mathrm{wt}}(\mu)}\Big|_{q_1=\dots=q_n=\xi,q_0=\xi^{-n}q} &
 =\prod_{k=1}^n \binom{n+1}{k}_{\xi^{-1}}^{a_{n+1-k}-a_{n-k}}\underline{q}^{\underline{\mathrm{wt}}(\lambda)}\Big|_{q_1=\dots=q_n=\xi,q_0=\xi^{-n}q}\\
 & = \prod_{l=0}^n \left(\frac{\binom{n+1}{n+1-l}_{\xi^{-1}}}{ \binom{n+1}{n-l}_{\xi^{-1}}}\right)^{a_{l}}\underline{q}^{\underline{\mathrm{wt}}(\lambda)}\Big|_{q_1=\dots=q_n=\xi,q_0=\xi^{-n}q}\\
 & =\prod_{l=0}^n \left(\frac{1-\xi^{-l-1} }{1-\xi^{l-n-1}}\right)^{a_{l}}\underline{q}^{\underline{\mathrm{wt}}(\lambda)}\Big|_{q_1=\dots=q_n=\xi,q_0=\xi^{-n}q}\\
 & =\prod_{l=1}^n \left(\frac{1-\xi^{-n-1} }{1-\xi^{-1}}\frac{1-\xi^{-l-1} }{1-\xi^{l-n-1}}\right)^{a_{l}}\underline{q}^{\underline{\mathrm{wt}}(\lambda)}\Big|_{q_1=\dots=q_n=\xi,q_0=\xi^{-n}q}
\\
 & =\xi^{-\sum_{l=1}^n la_{l}}\underline{q}^{\underline{\mathrm{wt}}(\lambda)}\Big|_{q_1=\dots=q_n=\xi,q_0=\xi^{-n}q},
 \end{align*}
 where in the second equality we used $\binom{n+1}{0}_z=\binom{n+1}{n+1}_z=1$, 
in the penultimate equality we used $a_0=-a_1-\dots-a_n$, and in the last equality we used 
 \[ \frac{1-\xi^{-n-1} }{1-\xi^{-1}}\frac{1-\xi^{-l-1} }{1-\xi^{l-n-1}}= \xi^{-l},\]
which can be checked to hold for $\xi$ a primitive $(n+2)$-nd root of unity. 
Incidentally, as the multiplicative order of $\xi$ is exactly $n+2$, 
all the denominators appearing above are non-vanishing.
Finally, according to \cite[\S2]{garvan1990cranks}, we have
 \[ \underline{q}^{\underline{\mathrm{wt}}(\lambda)}=q^{\frac{Q(\underline{a})}{2}}q_1^{a_1+\dots+a_n} \cdot \ldots \cdot  q_n^{a_n}, \]
where again $q=q_0\cdot \ldots \cdot q_n$ and  $Q: \SZ^n \rightarrow \SZ$ is the quadratic form associated to $C_{\Delta}$. 
Since $q_0$ appears only in $q$ on the right hand side, it is clear that $\frac{Q(\underline{a})}{2}=\mathrm{wt}_0(\lambda)$. Hence,
 \[ q^{\frac{Q(\underline{a})}{2}}q_1^{a_1+\dots+a_n} \dots   q_n^{a_n}\Big|_{q_1=\dots=q_n=\xi}=q^{\mathrm{wt}_0(\lambda)}\xi^{\sum_{l=1}^n la_{l}}. \]
This concludes the proof.
\end{proof}

%\enlargethispage{\baselineskip}
\section{Type \texorpdfstring{$D_n$}{Dn}: ideals and Young walls}
\label{Dnidealsect}

\subsection{The binary dihedral group}

Fix an integer $n\geq 4$, and let~$\Delta$ be the root system of type $D_n$. 
For $\varepsilon$ a fixed primitive $(2n-4)$-th root of unity, the corresponding subgroup
$G_\Delta$ of $SL(2,\SC)$ can be generated by the following two elements $\sigma$ and $\tau$:
\[ \sigma= 
\begin{pmatrix}
\varepsilon & 0\\
0 & \varepsilon^{-1} \\
\end{pmatrix}, \qquad
\tau=\begin{pmatrix}
0 & 1\\
-1 & 0 \\
\end{pmatrix}.
\]
The group $G_\Delta$ has order $4n-8$, and is often called the binary dihedral group.
We label its irreducible representations as shown in Table \ref{dnchartable}. There is a distinguished
2-dimensional representation, the defining representation $\rho_{\mathrm{nat}}=\rho_2$.
See \cite{ito1999hilbert, decelis2008dihedral} for more detailed information. 
 
We will often meet the involution on the set of representations of $G_\Delta$ 
which is given by tensor product with the sign representation
$\rho_1$: on the set of indices $\{0,\dots,n\}$, this is the involution $j\mapsto \kappa(j)$ which swaps $0$ and $1$ and $n-1$ 
and $n$, fixing other values $\{2,\dots, n-2\}$. %Given $j \in \{0,\dots,n\}$, we denote $\underline{j}=\kappa^n(j)$;
%this is an involution which is trivial for $n$ even, and nontrivial for $n$ odd.
Given $j \in \{0,\dots,n\}$, we denote $\kappa(j,k)=\kappa^{kn}(j)$;
this is an involution which is nontrivial when $k$ and $n$ are odd, and trivial otherwise. 
The special case $k=1$ will also be denoted as $\underline{j}=\kappa^n(j)$.

The following identities will be useful:
\begin{equation}
\label{eq:repstensor}
\rho_{n-1}^{\otimes 2} \cong \rho_{n}^{\otimes 2} \cong \rho_{\underline{0}}, \ \ \rho_{n-1} \otimes \rho_{n} \cong \rho_{\underline{1}}, \ \ \rho_1 \otimes \rho_{n-1} \cong \rho_{n}, \ \ \rho_1 \otimes \rho_{n} \cong \rho_{n-1}, \ \ \rho_1^{\otimes 2}\cong \rho_0.
\end{equation}

\begin{table}
\begin{center}
\begin{tabular}{|c|c c c|}
\hline
$\rho$ & $\mathrm{Tr}(1)$ & $\mathrm{Tr}(\sigma)$ & $\mathrm{Tr}(\tau)$ \\
%\hline
\hline
$\rho_0$ & 1 & 1 & 1\\
$\rho_1$ & 1 & 1 & $-1$ \\
$\rho_2$ & 2 & $\varepsilon+\varepsilon^{-1} $ & 0\\
\vdots & & \vdots & \\
$\rho_{n-2}$ & 2 & $\varepsilon^{n-3}+\varepsilon^{-(n-3)} $ & 0\\
$ \rho_{n-1}$ & 1 & $-1$ & $-i^n$\\
$ \rho_{n}$ & 1 & $-1$ & $i^n$\\
\hline
\end{tabular}
\vspace{0.2in}
\caption{Labelling the representations of the group $G_\Delta$}
\label{dnchartable}
\end{center}
\end{table}

\subsection{Young wall pattern and Young walls}
\label{subsec:PYW}

We describe here the type $D$ analogue of the set of labelled partitions used in type $A$, following \cite{kang2004crystal,kwon2006affine}. 
In this section, we only describe the combinatorics; see Appendix \ref{sect:apprepr} for the representation-theoretic significance of 
this set. 

First we define the {\em Young wall pattern of type\footnote{The
    combinatorics introduced in this section should really be called 
type $\tilde{D}_n^{(1)}$, but we do not wish to overburden the notation. Also we have reflected the pattern in a vertical axis compared to the pictures of \cite{kang2004crystal,kwon2006affine}.}  $D_n$}, 
the analogue of the $(n+1)$-labelled positive quadrant 
lattice of type $A_{n}$ used above. This is the following infinite
pattern, consisting of two types of blocks: half-blocks carrying possible labels
$j\in\{0,1,n-1, n\}$, and full blocks carrying possible labels $1<j<n-1$:

\begin{center}
\begin{tikzpicture}[scale=0.6, font=\footnotesize, fill=black!20]
  \foreach \x in {1,2,3,4,5,6,7,8}
    {
      \draw (\x, 0) -- (\x,11+0.2);
    }
     \draw (0, 0) -- (0,12);
   \foreach \y in {1,2,3.5,4.5,5.5,6.5,8,9,10,11}
    {
         \draw (0,\y) -- (8.2,\y);
    }
    \draw (0,0) -- (9,0);
    \foreach \x in {0,1,2,3,4,5,6,7}
    {
    	%\draw (\x,0) -- (\x+1,1);
    	\draw (\x,4.5) -- (\x+1,5.5);
    	\draw (\x,9) -- (\x+1,10);
    	\draw (\x+0.5,1.5) node {2};
    	\draw (\x+0.5,4) node {$n$$-$$2$};
    	\draw (\x+0.5,6) node {$n$$-$$2$};
    	\draw (\x+0.5,8.5) node {2};
    	\draw (\x+0.5,10.5) node {2};
    	\draw(\x+0.5,2.85) node {\vdots};
    	\draw(\x+0.5,7.35) node {\vdots};
    	\filldraw (\x,0) -- (\x+1,1) -- (\x+1,0) -- cycle ;
    }
     \foreach \x in {0,2,4,6}
        {
        	\draw (\x+0.25,0.65) node {0};
        	\draw (\x+0.75,0.35) node {1};
        	\draw (\x+0.49,5.28) node  {$n$$-$$1$};
        	%\draw (\x+0.49,5.28) node %[fill=white] {$n$$-$$1$};
        	\draw (\x+0.75,4.72) node {$n$};
        	\draw (\x+0.25,9.65) node {0};
        	\draw (\x+0.75,9.35) node {1};
        }
       \foreach \x in {1,3,5,7}
             {
             	\draw (\x+0.25,0.65) node {1};
             	\draw (\x+0.75,0.35) node {0};
             	\draw (\x+0.25,5.28) node {$n$};
             	\draw (\x+0.52,4.72) node {$n$$-$$1$};
             	\draw (\x+0.25,9.65) node {1};
             	\draw (\x+0.75,9.35) node {0};
             }
             
       \draw (9,5) node {\dots};
       \draw (4,12) node {\vdots};
\end{tikzpicture}
\end{center}

Next, we define the set of {\em Young walls\footnote{In \cite{kang2004crystal,kwon2006affine}, these arrangements are called {\em proper Young walls}. Since we will not meet any other Young wall, we will drop the adjective {\em proper} for brevity.} of type $D_n$}. A Young wall of type $D_n$ is a 
subset~$Y$ of the infinite Young wall of type $D_n$, satisfying the following rules. 
\begin{enumerate}
\item[(YW1)] $Y$ contains all grey half-blocks, and a finite number of the white blocks and half-blocks. 
\item[(YW2)] $Y$ consists of continuous columns of blocks, with no block placed on top of a missing block or half-block. 
\item[(YW3)] Except for the leftmost column, there are no free
  positions to the left of any block or half-block. Here the rows of
  half-blocks are thought of as two parallel rows; only half-blocks of the same orientation have to be present.
\item[(YW4)] A full column is a column with a full block or both half-blocks present at its top; then no two full columns 
have the same height\footnote{This is the properness condition of \cite{kang2004crystal}.}.
\end{enumerate}

Let $\CZ_\Delta$ denote the set of all Young walls of type $D_n$. For any $Y\in \CZ_\Delta$ and label $j\in \{0, \ldots, n\}$ let $wt_j(Y)$ be the number of white half-blocks, respectively blocks, of label $j$. These are collected into the multi-weight vector $\underline{wt}(Y)=(wt_0(Y), \ldots, wt_n(Y))$. The total weight of $Y$ is the sum
\[|Y|=\sum_{j=0}^{n} wt_j(Y),
\]
and for the formal variables $q_0,\dots,q_n$,
\[\underline{q}^{\underline{wt}(Y)}=\prod_{j=0}^{n}q_j^{wt_j(Y)}.\]

\subsection{Decomposition of \texorpdfstring{$\SC[x,y]$}{C[x,y]} and the transformed Young wall pattern}

The group $G_\Delta$ acts on the affine plane $\SC^2$ via the defining representation $\rho_{\mathrm{nat}}=\rho_2$. Let $S=\SC[x,y]$ be the coordinate ring of the plane, then $S=\oplus_{m\geq 0} S_m$  where $S_m$ is the $m$th symmetric power of $\rho_{\mathrm{nat}}$, the space of homogeneous polynomials of degree $m$ of the coordinates $x,y$. 

We further decompose
\[  S_m = \bigoplus_{j=0}^{n} S_m[\rho_j]
\]
into subrepresentations indexed by irreducible representations. We will also use this notation for linear 
subspaces: for $U\subset S_m$ a linear subspace, $U[\rho_j]=U \cap  S_m[\rho_j]$.
We will call an element $f\in S$ {\em degree homogeneous}, 
if $f\in S_m$ for some $m$; we call it {\em degree and weight homogeneous}, if $f\in S_m[\rho_j]$ 
for some $m, j$. 

The decomposition of $S$ into $G_\Delta$-summands can be read off very conveniently from the
\textit{transformed Young wall pattern}. The transformation is an affine one, involving a shear: 
reflect the original Young wall pattern in the line $x = y$ in the plane, translate the $n$th row 
by $n$ to the right, and remove the grey triangles of the
original pattern. In this way, we get the following picture: 

\begin{center}
\begin{tikzpicture}[scale=0.6, font=\footnotesize, fill=black!20]
  \draw (5,5) node {\reflectbox{$\ddots$}};
  \draw (15,2) node {\dots};
  \draw (4,4) -- (4,5);
  \draw (0,0 ) -- (13,0);
    \foreach \y in {1,2,3,4}
        {
          \draw (\y -1,\y ) -- (\y +12+0.2,\y);
        }
    \foreach \y in {0,1,2,3}
    {
    \foreach \x in {0,1,2,4,5,6,7,9,10,11,12}
                  {
                       \draw (\y+\x ,\y) -- (\y+\x ,\y+1);
                  }
    	%\draw (\x,0) -- (\x+1,1);
    	\draw (\y+6,\y) -- (\y+5,\y+1);
    	\draw (\y+11,\y) -- (\y+10,\y+1);
    	\draw (\y+1.5,\y+0.5) node {2};
    	\draw (\y+4.5,\y+0.5) node {$n$$-$$2$};
    	\draw (\y+6.5,\y+0.5) node {$n$$-$$2$};
    	\draw (\y+9.5,\y+0.5) node {2};
    	\draw (\y+11.5,\y+0.5) node {2};
    	\draw(\y+3,\y+0.5) node {\dots};
    	\draw(\y+8,\y+0.5) node {\dots};
    	%\filldraw (\x,0) -- (\x+1,1) -- (\x+1,0) -- cycle ;
    }
     \foreach \y in {0,2}
        {
      	    \draw (\y+0.5,\y+0.5) node {0};
       	    %\draw (\x+0.75,0.35) node {1};
        	\draw (\y+5.49,\y+0.25) node  {$n$$-$$1$};
        	%\draw (\x+0.49,5.28) node %[fill=white] {$n$$-$$1$};
        	\draw (\y+5.78,\y+0.75) node {$n$};
        	\draw (\y+10.75,\y+0.65) node {0};
        	\draw (\y+10.25,\y+0.35) node {1};
        }
        \foreach \y in {1,3}
          {
              	    \draw (\y+0.5,\y+0.5) node {1};
               	    %\draw (\x+0.75,0.35) node {1};
                	\draw (\y+5.25,\y+0.25) node  {$n$};
                	%\draw (\x+0.49,5.28) node %[fill=white] {$n$$-$$1$};
                	\draw (\y+5.52,\y+0.75) node {$n$$-$$1$};
                	\draw (\y+10.75,\y+0.65) node {1};
                	\draw (\y+10.25,\y+0.35) node {0};
          }
      
\end{tikzpicture}
\end{center}

As it can be checked readily, this is a representation of $S$ and its
decomposition into $G_\Delta$-representations. The homogeneous components
$S_m$ are along the antidiagonals. For $1<i<n-1$, a full block
labelled $j$ below the diagonal, together with its mirror image,
correspond to a 2-dimensional representation $\rho_j$. For $j\in\{0,1,n-1,
n\}$, a full block labelled $j$ on the diagonal, as well as a half-block
labelled $j$ below the diagonal with its mirror
image, corresponds to a one-dimensional representation. 
The dimension of $S_m[\rho_j]$ is the same as the total number of
full blocks labelled $j$ on the $m$th diagonal in the transformed
Young wall pattern, counting mirror images also.

It is easy to translate the conditions (YW1)-(YW4) into the combinatorics 
of the transformed pattern; see Proposition~\ref{wallprop} and Remark~\ref{rem:corr_young_rules} below. Pictures of some small Young walls in the transformed pattern can be found below in Examples~\ref{ex:3sidepyr}-\ref{ex:GHilb} below.

\subsection{Subspaces and operators}
\label{subsec:subops}

For each non-negative integer $m$ and irreducible representation~$\rho_j$, consider the space $P_{m,j}$ of nontrivial 
$G_\Delta$-invariant subspaces of minimal dimension in $S_m[\rho_j]$. Specifically, if $\rho_j$ is one-dimensional, 
then these will be lines, and $P_{m,j}$ is simply the projectivization $\SP S_m[\rho_j]$.
If $\rho_j$ is two-dimensional, then $P_{m,j}$ is a closed subvariety of $\Gr(2,S_m[\rho_j])$. It is easy to 
see that in this case also, $P_{m,j}$ is isomorphic to a projective space. 

More generally, let $G_{m,j}^r$ be the space of $(r-1)$-dimensional projective subspaces of $P_{m,j}$. If $\rho_j$ is one-dimensional, then this is the Grassmannian $\Gr(r, S_m[\rho_j])$. When $\rho_j$ is two-dimensional, then $G_{m,j}^r$ is a closed subvariety of $\Gr(2 r,S_m[\rho_j])$ isomorphic to a Grassmannian of rank $r$. Clearly $G_{m,j}^1=P_{m,j}$.

For $0\leq j\leq n$, we introduce operators $L_j\colon \Gr(S)\to \Gr(S)$ 
on the Grassmannian $\Gr(S)$ of all linear subspaces of the vector space $S$ 
as follows: for $v\in\Gr(S)$, we set
\begin{enumerate}
\item $L_0 v = v$;
\item $L_1 v = xy\cdot v$;
\item for $1<j<n-1$, $L_j v = \langle x^{j-1} \cdot v, y^{j-1} \cdot v\rangle$;
\item $L_{n-1} v =(x^{n-2}-i^ny^{n-2})\cdot v$;
\item $L_{n} v =(x^{n-2}+i^ny^{n-2})\cdot v$.
\end{enumerate}
Sometimes we will use the notation $L_2=L_{2,x}+L_{2,y}$ for the $x$- and $y$-component of the operator $L_2$, i.e.~multiplication with $x$, respectively $y$. 
The operators above restrict to operators $L_0\colon \Gr(S_m)\to \Gr(S_m)$, $L_1\colon \Gr(S_m)\to\Gr(S_{m+2})$, 
$L_j\colon \Gr(S_m)\to \Gr(S_{m+j-1})$ for $1<j<n-1$, and $L_{n-1}, L_n\colon \Gr(S_m)\to\Gr(S_{m+n-2})$
on the Grassmannians of the graded pieces $S_m$.
To simplify notation, if we do not write the space to which these
operators are applied, then application to $\langle 1\rangle$ is
meant. So, for example, the symbol $L_1^2$ standing alone denotes the vector subspace $\langle
x^2y^2 \rangle$ of $S_4$, while $L_2$ alone denotes the two-dimensional
vector subspace $\langle x,y \rangle$ of $S_1$. For a linear subspace
$v$ of $S$, the sum $\sum_{j \in I}L_j v$ denotes the subspace of $S$
generated by the images $L_j v$. We use the operator notation also for
a set of subspaces; the meaning should be clear from the context. 

\subsection{Cell decompositions of equivariant Grassmannians}
\label{subsec:schubertcells}

We start this section by defining decompositions of the
Grassmannians $P_{m,j}$ of nontrivial 
$G_\Delta$-invariant subspaces of minimal dimension in $S_m[\rho_j]$. 
Given $(m,j)$, let $B_{m,j}$ denote the set of pairs of non-negative
integers $(k,l)$ such that $k+l=m$, $l \geq k$, and the block
position $(k,l)$ on the $m$-th antidiagonal on or below the main diagonal contains a block or 
half block of color $j$. Here $k$ is the row index, $l$ is the column index, and both of them in a nonnegative integer. It clearly follows from our setup that 
\[ \dim P_{m,j} = |B_{m,j}| -1.\]

\begin{proposition} Given $(m,j)$, there exists a locally closed stratification
\[  P_{m,j} = \displaystyle\bigsqcup_{(k,l)\in B_{m,j}} V_{k,l,j},\]
which is a standard stratification of the projective space $P_{m,j}$ into affine spaces $V_{k,l,j}$ of 
decreasing dimension.
\label{prop:decomp of P}
\end{proposition}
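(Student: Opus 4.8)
The plan is to exhibit $P_{m,j}$ explicitly as a projective space and read off a stratification from a basis adapted to the block positions in $B_{m,j}$. First I would describe $S_m[\rho_j]$ concretely. Since $G_\Delta$ acts on $S=\SC[x,y]$ via the defining representation, the graded piece $S_m$ has the monomial basis $x^ay^b$ with $a+b=m$, and the torus-weight decomposition under $\sigma=\mathrm{diag}(\varepsilon,\varepsilon^{-1})$ groups these monomials according to the value $a-b \bmod (2n-4)$. The involution $\tau$ pairs $x^ay^b$ with $\pm x^by^a$, so $S_m[\rho_j]$ for a two-dimensional $\rho_j$ has a basis consisting of pairs $\{x^ay^b, x^by^a\}$ (one pair for each block position $(k,l)=(b,a)$ strictly below the diagonal with the correct label), while for one-dimensional $\rho_j$ the diagonal term $x^{m/2}y^{m/2}$ (if present, with the right $\tau$-eigenvalue) together with the $\tau$-symmetric or $\tau$-antisymmetric combinations $x^ay^b \pm x^by^a$ furnish a basis. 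In either case, the nontrivial $G_\Delta$-invariant subspaces of minimal dimension — lines in the $1$-dimensional case, the $2$-dimensional $\tau$-stable subspaces spanned by a $\{x^ay^b,x^by^a\}$-type vector in the $2$-dimensional case — are exactly parametrized by a point of $\SP S_m[\rho_j]$, i.e. by a choice of coefficients indexed by $B_{m,j}$, up to scalar. This is precisely the identification $P_{m,j}\cong \SP^{|B_{m,j}|-1}$ already asserted in~\ref{subsec:subops}, and I would just make the isomorphism canonical by ordering the index set $B_{m,j}$ in a fixed way, say by increasing row index $k$ (equivalently decreasing column index $l$).

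Next, with this fixed ordered basis $e_{(k,l)}$, $(k,l)\in B_{m,j}$, of the ambient vector space, I would take the \emph{standard} stratification of $\SP^{N}$: for each index $(k_0,l_0)$, let $V_{k_0,l_0,j}$ be the locus of points $[\sum c_{(k,l)} e_{(k,l)}]$ whose first nonzero coordinate (in the chosen order) is the one at $(k_0,l_0)$. Concretely, $c_{(k,l)}=0$ for all $(k,l)$ preceding $(k_0,l_0)$ and $c_{(k_0,l_0)}=1$; the remaining coordinates are free, so $V_{k_0,l_0,j}\cong \SA^{d}$ where $d$ is the number of index pairs in $B_{m,j}$ strictly after $(k_0,l_0)$. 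These are locally closed, disjoint, cover $P_{m,j}$, and have strictly decreasing dimension as $(k_0,l_0)$ runs through the ordered set — this is exactly the Schubert-type affine paving of projective space. I would state the dimension formula $\dim V_{k,l,j}=|\{(k',l')\in B_{m,j} : (k',l')\succ (k,l)\}|$ as part of the conclusion, since it is needed later for the Euler-characteristic bookkeeping, and note the closure relation $\overline{V_{k,l,j}}=\bigsqcup_{(k',l')\succeq(k,l)} V_{k',l',j}$.

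The only genuine content, and the step I expect to need the most care, is the concrete determination of $S_m[\rho_j]$ — that is, checking that the monomials/monomial-pairs sitting at block positions $(k,l)\in B_{m,j}$ really do form a basis of $S_m[\rho_j]$, with exactly the right label $j$ attached by the transformed Young wall pattern. This is a finite character computation using Table~\ref{dnchartable} and the identities~\eqref{eq:repstensor}: one decomposes $S_m=\mathrm{Sym}^m\rho_{\mathrm{nat}}$ into irreducibles and matches the multiplicities against the count of labelled blocks on the $m$th antidiagonal of the transformed pattern. The paper has in fact already asserted this correspondence (``The dimension of $S_m[\rho_j]$ is the same as the total number of full blocks labelled $j$ on the $m$th diagonal\ldots''), so in the write-up I would simply invoke that discussion to identify $P_{m,j}$ with $\SP S_m[\rho_j]=\SP^{|B_{m,j}|-1}$ and then the proposition follows immediately from the standard affine paving of projective space described above. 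Everything downstream is routine linear algebra, so the proof is short once the identification is in place.
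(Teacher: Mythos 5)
Your proof is correct and is essentially the paper's argument in different clothing: the paper defines the cells inductively via the operator $L_1$, setting $V_{0,l,j}=P_{l,j}\setminus L_1P_{l-2,\kappa(j)}$ and $V_{k,k+l,j}=L_1^kV_{0,l,\kappa^k(j)}$, which is exactly the affine paving attached to the flag of subspaces divisible by powers of $xy$ --- and that flag is precisely the one your ordered monomial(-pair) basis produces, since the span of your basis vectors with row index $\geq k$ equals $L_1^k S_{m-2k}[\rho_{\kappa^k(j)}]$. So your ``first nonzero coordinate'' cells coincide with the paper's $V_{k,l,j}$, and the identification of $P_{m,j}$ with $\SP^{|B_{m,j}|-1}$ via the transformed pattern is the same input the paper uses (and later makes explicit in its coordinate lemmas).
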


We will call $V_{k,l,j}$ the {\em cells} of $P_{m,j}$. 
The decomposition will be defined inductively, based on the following Lemma. %To simplify notation, we use the 
%convention that $G_{k, j}^1$ denotes the empty set for $k<0$ and $j\in \{0, \ldots, n\}$ [EZ HOL KELL?]. 
Recall that $j\mapsto \kappa(j)$ denotes the involution on $\{0, \ldots, n\}$ which swaps $0$ and $1$ and 
$n-1$ and $n$. 

\begin{lemma} For any $l\geq 0$ and any $j\in[0,n]$, we have an injection 
 \[ L_1 \colon P_{l-2, j} \to P_{l, \kappa(j)}.\]
This map is an isomorphism except in the case when the block or half-block in the bottom row of
the transformed Young wall pattern on the $l$-th antidiagonal has label $j$, in which case the image has
codimension one. 
\label{lem:codim1_2}
\end{lemma}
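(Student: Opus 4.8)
The plan is to realise $L_1$ as the projectivisation of an injective linear map and then read off its codimension from a short exact sequence. First, the line $\langle xy\rangle\subset S_2$ carries the representation $\rho_1$, since $\sigma$ fixes $xy$ and $\tau$ negates it; hence multiplication by $xy$ is a $G_\Delta$-equivariant injection $S_m\hookrightarrow S_{m+2}$ (injective as $S$ is a domain), which by~\eqref{eq:repstensor} together with the elementary fact that $\rho_j\otimes\rho_1\cong\rho_j$ for $2\le j\le n-2$ carries $S_m[\rho_j]$ into $S_{m+2}[\rho_{\kappa(j)}]$. Writing $l=m+2$, this yields $L_1\colon P_{l-2,j}\to P_{l,\kappa(j)}$: when $\rho_j$ is one-dimensional it is the projectivisation of the injection $S_{l-2}[\rho_j]\hookrightarrow S_l[\rho_{\kappa(j)}]$; when $\rho_j$ is two-dimensional (so $\kappa(j)=j$) Schur's lemma writes multiplication by $xy$ as $\mathrm{id}_{\rho_j}\otimes\varphi$ for an injective linear map $\varphi$ of multiplicity spaces, and under the canonical identification of $P_{m,j}$ with the projectivisation of $\mathrm{Hom}_{G_\Delta}(\rho_j,S_m)$ the map $L_1$ is the projectivisation of $\varphi$. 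In both cases $L_1$ is a linear embedding of one projective space into another, so it is an isomorphism exactly when source and target have equal dimension, and otherwise its image is a linear subspace whose codimension is the dimension drop. Since $\dim P_{m,j}$ is one less than the multiplicity of $\rho_j$ in $S_m$, this codimension equals the multiplicity of $\rho_{\kappa(j)}$ in $S_l$ minus the multiplicity of $\rho_j$ in $S_{l-2}$.

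To compute this difference I would use the short exact sequence $0\to S_{l-2}\xrightarrow{xy}S_l\to S_l/(xy\cdot S_{l-2})\to0$, in which the cokernel is the degree-$l$ piece of $S/(xy)$, namely $\langle x^l,y^l\rangle$, two-dimensional for $l\ge 1$. Passing to $\rho_{\kappa(j)}$-isotypic parts and using that $\kappa$ preserves dimensions of irreducibles, the dimension drop above equals the multiplicity of $\rho_{\kappa(j)}$ in $\langle x^l,y^l\rangle$. A one-line character computation (on $\langle x^l,y^l\rangle$, $\sigma$ has trace $\varepsilon^l+\varepsilon^{-l}$ and $\tau$ has trace $0$) then shows that for $l\ge 1$ this representation is always of exactly one of three types: an irreducible $\rho_k$ with $2\le k\le n-2$ when $l\not\equiv 0$ and $l\not\equiv n-2\pmod{2n-4}$; the sum $\rho_0\oplus\rho_1$ when $l\equiv 0$; or $\rho_{n-1}\oplus\rho_n$ when $l\equiv n-2$. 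In particular every irreducible appears in it with multiplicity at most one, so the codimension of $L_1(P_{l-2,j})$ in $P_{l,\kappa(j)}$ is $0$ or $1$, which is the first assertion.

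It remains to recognise the codimension-one case. By the construction of the transformed Young wall pattern, the block in its bottom row on the $l$-th antidiagonal — or, in the two degenerate residues $l\equiv 0$ and $l\equiv n-2\pmod{2n-4}$, the corresponding pair of half-blocks — carries precisely the $G_\Delta$-subspace $\langle x^l,y^l\rangle\subset S_l$, and the label(s) attached to it are exactly the labels of the irreducible constituents listed above. Because that set of constituents is stable under the involution $\kappa$, the multiplicity of $\rho_{\kappa(j)}$ in $\langle x^l,y^l\rangle$ is one precisely when $j$ is one of these labels, i.e. precisely when the block or half-block in the bottom row of the transformed pattern on the $l$-th antidiagonal has label $j$. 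Combining with the previous paragraph gives the statement.

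The formal ingredients (the equivariance of multiplication by $xy$, the fact that a projectivised injection is a linear embedding, the exact sequence) and the character computation of $\langle x^l,y^l\rangle$ are routine. The step requiring genuine care is the last identification: one must verify that the combinatorial labelling convention of the transformed Young wall pattern — which block or half-block pair occupies the bottom row of each antidiagonal, and with which label(s) — agrees with the three-case decomposition of $\langle x^l,y^l\rangle$, in particular at the residues where a full block is replaced by two half-blocks. This is where I expect the main (though entirely elementary) bookkeeping to lie.
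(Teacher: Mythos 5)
Your proof is correct, and it is worth comparing with the paper's, which is much terser: there the injectivity is simply asserted, and the dimension count is read off the transformed Young wall pattern (``multiplication by $L_1$ shifts the $(l-2)$-nd antidiagonal up by one step''), which in turn rests on the earlier, unproved assertion that the pattern encodes the isotypic decomposition of $S=\SC[x,y]$. You instead compute the dimension drop intrinsically: the exact sequence $0\to S_{l-2}\xrightarrow{xy} S_l\to \langle x^l,y^l\rangle\to 0$ together with the character of $\langle x^l,y^l\rangle$ gives the codimension as the multiplicity of $\rho_{\kappa(j)}$ in $\langle x^l,y^l\rangle$, and your three-case decomposition ($\rho_k$ for $l\equiv k-1$ or $2n-3-k$, $\rho_0\oplus\rho_1$ for $l\equiv 0$, $\rho_{n-1}\oplus\rho_n$ for $l\equiv n-2$, all mod $2n-4$) is exactly the content of the Remark immediately following the lemma in the paper, so the ``bookkeeping'' you defer at the end amounts only to reading the bottom-row labels off the definition of the transformed pattern; in effect you prove, for the bottom row, the dictionary that the paper invokes with ``as it can be checked readily''. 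Two small points in your favour and one caveat: your use of $\kappa$-stability of the constituent set to pass from the multiplicity of $\rho_{\kappa(j)}$ to the label $j$ of the bottom block is precisely the twist that the paper's wording (``the number of blocks or half-blocks labelled $j$ is identical'') glosses over; your Schur-lemma treatment of the two-dimensional case is the right way to see that $L_1$ is a linear embedding of $P_{l-2,j}$ into $P_{l,j}$; and at the degenerate value $l=0$ the cokernel is $\langle 1\rangle\cong\rho_0$ rather than two-dimensional, consistent with the exception at $(0,0)$ recorded in the Remark, so your restriction to $l\ge 1$ there is harmless but should be said explicitly if you want the statement for all $l\ge 0$.
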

\begin{proof} It is clear that multiplication by $L_1$ induces an injection, so we simply need to 
check the dimensions. The statement then clearly follows by looking at the transformed Young wall pattern: 
multiplication by $L_1$ corresponds to shifting the $(l-2)$-nd diagonal up by one diagonal step to the $l$-th diagonal;
the number of blocks or half-blocks labelled $j$ is identical, unless the new (half-)block has 
label $j$, and then the codimension is exactly one. 
\end{proof}
\begin{remark} The half-block in the bottom row of
the transformed Young wall pattern in the $l$-th antidiagonal has label $j=0,1$ for 
$l \equiv 0 \; \mathrm{mod} \; (2n-4)$ except at $(0,0)$ where only $0$ occurs. 
Half-blocks labelled $j=(n-1), n$ occur for $l \equiv n-2 \; \mathrm{mod} \; (2n-4)$.
For $j\in [2,n-2]$, there are full blocks labelled $j$ in the bottom row on antidiagonals for $l \equiv j-1 \mbox{ or } 2n-3-j  \; \mathrm{mod} \; (2n-4)$.
\end{remark}

\begin{proof}[Proof of Proposition~\ref{prop:decomp of P}]
Nontrivial cells $V_{0, l, j}$ need to be defined exactly when the block or half-block in the bottom 
row of the transformed Young wall pattern in the $l$-th antidiagonal has label~$j$. 
In these cases, we set the cells along the bottom row to be          
\[ V_{0, l, j} = P_{l, j} \setminus L_1 P_{l-2, \kappa(j)}.
\]
Once the cells $V_{0,k,j}$ along the bottom row are defined, we define
the general cells for all $0\leq j \leq n$, all $l$ and $k$ by 
\[V_{k,k+l,j}=L_1^k V_{0,l,\kappa^k(j)}. \]
What this says is that the cells are shifted up diagonally by $L_1$, taking into account that $L_1$ 
multiplies by the sign representation, so shifts the indices by the appropriate power of the 
involution~$\kappa$. By induction, we obtain a decomposition of $P_{m,j}$ with the stated properties.
\end{proof}

As it is well known, a decomposition of a projectivization of a vector
space into affine cells is equivalent to giving a flag in the space itself. This induces a natural decomposition of all higher rank Grassmannians into \emph{Schubert cells}, which are known to be affine. Thus our cell decomposition of $P_{m,j}$ induces cell decompositions of all $G_{m,j}^r$. Since the cells in the first decomposition are indexed by the set $B_{m,j}$, the cells in the second will be indexed by subsets of $B_{m,j}$ of size $r$. 
A Schubert cell of $G_{m,j}^r$  corresponding to a subset $S=\{(k_1,l_1),\dots (k_r,l_r)\} \subset B_{m,j} $ will consist
of those $(r-1)$-dimensional projective subspaces of $P_{m,j}$ that intersect $V_{k_i,l_i,j}$ nontrivially for 
all $1\leq i \leq r$. We will denote the cell corresponding to $S$ in $G_{m,j}^r$ by $V_{S,j}$. We obtain a locally 
closed decomposition
\[ G_{m,j}^r = \bigsqcup_{\substack{S \subseteq B_{m,j} \\ |S|=r}} V_{S,j}. \]
Occasionally, when it is clear from the context that $S$ is a subset
of $B_{m,j}$, we will supress the index $j$ and write just $V_S$ for
the Schubert cells of $G_{m,j}^r$.

We will call a Schubert cell \emph{maximal} if it intersects the
maximal dimensional cell of $P_{m,j}$ nontrivially. Such a cell corresponds to
subsets $S\subset B_{m,j}$ which contain $(k_{min},l)$ where $k_{min}$ is minimal among the first components of the elements of $B_{m,j}$. The intersection with $V_{k_{min},l,j}$ of a subspace corresponding to a point in a maximal Schubert cell is an affine subspace of $V_{k_{min},l,j}$. Conversely, to any affine subspace of $V_{k_{min},l,j}$, there corresponds a point in a maximal Schubert cell given by the completion of the subspace in~$P_{m,j}$.

For a maximal subset $S$, denote by $\overline{S} \subset B_{m,j}$ the set of indices which we get by deleting $(k_{min},l)$ from $S$. $\overline{S}$ is empty, if $|S|=1$. Define the codimension one projective subspace 
\[ \overline{P}_{m,j}=\bigsqcup_{ \{(k,l)\in B_{m,j}\;:\; k>k_{min}\} } V_{k,l,j} \subset P_{m,j}=P_{m,j}\setminus V_{k_{min},l,j}. \]
For each $(r-1)$-dimensional subspace $U\subset P_{m,j}$ intersecting the affine space
$V_{{k_{min}},l,j}$ nontrivially, let $\overline{U}= U\cap \overline{P}_{m,j}$. 

\begin{lemma} The map $\omega\colon V_{S,j} \rightarrow V_{\overline{S},j}$ defined by $\omega(U)= \overline{U}$
is a trivial affine fibration with fibre $\SA^{|B_{m,j}|-|S|}$. 
\label{lemma_omega}
\end{lemma}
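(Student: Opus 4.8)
The plan is to unwind the definitions of the Schubert cells $V_{S,j}$ and $V_{\overline{S},j}$ in terms of the flag coming from the cell decomposition of $P_{m,j}$, and to show that the map $\omega$ is, in suitable affine coordinates on the Schubert cells, a linear projection with affine-space fibres. First I would recall that the cell decomposition $P_{m,j} = \bigsqcup_{(k,l)\in B_{m,j}} V_{k,l,j}$ of the projective space $P_{m,j}$ into affine cells of decreasing dimension is equivalent to a complete flag $F^\bullet$ in the underlying vector space, with $V_{k,l,j}$ being the cell of the point whose unique lowest-order nonzero coordinate sits in the slot indexed by $(k,l)$; the maximal cell $V_{k_{min},l,j}$ is the ``big cell'' (complement of the hyperplane $\overline{P}_{m,j}$). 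A point $U$ of a maximal Schubert cell $V_{S,j}$ is an $(r-1)$-plane in $P_{m,j}$ meeting each $V_{k_i,l_i,j}$ and in particular meeting $V_{k_{min},l,j}$; since $U\cap V_{k_{min},l,j}$ is a nonempty affine subspace of the affine space $V_{k_{min},l,j}$, the plane $U$ is the projective completion of an affine subspace.

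The key step is then a direct coordinate computation. Write $U$ in row-echelon form with respect to the flag $F^\bullet$: because $(k_{min},l)\in S$, exactly one pivot of $U$ lies in the big cell slot, and the remaining $r-1$ pivots lie in the slots indexed by $\overline{S}$. The ``free'' entries above the pivots are precisely the Schubert-cell coordinates, and the count of them for $V_{S,j}$ exceeds that for $V_{\overline{S},j}$ by exactly $|B_{m,j}| - |S|$ — these are the free entries in the row whose pivot is $(k_{min},l)$, i.e.\ the coordinates of the affine subspace $U\cap V_{k_{min},l,j}$ inside the ambient affine space of dimension $|B_{m,j}|-1$ minus the $|S|-1$ linear conditions already imposed by passing through the other pivots, giving $(|B_{m,j}|-1) - (|S|-1) = |B_{m,j}| - |S|$. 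Taking $\overline U = U \cap \overline{P}_{m,j}$ simply deletes that row (equivalently, forgets exactly those free coordinates), so in these coordinates $\omega$ is the coordinate projection $\SA^{N}\times\SA^{|B_{m,j}|-|S|} \to \SA^N$, where $\SA^N \cong V_{\overline{S},j}$. One must check that this identification is independent of the choice of echelon representative, i.e.\ that it glues to a well-defined morphism $V_{S,j}\to V_{\overline S,j}$; this follows because the echelon form relative to a fixed flag is canonical once the pivot set is fixed, and $S$ (hence the pivot set) is constant on $V_{S,j}$.

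The main obstacle I anticipate is purely bookkeeping: keeping the indexing by $B_{m,j}$ straight in the two-dimensional (two-dimensional-representation) case, where each ``slot'' $(k,l)$ actually corresponds to a rank-$2$ block and $G_{m,j}^r$ sits inside $\Gr(2r, S_m[\rho_j])$ rather than an honest Grassmannian — one has to be sure the isomorphism ``$G_{m,j}^r \cong$ a rank-$r$ Grassmannian'' (asserted earlier in \ref{subsec:subops}) is compatible with the flag coming from Proposition \ref{prop:decomp of P}, so that the Schubert-cell calculus goes through verbatim. Once that compatibility is noted, the argument is the standard fact that forgetting the big-cell part of a Schubert cell is a trivial affine fibration, and the fibre dimension $|B_{m,j}|-|S|$ drops out of the echelon-form coordinate count. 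I would therefore structure the write-up as: (i) reduce to the honest-Grassmannian case via the isomorphism of \ref{subsec:subops}; (ii) fix the flag and echelon coordinates; (iii) identify $\omega$ with a coordinate projection and read off the fibre.
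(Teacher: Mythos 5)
Your proposal is correct, and in fact it supplies more than the paper does: the paper states Lemma~\ref{lemma_omega} without any proof, treating it as standard Schubert-cell calculus, and immediately afterwards only records the coordinate-free description of the fibre over $\overline{U}$ as the coset space $V_{k_{min},l,j}/\overline{U}$ of affine subspaces of the big cell having $\overline{U}$ as their set of points at infinity. Your reduced-row-echelon argument is the standard way to make that precise, and your count is right: the fibre consists exactly of the translates of one affine subspace with boundary $\overline{U}$, of dimension $(|B_{m,j}|-1)-(|S|-1)=|B_{m,j}|-|S|$, which is what the free entries in the pivot row of the big-cell slot parametrize. Two points in your write-up are worth keeping explicit. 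First, since the paper's definition of ``trivial affine fibration'' in~\ref{sec:terminology} demands a global isomorphism $V_{S,j}\cong V_{\overline{S},j}\times\SA^{|B_{m,j}|-|S|}$ with $\omega$ the first projection, it matters that reduced echelon form with a fixed pivot set gives \emph{global} coordinates on the whole Schubert cell, so that forgetting the pivot-row free entries is literally a coordinate projection; your remark on canonicity of the echelon representative is exactly what guarantees this, and it also handles surjectivity onto $V_{\overline{S},j}$ (any choice of free entries in the deleted row yields a valid $U$ with pivot set $S$). Second, your step (i) — reducing the two-dimensional-representation case to an honest Grassmannian via the identification of $P_{m,j}$ and $G^r_{m,j}$ from~\ref{subsec:subops}, compatibly with the flag of Proposition~\ref{prop:decomp of P} — is indeed the only bookkeeping issue, and once noted the argument goes through verbatim; the paper implicitly makes the same identification when it speaks of Schubert cells of $G^r_{m,j}$ in~\ref{subsec:schubertcells}.
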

We can think of this map as associating to an {\em affine} subspace of $V_{{k_{min}},l,j}$
its set of ``ideal points at infinity''. 
We define the mapping 
%$\overline{\phantom{A}} \colon V_{S,j} \rightarrow V_{\overline{S},j}$ 
$\omega \colon V_{S,j} \rightarrow V_{\overline{S},j}$ 
as the identity for those index sets and the corresponding cells
which are not maximal. In such cases, $\overline{S}=S$ considered as a subset of $B_{m,j} \setminus \{(k_{min},l)\}$.

Consider the fibre $\omega^{-1}(\overline{U})$ over a point $\overline{U} \in V_{\overline{S}}$, which we will also denote by $V_S|_{\overline{U}}$ below. This fibre consists of those subspaces $U\subseteq P_{m,j}$ which intersect $\overline{P}_{m,j}$ in $\overline{U}$, i.e.~when considered as an affine subspace of $V_{k_{min},l,j}$, they have $\overline{U}$ as their set of ``points at infinity''. We will denote the set of such subspaces also by $V_{k_{min},l,j}/\overline{U}$. This notation means that we take the cosets in $V_{k_{min},l,j}$ of an arbitrary affine subspace $U\subset V_{k_{min},l,j}$ with $\overline{U}=\overline{P}_{m,j}\cap U$. The affine structure on $V_{k_{min},l,j}$ descends to an affine structure on $V_{k_{min},l,j}/\overline{U}$ which does not depend on the particular affine subspace $U$ whose cosets were taken.

We also need a description of the affine subspaces of the cells
$V_{k,l,j}$ for $k>k_{min}$. The relevant Schubert cells in this case
are indexed by those subsets $S$ of $B_{m,j}$ which contain $(k,l)$
but do not contain any $(k',l')$ for $k' < k$. Hence, the index set
$B_{m,j}$ is first truncated by deleting the pairs $(k',l')$ with $k'
< k$. We will denote the result as $B_{m,j}(k)$. Then, the maximal
Schubert cells for $V_{k,l,j}$ correspond to those subsets $S
\subseteq B_{m,j}(k) $ of the truncated index set which contain
$(k,l)$. For these, $\overline{S}$ is defined by removing $(k,l)$ from
$S$. There is still a morphism $\omega \colon V_{S,j} \rightarrow
V_{\overline{S},j}$ which is defined in the same way as above; its
global structure and the description of its fibres is analogous to the
previous special case. 

Note that the notation $\overline{S}$ is ambiguous at this point: any
maximal subset $S \subseteq B_{m,j}(k)$ can also be considered as a
nonmaximal subset of $B_{m,j}(k')$ for $k'<k$. If we view $S$ as a
subset of $B_{m,j}(k)$, then $\overline{S}=S\setminus\{ (k,m-k)
\}$. On the other hand, if we view it as a nonmaximal subset of
$B_{m,j}(k')$ for $k' < k$, then $\overline{S}=S$. We have decided not
to introduce extra notation; when this notation gets used below, we
will always specify the reference point $k$ explicitly.

\subsection{The Young wall associated to a homogeneous ideal}
\label{sec:Ywall_to_ideal}

In this section, we study ideals generated by degree- and weight-homogeneous 
polynomials; we will call such ideals simply {\em homogeneous}
ideals. Here is the main definition of this section. 

\begin{definition} Consider a homogeneous $G_\Delta$-invariant ideal 
$I\lhd \SC[x,y]$. Let $Y_I$ denote the following subset of
the transformed Young wall pattern of type $D_n$: 
for each block or half-block $(k,l)$ of label $j$, with $k+l=m$, include this block or
half-block in $Y_I$ if and only if $I\cap S_m[\rho_j]$ does not intersect the preimage in $S_m[\rho_j]$ of the stratum $V_{k,l,j}\subset P_{m,j}$ from the stratification of Proposition~\ref{prop:decomp of P}. $Y_I$ will be called the \emph{profile} of $I$.
\label{def_wall_to_ideal}
\end{definition}

It will be useful to introduce a little bit of extra notation, and to
reformulate this definition in the new notation. Given a homogeneous $G_\Delta$-invariant ideal $I$,
let $I_{m,j}$ be the set of $G_\Delta$-invariant subspaces of minimal dimension in $I\cap
S_m[\rho_j]$. Then as $I\cap S_m[\rho_j] \subset S_m[\rho_j]$ is a linear subspace, $I_{m,j}\subset
P_{m,j}$ is a projective linear subspace. Then the definition simply
says that a block or half-block $(k,l)$ labelled $j$ is included in $Y_I$ 
if and only if $I_{m,j}\cap V_{k,l,j} = \emptyset$, for $m=k+l$ as before. 
Since $\{V_{k,l,j}\}$ is a standard stratification of the projective
space $P_{m,j}$ into affine spaces, $\{I_{m,j}\cap V_{k,l,j}\}$ is also a
standard stratification of its projective linear subspace $I_{m,j}$ into affine
spaces, and so has the same number of strata as its affine dimension. We
conclude 
\begin{lemma} \label{lem:intdim}
For all $m,j$, the number of absent blocks or half-blocks of label $j$ on the $m$-th
diagonal equals $\mathrm{dim}( I \cap S_m[\rho_j])$. 
\end{lemma}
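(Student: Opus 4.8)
The plan is to read off both sides of the claimed equality from the stratification of $P_{m,j}$ constructed in Proposition~\ref{prop:decomp of P}, using Definition~\ref{def_wall_to_ideal}. I would start from the reformulation recorded just above the lemma: $I_{m,j}\subset P_{m,j}$ is a projective linear subspace, and a (half-)block $(k,l)$ of label $j$ with $k+l=m$ is \emph{absent} from $Y_I$ precisely when $I_{m,j}\cap V_{k,l,j}\neq\emptyset$. Thus the left hand side of the lemma is nothing but the number of cells $V_{k,l,j}$ of $P_{m,j}$ that the linear subspace $I_{m,j}$ meets --- with the understanding, to be dealt with at the end, that when $\rho_j$ is two-dimensional each such cell lies below the main diagonal and is to be counted together with its mirror image above it.

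The main geometric input is the elementary fact that restricting a standard (flag-type) affine stratification of a projective space to a projective linear subspace yields again a standard stratification. Concretely, under the identification of $P_{m,j}$ with a projective space $\mathbb{P}(W)$, the cells $V_{k,l,j}$ correspond to a complete flag $0=F_0\subset F_1\subset\cdots\subset F_N=W$, via $V_{k,l,j}=\mathbb{P}(F_{i+1})\setminus\mathbb{P}(F_i)$ for a suitable index $i$; if $I_{m,j}=\mathbb{P}(U)$ for a subspace $U\subseteq W$, then $I_{m,j}\cap V_{k,l,j}=\mathbb{P}(U\cap F_{i+1})\setminus\mathbb{P}(U\cap F_i)$ is nonempty iff $\dim(U\cap F_{i+1})=\dim(U\cap F_i)+1$, in which case it is an affine space. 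Since the distinct terms $U\cap F_i$ form a complete flag of $U$, the induced decomposition $\{I_{m,j}\cap V_{k,l,j}\}$ is a standard stratification of $I_{m,j}$; summing the jumps $\dim(U\cap F_{i+1})-\dim(U\cap F_i)$ over $i$ then shows that the number of cells of $P_{m,j}$ met by $I_{m,j}$ equals $\dim U=\dim_{\mathrm{proj}}I_{m,j}+1$, the dimension of the underlying vector space.

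It then remains to identify this vector space dimension with $\dim(I\cap S_m[\rho_j])$. If $\rho_j$ is one-dimensional, then $P_{m,j}=\mathbb{P}(S_m[\rho_j])$ and $I_{m,j}=\mathbb{P}(I\cap S_m[\rho_j])$, so the count of cells met is exactly $\dim(I\cap S_m[\rho_j])$, which is also the number of absent (half-)blocks since there is no mirror doubling in this case. If $\rho_j$ is two-dimensional, I would write $S_m[\rho_j]\cong\rho_j\otimes W$ with $W$ the multiplicity space; as $I\cap S_m[\rho_j]$ is a $G_\Delta$-submodule of an isotypic module it has the form $\rho_j\otimes U$ for some $U\subseteq W$, whence $P_{m,j}\cong\mathbb{P}(W)$, $I_{m,j}\cong\mathbb{P}(U)$, and the count of cells met is $\dim U=\tfrac12\dim(I\cap S_m[\rho_j])$; doubling for mirror images recovers $\dim(I\cap S_m[\rho_j])$, as required. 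The step I expect to demand the most care is exactly this final bookkeeping of mirror images --- checking that ``the number of absent blocks or half-blocks of label $j$ on the $m$-th diagonal'' is counted with mirrors in precisely the fashion dictated by the earlier identification of $\dim S_m[\rho_j]$ with the number of label-$j$ blocks on that diagonal, mirrors included --- whereas the geometric heart of the argument, restricting a flag stratification to a linear subspace, is entirely routine.
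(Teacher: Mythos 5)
Your argument is correct and is essentially the paper's own: the proof given there is exactly the observation that the standard stratification $\{V_{k,l,j}\}$ of $P_{m,j}$ restricts to a standard stratification $\{I_{m,j}\cap V_{k,l,j}\}$ of the projective linear subspace $I_{m,j}$, whose number of nonempty strata equals its affine dimension. You simply make explicit the flag computation and the isotypic bookkeeping ($S_m[\rho_j]\cong\rho_j\otimes W$, with the mirror-image doubling for two-dimensional $\rho_j$) that the paper leaves implicit.
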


\begin{proposition} Given a homogeneous $G_\Delta$-invariant ideal $I\lhd \SC[x,y]$, 
the associated subset $Y_I$ of the transformed Young wall pattern of type $D_n$ has the following 
properties.
\begin{enumerate}
\item If a full or half block is missing, then all the blocks above-right from it on the diagonal are missing.\footnote{Again, for a missing half block only the half blocks of the same orientation have to be missing.}
\item If a full block is missing, then all full or half blocks to the right of it are missing, 
and {\em at least} one (full or half) block immediately above it is missing. 
\item If a half block is missing, then the full block to the right of it is missing. 
\item If both half-blocks sharing the same block position are missing, then the full block immediately above
this position is missing.
\end{enumerate}
In particular, if $I$ is of finite codimension, then $Y_I$ is a  Young
wall of type $D_n$, an element of the set ${\mathcal Z}_\Delta$.
\label{wallprop}
\end{proposition}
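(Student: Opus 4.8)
The plan is to verify the four listed properties one at a time by translating each into a statement about the linear subspaces $I\cap S_m[\rho_j]$ and the operators $L_j$ from Section~\ref{subsec:subops}, and then to deduce that for finite-codimension $I$ the subset $Y_I$ satisfies the Young wall rules (YW1)--(YW4). The basic dictionary is Definition~\ref{def_wall_to_ideal} together with Lemma~\ref{lem:intdim}: a block $(k,l)$ of label $j$ with $m=k+l$ is \emph{absent} from $Y_I$ precisely when the projective linear subspace $I_{m,j}\subset P_{m,j}$ meets the cell $V_{k,l,j}$, and on each antidiagonal the number of absent blocks of label $j$ is exactly $\dim(I\cap S_m[\rho_j])$. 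The key geometric input is that $\{V_{k,l,j}\}$ is a \emph{standard} stratification of $P_{m,j}$ by affine cells of decreasing dimension, built inductively by the rule $V_{k,k+l,j}=L_1^kV_{0,l,\kappa^k(j)}$ (Proposition~\ref{prop:decomp of P}); so ``$I_{m,j}$ meets a given cell'' behaves monotonically with respect to the flag, and the cell of minimal index that $I_{m,j}$ meets controls everything below it in the flag.

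For property~(1), the point is that moving ``above-right along the diagonal'' from a block $(k,l)$ to $(k-1,l+1)$ (or to the next block of the same orientation) is exactly passing to a deeper cell in the standard flag on the \emph{same} antidiagonal — wait, no: moving along a fixed antidiagonal keeps $m$ fixed, and the cells $V_{k,l,j}$ on that antidiagonal form a flag indexed by increasing $k$. If $I_{m,j}$ meets the cell $V_{k,l,j}$, then because $I_{m,j}$ is a \emph{linear} subspace whose induced stratification is standard, it meets every cell of the flag that is ``higher'', i.e.\ $V_{k',l',j}$ with $k'\geq k$ on the same antidiagonal; this gives (1). For property~(3): a missing half-block $(k,l)$ of label $j\in\{n-1,n\}$ sits on an antidiagonal $m\equiv n-2 \pmod{2n-4}$, and the full block immediately to its right on the transformed pattern lies on antidiagonal $m+1$ and carries label~$2$ (reading off the transformed pattern picture); the relevant relation is captured by the operator $L_{n-1}$ or $L_n$ and the multiplicative identities \eqref{eq:repstensor}. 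Concretely, $\dim(I\cap S_{m}[\rho_j])>0$ forces, via multiplication by the generator of the next graded piece, $\dim(I\cap S_{m+1}[\rho_2])$ to be large enough that $I_{m+1,2}$ meets the corresponding cell. Properties~(2) and~(4) are similar but require combining the ``along the diagonal'' monotonicity of (1) with a ``one step up'' statement, and here the operator $L_j$ for $1<j<n-1$, resp.\ the pair $L_{n-1},L_n$, enters: if a full block is missing, the subspace $I\cap S_m[\rho_j]$ is nonzero, and applying the degree-one multiplications $x,y$ (the components of $L_2$) produces elements of $I$ in $S_{m+1}$ in enough independent weight spaces that at least one block immediately above must also be absent; the ``$L_{n-1}^{\otimes 2}\cong L_n^{\otimes 2}\cong\rho_{\underline 0}$'' type relations in \eqref{eq:repstensor} are what make the two half-blocks in a single position jointly force the full block above.

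Finally, assuming $I$ has finite codimension: then $I\cap S_m[\rho_j]=S_m[\rho_j]$ for all $m$ large and all $j$, so all but finitely many blocks of the transformed pattern are absent from $Y_I$, i.e.\ $Y_I$ (after transforming back) contains all grey half-blocks and only finitely many white ones — this is (YW1). Rules (YW2) and (YW3) are the ``continuous columns, no gaps to the left'' conditions; under the affine shear relating the two patterns these translate exactly into properties (1) and (3) above (together with the bookkeeping in the Remark after Lemma~\ref{lem:codim1_2} on which labels occur on which antidiagonals), and the properness condition (YW4) — no two distinct full columns of equal height — follows from property~(2): two full columns of the same height would force, via (2) applied on the common antidiagonal through their tops, a dimension count on $I\cap S_m[\rho_j]$ that is incompatible with $I_{m,j}$ being a single linear subspace meeting the standard flag in a standard sub-stratification. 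I expect the main obstacle to be the careful bookkeeping in properties~(2) and~(4): one must correctly track how the labels shift under $\kappa$ when applying $L_1$, how the $2$-dimensional weight spaces pair up with their mirror images across the diagonal in the transformed pattern, and precisely which antidiagonal the ``block immediately above'' lands on; the Remark following Lemma~\ref{lem:codim1_2} and the tensor identities \eqref{eq:repstensor} should supply exactly the combinatorial data needed, but assembling them into a clean monotonicity argument is the delicate part.
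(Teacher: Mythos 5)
Your reading of property (1) is incorrect, and the monotonicity you invoke in its place is false. In the transformed pattern, ``above-right \ldots{} on the diagonal'' means the diagonal direction $(k,l)\mapsto(k+1,l+1)$, i.e.\ passing from the $m$-th to the $(m+2)$-nd antidiagonal; this is the transformed image of ``to the right'' in the original pattern (cf.\ Remark~\ref{rem:corr_young_rules}), and it is proved using that $I$ is an \emph{ideal}: if $u\subset I\cap S_m[\rho_j]$ lies in $V_{k,l,j}$, then $L_1u=xy\cdot u\subset I$ lies in $V_{k+1,l+1,\kappa(j)}=L_1V_{k,l,j}$, and one iterates. Your claim --- that a missing block at $(k,l)$ forces all blocks $(k',l')$ with $k'\ge k$ on the \emph{same} antidiagonal to be missing --- does not follow from the stratification being standard (a line in $\SP^2$ through the deepest flag point but different from the flag line meets the open cell and the point while skipping the middle cell; standardness only controls the \emph{number} of cells met, which is the content of Lemma~\ref{lem:intdim}), and it is false even for ideals: in Example~\ref{ex:3sidepyr} (type $D_4$, generator generic in $V_{0,3,2}$) the label-$2$ block at $(0,3)$ is missing while the label-$2$ block at $(1,2)$ on the same antidiagonal is present, since Lemma~\ref{lem:intdim} forces exactly one missing label-$2$ block on that antidiagonal. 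So your argument establishes neither property (1) nor the statement you substituted for it.

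For (2)--(4) the proposal misidentifies the relevant data and omits the mechanism that actually carries the proof. The block to the right of an $n-1$/$n$ half-block is labelled $n-2$ (not $2$, unless $n=4$), and the operator moving one degree up is $L_2$ (multiplication by $x$ and $y$), not $L_{n-1}$ or $L_n$; the identities \eqref{eq:repstensor} alone do not tell you \emph{which cell} the image meets, which is the whole point. The needed argument (after reducing to blocks in the bottom row via the $L_1$-inductive construction of the cells) is: take an invariant subspace $u\in I_{l,j}\cap V_{0,l,j}$, use that $u$ is not divisible by $xy$ to pick a basis vector $x^ap$ with $p$ prime to $x$ and $y$, decompose $L_2u$ into isotypic summands, and check that the summand containing $x^{a+1}p$ cannot lie in the image of $L_1$ (so it sits in the open cell and the block to the right is missing), while the summand containing $x^ayp$ is not in the image of $L_1^2$ (so it sits in the row-one cell and a block immediately above is missing); the subtlety that only \emph{at least one} of the two half-blocks above is forced missing is precisely the question of whether $x^ayp$ spans a one-dimensional eigenspace, and your ``enough independent weight spaces'' sentence does not decide this. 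Finally, deducing (YW4) from a dimension count is neither needed nor correct as phrased: once (1)--(4) are established, the passage to the Young wall rules is the purely combinatorial dictionary of Remark~\ref{rem:corr_young_rules}, with finite codimension used only to guarantee (YW1).
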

\begin{remark} 
As it can be checked from the definitions, the relationship between the directions in the original 
and transformed Young wall patterns is the following: (right, up, diagonally right and down) in the original 
correspond after transformation to (diagonally right and up, right, up) respectively. This way, it is easy to 
check the correspondence between the rules for Young walls from~\ref{subsec:PYW} and this 
proposition. \label{rem:corr_young_rules}
\end{remark} 

\begin{proof}[Proof of Proposition \ref{wallprop}] Fix a homogeneous invariant ideal $I\lhd \SC[x,y]$ and let $Y_I$
be the corresponding subset of the Young wall pattern. 
Property (1) of $Y_I$ follows by applying $L_1$, recalling the inductive nature of the 
stratification of $P_{m,j}$ using $L_1$. The inductice construction also implies that it suffices to check 
properties (2)-(4) for blocks missing on the bottom row. 

Let us next prove (2) in the general case, when a full block in position $(0,l)$ in representation $j\in [3, n-3]$ 
is missing from $Y_I$; by the choice of $j$, both above and to the right of this block 
there are also full blocks. Since the block at $(0,l)$ is missing, there is an invariant $2$-dimensional subspace  
$u\in I_{l,j}\cap V_{0,l,j}$ contained in $I$. Since $u$ is in the lowest stratum $V_{0,l,j}$ of $P_{l,j}$, it has a basis 
one of whose members at least is not divisible by $xy$; without loss of generality, we may assume that this polynomial is  
$x^ap$ where $a$ is a non-negative integer and $p$ a polynomial in $x,y$ not divisible by
$x,y$. Now we can write 
\[L_2u = u_+\oplus u_-\]
with $u_+\in  I_{l+1, j+1}$ and $u_-\in  I_{l+1, j-1}$. Then $u_+$ must contain a polynomial with $x^{a+1}p$ as nonzero summand, 
so it cannot be in the image of $L_1$; so we have $u_+\in V_{0,l+1,j+1}$. Similarly, $u_-$ must contain a polynomial 
with $x^ayp$ as nonzero summand, so it cannot be in the image of $L_1^2$ and so $u_-\in V_{1,l,j-1}$. 
Thus indeed both the blocks in positions $(0,l+1)$ and $(1,l)$ are missing as claimed. 

Let us now consider what changes if $j$ is chosen such that there are half-blocks around. Suppose first that the 
half-blocks happen to lie to the right of our block labelled $j$. Then we have a decomposition
\[L_2u = u_+^1\oplus u_+^2\oplus u_-,\]
with $u_+^i$ both one-dimensional. In this case, it is easy to check that the polynomial $x^{a+1}p$ cannot itself 
generate a one-dimensional eigenspace, so both $u_+^i$ will contain a polynomial with $x^{a+1}p$ as nonzero summand. 
Thus neither of these subspaces can be in the image of $L_1$, and so must lie in the large stratum. Hence both these
blocks are missing.

Suppose now that the half-blocks happen to lie above our block
labelled $j$. Then $L_2u$ is either three- or four-dimensional. In the
general case, it has dimension four and there is a decomposition
\[L_2u = u_-^1\oplus u_-^2\oplus u_+\]
with $u_-^i$ both one-dimensional. In special situations $L_2u$ is
only three dimensional, and one of the $u_-^i$'s is missing
(see Lemma~\ref{l0} below for a detailed analysis).
In any case, $x^{a+1}p$ will lie in $u_+$, forcing that subspace to be in the large stratum.
The other relevant polynomial $x^ayp$ may or may not generate a one-dimensional invariant subspace, 
depending on the values of $a, p$; so at least 
one, possibly both, of $u_-^1, u_-^2$ lies in the image of $L_1$ but not $L_1^2$, forcing them to lie in the corresponding
stratum. So at least one, but possibly both, of the corresponding half-blocks must be missing. 
We remark here that the other $u_-^i$, if present, can be divisible by
a higher power of $L_1$. This implies that in this case the ideal generated by $u$ may have nontrivial intersection with the cells $V_{k,l,j}$ even with $l > 0$. %[EZ MIERT ERDEKES?]

The proofs of (3)-(4) follow the same pattern; we omit the details. Finally if $I$ is of finite codimension, 
then it contains $S_m$ for $m$ large enough, and so $Y_I$ contains only finitely many blocks and half-blocks. 
\end{proof}

%We remark here that the Hilbert function of a homogeneous invariant
%ideal $I\lhd\SC[x,y]$ is clearly determined by its associated Young
%wall $Y$. This point will be important in the next section.

%\enlargethispage{\baselineskip}
\section{Type \texorpdfstring{$D_n$}{Dn}: decomposition of the orbifold Hilbert scheme}
\label{sect:dnorbidec}

\subsection{The decomposition}

The aim of this section is to prove the following result, which gives a constructive proof of Theorem~\ref{thmorb} for type $D_n$.
\begin{theorem}
\label{thm:dnorbcells}
Let $G_\Delta$ be the subgroup of $SL(2,\SC)$ of type $D_n$. 
Then there is a locally closed decomposition
\[\mathrm{Hilb}([\SC^2/G_\Delta]) = \displaystyle\bigsqcup_{Y \in{\mathcal Z}_\Delta } \mathrm{Hilb}([\SC^2/G_\Delta])_Y\]
of the equivariant Hilbert scheme $\mathrm{Hilb}([\SC^2/G_\Delta])$ into strata indexed bijectively by the
set ${\mathcal Z}_\Delta$ of Young walls of type $D_n$, with each stratum $\mathrm{Hilb}([\SC^2/G_\Delta])_Y$ a non-empty affine space. 
\end{theorem}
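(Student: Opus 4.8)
The plan is to reduce the statement about the orbifold Hilbert scheme to the cell decompositions of the equivariant Grassmannians established in Section~\ref{Dnidealsect}. A point of $\mathrm{Hilb}([\SC^2/G_\Delta])$ is a $G_\Delta$-invariant ideal $I\lhd S=\SC[x,y]$ of finite colength. The first step is to attach to such an $I$ its profile $Y_I$, a finite subset of the transformed Young wall pattern, exactly as in Definition~\ref{def_wall_to_ideal}; by Proposition~\ref{wallprop} this is a Young wall of type $D_n$, so setting $\mathrm{Hilb}([\SC^2/G_\Delta])_Y=\{I : Y_I=Y\}$ gives a set-theoretic decomposition indexed by $\CZ_\Delta$. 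One must then check this decomposition is into \emph{locally closed} subschemes: the condition ``$I_{m,j}\cap V_{k,l,j}=\emptyset$'' is an open condition on the Grassmannian stratum, while the complementary membership conditions are closed, so each stratum is a locally closed subscheme cut out by such conditions on finitely many graded pieces. The main work is to show each nonempty stratum is an affine space.

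To identify $\mathrm{Hilb}([\SC^2/G_\Delta])_Y$ with an affine space I would build an explicit parametrization degree by degree. Given a Young wall $Y$, an ideal $I$ with $Y_I=Y$ is determined by the flag of subspaces $I\cap S_m[\rho_j]\subset S_m[\rho_j]$ for all $m,j$; by Lemma~\ref{lem:intdim} the dimension of $I\cap S_m[\rho_j]$ is prescribed by $Y$ (it equals the number of absent blocks/half-blocks of label $j$ on diagonal $m$). The absent blocks of label $j$ on diagonal $m$ sit in positions $(k,l)\in B_{m,j}$, so $I_{m,j}$ is an $(r-1)$-plane in $P_{m,j}$ whose intersection pattern with the cells $V_{k,l,j}$ is governed by the absent-block set; the candidate subspaces form a Schubert cell $V_{S,j}$ in $G_{m,j}^r$ (or a fibre thereof, using the maps $\omega$ of Lemma~\ref{lemma_omega}), which is an affine space. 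The constraints linking different degrees are the ideal conditions $L_j(I\cap S_m[\rho_\bullet])\subset I\cap S_{m+j-1}[\rho_\bullet]$ for the generators $L_0,\dots,L_n$ of~\ref{subsec:subops}; the key point is that, because $Y$ is a Young wall, these conditions are compatible and, after choosing the lowest degree in which each ``new generator'' of $I$ appears, the higher-degree data is either forced or varies freely in an affine Schubert cell over the lower-degree data. Concretely I expect the stratum to fibre, via successively forgetting the top graded piece, as an iterated tower of trivial affine fibrations (using Lemma~\ref{lemma_omega} and the affineness of Schubert cells), hence to be an affine space; its dimension is the sum of the local affine-cell dimensions, which should match the formula one reads off from $Y$.

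The nonemptiness claim is handled by exhibiting, for each $Y\in\CZ_\Delta$, at least one invariant ideal with that profile — most naturally a monomial-like ideal built from the cells $V_{k,l,j}$ indexed by the blocks present in $Y$, using the operators $L_j$ to generate it; the finite-codimension part of Proposition~\ref{wallprop} guarantees it lands in the Hilbert scheme. Counting absent blocks shows this construction realizes every finite $Y$. Finally, summing $\chi(\mathrm{Hilb}([\SC^2/G_\Delta])_Y)=1$ over $\CZ_\Delta$ recovers Theorem~\ref{thmorb} for type $D$, and matching the generating function of $|Y|$-graded Young walls to the right-hand side of~\eqref{eq:orbi_main_formula} recovers Theorem~\ref{thmgenfunct}.

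I expect the main obstacle to be the third step: proving that the ideal-membership conditions $L_j v\subset I$ across consecutive degrees are exactly compatible with the Young wall rules (YW1)--(YW4), so that the stratum really is a tower of trivial affine fibrations rather than merely a locally closed subvariety with Euler characteristic~$1$. In particular, the subtlety noted at the end of the proof of Proposition~\ref{wallprop} — that one of the half-block contributions $u_-^i$ may be divisible by a higher power of $L_1$, so that $I$ can meet cells $V_{k,l,j}$ with $l>0$ in unexpected ways — means the bookkeeping near the $(n-1),n$-labelled half-block columns will require care, and is where type $D$ genuinely differs from type $A$.
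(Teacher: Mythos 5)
There is a genuine gap, and it is at the very first step. Definition~\ref{def_wall_to_ideal}, Lemma~\ref{lem:intdim} and Proposition~\ref{wallprop} apply only to \emph{homogeneous} invariant ideals, and a general finite-colength invariant ideal is not determined by its graded intersections $I\cap S_m[\rho_j]$ (for instance, an ideal of reduced $G_\Delta$-orbits away from the origin has essentially trivial low-degree graded pieces, and the count of missing blocks no longer equals the colength). So your claim that ``an ideal $I$ with $Y_I=Y$ is determined by the flag of subspaces $I\cap S_m[\rho_j]$'' is false outside the graded case, and your degree-by-degree parametrization only ever sees the locus of homogeneous ideals, i.e.\ the fixed locus of the diagonal $T=\SC^*$-action (note that in type $D$ only this one-dimensional torus commutes with $G_\Delta$, so there are no isolated monomial fixed points as in type $A$). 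What is missing is the mechanism that carries the statement from the fixed locus to the whole equivariant Hilbert scheme: in the paper the stratum $\mathrm{Hilb}([\SC^2/G_\Delta])_Y$ is \emph{defined} as the Bia\l ynicki--Birula attracting set of the locus $Z_Y$ of homogeneous ideals with profile $Y$ (equivalently, ideals whose initial/limit ideal under the $\SC^*$-flow has Young wall $Y$); BB gives a Zariski-locally trivial fibration with affine-space fibres over $Z_Y$, and once $Z_Y$ is known to be affine (Theorem~\ref{thm:Zstrata}) this fibration is an algebraic vector bundle over affine space, hence trivial by Serre--Quillen--Suslin, so the stratum is affine. Without the BB step, or some substitute such as degenerating each ideal to its ideal of initial forms, your decomposition is not a decomposition of $\mathrm{Hilb}([\SC^2/G_\Delta])$ at all.

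A secondary point: even for the fixed-locus strata, the heart of the matter (the paper's Theorem~\ref{thm:Zstrata}) is only stated in your proposal as an expectation. The actual induction is not by ``forgetting the top graded piece'' but by deleting the bottom row of $Y$ via $I\mapsto L_1^{-1}(I\cap L_1\SC[x,y])$, and the fibres of $Z_Y\to Z_{\overline Y}$ are not naive Schubert cells: one needs the incidence varieties of~\ref{subsec:incvargr}, the join structure on $P_{m,j}$ (Lemma~\ref{lem:imgdescr}), and the fibre-product statement of Proposition~\ref{prop:fiberproduct} to see that they are trivial affine fibrations (Example~\ref{ex:5sidepyr}, where $Z_{Y_3}$ is a blown-up plane minus a proper transform, shows why the bookkeeping you flag is genuinely delicate). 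You correctly identified this as the hard point, but as it stands the proposal neither carries it out nor supplies the BB reduction that makes it sufficient.
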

\begin{proof} The affine plane $\SC^2$ carries the diagonal $T=\SC^\ast$-action, which 
commutes with the $G_\Delta$-action. The action of $T$ lifts to all the equivariant Hilbert schemes
$\mathrm{Hilb}^{\rho}([\SC^2/G_\Delta])$ which are themselves nonsingular. Thus the fixed point set
\[\mathrm{Hilb}([\SC^2/G_\Delta])^T=\sqcup_\rho\mathrm{Hilb}^{\rho}([\SC^2/G_\Delta])^T\] 
is also a union of nonsingular varieties, and it consists of points representing homogeneous invariant ideals. 
The construction of~\ref{sec:Ywall_to_ideal} associates a Young wall $Y$ to each homogeneous invariant ideal $I\lhd\SC[x,y]$. Since the construction uses a locally closed decomposition of the projective spaces $P_{m,j}$, the Young wall $Y$ also depends in a locally closed way on the ideal $I$, and thus we obtain a decomposition 
\[\mathrm{Hilb}([\SC^2/G_\Delta])^T = \displaystyle\bigsqcup_{Y \in{\mathcal Z}_\Delta } Z_Y\]
into reduced locally closed subvarieties, where $Z_Y\subset \mathrm{Hilb}([\SC^2/G_\Delta])^T$ is the locus
of homogeneous invariant ideals $I$ with associated Young wall $Y$.

Let $\mathrm{Hilb}([\SC^2/G])_Y\subset\mathrm{Hilb}([\SC^2/G]) $ denote the locus of ideals which flow to $Z_Y$ under the action of the torus $T$. 
Then by the Bia\l{}ynicki-Birula theorem~\cite{bialynicki1973some}, there is a regular map
$\mathrm{Hilb}([\SC^2/G])_Y \to Z_Y$ which is a Zariski locally
trivial fibration with affine space fibres, and a compatible
$T$-action on the fibres. By Theorem~\ref{thm:Zstrata} below, the base
is an affine space as well. Hence, by~\cite[Sect.3, Remarks]{bialynicki1973some},
$\mathrm{Hilb}([\SC^2/G])_Y$ is an algebraic vector bundle over this base, and
hence trivial (Serre--Quillen--Suslin) \cite{lang2002algebra}. Theorem~\ref{thm:dnorbcells} 
follows. 
\end{proof}

\begin{remark} 
\label{rem:Zyuniv}
\begin{enumerate}
\item As $\mathrm{Hilb}([\SC^2/G_\Delta])=\mathrm{Hilb}(\SC^2)^{G_{\Delta}} \subset \mathrm{Hilb}(\SC^2)$ is a smooth subvariety, the universal family over $\mathrm{Hilb}(\SC^2)$ restricts to a universal family over the equivariant Hilbert scheme $\mathrm{Hilb}([\SC^2/G_\Delta])$.
This restricts to a
universal family of homogeneous invariant ideals ${\mathcal U} \lhd {\mathcal O}_{\mathrm{Hilb}([\SC^2/G_\Delta])^T}\otimes\SC[x,y]$ over the $T$-fixed point set.
Restricting this universal family ${\mathcal U}$ to each of the strata
constructed above gives flat families of homogeneous invariant ideals
${\mathcal U}_Y\lhd {\mathcal O}_{Z_Y}\otimes\SC[x,y]$ over each
stratum $Z_Y$. It follows from the construction that the families ${\mathcal U}_Y$ are
universal for flat families of homogeneous invariant ideals with
associated Young wall~$Y$. We will have occasion to use the universal
property of the strata $Z_Y$ below.
%\item By Lemma \ref{lem:intdim}, the Hilbert function of a homogeneous invariant
%ideal $I\lhd\SC[x,y]$ is determined by its associated Young
%wall $Y$.
\item The diagonal $T=\SC^\ast$-action on $\SC^2$ induces the usual
  monomial grading on $\SC[x,y]$, and when $I$ is homogeneous, also on
  the quotient $\SC[x,y]/I$. For a $G_{\Delta}$-invariant ideal $I$, this
  quotient is a $G_{\Delta}$-representation. In the
  homogeneous $G_{\Delta}$-invariant case, the quotient has a
  multigraded Hilbert function: a $\SZ$-grading induced by the
  $T=\SC^\ast$-action and a grading (or rather labelling) by the set
  $\{\rho_0,\dots,\rho_n\}$. By Lemma \ref{lem:intdim}, the
  multigraded Hilbert function of a homogeneous invariant ideal
  $I\lhd\SC[x,y]$ is determined by its associated Young wall $Y$.
\end{enumerate}
\end{remark}

%\begin{corollary}

%\end{corollary}

The following is the main technical result of this section.

\begin{theorem} For each $Y\in{\mathcal Z}_\Delta$, the stratum $Z_Y$ constructed above is isomorphic to a nonempty affine space. 
\label{thm:Zstrata}\end{theorem}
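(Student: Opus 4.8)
The strategy is to realise $Z_Y$ explicitly as a locally closed subvariety of a product of the equivariant Grassmannians $P_{m,j}$ and their higher-rank analogues $G_{m,j}^r$, and then to build it up one antidiagonal at a time, showing at each step that the piece being added is an affine space fibred over what has been constructed so far. The key observation is the one already recorded in Lemma~\ref{lem:intdim} and Remark~\ref{rem:Zyuniv}(2): a homogeneous $G_\Delta$-invariant ideal $I$ with profile $Y$ has, in each graded-and-isotypic piece $S_m[\rho_j]$, an intersection $I\cap S_m[\rho_j]$ of a dimension determined purely by $Y$ (the number of absent $j$-blocks on the $m$-th diagonal). So a point of $Z_Y$ is the same datum as a collection of subspaces $I_{m,j}\subset P_{m,j}$ of the prescribed ranks, one for each $(m,j)$, subject to (i) the ideal conditions $L_k I_{m,j}\subseteq I_{m+\ldots,\kappa^{\ast}(j)}$ coming from the multiplication operators $L_0,\dots,L_n$ of~\ref{subsec:subops}, and (ii) the open conditions pinning down the profile to be exactly $Y$, i.e.~$I_{m,j}$ meets $V_{k,l,j}$ in the ``expected'' cells and misses the others. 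First I would make this reformulation precise, phrasing $Z_Y$ as a subfunctor of $\prod_{(m,j)} G_{m,j}^{r_{m,j}(Y)}$ cut out by these conditions, using the universal property from Remark~\ref{rem:Zyuniv}(1).

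Next I would set up the induction on the antidiagonal index $m$. For $m$ large (beyond the support of $Y$), the ideal is forced to contain all of $S_m$, so there is nothing left to choose; thus only finitely many diagonals carry moduli, and the induction terminates. The inductive step is: given the restriction $I^{<m}$ of the ideal data to diagonals $<m$ (a point of an affine space $Z_Y^{<m}$ by induction), describe the fibre of the forgetful map $Z_Y^{\le m}\to Z_Y^{<m}$. On the $m$-th diagonal, in each representation $\rho_j$, the subspace $I_{m,j}$ is constrained from below: it must contain $\sum_k L_k I_{m-?,\,\ast}$, the span of the images under the multiplication operators of the already-chosen subspaces on lower diagonals. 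By Lemma~\ref{lem:codim1_2} (and its variants for the other $L_k$), these images sit in controlled cells of $P_{m,j}$, and — this is where the Young-wall rules (1)--(4) of Proposition~\ref{wallprop} do their work — the "forced" part together with the profile condition says exactly that $I_{m,j}$ is an $r$-plane in $P_{m,j}$ containing a fixed flag-piece and meeting the remaining cells $V_{k,l,j}$ in the pattern dictated by which $j$-blocks of $Y$ lie on diagonal $m$. The space of such $r$-planes is a single Schubert cell $V_{S,j}$ (maximal or not, depending on whether the bottom block on that diagonal is in $Y$), hence an affine space by~\ref{subsec:schubertcells}; and by Lemma~\ref{lemma_omega} the map recording the "points at infinity'' exhibits the genuine choice as a trivial affine fibration over the part determined by lower diagonals. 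Assembling the contributions of all $j$ on the $m$-th diagonal, the fibre of $Z_Y^{\le m}\to Z_Y^{<m}$ is a product of affine spaces, and a trivial fibration over the base; since an iterated trivial affine fibration over a point is an affine space, $Z_Y^{\le m}$ is affine, completing the induction.

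The main obstacle — and the place where real care is needed — is the interaction between the multiplication operators coming from \emph{different} lower diagonals feeding into the same $I_{m,j}$, especially around the half-block rows where $L_{n-1}$ and $L_2$ behave asymmetrically. The subtle point, already flagged in the proof of Proposition~\ref{wallprop} (the remark that "the other $u_-^i$, if present, can be divisible by a higher power of $L_1$''), is that the forced subspace $\sum_k L_k I_{\bullet}$ need not itself lie in a single cell of $P_{m,j}$: it is a subspace meeting several cells, and one must check that its "profile within $P_{m,j}$'' is precisely complementary to the block pattern of $Y$ on that diagonal, so that the remaining freedom is an honest Schubert cell and not something cut out by further equations. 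Verifying this amounts to a careful bookkeeping, using Lemma~\ref{lem:intdim} to match dimensions on both sides, that the lower bound imposed by the operators and the upper bound imposed by the profile coincide exactly — i.e.~that the scheme-theoretic fibre is reduced and cut out transversally. I would handle the generic representations $j\in[2,n-2]$ first, where only $L_1$ and $L_j$ (and $L_2$) are relevant and the argument mirrors Proposition~\ref{wallprop}(2), then treat the boundary labels $j\in\{0,1,n-1,n\}$ where $L_{n-1},L_n$ enter, using the tensor identities~\eqref{eq:repstensor} to keep track of how $\kappa$ permutes the indices. Non-emptiness of $Z_Y$ is automatic once the fibration description is in place, since each Schubert cell appearing is nonempty and the base case is a point.
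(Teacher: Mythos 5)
Your strategy---inducting on antidiagonals and claiming that at each degree $m$ the new choice of $I_{m,j}$ is a Schubert cell sitting as a trivial affine fibration over the lower-degree data---is genuinely different from the paper's proof, which inducts on the \emph{rows} of $Y$ via the truncation map $T\colon Z_Y\to Z_{\overline Y}$, $I\mapsto L_1^{-1}(I\cap L_1\SC[x,y])$ (Lemma~\ref{lemma:inductive_morphism}), and exhibits $Z_Y$ as a fibre product of $Z_{\overline Y}$ with incidence varieties inside products of Schubert cells (Proposition~\ref{prop:fiberproduct} together with Propositions~\ref{prop:incvar1}--\ref{prop:incvar4}). The degree-by-degree route has a genuine gap, and it sits exactly at the point you flag in your last paragraph: the assertion that ``the lower bound imposed by the operators and the upper bound imposed by the profile coincide exactly'', so that passing from degree $m$ to $m+1$ is a trivial affine fibration, fails at the divided blocks. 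The paper's own Example~\ref{ex:5sidepyr} is a counterexample: for the triangle $Y_3$ the degree-$5$ datum is a point $[f]$ of the affine plane $V_{0,5,2}$, and the requirement that $(f)$ meet $P_{6,0}$ and $P_{6,1}$ only in the cells prescribed by $Y_3$ forces $[f]$ to lie on the join of $M_0^0$ and $M_1^0$, i.e.\ in $V_{0,5,2}\setminus(M_1\setminus M_1^0)$, the plane minus a punctured line---not an affine space and not a Schubert cell; moreover over the special point $M_1^0$ the fibre of admissible degree-$6$ extensions is an affine line, while elsewhere it is a single point. So the forgetful map from $Z_{Y_3}$ to the degree-$\le 5$ data is neither surjective onto an affine base nor a fibration with constant fibres, and $Z_{Y_3}\cong\SA^2$ only emerges after identifying it with the blowup of $V_{0,5,2}$ at $M_1^0$ minus the proper transform of $M_1\setminus M_1^0$. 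The same failure is recorded abstractly in the remark following Proposition~\ref{prop:incvar3}: at a divided block the analogue of the injectivity statements (3) of Propositions~\ref{prop:incvar1} and~\ref{prop:incvar2} is false, which is precisely the transversality your dimension-matching bookkeeping would need. Example~\ref{ex:5sidedalt} shows the complementary phenomenon, where the admissible locus in degree $5$ is only the line $M_1$.

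What the paper's row-removal induction buys is exactly a way around this: the base of the fibration is $Z_{\overline Y}$, which mixes all degrees at once rather than truncating at a fixed degree, and the join geometry of Lemma~\ref{lem:imgdescr} together with Proposition~\ref{prop:incvar3} shows that the relevant incidence variety is a quotient of a trivial family of joins by a trivial subfamily, hence a trivial affine fibration over ${\mathcal Y}_{\overline Y}$; pulling this back along $Z_{\overline Y}\to{\mathcal Y}_{\overline Y}$ (Proposition~\ref{prop:fiberproduct}) makes $T$ a trivial affine fibration, and the non-affine loci and jumping fibres you would see degree by degree are absorbed into this global description. If you want to salvage a degree-wise argument, you would have to fibre not over the raw degree-$\le m$ truncation but over data remembering, for each divided block, which support cell the generator's image comes from---which is in effect the analysis the paper carries out in Section~\ref{Dnspecialsect}, and it yields strata of Euler characteristic $0$ or $\pm 1$ rather than affine cells. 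Your argument does go through essentially as written in type $A$ and in the type $D$ steps covered by Propositions~\ref{prop:incvar1},~\ref{prop:incvar2} and~\ref{prop:incvar4}, where the injectivity statement holds; the divided-block case is the one place where it breaks, and unfortunately it is the heart of the theorem.
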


\begin{remark} We note that our proof of Theorem~\ref{thm:Zstrata} below certainly provides some information about the dimension of the affine space $Z_Y$, and thus of the affine space $\mathrm{Hilb}([\SC^2/G_\Delta])_Y$. We leave the study of these quantities, which could lead to a refinement of Theorem~\ref{thmgenfunct} in the Grothendieck ring of varieties for type $D_n$, for further study.
\label{rem:Dn:motivic} 
\end{remark}

Our proof of Theorem~\ref{thm:Zstrata}, discussed below following some preparation, is a direct inductive proof. We start with a series of examples which
exhibit the range of issues our proof will have to tackle; the discussions use results to be proved further below. Throughout we take the simplest example $n=4$, which exhibits all the nontrivial features. 

\begin{example}
\label{ex:3sidepyr}
Let $Y_1$ be the triangle of size $3$.
\begin{center}
\begin{tikzpicture}[scale=0.6, font=\footnotesize, fill=black!20]
 \draw (1, 0) -- (4,0);
  \foreach \x in {1,2,3}
    {
      \draw (\x, 0) -- (\x,\x);
    }
   \foreach \y in {1,2,3}
       {
            \draw (\y,\y) -- (4,\y);
       }
\draw (4, 0) -- (4,3);

\foreach \x in {1}
        {
        	\draw (\x+1.5, 0.5) node {2};
        	%\draw (\x+0.25,0.65) node {0};
        %	\draw (\x+0.75,0.35) node {1};
        }
  \foreach \x in {0}
          {
           \draw (\x+3.5, \x+1.5) node {2};
          }   
     
        \foreach \x in {0}
         {
                	\draw (\x+1.5, \x+0.5) node {0};
                	\draw (\x+2.5, \x+1.5) node {1};
                	%\draw (\x+0.25,0.65) node {0};
                %	\draw (\x+0.75,0.35) node {1};
                }
        \draw (3.5, 2.5) node {0};
 \foreach \x in {3}
           {
         %  \draw (\x+0.5,1.5) node {2};
           %\draw (\x+0.5,3.5) node {2};
           \draw (\x+0.75,0.75) node {4};
                        	\draw (\x+0.25,0.25) node {3};
                        	\draw (\x,1) -- (\x+1,0);
                        	    	%\draw (\x,5) -- (\x+1,6);
           }
\end{tikzpicture}
\end{center}
An invariant homogeneous ideal $I$ corresponding to this Young wall
necessarily has a generator in $V_{0,3,2}\subset P_{3,2}$. The latter
is a projective line whose points can be represented by expressions
$\alpha_0 L_2 L_3+\alpha_1 L_2 L_4$. The affine line $V_{0,3,2}$ is given by $\alpha_0 + \alpha_1\neq 0$. 
It is straightforward to check that a general point in $V_{0,3,2}$, that is, when $[\alpha_0:\alpha_1] \not\in \{[1:0],[0:1]\}$, indeed generates an ideal $I$ with Young wall $Y_1$. However, when $\alpha_0$ or $\alpha_1$
become zero, then $I$ does not intersect $V_{1,3,4}$, respectively $V_{1,3,3}$, even though it should, so we have to add another generator to $I$ 
from within the corresponding cell (see Proposition~\ref{prop:1}(5) below). Both these cells are points, so there is no further choice to make and thus the space $Z_{Y_1}$ is isomorphic to an affine line. This example already illustrates the fact that even within a single stratum $Z_Y$, the minimal number of generators of an ideal $I$ with Young wall $Y$ can vary.
\end{example}

\begin{example} \label{ex:4sidepyr} Let $Y_2$ be the triangle of size $4$. In this case, we
  get $Z_{Y_2}\cong Z_{Y_1}\cong\SA^1$, the affine line of Example
  \ref{ex:3sidepyr}, due to the isomorphism $V_{0,3,2}\cong V_{0,4,0}
  \times V_{0,4,1}$, see Proposition~\ref{prop:1}(2) below, or repeat the same argument as above.
\end{example} 

\begin{example}
\label{ex:5sidepyr}
Let $Y_3$ be the triangle of size $5$.
\begin{center}
\begin{tikzpicture}[scale=0.6, font=\footnotesize, fill=black!20]
 \draw (1, 0) -- (6,0);
  \foreach \x in {1,2,3,4,5}
    {
      \draw (\x, 0) -- (\x,\x);
    }
   \foreach \y in {1,2,3,4,5}
       {
            \draw (\y,\y) -- (6,\y);
       }
\draw (6, 0) -- (6,5);

\foreach \x in {1,3}
        {
        	\draw (\x+1.5, 0.5) node {2};
        	%\draw (\x+0.25,0.65) node {0};
        %	\draw (\x+0.75,0.35) node {1};
        }
  \foreach \x in {0,1,2}
          {
           \draw (\x+3.5, \x+1.5) node {2};
          }   
   \foreach \x in {0}
             {
              \draw (\x+5.5, \x+1.5) node {2};
             }   
     
        \foreach \x in {0,2}
         {
                	\draw (\x+1.5, \x+0.5) node {0};
                	\draw (\x+2.5, \x+1.5) node {1};
                	%\draw (\x+0.25,0.65) node {0};
                %	\draw (\x+0.75,0.35) node {1};
                }
        \draw (5.5, 4.5) node {0};
 \foreach \x in {0,2}
           {
         %  \draw (\x+0.5,1.5) node {2};
           %\draw (\x+0.5,3.5) node {2};
           \draw (\x+3.75,\x+0.75) node {4};
                        	\draw (\x+3.25,\x+0.25) node {3};
                        	\draw (\x+4,\x) -- (\x+3,\x+1);
                        	    	%\draw (\x,5) -- (\x+1,6);
           }
  \foreach \x in {1}
             {
           %  \draw (\x+0.5,1.5) node {2};
             %\draw (\x+0.5,3.5) node {2};
             \draw (\x+3.75,\x+0.75) node {3};
                          	\draw (\x+3.25,\x+0.25) node {4};
                          	\draw (\x+4,\x) -- (\x+3,\x+1);
                          	    	%\draw (\x,5) -- (\x+1,6);
             }
             
     \foreach \x in {0}
               {
             %  \draw (\x+0.5,1.5) node {2};
               %\draw (\x+0.5,3.5) node {2};
               \draw (\x+5.75,\x+0.75) node {0};
                            	\draw (\x+5.25,\x+0.25) node {1};
                            	\draw (\x+6,\x) -- (\x+5,\x+1);
                            	    	%\draw (\x,5) -- (\x+1,6);
               }
    
\end{tikzpicture}
\end{center}
For each fixed ideal $I$ with associated Young wall $Y_3$ must necessarily have
an generator $f\in I$ such that $[f]\in V_{0,5,2}$. By construction, $V_{0,5,2}$ is an affine
plane. This generator is, up to scalar, unique, since the block of
label $2$ in position $(0,5)$ is the only one with this label missing
from the degree $5$ antidiagonal. In other words, $I_{5,2}\cap V_{0,5,2}$ should be a point, otherwise the points at infinity of $I_{5,2}\cap V_{0,5,2}$ would intersect the other cells of degree 5 which is not allowed because of the shape of the Young wall. $I$ must also intersect both
$V_{1,5,0}$ and $V_{1,5,1}$, and again in essentially unique points,
the corresponding blocks being the only $0/1$ blocks missing from the
degree $6$ antidiagonal. This puts the following constraint on the
allowed $[f]\in V_{0,5,2}$. Use the isomorphism $L_1^{-1}$ which maps
$V_{1,5,0}\sqcup V_{1,5,1}$, a disjoint union of an affine line and a point, to $V_{0,4,1}\sqcup V_{0,4,0}$. 
Map this locus into $V_{0,5,2}$, an affine plane, to obtain $M_1^0\sqcup M_0^0\subset
V_{0,5,2}$ by taking the ideals generated by them (the curious notation $M_1^0\sqcup M_0^0$ for this locus is used here to be consistent with the general setup later, see the definitions after Lemma~\ref{lem:imgdescr}). Then by
Proposition~\ref{prop:1}(2) below, the ideal $\langle f\rangle$ intersects $P_{6,0}$ and
$P_{6,1}$ at most in the correct cells $V_{1,5,0}$ and $V_{1,5,1}$ if
and only if $[f]\in V_{0,5,2}$ lies on the linear join of the affine line $M_0^0$ and the
point $M_1^0$ inside the affine plane $V_{0,5,2}$ but not on $M_1^0\sqcup M_0^0$. This join is the plane $V_{0,5,2}$
minus a punctured affine line $M_1\setminus M_1^0$, where 
$M_1$ is the line parallel to $M_0^0$, going through $M_1^0$. On this join,
but away from the locus $M_0^0\sqcup M_1^0$ itself, we can set
$I=\langle f\rangle$ to indeed get an ideal with Young wall $Y_3$. On
the locus $M_0^0\sqcup M_1^0$ however, the ideal $\langle f\rangle$ itself will not
actually meet both cells, so we have to add an arbitrary generator of
the missed cell to $f$ to obtain an ideal of the correct Young
wall. Over the affine line $M_0^0$, there is no choice, since
$V_{1,5,0}$ is a point. But over the point $M_1^0$, we still have
$V_{1,5,0}$, in other words an affine line, worth of choices. This
sofar tells us that $Z_{Y_3}$ is the disjoint union of
$V_{0,5,2}\setminus M_1 \cong \SA^2\setminus\SA^1$ and
$V_{1,5,0}\cong\SA^1$. 

\begin{center}
\begin{tikzpicture}[scale=0.8,font=\footnotesize]
\draw (-3,0,0) --(3,0,0); 
\draw (0,3,0) --(0,0.8,0);
\draw[->] (0,0.6,0) -- (0,0.2,0);
\draw (0,0,-3)--(0,0,3);
\draw (-1,0,-3)--(3,0,1);
\draw[dashed] (3,0,3)--(-3,0,-3);
\draw (-3,0,-3)--(-3,0,3)--(3,0,3)--(3,0,-3)--(-3,0,-3);
\draw (-0.5,0,0.6) node {$M_1^0$};
\draw (1.4,0,-1.4) node {$M_0^0$};
%\draw (0,0,-3.6) node {$M_4$};
%\draw (3.6,0,0) node {$M_3$};
\draw (-1.9,0,-1.1) node {$M_1$};
\end{tikzpicture}
\end{center}

To fully work out the geometry of $Z_{Y_3}$, note that $P_{5,2}\cong\SP^2$ can be parametrized by expersions $\alpha_0 L_2 L_3^2 + \alpha_1 L_2 L_3 L_4 + \alpha_2 L_2 L_4^2$. The locus $V_{0,5,2}\subset P_{5,2}$ is given by $\alpha_0 + \alpha_1 + \alpha_2\neq 0$. The image of $V_{0,4,0}$ in these coordinates is 
$M_0^0=\{ (\alpha_0,0,\alpha_1): \alpha_0+\alpha_1 \neq 0\}$, while the image of $V_{0,4,1}$ is $M_1^0=\{(0, 1, 0)\}$. The linear combinations of the points in $M_0^0$ and $M_1^0$ cover the whole affine plane $V_{0,5,2}$, except a punctured line. 
For a general linear combination $a \cdot (\alpha_0,0,\alpha_1)+ b \cdot (0, 1, 0)$, the ideal generated by the corresponding $f$ intersects $V_{1,5,0}\times V_{1,5,1}$ in $(L_1 L_3 L_4,  \alpha_0 L_1 L_3^2+\alpha_1 L_1 L_4^2)$. For $(a,b)=(1,0)$ we have to have an extra generator in $V_{1,5,0}$, while for $(a,b)=(0,1)$ we need an extra generator in $V_{1,5,1}$.

Consider a family of ideals which approaches the point $M_1^0$ from the direction $(\alpha_0,0,\alpha_1)$. Then it can be
shown by explicit calculation that the limit ideal contains the subspace 
generated by $\alpha_0 L_1 L_3^2+\alpha_1 L_1 L_4^2 \in V_{1,5,1}$. This shows that $Z_{Y_3}$ can be obtained by blowing up the affine plane $V_{0,5,2}$ in its point $M_1^0$, and removing the proper transform of the punctured line $M_1\setminus M_1^0$ from this blowup. Thus $Z_{Y_3} \cong \SA^2$. From the blowup construction, we also obtain a canonical morphism $Z_{Y_3} \to \SA^1$, the restriction of the morphism ${\rm Bl}_0 \SA^2 \to {\mathbb P}^1$ to the exceptional curve of the blowup. 
\end{example}

\begin{example}
\label{ex:5sidedalt}
 Let $Y_3'$ be the Young wall
\begin{center}
\begin{tikzpicture}[scale=0.6, font=\footnotesize, fill=black!20]
 \draw (1, 0) -- (6,0);
  \foreach \x in {1,2,3}
    {
      \draw (\x, 0) -- (\x,\x);
    }
  
   \foreach \y in {1,2,3}
       {
            \draw (\y,\y) -- (6,\y);
       }
 \draw (4, 0) -- (4,3);
\draw (5, 0) -- (5,3);
 \draw (6, 0) -- (6,3);
 \draw (6, 3) -- (7,3) -- (7,2) --(6,2);

\foreach \x in {0,1,2}
        {
        	\draw (\x+2.5, \x+0.5) node {2};
        	%\draw (\x+0.25,0.65) node {0};
        %	\draw (\x+0.75,0.35) node {1};
        }
  \foreach \x in {0,1,2}
       {
           \draw (\x+4.5, \x+0.5) node {2};
          }   
     
     \foreach \x in {0}
         {
                	\draw (\x+1.5, \x+0.5) node {0};
                	\draw (\x+2.5, \x+1.5) node {1};
                	%\draw (\x+0.25,0.65) node {0};
                %	\draw (\x+0.75,0.35) node {1};
                }
       \draw (3.5, 2.5) node {0};
 \foreach \x in {0,2}
           {
         %  \draw (\x+0.5,1.5) node {2};
           %\draw (\x+0.5,3.5) node {2};
           \draw (\x+3.75,\x+0.75) node {4};
                        	\draw (\x+3.25,\x+0.25) node {3};
                        	\draw (\x+3,\x+1) -- (\x+4,\x+0);
                        	    	%\draw (\x,5) -- (\x+1,6);
           }
  \foreach \x in {1}
             {
           %  \draw (\x+0.5,1.5) node {2};
             %\draw (\x+0.5,3.5) node {2};
             \draw (\x+3.75,\x+0.75) node {3};
                                     	\draw (\x+3.25,\x+0.25) node {4};
                                     	\draw (\x+3,\x+1) -- (\x+4,\x+0);
                          	    	%\draw (\x,5) -- (\x+1,6);
             }
      \foreach \x in {0}
                    {
                  %  \draw (\x+0.5,1.5) node {2};
                    %\draw (\x+0.5,3.5) node {2};
                    \draw (\x+5.75,\x+0.75) node {0};
                                 	\draw (\x+5.25,\x+0.25) node {1};
                                 	\draw (\x+6,\x) -- (\x+5,\x+1);
                                 	    	%\draw (\x,5) -- (\x+1,6);
                    }
     \draw (6,2)--(7,1)--(7,2);
     \draw (6.75,1.75) node {1};
     \draw (7,3)--(8,2)--(8,3)--(7,3);
     \draw (7.75,2.75) node {0};

\end{tikzpicture}
\end{center}
There is necessarily still a unique generator in $V_{0,5,2}$. The difference compared to $Y_3$ is that there is now no intersection with $V_{1,5,1}$ but an intersection with $V_{3,3,1}$. The cells of the 0/1-blocks missing from the degree six diagonal are $V_{1,5,0}$ and $V_{3,3,1}$, both of which are zero dimensional. As before, we pull back these using $L_1^{-1}$, take the linear combinations of their images in $P_{5,2}$, and intersect this line with $V_{0,5,2}$. This is exactly the line $M_1$ from Example \ref{ex:5sidepyr}. This has one special point, $M_1^0$. If the new generator is (in the subspace represented by) this point, then it will not generate an ideal with shape $Y$, except if we keep the unique element of $V_{3,3,0}$ as a generator. In any case, $Z_{Y_3'}\cong M_1\cong\SA^1$.
\end{example}

\begin{example} \label{ex:GHilb} As a final example, it is well known that the minimal resolution of the singularity $\SC^2/G_\Delta$ is given by the component~$\mathrm{Hilb^{\rho_{reg}}([\SC^2/G])}$ of $\mathrm{Hilb([\SC^2/G])}$ corresponding to the regular representation~\cite{kapranov2000kleinian}. The $\SC^*$-fixed set on the minimal resolution consists of the $\SP^1$ within the exceptional locus corresponding to the central node in the Dynkin diagram, as well as three isolated points on the other three $\SP^1$'s. 
The following five Young walls contribute for the regular representation.
\begin{center}
\begin{tikzpicture}[scale=0.6, font=\footnotesize, fill=black!20]
 \draw (1, 0) -- (6,0);
 \draw (1, 1) -- (5,1);
  \foreach \x in {1,2,3,4,5}
    {
      \draw (\x, 0) -- (\x,1);
    }
  \draw (1.5, 0.5) node {0};    
  \draw (2.5, 0.5) node {2};
  
  \draw (3.25, 0.25) node {3};
  \draw (3,1)--(4,0);
  \draw (3.75, 0.75) node {4};

\draw (4.5, 0.5) node {2};

\draw (5,1)--(6,0);
  \draw (5.25, 0.25) node {1};
\end{tikzpicture}\qquad
\begin{tikzpicture}[scale=0.6, font=\footnotesize, fill=black!20]
 \draw (1, 0) -- (5,0);
 \draw (1, 1) -- (5,1);
  \foreach \x in {1,2,3,4,5}
    {
      \draw (\x, 0) -- (\x,1);
    }
  \draw (1.5, 0.5) node {0};    
  \draw (2.5, 0.5) node {2};
  
  \draw (3.25, 0.25) node {3};
  \draw (3,1)--(4,0);
  \draw (3.75, 0.75) node {4};

\draw (4.5, 0.5) node {2};

\draw (2,1)--(2,2)--(3,2)--(3,1);
  \draw (2.5, 1.5) node {1};
\end{tikzpicture}\qquad
\begin{tikzpicture}[scale=0.6, font=\footnotesize, fill=black!20]
 \draw (1, 0) -- (4,0);
 \draw (1, 1) -- (4,1);
  \foreach \x in {1,2,3,4}
    {
      \draw (\x, 0) -- (\x,1);
    }
  \draw (1.5, 0.5) node {0};    
  \draw (2.5, 0.5) node {2};
  
  \draw (3.25, 0.25) node {3};
  \draw (3,1)--(4,0);
  \draw (3.75, 0.75) node {4};

\draw (2,1)--(2,2)--(3,2)--(3,1);
  \draw (2.5, 1.5) node {1};
\draw (3,2)--(4,2)--(4,1);
\draw (3.5, 1.5) node {2};
\end{tikzpicture}

\vspace{0.5cm}

\begin{tikzpicture}[scale=0.6, font=\footnotesize, fill=black!20]
 \draw (1, 0) -- (3,0);
 \draw (1, 1) -- (4,1);
  \foreach \x in {1,2,3}
    {
      \draw (\x, 0) -- (\x,1);
    }
  \draw (1.5, 0.5) node {0};    
  \draw (2.5, 0.5) node {2};

  \draw (3,1)--(4,0)--(4,1);
  \draw (3.75, 0.75) node {4};

\draw (2,1)--(2,2)--(3,2)--(3,1);
  \draw (2.5, 1.5) node {1};
\draw (3,2)--(4,2)--(4,1);
\draw (3.5, 1.5) node {2};

 \draw (4,2)--(5,1)--(5,2)--(4,2);
\draw (4.75, 1.75) node {3};
\end{tikzpicture}\qquad
\begin{tikzpicture}[scale=0.6, font=\footnotesize, fill=black!20]
 \draw (1, 0) -- (3,0);
 \draw (1, 1) -- (4,1);
  \foreach \x in {1,2,3}
    {
      \draw (\x, 0) -- (\x,1);
    }
  \draw (1.5, 0.5) node {0};    
  \draw (2.5, 0.5) node {2};

  \draw (3,1)--(4,0)--(3,0);
  \draw (3.25, 0.25) node {3};

\draw (2,1)--(2,2)--(3,2)--(3,1);
  \draw (2.5, 1.5) node {1};
\draw (3,2)--(4,2)--(4,1);
\draw (3.5, 1.5) node {2};

 \draw (4,2)--(5,1)--(4,1);
\draw (4.25, 1.25) node {4};
\end{tikzpicture}
\end{center}
A quick computation shows that $Z_Y$ is a point in each case, except
for the last Young wall in the first row, when it is an affine line
(as in Example~\ref{ex:3sidepyr} except that in this case there is also a generator in $V_{2,2,0}$). This affine line contains the point corresponding to the Young wall next to it in its closure, giving the central $\SP^1$. 
\end{example}

\subsection{Incidence varieties}
\label{subsec:incvargr}

The purpose of this section is to introduce some incidence varieties inside products of the Schubert cells defined in~\ref{subsec:schubertcells}. We state some propositions regarding these incidence varieties and morphisms between them, whose proofs we defer to~\ref{subsec:prop:proofs} below. We discuss four different cases.

\smallskip

\noindent{\bf Case \ref{subsec:incvargr}.1\ } Assume that $m \equiv 0\; \mathrm{mod}\; n-2$ is a nonnegative integer, such that at position $(0,m)$ there is a divided block with labels $(c_1, c_2)$. Let $c$ be the label of the block at position $(1,m)$. Let $S_{c} \subseteq B_{m+1,c}$ be a nonempty maximal subset. Let $S_{1} \subseteq B_{m,c_1}$ and $S_{2} \subseteq B_{m,c_2}$ be two maximal subsets which are \emph{allowed} by $S_{c}$. This means, by definition, that each block above every composite block whose two halves are both contained in $S_1\cup S_2$, and each block to the right of every composite block at least one half-block of which in $S_1\cup S_2$, is in $S_c$.

Consider the incidence varieties
\[ X_{S_1,S_2}^{S_c}=\{ (U_1,U_2, U_c)\;:\; (U_1,U_2)\cap P_{m+1,c} \subseteq U_c \} \subseteq V_{S_1} \times V_{S_2} \times V_{S_{c},c},\]
and
\[ Y_{\overline{S}_1,\overline{S}_2}^{S_c}=\{ (\overline{U}_1,\overline{U}_2, U_c)\;:\; (\overline{U}_1, \overline{U}_2)\cap P_{m+1,c} \subseteq U_c \} \subseteq V_{\overline{S}_1} \times V_{\overline{S}_2} \times V_{S_{c},c}.\]
Using the maps $\omega$ from Lemma~\ref{lemma_omega}, these varieties fit into the diagram
\[
\begin{array}{ccccc}
 X_{S_1,S_2}^{S_c} & \subseteq & V_{S_1} \times V_{S_2} \times V_{S_c,c} & \ra{\mathrm{Id} \times \mathrm{Id} \times \omega}  &  V_{S_1} \times V_{S_2} \times V_{\overline{S}_c,c}  \\ %[0.15cm]
& & \da{\omega \times \omega \times \mathrm{Id}} & &\da{\omega \times \omega \times \mathrm{Id} } \\
Y_{\overline{S}_1, \overline{S}_2}^{S_c} & \subseteq &  V_{\overline{S}_1} \times V_{\overline{S}_2} \times V_{S_c,c}& \ra{\mathrm{Id} \times \mathrm{Id} \times \omega} & V_{\overline{S}_1} \times V_{\overline{S}_2} \times V_{\overline{S}_c,c} \;.
\end{array}
\]

\begin{proposition}
\label{prop:incvar1}
\begin{enumerate}
\item The image of $X_{S_1,S_2}^{S_c}$ under the vertical morphism $V_{S_1} \times V_{S_2} \times V_{S_{c},c} \to V_{\overline{S}_1} \times V_{\overline{S}_2} \times V_{S_{c},c}$ is precisely $Y_{\overline{S}_1, \overline{S}_2}^{S_c}$.
\item The induced morphism $X_{S_1,S_2}^{S_c} \rightarrow Y_{\overline{S}_1, \overline{S}_2}^{S_c}$ is a trivial fibration over its image with affine fibers of dimension $|S_{c}|-|S_1|-|S_2|+1$. 
\item The horizontal morphism $X_{S_1,S_2}^{S_c} \rightarrow V_{S_1} \times V_{S_2}  \times V_{\overline{S}_{c},c}$ is injective.
\end{enumerate}
\end{proposition}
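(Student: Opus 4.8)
The plan is to unwind the definitions so that all three assertions reduce to linear algebra of the operators $L_{2,x},L_{2,y}$, read off from the transformed Young wall pattern, together with the formal properties of the maps $\omega$ and of linear joins. Since the divided block sits at $(0,m)$, the labels $c_1,c_2$ lie in $\{0,1,n-1,n\}$, so $\rho_{c_1},\rho_{c_2}$ are one-dimensional and $P_{m,c_i}=\SP S_m[\rho_{c_i}]$; the assertion that the block at $(1,m)$ carries label $c$ is precisely the statement that $\langle x\cdot v,\,y\cdot v\rangle\subseteq S_{m+1}[\rho_c]$ for every $v\in S_m[\rho_{c_i}]$, i.e.\ that $L_2$ maps $S_m[\rho_{c_i}]$ into $S_{m+1}[\rho_c]$. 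Hence $(U_1,U_2)\cap P_{m+1,c}$ is, by construction, the projective-linear subspace of $P_{m+1,c}$ cut out by the single linear subspace $L_2U_1+L_2U_2\subseteq S_{m+1}[\rho_c]$, and it only grows when $U_1$ or $U_2$ grows. For part~(1), this monotonicity together with $\overline U_i=U_i\cap\overline{P}_{m,c_i}\subseteq U_i$ gives at once $(\overline U_1,\overline U_2)\cap P_{m+1,c}\subseteq(U_1,U_2)\cap P_{m+1,c}\subseteq U_c$, so $\omega\times\omega\times\mathrm{Id}$ maps $X_{S_1,S_2}^{S_c}$ into $Y_{\overline S_1,\overline S_2}^{S_c}$; the reverse inclusion of images amounts to exhibiting a preimage for each point of $Y_{\overline S_1,\overline S_2}^{S_c}$, which I would extract from the explicit computation in part~(2), where the hypothesis that $S_1,S_2$ are \emph{allowed} by $S_c$ is exactly what makes the relevant linear system solvable.

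For part~(2), fix $(\overline U_1,\overline U_2,U_c)\in Y_{\overline S_1,\overline S_2}^{S_c}$. By Lemma~\ref{lemma_omega} the fibre of $\omega\times\omega\times\mathrm{Id}$ over this point inside $V_{S_1}\times V_{S_2}\times V_{S_c,c}$ is a product of two $\omega$-fibres, an affine space of dimension $(|B_{m,c_1}|-|S_1|)+(|B_{m,c_2}|-|S_2|)$, whose points are the choices of the ``affine parts'' $p_1,p_2$ of $U_1,U_2$ in the largest strata $V_{k_{min},l,c_i}$. The core of the proof is a direct computation in Schubert coordinates, using the explicit $L_{2,x},L_{2,y}$ on the monomial bases dictated by the transformed pattern, showing that $(U_1,U_2)\cap P_{m+1,c}$ is the linear join inside $P_{m+1,c}$ of the fixed subspace $(\overline U_1,\overline U_2)\cap P_{m+1,c}$ with one further point $q(p_1,p_2)$, that $q$ lies in the largest stratum $V_{k_{min},l,c}$, and that $q$ is an affine-linear function of $(p_1,p_2)$ (with $\overline U_1,\overline U_2$ as parameters). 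The incidence condition defining $X_{S_1,S_2}^{S_c}$ then collapses to the single requirement $q(p_1,p_2)\in U_c$; writing $p_c$ for the affine part of $U_c$ and using $\dim\overline U_c=|S_c|-2$, this says that $q(p_1,p_2)$ lies in the affine span of $p_c$ and $\overline U_c$, an affine-linear system whose solution locus is an affine space. A bookkeeping of ranks — comparing the codimension of this system to the $\omega$-fibre dimensions $|B_{m,c_i}|-|S_i|$, using the $L_1$-shift description of the cells (Lemma~\ref{lem:codim1_2}) to relate the block counts on the antidiagonals $m$ and $m+1$ — yields the dimension $|S_c|-|S_1|-|S_2|+1$. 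Finally, the linear part of the system is independent of the chosen base point, so $X_{S_1,S_2}^{S_c}$ is a vector bundle over $Y_{\overline S_1,\overline S_2}^{S_c}$ and hence a trivial affine fibration; non-emptiness of these fibres then completes part~(1).

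For part~(3), given $(U_1,U_2,\overline U_c)$ in the image of the horizontal map, I recover $U_c$ as $\langle\overline U_c,q\rangle$. Indeed, by the computation above $(U_1,U_2)\cap P_{m+1,c}$ contains the point $q=q(p_1,p_2)$ of the largest stratum, so it is not contained in $\overline{P}_{m+1,c}$; on the other hand $(U_1,U_2)\cap P_{m+1,c}\subseteq U_c$, so $q\in U_c$. Since $\dim\langle\overline U_c,q\rangle=\dim\overline U_c+1=|S_c|-1=\dim U_c$ and $\langle\overline U_c,q\rangle\subseteq U_c$, equality holds, so $U_c$ is determined by $(U_1,U_2,\overline U_c)$; thus $\mathrm{Id}\times\mathrm{Id}\times\omega$ is injective on $X_{S_1,S_2}^{S_c}$.

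The hard part will be the computation underlying part~(2): one must verify, uniformly in $m$ and over the few combinatorial sub-cases that actually occur — whether the divided block at $(0,m)$ is of type $\{0,1\}$ or $\{n-1,n\}$, and how the half-blocks behave under $L_1$ — that multiplication by $L_{2,x},L_{2,y}$ carries the affine part of a maximal Schubert subspace to a single, well-defined point of the largest stratum of $P_{m+1,c}$, does so affine-linearly, and that the resulting system has exactly the rank predicted by the block-count comparison. Everything else is formal: the trivial-affine-fibration properties of the maps $\omega$ from Lemma~\ref{lemma_omega}, and standard facts about linear joins of projective subspaces from Appendix~\ref{sec:joins}.
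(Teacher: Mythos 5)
Your overall architecture is close to the paper's. The paper proves Proposition~\ref{prop:incvar4} in detail and declares the proof of Proposition~\ref{prop:incvar1} analogous; the analogue is that Proposition~\ref{prop:1}(1)(b) (resp.\ (4)(b)) converts the incidence condition into an affine--linear condition on the new affine parts $p_1,p_2$ of $U_1,U_2$, after which the fibre of $\omega\times\omega\times\mathrm{Id}$ is a quotient of trivial tautological families over Schubert cells, giving (1) and (2) together, and (3) follows by recovering $U_c$ from the intersection of the generated ideal with the relevant cell. Your plan for (1) and (3), and your ``fix the base point and solve an affine--linear system'' reading of (2), are the same computation viewed from the fibre rather than from the total space.

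The gap is in the central claim of part~(2). Passing from $(\overline U_1,\overline U_2)$ to $(U_1,U_2)$ adds the subspace $\langle xp_1,yp_1,xp_2,yp_2\rangle$ to the degree-$(m+1)$ piece, so $(U_1,U_2)\cap P_{m+1,c}$ is the join of $(\overline U_1,\overline U_2)\cap P_{m+1,c}$ with the projective \emph{line} through the two points $\phi_1(p_1),\phi_2(p_2)$ of the top cell $V_{0,m+1,c}$: the dimension jumps by two, not one. (Take $|S_1|=|S_2|=1$, so $\overline U_i=\emptyset$: Proposition~\ref{prop:1}(1)(b) says the intersection is a line, not a point.) Your single point $q(p_1,p_2)$ is the unique point where that line meets $\bigsqcup_{k\geq 1}V_{k,m+1-k,c}$, namely $L_1v_2$ with $v_2\in V_{0,m-1,c}$ the point determined by $p_1,p_2\in(v_2)$; so your ``join with one further point'' description, and hence the single condition $q\in U_c$, is correct only after replacing $(U_1,U_2)\cap P_{m+1,c}$ by its part at or above the cell of the block $(1,m)$. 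That truncation is forced anyway by the way the proposition is used in Case~\ref{sec:proof:orbicells}.1, where $U_c=I\cap V_{1,m,c}$ records only that part of $I_{m+1,c}$ and $S_c$ is maximal in the truncated index set $B_{m+1,c}(1)$; but you must say so explicitly, because under the literal reading your one condition becomes the two independent conditions $\phi_1(p_1),\phi_2(p_2)\in U_c$, the rank bookkeeping changes, and the incidence variety is typically empty. Once the truncation is in place, the ``hard computation'' you defer is exactly Proposition~\ref{prop:1}(1)(b)/(4)(b) together with the isomorphism $V_{0,m-1,c}\cong V_{0,m,c_1}\times V_{0,m,c_2}$ of Proposition~\ref{prop:1}(2),(5),(6), which supplies both the affine--linearity of $(p_1,p_2)\mapsto q$ and the fibre dimension $(|S_c|-1)-(|S_1|-1)-(|S_2|-1)$ directly.
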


\smallskip

\noindent{\bf Case \ref{subsec:incvargr}.2\ }
Let $m \equiv 0\; \mathrm{mod}\; n-2$, but this time consider only one half block of label $c_0=\kappa(c)$ at the position $(0,m)$. Let $S_{c} \subseteq B_{m+2,c}$ be a nonempty maximal subset, and $S \subseteq B_{m,\kappa(c)}$ be a maximal subset which is allowed by $S_{c}$. This means that for each block in $S$ there is a block in $S_c$ at the top right corner. In analogy with the previous case, let
\[ X_{S}^{S_c}=\{ (U, U_c)\;:\; (U)\cap P_{m+2,c} \subseteq U_c \} \subseteq V_{S} \times V_{S_{c},c}\;,\]
and
\[ Y_{\overline{S}}^{S_c}=\{ (\overline{U}, U_c)\;:\; (\overline{U})\cap P_{m+2,c} \subseteq U_c \} \subseteq V_{\overline{S}} \times V_{S_{c},c}\;,\]
which fit into the diagram
\[
\begin{array}{ccccc}
 X_{S}^{S_c} & \subseteq & V_{S} \times V_{S_c,c} & \ra{\mathrm{Id} \times \omega}  &  V_{S} \times V_{\overline{S}_c,c}  \\ %[0.15cm]
& & \da{\omega \times \mathrm{Id}} & &\da{\omega \times \mathrm{Id} } \\
Y_{\overline{S}}^{S_c} & \subseteq &  V_{\overline{S}} \times V_{S_c,c}& \ra{\mathrm{Id} \times \omega} & V_{\overline{S}} \times V_{\overline{S}_c,c} \;.
\end{array}
\]

\begin{proposition}
\label{prop:incvar2}
\begin{enumerate}
\item The image of $X_{S}^{S_c}$ under the vertical morphism $ V_{S} \times V_{S_{c},c} \to V_{\overline{S}} \times V_{S_{c},c}$ is exactly $Y_{\overline{S}}^{S_c}$.
\item The induced morphism $X_{S}^{S_c} \rightarrow Y_{\overline{S}}^{S_c}$ is a trivial fibration over its image with affine fibers of dimension $|S_{c}|-|S|$. 
\item The horizontal morphism $X_{S}^{S_c} \rightarrow V_{S} \times V_{\overline{S}_{c},c}$ is injective.
\end{enumerate}
\end{proposition}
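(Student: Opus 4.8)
The plan is to run the one‑factor analogue of the argument behind Proposition~\ref{prop:incvar1}: there is now a single ``lower'' Schubert cell $V_S$ in place of the pair $V_{S_1},V_{S_2}$, and the operator bridging the two antidiagonals is $L_1$, which by~\eqref{eq:repstensor} carries $\rho_{\kappa(c)}$‑isotypic data to $\rho_c$‑isotypic data. The key preliminary observation is that ``$(U)\cap P_{m+2,c}$'' is literally $L_1 U$, a projective subspace of $P_{m+2,c}$ of the same projective dimension as $U$. Indeed, the degree‑$(m+2)$, weight‑$\rho_c$ part of the ideal generated by $U\subseteq S_m[\rho_{\kappa(c)}]$ is $S_2\cdot U$ intersected with the $\rho_c$‑eigenspace, and the only summand of $S_2$ that takes the $\rho_{\kappa(c)}$‑weight to the $\rho_c$‑weight is the line $\langle xy\rangle=L_1$; moreover, for the values of $m$ occurring here the map $L_1\colon P_{m,\kappa(c)}\to P_{m+2,c}$ of Lemma~\ref{lem:codim1_2} is a linear \emph{isomorphism}, since the half‑block at $(0,m)$ has label $\kappa(c)\in\{0,1,n-1,n\}$ while the (half‑)block in the bottom row of antidiagonal $m+2$ has a label in $\{2,\dots,n-2\}$ (a short case check using the Remark after Lemma~\ref{lem:codim1_2}), so the drop‑in‑dimension case of Lemma~\ref{lem:codim1_2} never occurs. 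Consequently the incidence relation $(U)\cap P_{m+2,c}\subseteq U_c$ is the honestly \emph{linear} condition $L_1 U\subseteq U_c$.

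\emph{Parts (1) and (3).} For injectivity, suppose $(U,U_c)$ and $(U,U_c')$ in $X_S^{S_c}$ have the same image in $V_S\times V_{\overline S_c,c}$, i.e.\ $\omega(U_c)=\omega(U_c')$. Since $S$ is maximal, $U$ meets the big cell of $P_{m,\kappa(c)}$, so $L_1 U$ meets a cell of $P_{m+2,c}$ which, because $(S,S_c)$ is allowed, is indexed by an element of $S_c$; in particular $L_1 U$ meets the big cell of $P_{m+2,c}$ in a point $v$ lying in both $U_c$ and $U_c'$. An affine subspace of the big cell of $P_{m+2,c}$ is determined by its set of points at infinity together with one of its affine points, whence $U_c=U_c'$; the same argument applies to the lower horizontal arrow, and shows the square commutes. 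For part (1), the inclusion $(\omega\times\mathrm{Id})(X_S^{S_c})\subseteq Y_{\overline S}^{S_c}$ is immediate, as $\overline U=\omega(U)$ is a linear subspace of $U$, so $L_1\overline U\subseteq L_1 U\subseteq U_c$. Conversely, given $(\overline U,U_c)\in Y_{\overline S}^{S_c}$, Lemma~\ref{lemma_omega} identifies the fibre $\omega^{-1}(\overline U)$ with an affine space, a point of which is fixed by one extra point $w$ of the big cell of $P_{m,\kappa(c)}$. As $L_1\overline U\subseteq U_c$ already holds, the requirement $L_1 U\subseteq U_c$ amounts to the single affine‑linear condition that $L_1 w$ lie in $U_c$ modulo $L_1\overline U$; since $L_1$ is a linear isomorphism and $(S,S_c)$ is allowed, this is solvable (take $w$ so that $L_1 w$ is the big‑cell point of $U_c$), producing the required $(U,U_c)\in X_S^{S_c}$ over $(\overline U,U_c)$.

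\emph{Part (2).} Over a point $(\overline U,U_c)$, the fibre of $\omega\times\mathrm{Id}\colon X_S^{S_c}\to Y_{\overline S}^{S_c}$ is the locus of $U\in\omega^{-1}(\overline U)$ with $L_1 U\subseteq U_c$; since $L_1$ is a linear isomorphism, this is the intersection of the affine space $\omega^{-1}(\overline U)$ with an affine‑linear subvariety, hence again an affine space. Its dimension is counted exactly as in Proposition~\ref{prop:incvar1}: the ambient fibre $\omega^{-1}(\overline U)$ has dimension $|B_{m,\kappa(c)}|-|S|$ by Lemma~\ref{lemma_omega}, and $|B_{m,\kappa(c)}|=|B_{m+2,c}|$ because $L_1$ is an isomorphism; the condition $L_1 U\subseteq U_c$, given $L_1\overline U\subseteq U_c$, imposes $|B_{m+2,c}|-|S_c|$ independent affine‑linear conditions, leaving fibre dimension $|S_c|-|S|$. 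Performing the same linear‑algebraic description over the base $Y_{\overline S}^{S_c}$, in the affine coordinates on the cells supplied by Lemma~\ref{lemma_omega}, exhibits $\omega\times\mathrm{Id}$ as a (globally) trivial affine fibration over its image, which by part (1) is all of $Y_{\overline S}^{S_c}$.

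\emph{Main obstacle.} As in Case~\ref{subsec:incvargr}.1, the genuine content lies in the consistency‑and‑rank statement feeding parts (1) and (2): one must check that the affine‑linear incidence condition obtained after applying $L_1$ is always solvable and has exactly the expected corank. This is precisely where the hypothesis that $S_c$ is allowed by $S$ is used — it guarantees that $L_1$ sends the big cell of $P_{m,\kappa(c)}$ to a cell of $P_{m+2,c}$ already accounted for by $S_c$, so that no constraint is pushed ``off the edge'' of the Schubert stratification and all expected cancellations in the dimension count occur. I expect the only real work to be an elementary bookkeeping lemma describing how $L_1$ acts on the Schubert‑cell flags of $P_{m,\kappa(c)}$ and $P_{m+2,c}$, of the same nature as Lemma~\ref{lem:codim1_2} itself.
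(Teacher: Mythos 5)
Your proposal is correct and takes essentially the paper's approach: the paper proves Proposition~\ref{prop:incvar4} in detail (linearizing the incidence condition via the $L_1$-isomorphism between cells on the two relevant antidiagonals and realizing $X$ as a fibrewise quotient of trivial tautological families over the Schubert cells, with injectivity in (3) coming from that same isomorphism) and declares Proposition~\ref{prop:incvar2} analogous, and your pointwise argument, together with the representation-theoretic identification $(U)\cap P_{m+2,c}=L_1U$, is exactly that analogue. The only slip is in your case check: for $n=4$ the bottom (half-)block of antidiagonal $m+2$ is the complementary divided pair ($\{0,1\}$ or $\{n-1,n\}$), not a block of label in $\{2,\dots,n-2\}$, but since its labels still differ from $c$ the conclusion you actually use — that $L_1\colon P_{m,\kappa(c)}\to P_{m+2,c}$ is an isomorphism matching the cell decompositions — is unaffected.
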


\smallskip

\noindent{\bf Case \ref{subsec:incvargr}.3\ } Let $m \equiv 1\; \textrm{mod}\; n-2$, and $c_1$ and $c_2$ the labels of the divided block immediately above the block at position $(0,m)$. Let $S_{1} \subseteq B_{m+1,c_1}$, $S_{2} \subseteq B_{m+1,c_2}$ be nonempty subsets at least one of which is maximal. Let moreover, $S \subseteq B_{m,j}$ be a maximal subset which is allowed by $S_{1}$ and $S_{2}$. In this case, this means the following: for each block $b$ in $S$, there is a divided block of with labels $(c_1, c_2)$ in the pattern either immediately above or to the right of $b$. In the first case, we require that at least one of these half-blocks is in $S_{1} \cup S_{2}$. In the second case, we require that both are contained in $S_{1} \cup S_{2}$.

Given this data, we define
\[ X_{S}^{S_1,S_2}=\{ (U, U_1,U_2)\;:\; (U)\cap P_{m+1,c_1} \subseteq U_1, (U) \cap P_{m+1,c_2} \subseteq U_2 \} \subseteq V_{S} \times V_{S_{1},c_1} \times V_{S_{2},c_2}\;,\]
and
\[ Y_{\overline{S}}^{S_1,S_2}=\{ (\overline{U}, U_1,U_2)\;:\; (\overline{U})\cap P_{m+1,c_1} \subseteq U_1, (\overline{U})\cap P_{m+1,c_2} \subseteq U_2 \} \subseteq V_{\overline{S}} \times V_{S_{1},c_1} \times V_{S_{2},c_2}\;. \]
We now have the following diagram:
\[
\begin{array}{ccccc}
  X_{S}^{S_1,S_2} & \subseteq & V_{S} \times V_{S_{1},c_1} \times V_{S_{2},c_2} & \ra{\mathrm{Id} \times \omega\times\omega}  & V_{S} \times V_{\overline{S}_1,c_1} \times V_{\overline{S}_2,c_2}  \\ %[0.15cm]
 & & \da{\omega \times \mathrm{Id} \times \mathrm{Id}} & & \da{\omega \times \mathrm{Id} \times \mathrm{Id}} \\
 Y_{\overline{S}}^{S_1,S_2} &  \subseteq & V_{\overline{S}} \times V_{S_{1},c_1} \times V_{S_{2},c_2} & \ra{\mathrm{Id} \times \omega \times \omega} & V_{\overline{S}} \times V_{\overline{S}_1,c_1} \times V_{\overline{S}_2,c_2}\;.
\end{array}
\]

\begin{proposition} 
\label{prop:incvar3}
\begin{enumerate}
\item The image of $X_{S}^{S_1,S_2}$ under the vertical morphism $ V_{S} \times V_{S_{1},c_1} \times V_{S_{2},c_2} \to V_{\overline{S}} \times V_{S_{1},c_1} \times V_{S_{2},c_2}$ is exactly $Y_{\overline{S}}^{S_1,S_2}$. 
\item The induced morphism $ X_{S}^{S_1,S_2} \to Y_{\overline{S}}^{S_1,S_2}$ is a trivial fibration with fibers isomorphic to affine spaces of dimension $|S_{1}|+|S_{2}|-|S|$. 
\end{enumerate}
\end{proposition}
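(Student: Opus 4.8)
The plan is to follow the template used for Propositions~\ref{prop:incvar1} and~\ref{prop:incvar2} in~\ref{subsec:prop:proofs}, analysing the vertical morphism $\pi=\omega\times\mathrm{Id}\times\mathrm{Id}\colon X_{S}^{S_1,S_2}\to Y_{\overline S}^{S_1,S_2}$ fibrewise. By Lemma~\ref{lemma_omega}, $\omega\colon V_S\to V_{\overline S}$ is a trivial affine fibration whose fibre over $\overline U$ is the affine space $V_{k_{min},l,j}/\overline U$ of affine subspaces of the top cell $V_{k_{min},l,j}$ with prescribed hyperplane at infinity $\overline U$, of dimension $|B_{m,j}|-|S|$. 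Fix $(\overline U,U_1,U_2)\in Y_{\overline S}^{S_1,S_2}$ and write $\overline W\subseteq S_m[\rho_j]$, $\widetilde U_i\subseteq S_{m+1}[\rho_{c_i}]$ for the linear subspaces represented by $\overline U$, respectively $U_i$. A lift of $\overline U$ corresponds to a vector $w\in S_m[\rho_j]$, well defined modulo $\overline W$, with associated subspace $\widetilde W=\overline W\oplus\langle w\rangle$; since the degree $m+1$ part of the ideal $(\widetilde W)$ is $L_2\widetilde W=L_2\overline W+\langle xw,yw\rangle$, and since $\mathrm{pr}_{c_i}(L_2\overline W)\subseteq\widetilde U_i$ is precisely the condition cutting out $Y_{\overline S}^{S_1,S_2}$, membership of the lift in $X_S^{S_1,S_2}$ amounts to
\[ \mathrm{pr}_{c_1}(xw),\ \mathrm{pr}_{c_1}(yw)\in\widetilde U_1,\qquad \mathrm{pr}_{c_2}(xw),\ \mathrm{pr}_{c_2}(yw)\in\widetilde U_2,\]
where $\mathrm{pr}_{c_i}$ denotes the projection of $S_{m+1}$ onto its $\rho_{c_i}$-isotypic summand. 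These are affine-linear conditions on $w$, well posed modulo $\overline W$ exactly because $\mathrm{pr}_{c_i}(L_2\overline W)\subseteq\widetilde U_i$; hence $\pi^{-1}(\overline U,U_1,U_2)$ is an affine-linear subvariety of $V_{k_{min},l,j}/\overline U$.

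The inclusion $\mathrm{image}(\pi)\subseteq Y_{\overline S}^{S_1,S_2}$ in part~(1) is immediate, since $\overline W\subseteq\widetilde W$ forces $(\overline U)\subseteq(U)$ and so the defining inclusions of $X_S^{S_1,S_2}$ descend to $\overline U$. For the reverse inclusion and for part~(2), I would compute the rank of the system above from the explicit behaviour of $L_2=L_{2,x}+L_{2,y}$ on an element of a single cell $V_{k,l,j}$ --- the computation already carried out in the proof of Proposition~\ref{wallprop}, where $L_2u$ decomposes as $u_+\oplus u_-$, or as $u_+^1\oplus u_+^2\oplus u_-$, respectively $u_-^1\oplus u_-^2\oplus u_+$ (with one summand possibly absent), according to whether the adjacent divided block sits to the right of, respectively above, the block $(k,l)$. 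The hypothesis that $S$ is \emph{allowed} by $S_1,S_2$ says precisely that, for each $(k,l)\in S$, those half-blocks of the adjacent divided block that receive a nonzero $L_2$-image already lie in $S_1\cup S_2$. This guarantees, on the one hand, that a suitable lift has $(U)\cap P_{m+1,c_i}\subseteq U_i$, i.e.~consistency of the inhomogeneous system, whence $\mathrm{image}(\pi)=Y_{\overline S}^{S_1,S_2}$; and, on the other hand, that the system has rank $|B_{m,j}|-|S_1|-|S_2|$, so that the fibre is an affine space of dimension $(|B_{m,j}|-|S|)-(|B_{m,j}|-|S_1|-|S_2|)=|S_1|+|S_2|-|S|$. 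Constancy of this rank over $Y_{\overline S}^{S_1,S_2}$, together with the trivialisation of $\omega$ from Lemma~\ref{lemma_omega}, exhibits $X_S^{S_1,S_2}$ as a constant-rank family of affine subspaces inside the trivial bundle $Y_{\overline S}^{S_1,S_2}\times\SA^{|B_{m,j}|-|S|}$, and then, as in the proofs of Propositions~\ref{prop:incvar1}--\ref{prop:incvar2}, a trivial affine fibration; this is part~(2), and inserting it into part~(1) completes the proof.

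The step I expect to be the main obstacle is this uniform rank-and-consistency computation: verifying that ``allowed'' pins the rank to $|B_{m,j}|-|S_1|-|S_2|$ at \emph{every} point of $Y_{\overline S}^{S_1,S_2}$ and that the inhomogeneous term always lies in the column space. The divided-block configurations are the delicate ones: as observed in the proof of Proposition~\ref{wallprop}, $L_2u$ can drop from four- to three-dimensional, killing one of the two half-block components, which is exactly why ``allowed'' asks for only one half-block in $S_1\cup S_2$ when the divided block lies above, rather than both when it lies to the right. I would handle these cases by the same block-by-block bookkeeping over the cells of $P_{m,j}$, $P_{m+1,c_1}$ and $P_{m+1,c_2}$ used in Case~\ref{subsec:incvargr}.1, organising the argument so that each block of $S$ reduces the verification to the elementary two- or three-variable computation already underlying Cases~\ref{subsec:incvargr}.1 and~\ref{subsec:incvargr}.2.
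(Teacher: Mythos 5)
Your linearization is the right frame: translating $(U)\cap P_{m+1,c_i}\subseteq U_i$ into the conditions $\mathrm{pr}_{c_i}(xw),\mathrm{pr}_{c_i}(yw)\in\widetilde U_i$ on a lift $w$ of $\overline U$, well posed modulo $\overline W$ because $(\overline U,U_1,U_2)\in Y_{\overline S}^{S_1,S_2}$, is correct, and your dimension bookkeeping is consistent. But the step you defer as "the main obstacle" is the entire content of the proposition, and the mechanism you propose for it is not the right one. The rank of the system on the new generator $w$ involves neither $S$ nor the allowedness hypothesis (those only make $Y_{\overline S}^{S_1,S_2}$ and the index sets meaningful, and give well-posedness modulo $\overline W$). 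What pins the rank is an algebraic fact about multiplication by $x,y$ between isotypic components: the solution space of your homogeneous system is exactly the cone over the join $J(\phi_1(L_1^{-1}U_1),\phi_2(L_1^{-1}U_2))$, i.e.\ every $w$ with $\mathrm{pr}_{c_i}(L_2\langle w\rangle)\subseteq\widetilde U_i$ for $i=1,2$ splits as $w_1+w_2$ with $w_i$ of degree $m$ in the ideal generated by $L_1^{-1}\widetilde U_i$. The easy inclusion (join $\subseteq$ solutions) only bounds the fibre dimension from below; without the converse the rank could drop for special $(U_1,U_2)$, destroying both the dimension claim and its constancy. That converse is precisely what the paper proves via the join structure $P_{m,j}=J(N_{c_1}^0,N_{c_2}^0)$ of Lemma~\ref{lem:imgdescr} (resting on the explicit decompositions of Lemmas~\ref{l0} and~\ref{lem:otherms}) and the "if and only if" statements of Proposition~\ref{prop:1}(2),(5), packaged in Lemma~\ref{lem:incfiber}; the computation you cite from the proof of Proposition~\ref{wallprop} only records which components of $L_2u$ are nonzero for a single $u$, and does not give the splitting for arbitrary $w$ and arbitrary $U_1,U_2$. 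Note also that surjectivity onto $Y_{\overline S}^{S_1,S_2}$ (nonemptiness of the fibres, which are $(J(\phi_1(L_1^{-1}U_1),\phi_2(L_1^{-1}U_2))\cap V_{0,m,j})/\overline U$) uses that at least one of $S_1,S_2$ is maximal, so that one of the $\phi_i(L_1^{-1}U_i)$ meets the big cell $V_{0,m,j}$; this hypothesis never enters your argument.

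A second, smaller gap concerns triviality. Constancy of the fibre dimension of a linearly varying solution space gives at best an affine subbundle, i.e.\ a locally trivial affine fibration; the proposition asserts a \emph{trivial} one, and this is what the inductive proof of Theorem~\ref{thm:Zstrata} consumes via Proposition~\ref{prop:fiberproduct}. In Propositions~\ref{prop:incvar1}, \ref{prop:incvar2} and \ref{prop:incvar4} triviality comes from presenting $X$ as a fibrewise quotient of explicitly \emph{trivial} families (tautological bundles over Schubert cells are trivial); in the present case the paper needs in addition the base-change property of joins, Lemma~\ref{lem:joinbasechange}, to see that the family of joins $J(\mathcal{F}_1,\mathcal{F}_2)$ is trivial before quotienting by the trivial subfamily $\mathcal{F}$. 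So "as in Propositions~\ref{prop:incvar1}--\ref{prop:incvar2}" does not follow from constant rank alone; you would either have to reproduce this trivial-family construction, or argue triviality separately (e.g.\ your solution-space family is pulled back from the affine base $V_{S_1,c_1}\times V_{S_2,c_2}$, where a constant-rank kernel is a subbundle and one can invoke Quillen--Suslin), and then still check compatibility with the quotient by $\overline U$ over $Y_{\overline S}^{S_1,S_2}$.
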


We remark that the analogue of (3) of Propositions~\ref{prop:incvar1} and~\ref{prop:incvar2} is not true in this case. What happens to $X_{S}^{S_1,S_2}$ when we project $V_{S_{1},c_1} \times V_{S_{2},c_2}$ to $V_{\overline{S}_{1},c_1} \times V_{\overline{S}_{2},c_2}$ will be the subject of~\ref{Dnspecialsect} below.

\smallskip

\noindent{\bf Case \ref{subsec:incvargr}.4\ } Finally, assume $m \not\equiv 0, 1\; \mathrm{mod}\; n-2$ with a full block in position $(0,m)$. Let $c$ be the label of the full block immediately above this position, and $S_{c} \subseteq B_{m+1,c}$ a nonempty maximal subset. Let moreover $S \subseteq B_{m,j}$ be a maximal subset which is allowed by $S_{c}$; in this case, this means that above every block of $S$ there is a block in $S_c$. Consider the incidence 
varieties
\[ X_{S}^{S_c}=\{ (U, U_c)\;:\; (U)\cap P_{m+1,c} \subseteq U_c \} \subseteq V_{S} \times V_{S_{c},c}\]
and
\[ Y_{\overline{S}}^{S_c}=\{ (\overline{U}, U_c)\;:\; (\overline{U})\cap P_{m+1,c} \subseteq U_c \} \subseteq V_{\overline{S}} \times V_{S_{c},c}.\]
There is the following diagram:
\[
\begin{array}{ccccc}
 X_{S}^{S_c} & \subseteq & V_{S} \times V_{S_c,c} & \ra{\mathrm{Id} \times \omega}  &  V_{S} \times V_{\overline{S}_c,c}  \\ %[0.15cm]
& & \da{\omega \times \mathrm{Id}} & &\da{\omega \times \mathrm{Id} } \\
Y_{\overline{S}}^{S_c} & \subseteq &  V_{\overline{S}} \times V_{S_c,c}& \ra{\mathrm{Id} \times \omega} & V_{\overline{S}} \times V_{\overline{S}_c,c} \;.
\end{array}
\]

\begin{proposition}
\label{prop:incvar4}
\begin{enumerate}
\item The image of $X_{S}^{S_c}$ under the vertical morphism $ V_{S} \times V_{S_{c},c} \to V_{\overline{S}} \times V_{S_{c},c}$ is exactly $Y_{\overline{S}}^{S_c}$.
\item The induced morphism $X_{S}^{S_c} \rightarrow Y_{\overline{S}}^{S_c}$ is a trivial fibration over its image with affine fibers of dimension $|S_{c}|-|S|$. 
\item The horizontal morphism $X_{S}^{S_c} \rightarrow V_{S} \times V_{\overline{S}_{c},c}$ is injective.
\end{enumerate}
\end{proposition}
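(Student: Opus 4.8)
The statement is word-for-word identical to Propositions~\ref{prop:incvar1} and~\ref{prop:incvar2}, and the plan is to run the same argument as in~\ref{subsec:prop:proofs}, with the operator $L_1$ (which governed Cases~\ref{subsec:incvargr}.1--\ref{subsec:incvargr}.2) replaced by the degree-one ``upward'' multiplication $\mu\colon S_m[\rho_j]\to S_{m+1}[\rho_c]$, defined as the $\rho_c$-isotypic component of multiplication by $L_2=\langle x,y\rangle$; here $\rho_j$ is the label of the full block at $(0,m)$ and $\rho_c$ is the label of the full block at $(1,m)$ immediately above it, so that $\rho_c$ is one of the summands of $\rho_j\otimes\rho_{\mathrm{nat}}$. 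The first step is to establish for $\mu$ the analogue of Lemma~\ref{lem:codim1_2} and of the compatibility built into Proposition~\ref{prop:decomp of P}: that $\mu$ is injective on minimal invariant subspaces; that it is triangular with respect to the two stratifications, so that $\mu(V_{k,l,j})$ meets a cell $V_{k',l',c}$ only when $(k',l')$ lies weakly above-right of the target position (this is exactly the combinatorial content of the word \emph{allowed}); and in particular that $\mu$ carries the big cell $V_{k_{\min},\,m-k_{\min},j}$ of $P_{m,j}$ into the big cell of $P_{m+1,c}$. The last point uses that $(1,m)$ is the lowest label-$c$ block on the $(m+1)$-st antidiagonal, which holds once $m\not\equiv 0,1\bmod n-2$ rules out all half-block degenerations. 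All of this is the monomial bookkeeping already carried out, for a generic antidiagonal, in the proof of Proposition~\ref{wallprop} (the analysis of $L_2u=u_+\oplus u_-$ there): an element of the big cell has a basis member $x^ap$ with $p$ prime to $x$ and $y$, and the relevant component of $L_2u$ still contains a monomial not divisible by $xy$.

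Granting this, one direction of part~(1) is immediate: if $(U,U_c)\in X_S^{S_c}$ then $\overline U=\omega(U)=U\cap\overline P_{m,j}$ is a linear subspace of $U$, hence $(\overline U)\subseteq (U)$ as ideals and $(\overline U)\cap P_{m+1,c}\subseteq (U)\cap P_{m+1,c}\subseteq U_c$, i.e.\ $(\overline U,U_c)\in Y_{\overline S}^{S_c}$. For the reverse inclusion and for part~(2) I would fix $(\overline U,U_c)\in Y_{\overline S}^{S_c}$ and study the fibre of the vertical map over it. By Lemma~\ref{lemma_omega} the fibre of $\omega\colon V_S\to V_{\overline S}$ over $\overline U$ is the affine space $V_{k_{\min},\cdot,j}/\overline U$ of dimension $|B_{m,j}|-|S|$; writing a point of it as the join of $\overline U$ with a point $q$ of the big cell, so that the linear span is $\overline U+q$, the extra incidence condition $(U)\cap P_{m+1,c}\subseteq U_c$ becomes $\mu(q)\subseteq U_c$ (the inclusion $\mu(\overline U)\subseteq U_c$ being precisely the hypothesis $(\overline U,U_c)\in Y$), and in the affine coordinates coming from the operators $L_\bullet$ this is a system of affine-linear equations in $q$ with coefficients depending algebraically on $(\overline U,U_c)$. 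The claim to verify is that, for every $(\overline U,U_c)\in Y_{\overline S}^{S_c}$, this system is consistent and its solution locus is an affine subspace of the expected dimension $|S_c|-|S|$; this gives at once surjectivity onto $Y_{\overline S}^{S_c}$ (part~(1)) and the fact that the vertical map is a trivial affine fibration with fibre $\SA^{|S_c|-|S|}$ (part~(2)), triviality being automatic because the fibre is cut out by affine-linear equations varying algebraically over the base, as in the proof of Theorem~\ref{thm:dnorbcells}.

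For part~(3), the horizontal map sends $(U,U_c)$ to $(U,\overline U_c)$ with $\overline U_c=\omega(U_c)=U_c\cap\overline P_{m+1,c}$. Given $U$ and $\overline U_c$, the projective subspace $W:=(U)\cap P_{m+1,c}$ is determined; since $S$ is maximal, $U$ meets the big cell of $P_{m,j}$, and by the first step the $\mu$-image of that point lands in the big cell of $P_{m+1,c}$, so $W$ is not contained in the hyperplane $\overline P_{m+1,c}$. Now $U_c$ is a projective subspace of $P_{m+1,c}$ of the fixed dimension $|S_c|-1$ containing both the hyperplane section $\overline U_c$ and $W$; as $W$ meets the big cell, the affine subspace $W\cap V_{k'_{\min},\cdot,c}$ pins down the coset of $U_c\cap V_{k'_{\min},\cdot,c}$ in $V_{k'_{\min},\cdot,c}/\overline U_c$, hence $U_c$ is recovered from $U$ and $\overline U_c$, and the horizontal map is injective. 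This is also where the analogue of~(3) fails in Case~\ref{subsec:incvargr}.3: there the block above $(0,m)$ is divided, $\mu$ splits into two components landing in $P_{m+1,c_1}$ and $P_{m+1,c_2}$, no single big cell absorbs the image, and the reconstruction of $U_c$ breaks down.

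The step I expect to be the main obstacle is the dimension and consistency bookkeeping in the second paragraph: one must show that the affine-linear system cutting the fibre out of $V_{k_{\min},\cdot,j}/\overline U$ has exactly the expected rank, uniformly over $Y_{\overline S}^{S_c}$, which means carefully tracking how the codimension-one jumps of $\mu$ on the individual cells $V_{k,l,j}$ interact with the maximality of $S$ and $S_c$ and with the \emph{allowed} hypothesis. The compatibility statement for $\mu$ in the first paragraph is routine once it is set up, but has to be checked separately for the residue classes of $m$ modulo $n-2$; everything downstream is a transcription of the arguments for Propositions~\ref{prop:incvar1} and~\ref{prop:incvar2}.
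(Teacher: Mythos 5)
Your overall architecture (compute the fibre of the vertical map over a point of $Y_{\overline{S}}^{S_c}$, show it is an affine space of dimension $|S_c|-|S|$, deduce (1) and (2), and recover $U_c$ from $(U,\overline{U}_c)$ for (3)) is the same as the paper's, but the step you yourself flag as ``the main obstacle'' is exactly the nontrivial content of the proposition, and you have not supplied it. You reduce the fibre over $(\overline{U},U_c)$ to the solution locus of the affine-linear system $\mu(q)\subseteq\operatorname{span}(U_c)$ inside $V_{0,m,j}/\overline{U}$ and then only assert that it should be consistent of dimension $|S_c|-|S|$, uniformly over $Y_{\overline{S}}^{S_c}$. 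Nothing in your first paragraph (injectivity of $\mu$, triangularity, big cell to big cell) gives this: those facts bound the image but do not control the rank of the system, so as written the surjectivity in (1) and the fibre dimension in (2) remain unproven. The paper does not perform any such rank computation; it invokes Proposition~\ref{prop:1}(3),(6), already established, which say that for $v_j$ in the big cell $V_{0,m,j}$ the intersection $(v_j)\cap V_{1,m,c}$ equals $L_1v_c$ for a \emph{unique} $v_c\in V_{0,m-1,c}$ with $v_j\in(v_c)$, and that $(v_j)\cap\bigl(\bigcup_{l>0}V_{l,m+1-l,c}\bigr)\subseteq L_1U'$ if and only if $v_j\in(U')$. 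Composing $L_1^{-1}$ with the isomorphism $V_{0,m-1,c}\to V_{0,m,j}$ gives an isomorphism $V_{1,m,c}\cong V_{0,m,j}$, under which the fibre is literally $\bigl((L_1^{-1}U_c)\cap V_{0,m,j}\bigr)/\overline{U}$, visibly a nonempty affine space of dimension $|S_c|-|S|$. So the missing ingredient in your plan is precisely the analogue of these uniqueness/isomorphism statements for your operator $\mu$, and proving them amounts to redoing Proposition~\ref{prop:1} rather than to routine bookkeeping.

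Two smaller points. Triviality of the fibration is not ``automatic because the fibre is cut out by affine-linear equations varying algebraically'': that only gives a family of affine subspaces, and the base $Y_{\overline{S}}^{S_c}$ is not an affine space, so no Quillen--Suslin argument applies; the paper gets triviality by exhibiting $X_S^{S_c}=\mathcal{F}_c/\overline{\mathcal{F}}$ as a fibrewise quotient of two \emph{trivial} families built from tautological bundles over Schubert cells. Your argument for (3) is essentially the paper's and is fine once the facts about how $(U)$ meets $V_{1,m,c}$ are in place, i.e.\ once Proposition~\ref{prop:1} (or your unproven analogue for $\mu$) is available.
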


\subsection{Proof of Theorem \ref{thm:Zstrata}}
\label{sec:proof:orbicells}

In this section we prove Theorem \ref{thm:Zstrata}, thus completing the proof of Theorem~\ref{thm:dnorbcells}, using the constructions and results stated in~\ref{subsec:incvargr}. Given a Young wall $Y\in{\mathcal Z}_\Delta$, we need to show that the corresponding stratum $Z_Y$ is an affine space. The following is the key combinatorial definition which underlies much of the rest of the paper. 

\begin{definition}
\label{def:globsalient}
Consider the Young wall $Y$, as usual in the transformed pattern. The \emph{salient blocks} of $Y$ are those blocks in the complement of $Y$, whose absence from $Y$ does not follow from the shape of the rows below it, and which are at the leftmost positions in their rows with this property. In particular, these are
\begin{itemize}
\item missing half blocks under which there is a block in $Y$;
\item missing undivided full blocks under which there is a block in $Y$;
\item missing divided full blocks immediately to the right of the boundary of $Y$;
\item the leftmost missing block(s) in the bottom row.
\end{itemize}
\end{definition}
Given an ideal $I\in Z_Y$, it is easy to see that $I$ is necessarily generated by elements lying in cells corresponding to the salient blocks of $Y$. In most cases it is also true that all cells corresponding to salient blocks must contain a generator, but not always; we have already seen~Example~\ref{ex:3sidepyr}, where the divided missing blocks at position $(1,3)$ are salient blocks of $Y_3$, since they lie immediately to the right of the boundary of $Y_3$, but the corresponding cells do not necessarily contain generators of an ideal $I\in Z_{Y_3}$.

We start our analysis by defining maps from the strata $Z_Y$ to the Grassmannian cells defined in~\ref{subsec:schubertcells}. Consider an arbitrary block or half-block of label $j$ at position $(k,l)$ in the Young wall pattern. Let $S(k,l,j) \subseteq B_{k+l,j}$ be the set of blocks of label $j$ from the $(k+l)$-th antidiagonal which are not in $Y$ and are above and including the position $(k,l)$. $S(k,l,j)$ is called the {\em index set} of $(k,l)$ in $Y$. If the block of label $j$ at position $(k,l)$ is not contained in $Y$, then the index set $S=S(k,l,j)$ contains $(k,l)$ as well. By the correspondence discussed at the end of \ref{subsec:schubertcells} between the maximal Schubert cells of the relevant Grassmannian for $V_{k,l,j}$ and subsets of  $B_{k+l,j}(k)$ which do contain $(k,l)$, there is a maximal Schubert cell $V_{S,j}$ corresponding to $S$.  The affine cell $V_{S,j} \subset G^r_{k+l,j}$ for $r=|S|$ parameterizes certain affine subspaces of $V_{k,l,j}$, or, equivalently, projective subspaces of $P_{k+l,j}$, the projectivization of the space of degree $(k+l)$ homogeneous polynomials which transform in the representation $\rho_j$ with respect to $G_{\Delta}$. The correspondence is by projective closure of the corresponding affine subspace of $V_{k,l,j}$. %We are not interested in the case when $Y$ contains the block of label $j$ at position $(k,l)$.  %See \autoref{rem:klinY} below the case when the block of label $j$ at $(k,l)$ is contained in $Y$.

\begin{lemma} 
\label{lemma:grass-cells-morphism} 
For any block or half-block at position $(k,l)$ which is not contained in $Y$ and has index set $S=S(k,l,j)$,
there is a morphism \[ \begin{array}{rcl} Z_Y & \rightarrow & V_{S,j} \\  I & \mapsto & I \cap V_{k,l,j}.\end{array}\]
\end{lemma}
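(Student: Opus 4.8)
The plan is to produce the morphism from the universal family, using the compatibility of the Grassmannian cell decompositions with degeneration that is built into the construction of Section~\ref{sec:Ywall_to_ideal}. First I would recall from Remark~\ref{rem:Zyuniv} that over $Z_Y$ we have the universal family of homogeneous invariant ideals ${\mathcal U}_Y\lhd {\mathcal O}_{Z_Y}\otimes\SC[x,y]$, and that by Lemma~\ref{lem:intdim} (together with Remark~\ref{rem:Zyuniv}(2)) the multigraded Hilbert function of each member is constant on $Z_Y$, determined by $Y$. In particular, for $m=k+l$, the sheaf ${\mathcal U}_Y\cap ({\mathcal O}_{Z_Y}\otimes S_m[\rho_j])$ is a subbundle of ${\mathcal O}_{Z_Y}\otimes S_m[\rho_j]$ of constant rank equal to the number of absent label-$j$ blocks on the $m$-th antidiagonal, i.e.\ $|S(k,l,j)|$ plus the number of absent blocks strictly \emph{above} position $(k,l)$ that are already forced. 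Taking the $G_\Delta$-invariant subspace of minimal dimension inside each fibre — which is a well-defined linear-algebra operation varying algebraically, since $\rho_j$ is either one- or two-dimensional and the isotypic multiplicity is locally constant — produces a morphism $Z_Y\to G^{r}_{m,j}$ for the appropriate $r=|S(k,l,j)|$, $I\mapsto I_{m,j}$ in the notation of Section~\ref{sec:Ywall_to_ideal}.

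Next I would identify the image of this morphism with the cell $V_{S,j}$. By the very definition of the profile $Y=Y_I$ (Definition~\ref{def_wall_to_ideal}), a block $(k',l')$ of label $j$ on the $m$-th antidiagonal is absent from $Y$ precisely when $I_{m,j}$ meets the stratum $V_{k',l',j}$ of Proposition~\ref{prop:decomp of P}. Hence for every $I\in Z_Y$, the projective linear subspace $I_{m,j}\subset P_{m,j}$ meets exactly the cells $V_{k',l',j}$ with $(k',l')$ absent, and, because $\{V_{k',l',j}\cap I_{m,j}\}$ is a stratification of $I_{m,j}$ into affine spaces of strictly decreasing dimension, the restriction of $I_{m,j}$ to the truncated index set at height $k$ — i.e.\ the subspace $\overline{(\,\cdot\,)}$ obtained by discarding the cells with first coordinate $<k$ — is a point of the maximal Schubert cell $V_{S,j}\subset G^{|S|}_{m,j}$ associated to $S=S(k,l,j)$, as explained at the end of~\ref{subsec:schubertcells}. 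Equivalently, $I\cap V_{k,l,j}$ (the affine subspace of $V_{k,l,j}$ cut out by intersecting the projective subspace $I_{m,j}$ with the affine cell $V_{k,l,j}$, then passing to its set of ``points at infinity'' in the higher cells) lands in $V_{S,j}$. This gives the claimed map $I\mapsto I\cap V_{k,l,j}$ on points; the algebraicity follows from the previous paragraph together with the fact that passing from a subbundle to its Schubert-cell representative with respect to the fixed flag defining the $V_{k',l',j}$ is a regular operation on the locus where the incidence pattern with the flag is constant — and that locus is all of $Z_Y$ by the definition of $Z_Y$.

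The main obstacle, I expect, is purely bookkeeping rather than conceptual: reconciling the two descriptions of the target — $I_{m,j}$ as a projective subspace of $P_{m,j}$ versus $I\cap V_{k,l,j}$ as an affine subspace of the single cell $V_{k,l,j}$ with its ``ideal points at infinity'' — and checking that the truncation $B_{m,j}\rightsquigarrow B_{m,j}(k)$ and the corresponding $\overline{S}$-ambiguity flagged at the end of~\ref{subsec:schubertcells} are handled consistently, so that the rank $r=|S(k,l,j)|$ is exactly the dimension of the affine span of $I\cap V_{k,l,j}$ inside $V_{k,l,j}$ (after adding the ideal point, i.e.\ $r-1$ for its projective closure). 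Once the indexing conventions are pinned down, regularity is immediate from the universal family and constancy of the Hilbert function, and no genuinely new geometric input beyond Proposition~\ref{prop:decomp of P} and Remark~\ref{rem:Zyuniv} is needed.
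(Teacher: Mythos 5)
Your proposal is correct and follows essentially the same route as the paper: restrict the universal family ${\mathcal U}_Y$ of Remark~\ref{rem:Zyuniv} to the degree-$(k+l)$, weight-$\rho_j$ piece, use constancy of the multigraded Hilbert function to get a subbundle and hence a classifying morphism to $G^{r}_{m,j}$, and then invoke the definition of $Z_Y$ to see that the image lies in the Schubert cell $V_{S,j}$. The bookkeeping issues you flag (truncation $B_{m,j}(k)$ and the affine-versus-projective descriptions) are exactly what the paper dispatches with ``the general case follows similarly.''
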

\begin{proof}
 Assume first for simplicity that $(k,l)=(0,m)$. Let ${\mathcal U}_Y\lhd \left({\mathcal O}_{Z_Y}\otimes \SC[x,y]\right)$ be the universal family of homogeneous ideals over $Z_Y$ introduced in Remark~\ref{rem:Zyuniv}. Consider the subfamily ${\mathcal V} = {\mathcal U}_Y \cap \left({\mathcal O}_{Z_Y} \otimes S_m[\rho_j] \right)$. This is a family of subspaces of dimension $r\dim{\rho_j}$ in $S_m[\rho_j]$ parameterized by $Z_Y$, where $r=|S|$. It is known, that the morphisms from $Z_Y$ to the equivariant Grassmannian  $G^r_{m,j}$ of $S_m[\rho_j]$ are in one-to-one correspondence with $G_\Delta$ invariant subbundles of ${\mathcal O}_{Z_Y} \otimes S_m[\rho_j]$ of  rank $r\dim{\rho_j}$ \cite[p. 88, Theorem 2.4]{eisenbud20163264}. Hence, there is a classifying morphism $Z_Y \to G^r_{m,j}$ inducing ${\mathcal V}$.
  
  %By \cite[Ch III. Thm. 9.9]{hartshorne1977algebraic} 
  By the definition of $Z_Y$, the multigraded Hilbert polynomial of ${\mathcal U}_Y$ is constant. The Hilbert polynomial encodes the dimensions of the intersections with the cells of $P_{m,j}$. Therefore, over closed points of $Z_Y$ the elements of the family ${\mathcal V}$, when considered as projective subspaces of $P_{m,j}$, intersect exactly the cells $V_{k_i,l_i,j}$ for each element $(k_i,l_i) \in S$. As in~\ref{subsec:schubertcells}, these projective subspaces are represented by points in the Schubert cell $V_{S,j} \subset G^r_{m,j}$. Hence, the image of the classifying morphism of ${\mathcal V}$ is inside the cell $V_{S,j}$. By the construction, %for an $I \in Z_Y $ 
 the classifying morphism is just the same as taking the intersection of $I_{m,j}$ with $V_{0,m,j}$. This is denoted as $I \cap V_{0,m,j}$ above.
  
  The general case follows similarly.
\end{proof}

We will prove Theorem \ref{thm:dnorbcells} by induction on the number of nonempty rows of $Y$. Consider an arbitrary Young wall~$Y$ consisting of~$l>0$ rows. Let $\overline{Y}$ denote the Young wall obtained from $Y$ by deleting its bottom row; we will call this the {\em truncation} of $Y$. Of course the labels of the half blocks are exchanged by $\kappa$, but we will supress this in the notations. The following result will be key to our induction. 

\begin{lemma} There exists a morphism of schemes
\[\begin{array}{rcccl} T & : & Z_Y & \rightarrow & Z_{\overline{Y}} \\ &&  I & \mapsto & L_1^{-1} \left(I \cap L_1 \SC[x,y] \right). \end{array}\]
\label{lemma:inductive_morphism}
\end{lemma}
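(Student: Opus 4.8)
The plan is to notice first that, since $L_1$ is multiplication by $xy$ and is injective on $\SC[x,y]$, the assignment $I\mapsto L_1^{-1}(I\cap L_1\SC[x,y])$ is exactly the colon ideal $\bar I:=(I:xy)$. I would then prove the Lemma in two stages: a pointwise analysis showing that on closed points $T$ sends $Z_Y$ to $Z_{\overline Y}$ by identifying the colon operation with the combinatorial truncation, followed by a family version that upgrades this to a morphism of schemes.

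\emph{Pointwise stage.} For $I\in Z_Y$ the ideal $\bar I=(I:xy)$ is homogeneous, and $G_\Delta$-invariant because $xy$ spans the sign representation $\rho_1$. Multiplication by $xy$ gives an isomorphism $S_m[\rho_j]\xrightarrow{\sim}L_1S_m[\rho_j]\subseteq S_{m+2}[\rho_{\kappa(j)}]$, so $S/\bar I\cong L_1\SC[x,y]/(L_1\SC[x,y]\cap I)$ embeds into $S/I$ and $\bar I$ has finite colength. The same isomorphism identifies $\bar I\cap S_m[\rho_j]$ with $(I\cap S_{m+2}[\rho_{\kappa(j)}])\cap L_1S_m[\rho_j]$; since passing to the minimal nontrivial $G_\Delta$-invariant subspace commutes with intersections of $G_\Delta$-subrepresentations inside an isotypic module, the projective linear subspace $\bar I_{m,j}\subseteq P_{m,j}$ corresponds under the embedding $L_1\colon P_{m,j}\hookrightarrow P_{m+2,\kappa(j)}$ of Lemma~\ref{lem:codim1_2} to $I_{m+2,\kappa(j)}\cap L_1P_{m,j}$. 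By the inductive construction of the cell decomposition in Proposition~\ref{prop:decomp of P}, $L_1$ carries the cell $V_{k,l,j}$ isomorphically onto the cell $V_{k+1,l+1,\kappa(j)}$, which is contained in $L_1P_{m,j}$; hence $\bar I_{m,j}\cap V_{k,l,j}=\emptyset$ if and only if $I_{m+2,\kappa(j)}\cap V_{k+1,l+1,\kappa(j)}=\emptyset$. Thus a block $(k,l)$ of label $j$ lies in the profile of $\bar I$ exactly when $(k+1,l+1)$ of label $\kappa(j)$ lies in $Y$, and a direct check against the dictionary between the two patterns recorded in Remark~\ref{rem:corr_young_rules} shows this condition describes precisely $\overline Y$. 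So $\bar I\in Z_{\overline Y}$; moreover, by Lemma~\ref{lem:intdim} the multigraded Hilbert function of $\bar I$ depends only on $\overline Y$, hence is constant as $I$ ranges over $Z_Y$.

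\emph{Family stage.} Over $Z_Y$ I would run the same construction on the universal family ${\mathcal U}_Y\lhd {\mathcal O}_{Z_Y}\otimes\SC[x,y]$ of Remark~\ref{rem:Zyuniv}. Each graded--isotypic piece ${\mathcal U}_Y\cap({\mathcal O}_{Z_Y}\otimes S_{m+2}[\rho_{\kappa(j)}])$ is a subbundle, its fibrewise rank being constant on $Z_Y$ by Remark~\ref{rem:Zyuniv}; intersecting it with the constant subbundle ${\mathcal O}_{Z_Y}\otimes L_1S_m[\rho_j]$ again yields a subbundle, since by the pointwise stage together with Lemma~\ref{lem:intdim} the fibrewise rank $\dim(\bar I\cap S_m[\rho_j])$ is constant; applying the linear isomorphism $L_1^{-1}$ produces a subbundle $\bar{\mathcal U}_{m,j}\subseteq {\mathcal O}_{Z_Y}\otimes S_m[\rho_j]$. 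Their direct sum $\bar{\mathcal U}_Y$ is a $G_\Delta$-invariant, homogeneous subsheaf of ${\mathcal O}_{Z_Y}\otimes\SC[x,y]$, locally free in every graded piece, whose fibre at $I$ is $\bar I$. Because each fibre is an ideal and $Z_Y$ is reduced, $\bar{\mathcal U}_Y$ is stable under multiplication by $\SC[x,y]$, hence a flat family of homogeneous $G_\Delta$-invariant ideals of constant finite colength. It therefore defines a classifying morphism $Z_Y\to\mathrm{Hilb}([\SC^2/G_\Delta])$, which factors through the $T$-fixed locus since all the ideals are homogeneous, and then through the reduced locally closed stratum $Z_{\overline Y}$ by the pointwise stage. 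This is the morphism $T$.

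\emph{Main obstacle.} The delicate step is the pointwise identification of $I\mapsto(I:xy)$ with the truncation $Y\mapsto\overline Y$: one has to verify that the injections $L_1\colon P_{m,j}\hookrightarrow P_{m+2,\kappa(j)}$ are compatible with the cell decompositions built inductively in Proposition~\ref{prop:decomp of P} — that is, cells are sent to cells — and to combine this with the good behaviour of minimal invariant subspaces under intersection. Once this is established, the family stage is routine constant-rank bookkeeping, and the passage from a flat family to a classifying morphism is standard.
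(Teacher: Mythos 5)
Your proposal is correct and follows essentially the same route as the paper: form the family version of the colon ideal $L_1^{-1}({\mathcal U}_Y\cap L_1({\mathcal O}_{Z_Y}\otimes\SC[x,y]))$ over $Z_Y$, identify its fibres as homogeneous invariant ideals with Young wall $\overline Y$, deduce flatness from the constancy of the multigraded Hilbert function together with reducedness of $Z_Y$, and conclude via the universal property of $Z_{\overline Y}$ from Remark~\ref{rem:Zyuniv}. The only differences are expository: you verify flatness by exhibiting each graded isotypic piece as a constant-rank subbundle where the paper cites Hartshorne III.9.9, and you spell out the cell-by-cell compatibility $L_1V_{k,l,j}=V_{k+1,l+1,\kappa(j)}$ that the paper treats as clear.
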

\begin{proof} Let ${\mathcal U}_Y\lhd \left({\mathcal O}_{Z_Y}\otimes \SC[x,y]\right)$ be the universal family of homogeneous ideals over $Z_Y$. Consider 
${\mathcal I} = L_1^{-1} \left({\mathcal U}_Y \cap L_1 ({\mathcal O}_{Z_Y}\otimes\SC[x,y]) \right)$. It is straightforward to check locally that this is still a sheaf of ideals in ${\mathcal O}_{Z_Y}\otimes \SC[x,y]$. On closed points of $Z_Y$, it is also clear that the restriction has Young wall $\overline Y$.  As mentioned above, the multigraded Hilbert polynomial of ${\mathcal U}_Y$ is constant. As $Z_Y$ is reduced, it then follows from \cite[Ch III. Thm. 9.9]{hartshorne1977algebraic} that ${\mathcal I}$ is a flat family of homogeneous ideals with Young wall $\overline Y$ over $Z_Y$. By Remark \ref{rem:Zyuniv} there is a classifying morphism $Z_Y\to Z_{\overline Y}$  for this family, which is exactly the morphism $T$. 
\end{proof}

Next, we continue with an investigation of the combinatorics of the bottom two rows of our Young wall~$Y$. The boundary of~$Y$ in the transformed pattern is divided by the blocks into horizontal, vertical and diagonal straight line segments. The first two lines in the bottom can be connected in the following six possible ways:
\begin{center}
%\begin{tikzpicture}[scale=0.5]
%\draw (0,0) -- (0,1);
%\draw (0,0) -- (1,0);
%\draw (0.5,-0.5) node {A};
%\end{tikzpicture}\qquad
\begin{tikzpicture}[scale=0.5]
\draw (0.5,0) -- (0.5,2);
\draw (0.5,0) -- (1.5,0);
%\draw (1,1) -- (0,1);
\draw (1,-0.5) node {A1};
\end{tikzpicture}\qquad
\begin{tikzpicture}[scale=0.5]
\draw (1,0) -- (1,1);
\draw (1,1) -- (0,1);
\draw (1,0) -- (2,0);
\draw (1,-0.5) node {A2};
\end{tikzpicture}\qquad
\begin{tikzpicture}[scale=0.5]
\draw (0,0) -- (0,1);
\draw (0,1) -- (1,2);
\draw (0,0) -- (1,0);
\draw (0.5,-0.5) node {A3};
\end{tikzpicture}\qquad

%\begin{tikzpicture}[scale=0.5]
%\draw (0,0) -- (1,1);
%\draw (0,0) -- (1,0);
%\draw (0.5,-0.5) node {B};
%\end{tikzpicture}\qquad
\begin{tikzpicture}[scale=0.5]
\draw (0,0) -- (1,1);
\draw (1,1) -- (1,2);
\draw (0,0) -- (1,0);
\draw (0.5,-0.5) node {B1};
\end{tikzpicture}\qquad
\begin{tikzpicture}[scale=0.5]
\draw (0,0) -- (1,1);
\draw (1,1) -- (0,1);
\draw (0,0) -- (1,0);
\draw (0.5,-0.5) node {B2};
\end{tikzpicture}\qquad
\begin{tikzpicture}[scale=0.5]
\draw (0,0) -- (2,2);
%\draw (1,1) -- (1,2);
\draw (0,0) -- (1,0);
\draw (1,-0.5) node {B3};
\end{tikzpicture}
\end{center}
Here a diagonal straight line borders a half block of the Young wall, which can be either a lower or an upper triangle. In the A cases the salient block in the bottom row is a full block, while in the B cases it is a half block. 

Let the salient block of $Y$ in its bottom row be at position $(0,m)$. It can be either a divided or undivided full block, or a half block. In the first case, we have a type A corner at the bottom of $Y$, while in the second case there is a type B corner. As in~\ref{subsec:incvargr}, we need to consider four cases. In each case, we are going to define morphisms $Z_Y \to {\mathcal X}_{Y}$ and $Z_{\overline{Y}} \to {\mathcal Y}_{ \overline{Y}}$ to incidence varieties defined in~\ref{subsec:incvargr}. 

\smallskip

\noindent{\bf Case \ref{sec:proof:orbicells}.1\ } Assume $m \equiv 0\; \mathrm{mod}\; n-2$ and we have vertex types A1 or A2 (A3 is not possible in this case). We are in the context of Case~\ref{subsec:incvargr}.1: the divided block at position $(0,m)$ has labels $(c_1, c_2)$, and  index sets $S_1, S_2$; the block at position $(1,m)$ has label $c$, and index set $S_c$. By the Young wall rules for $Y$, $S_1, S_2$ is allowed by $S_c$. Lemma~\ref{lemma:grass-cells-morphism} implies that there is a morphism 
\[ \begin{array}{ccc} Z_Y & \to & V_{S_1} \times V_{S_2} \times V_{S_c} \\
I & \mapsto & (I \cap V_{0,m,c_1}, I \cap V_{0,m,c_2}, I \cap V_{1,m,c}).\end{array}\]
By construction, the image of this morphism is contained in the incidence variety $X_{S_1,S_2}^{S_c}\subset V_{S_1} \times V_{S_2} \times V_{S_c}$ from Case~\ref{subsec:incvargr}.1. Denote ${\mathcal X}_{Y}=X_{S_1,S_2}^{S_c}\subseteq V_{S_1} \times V_{S_2} \times V_{S_c}$; we thus obtain an induced morphism $Z_Y \to {\mathcal X}_{Y}$.

There is also a morphism %[ezt nem ertem! mik itt az index halmazok?]
\[ \begin{array}{ccc} Z_{\overline{Y}} & \to & V_{\overline{S}_1} \times V_{\overline{S}_2 } \times V_{S_c}\\ I & \mapsto & (L_1 I \cap V_{k_{1},l_1,c_1},L_1 I \cap V_{k_{2},l_2,c_2},L_1 I \cap V_{1,m,c}),\end{array}\]
where $(k_i,l_i)$ is the lowest block in $\overline{S}_i$ for $i=1,2$. We obtain an induced morphism $Z_{\overline{Y}} \to {\mathcal Y}_{\overline{Y}}$, where ${\mathcal Y}_{\overline{Y}}=Y_{\overline{S}_1,\overline{S}_2}^{S_c}$. 

\smallskip

\noindent{\bf Case \ref{sec:proof:orbicells}.2\ } Assume $m \equiv 0\; \mathrm{mod}\; n-2$ with vertex types B1 to B3. This is Case~\ref{subsec:incvargr}.2: the block at position $(0,m)$ has label $\kappa(c)$, and index set $S=S_{\kappa(c)}$; the block at position $(1,m+1)$ has label $c$, and index set  $S_{c}$. We consider the morphisms
\[ \begin{array}{ccc}Z_Y & \to & V_{S} \times V_{S_c} \\ I & \mapsto & (I \cap V_{0,m,\kappa(c)},I \cap V_{1,m+1,c})\;, \end{array}\]
and
\[ \begin{array}{ccc} Z_{\overline{Y}} & \to & V_{\overline{S}} \times V_{S_c} \\ I & \mapsto & (L_1I \cap V_{k,l,\kappa(c)},L_1I \cap V_{1,m+1,c}),\end{array} \]
where again $(k,l)$ is the lowest block in $\overline{S}$. In this case we let ${\mathcal X}_{Y}=X_{S}^{S_c}$ and ${\mathcal Y}_{\overline{Y}}=Y_{\overline{S}}^{S_c}$. The images of the morphisms above are contained in these.

\smallskip

\noindent{\bf Case \ref{sec:proof:orbicells}.3\ }  Assume $m \equiv 1\; \mathrm{mod}\; n-2$  with vertex types A1, A2, A3. This is Case~\ref{subsec:incvargr}.3: $c_1$ and $c_2$ are the labels of the divided block immediately above the block at position $(0,m)$, $S_{1} \subseteq B_{m+1,c_1}$, $S_{2} \subseteq B_{m+1,c_2}$ are their index sets, both (in cases A1 and A2) or one (in case A3) of which is maximal; $S$ is the index
set of the block at position $(0,m)$ with label $j$. We get a morphism 
\[ \begin{array}{ccc} Z_Y & \to & V_{S}\times V_{S_{1},c_1} \times V_{S_{2},c_2} \\ I & \mapsto & (I \cap V_{0,m,j}, I \cap V_{1,m,c_1}, I \cap V_{1,m,c_2})\end{array}\]
whose image is contained in ${\mathcal X}_{Y}=X_{S}^{S_1,S_2}$. In this way we obtain an induced morphism $Z_Y \to {\mathcal X}_{Y, \overline{Y}}$. Similarly, consider the morphism 
\[ \begin{array}{ccc} Z_{\overline{Y}} & \to & V_{\overline{S}} \times V_{S_{1},c_1} \times V_{S_{2},c_2} \\ I & \mapsto & (L_1 I \cap V_{k,l,j},L_1 I \cap V_{1,m,c_1}, L_1 I \cap V_{1,m,c_2}),\end{array}\]
where $(k,l)$ is the lowest block in $\overline{S}$. We obtain an induced morphism $Z_{\overline{Y}} \to {\mathcal Y}_{\overline{Y}}$, where ${\mathcal Y}_{\overline{Y}}=Y_{\overline{S}}^{S_1,S_2}$. 

\smallskip

\noindent{\bf Case \ref{sec:proof:orbicells}.4\ } Assume finally that $m \not\equiv 1\; \mathrm{mod}\; n-2$ with vertex types A1 or A2. This is Case~\ref{subsec:incvargr}.4: the full block at position $(0,m)$ has label $j$ and index set $S$ which is maximal; the full block at position $(1,m)$ has label $c$ and index set $S_{c}$; $S$ is allowed by $S_{c}$. 
We get a morphism
\[ \begin{array}{ccc} Z_Y &\to & V_{S} \times V_{S_{c}} \\ I & \mapsto & (I \cap V_{0,m,j}, I \cap V_{1,m,c})\end{array}\]
whose image is contained in $X_{S}^{S_c} \subseteq V_{S} \times V_{S_{c}}$, an incidence variety we denote by 
${\mathcal X}_{Y}$ to obtain an induced morphism $Z_Y \to {\mathcal X}_{Y, \overline{Y}}$. 

Second, let
\[ \begin{array}{ccc} Z_{\overline{Y}} & \to & V_{\overline{S}} \times V_{S_c} \\ I & \mapsto & (L_1 I \cap V_{k,l,j},L_1 I \cap V_{1,m,c}),\end{array}\]
where $(k,l)$ is the lowest block in $\overline{S}$. By letting ${\mathcal Y}_{\overline{Y}}=Y_{\overline{S}}^{S_c}$ we obtain an induced morphism $Z_{\overline{Y}} \to {\mathcal Y}_{Y, \overline{Y}}$.
\smallskip

The last key step in our inductive proof is the following result, valid in all four cases above.
\begin{proposition} The following is a scheme-theoretic fiber product diagram, with the right hand vertical map in each case given by the map induced by statement (1) of Propositions \ref{prop:incvar1},  \ref{prop:incvar2} \ref{prop:incvar3} or \ref{prop:incvar4} as appropriate.
\label{prop:fiberproduct}
\begin{equation}
\label{eq:prfibeq}
\begin{array}{ccc}
Z_Y  &\ra{\phantom}  & {\mathcal X}_{Y}  \\
\da{T}& &\da{\omega \times \mathrm{Id}} \\
Z_{\overline{Y}} & \ra{\phantom} & {\mathcal Y}_{\overline{Y}}.
\end{array}
\end{equation}
\end{proposition}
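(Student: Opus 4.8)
The plan is to identify $Z_Y$ with the fiber product $W := Z_{\overline Y}\times_{{\mathcal Y}_{\overline Y}}{\mathcal X}_{Y}$ by producing mutually inverse morphisms, working throughout with the universal families of homogeneous invariant ideals and the universal properties recorded in Remark~\ref{rem:Zyuniv}. The first step is to check that the square~\eqref{eq:prfibeq} commutes. On the universal family ${\mathcal U}_Y$ over $Z_Y$, one composite sends an ideal $I$ to the tuple of intersections of $I$ with the salient cells of the bottom row of $Y$ (and the cell immediately above), followed by the operation $\omega$ of passing to ``ideal points at infinity'', i.e.\ intersecting with the codimension-one subspaces $\overline P_{m,\bullet}$; the other composite first forms the truncation $T(I)=L_1^{-1}(I\cap L_1\SC[x,y])$ of Lemma~\ref{lemma:inductive_morphism} and then intersects $L_1 T(I)$ with the corresponding cells one antidiagonal lower. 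These agree because, by Lemma~\ref{lem:codim1_2}, $\overline P_{m,\bullet}$ is exactly the image of $L_1$ from the previous antidiagonal, so that $\omega$ restricted there coincides with $L_1$ applied to intersection-with-a-cell downstairs, while $T$ is by its very definition the operation $L_1^{-1}\big(\,\cdot\,\cap L_1\SC[x,y]\big)$; this is a direct check from the definitions of the maps in Cases~\ref{sec:proof:orbicells}.1--\ref{sec:proof:orbicells}.4. The commuting square then yields a canonical morphism $\Phi\colon Z_Y\to W$.

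Next I would construct the inverse $\Psi\colon W\to Z_Y$ using the universal property of $Z_Y$ (Remark~\ref{rem:Zyuniv}(1)). Over $W$ there is a tautological flat family ${\mathcal J}$ of homogeneous invariant ideals with Young wall $\overline Y$, pulled back along $W\to Z_{\overline Y}$, together with the tautological invariant subspaces in the salient cells of the bottom row of $Y$, pulled back along $W\to{\mathcal X}_Y$ and the defining embedding of ${\mathcal X}_Y$ into the relevant product of Schubert cells. Define ${\mathcal I}\lhd{\mathcal O}_W\otimes\SC[x,y]$ to be the ideal generated by $L_1{\mathcal J}$ together with these subspaces; one checks locally that ${\mathcal I}$ is a sheaf of ideals, and then that it is flat over $W$ with multigraded Hilbert function constant and equal to the one dictated by $Y$. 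By \cite[Ch.~III, Thm.~9.9]{hartshorne1977algebraic} this reduces to a fibrewise computation of the dimensions $\dim({\mathcal I}\cap S_m[\rho_j])$ via Lemma~\ref{lem:intdim}: on antidiagonals not meeting the bottom two rows these are inherited from the $\overline Y$-family, while on the bottom two antidiagonals they are exactly what is forced by the incidence conditions cutting out ${\mathcal X}_Y$ together with the compatibility condition encoded by membership in ${\mathcal Y}_{\overline Y}$ --- this is precisely the content of parts (1)--(2) of Propositions~\ref{prop:incvar1}--\ref{prop:incvar4}, according to which of the four cases we are in. This exhibits ${\mathcal I}$ as a family with Young wall $Y$, and the universal property gives $\Psi$.

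Finally I would check that $\Phi$ and $\Psi$ are mutually inverse on the level of families. In one direction, $\Psi\circ\Phi$ reconstructs $I\in Z_Y$ from $T(I)$ and the intersections of $I$ with the salient bottom-row cells via the identity $I=L_1 T(I)+\langle\,I\cap V_{0,m,\bullet},\,I\cap V_{1,m,c}\,\rangle$; this holds because every element of $I$ is either divisible by $xy$, hence lies in $L_1 T(I)=I\cap L_1\SC[x,y]$, or else, being a generator, lies in a cell of a bottom-row salient block of $Y$ --- using the observation after Definition~\ref{def:globsalient} that an ideal in $Z_Y$ is generated by elements in the cells of its salient blocks, together with the fact (from the construction of the stratification of $P_{m,j}$) that the cells $V_{k,l,j}$ with $k\ge 1$ consist of subspaces lying in the image of $L_1$. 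In the other direction, $\Phi\circ\Psi=\mathrm{id}_W$ follows by computing $T({\mathcal I})$ and the relevant intersections for the reconstructed family ${\mathcal I}$.

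The step I expect to be the main obstacle is the assertion, used in building $\Psi$, that the reconstructed ideal ${\mathcal I}$ has Young wall exactly $Y$: one must rule out that the newly adjoined bottom-row generators ``spill over'', raising some $\dim({\mathcal I}\cap S_m[\rho_j])$ above the value prescribed by $Y$ on an intermediate or higher antidiagonal and thereby making $Y$ come out too small there. This is exactly where the ``allowed'' hypotheses on the index sets, and the injectivity statements (part (3)) of Propositions~\ref{prop:incvar1}, \ref{prop:incvar2} and~\ref{prop:incvar4}, enter; in Case~\ref{sec:proof:orbicells}.3 the corresponding injectivity genuinely fails, and the verification there is more delicate, relying on the finer analysis of the projected incidence varieties carried out in Section~\ref{Dnspecialsect}.
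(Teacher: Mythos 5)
Your proposal is correct and follows essentially the same route as the paper: the paper also proves the square is Cartesian by taking a compatible pair of maps (from an arbitrary test scheme $B$ rather than from the fiber product itself, which is equivalent), forming the ideal generated by $L_1$ applied to the truncated family together with the tautological bottom-row subspaces coming from ${\mathcal X}_Y$, and invoking the universal property of $Z_Y$ from Remark~\ref{rem:Zyuniv}. The only difference is one of detail: the paper dismisses the verification that the reconstructed family has Young wall exactly $Y$ as ``immediate from the compatibility of $(f,g)$,'' whereas you correctly isolate this as the substantive step and supply the supporting ingredients (Lemma~\ref{lem:intdim}, the allowedness hypotheses, and parts (1)--(3) of Propositions~\ref{prop:incvar1}--\ref{prop:incvar4}).
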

\begin{proof} It is immediate from the definitions in the different cases that the diagram is commutative. We thus need to show that it is a fibre product. Let $B$ be an arbitrary base scheme and let $f \colon B \to Z_{\overline{Y}}$ and $g \colon B \to {\mathcal X}_{Y}$ be morphisms; we need to show that these induce a unique morphism $B \to Z_Y$. We consider Case \ref{sec:proof:orbicells}.1; the proof in the other cases is analogous. The map $f$ corresponds to a flat family of ideals ${\mathcal I}_f\lhd {\mathcal O}_B\otimes \SC[x,y]$ with Young wall $\overline Y$. The map $g$ corresponds to a 3-tuple of families ${\mathcal U}_{1,g}, {\mathcal U}_{2,g}, {\mathcal U}_{c,g}$ of subspaces of $\SC[x,y]$ over $B$. 
Given this data, consider the family of ideals 
\[{\mathcal I}_{f,g} = (L_1{\mathcal I}_f, {\mathcal U}_{1,g}, {\mathcal U}_{2,g})\lhd {\mathcal O}_B\otimes \SC[x,y]\]
over $B$. Here the parentheses mean the generated ideal as explained in \ref{subsec:auxiliary}. By the compatibility of $(f,g)$ it is immediate that the Young wall of the corresponding ideals is $Y$. The classifying map of this family is the unique possible extension of $(f,g)$ to a morphism $B \to Z_Y$.
\end{proof}

\begin{proof}[Conclusion of the Proof of Theorem~\ref{thm:Zstrata}]
Assume that we have shown for any Young wall~$Y$ having less then $l$ rows that the corresponding stratum~$Z_Y$ is affine, the $l=1$ case being obvious. Consider an arbitrary Young wall~$Y$ consisting of~$l$ rows. Let $\overline{Y}$ denote its truncation, as defined above. By the induction assumption, the space $Z_{\overline{Y}}$ is affine. Also, by Propositions \ref{prop:incvar1}, \ref{prop:incvar2}, \ref{prop:incvar3} or \ref{prop:incvar4} respectively, the map ${\mathcal X}_{Y}\to {\mathcal Y}_{\overline{Y}}$ of Proposition~\ref {prop:fiberproduct} is a trivial affine fibration in all cases. By Proposition~\ref{prop:fiberproduct}, the map $Z_Y\to Z_{\overline{Y}}$ is a pullback of a trivial affine fibration and thus itself a trivial affine fibration. Using the induction hypothesis, $Z_Y$ is thus an affine space. The proof of Theorem~\ref{thm:Zstrata} is complete.
\end{proof}

\begin{remark} 
One can deduce from the above proof that one can in fact \emph{canonically} choose generators of a homogeneous ideal $I\in Z_Y$, which are in the cells of the some of the salient blocks of $Y$; as discussed before, not all salient cells necessarily contain a generator. For describing the coarse Hilbert scheme we have to keep track of these generators, but we will do this only implicitly.
\end{remark}

\begin{example} Returning to Examples~\ref{ex:4sidepyr}-\ref{ex:5sidepyr}, we see that for $Y_3$ the triangle of side $5$, $\overline{Y_3}=Y_2$, the triangle of size $4$. The map $T\colon Z_{Y_3}\cong\SA^2\to Z_{\bar Y_3}\cong\SA^1$ is the map identified at the end of the discussion of Example~\ref{ex:5sidepyr}.
\end{example}

\subsection{Preparation for the proof  of the incidence propositions}
\label{subsec:auxiliary}

To prepare the ground for the proof of the propositions announced in~\ref{subsec:incvargr}, consider the operators defined in~\ref{subsec:subops}. We use these operators to describe projective coordinates on some of the Grassmannians~$P_{m,j}$. We first record the following equalities, computing the isotypical summands of the homogeneous pieces of the ring $S=\SC[x,y]$.

\begin{lemma} We have 
\label{l0}
\[S_{2k(n-2)}[\rho_{\kappa(0,k)}] = (L_{n-1} + L_{n})^{2k}[\rho_{\kappa(0,k)}]=
 (L_{n-1}^{2} + L_{n}^{2})^{k};
\]
\[ S_{2k(n-2)}[\rho_{\kappa(1,k)}] = (L_{n-1} + L_{n})^{2k}[\rho_{\kappa(1,k)}]=
 L_{n-1} L_{n}(L_{n-1}^{2} + L_{n}^{2})^{k-1};
\]
\[S_{(2k+1)(n-2)}[\rho_{\kappa(n-1,k)}] = (L_{n-1} + L_{n})^{2k+1}[\rho_{\kappa(n-1,k)}]=
 L_{n-1} (L_{n-1}^{2} + L_{n}^{2})^{k};
\]
\[S_{(2k+1)(n-2)}[\rho_{\kappa(n,k)}] = (L_{n-1} + L_{n})^{2k+1}[\rho_{\kappa(n,k)}]=
 L_{n} (L_{n-1}^{2} + L_{n}^{2})^{k}.
\]
\end{lemma}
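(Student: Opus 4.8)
The plan is to reduce everything to monomial bookkeeping in the two distinguished degree-$(n-2)$ forms $u = x^{n-2}-i^ny^{n-2}$ and $v = x^{n-2}+i^ny^{n-2}$, so that $L_{n-1}=\langle u\rangle$, $L_n=\langle v\rangle$, and, applying the operator conventions of~\ref{subsec:subops},
\[
(L_{n-1}+L_n)^N = \langle u^iv^j : i+j=N\rangle,\qquad (L_{n-1}^2+L_n^2)^k = \langle u^{2a}v^{2b}: a+b=k\rangle,
\]
with the evident variants $L_{n-1}L_n(L_{n-1}^2+L_n^2)^{k-1}=\langle u^{2a+1}v^{2b+1}: a+b=k-1\rangle$, $L_{n-1}(L_{n-1}^2+L_n^2)^k=\langle u^{2a+1}v^{2b}: a+b=k\rangle$, $L_n(L_{n-1}^2+L_n^2)^k=\langle u^{2a}v^{2b+1}: a+b=k\rangle$. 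Since $\{u,v\}$ is a basis of $\langle x^{n-2},y^{n-2}\rangle\subset S_{n-2}$, the subspace $(L_{n-1}+L_n)^N$ is simply the span in $S_{N(n-2)}$ of the $N+1$ monomials $x^{a(n-2)}y^{b(n-2)}$ with $a+b=N$.

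First I would prove the first equality in each of the four lines by a character argument for the cyclic subgroup $\langle\sigma\rangle$. The key elementary observation is that if $a+b=N(n-2)$ and $x^ay^b$ has $\sigma$-eigenvalue $(-1)^N=\varepsilon^{N(n-2)}$, i.e.\ $a-b\equiv N(n-2)\pmod{2(n-2)}$, then both $a$ and $b$ are divisible by $n-2$; hence the sum of those $\sigma$-isotypic pieces of $S_{N(n-2)}$ on which $\sigma$ acts by $(-1)^N$ equals exactly $(L_{n-1}+L_n)^N$. Because $\rho_0,\rho_1$ restrict to the character $\sigma\mapsto 1$ and $\rho_{n-1},\rho_n$ to $\sigma\mapsto -1$ of $\langle\sigma\rangle$, intersecting with the isotypic component of the relevant irreducible ($N=2k$ in the first two lines, $N=2k+1$ in the last two) gives $S_{N(n-2)}[\rho]=(L_{n-1}+L_n)^N[\rho]$.

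Next I would compute the $G_\Delta$-action on $u$ and $v$: one reads off, consistently with Table~\ref{dnchartable}, that $\sigma u=-u$, $\tau u=-i^nu$, $\sigma v=-v$, $\tau v=i^nv$, so $u^iv^j$ has $\sigma$-eigenvalue $(-1)^{i+j}$ and $\tau$-eigenvalue $(-1)^i\,i^{n(i+j)}$. For fixed $N=i+j$ I would then match this $\tau$-eigenvalue against the characters $\chi_{\rho_0}(\tau)=1$, $\chi_{\rho_1}(\tau)=-1$, $\chi_{\rho_{n-1}}(\tau)=-i^n$, $\chi_{\rho_n}(\tau)=i^n$, using $\kappa(j,k)=\kappa^{kn}(j)$ to evaluate the character of $\rho_{\kappa(\bullet,k)}$. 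In each case the condition ``$u^iv^j$ lies in the target isotypic component'' collapses to a single parity condition on $i$: $i$ even for the $\rho_{\kappa(0,k)}$ and $\rho_{\kappa(n,k)}$ lines, $i$ odd for the $\rho_{\kappa(1,k)}$ and $\rho_{\kappa(n-1,k)}$ lines. Reading off the span of the corresponding monomials $u^iv^j$ then yields exactly $(L_{n-1}^2+L_n^2)^k$, $L_{n-1}L_n(L_{n-1}^2+L_n^2)^{k-1}$, $L_{n-1}(L_{n-1}^2+L_n^2)^k$, $L_n(L_{n-1}^2+L_n^2)^k$ respectively, which is the second equality in each line.

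The main obstacle is purely bookkeeping: keeping straight the parities of $n$, $k$, $kn$ and $i$ and the resulting powers of $i$ and $-1$, and making sure the substitution $\kappa(j,k)=\kappa^{kn}(j)$ is carried through correctly so that all four cases come out with the claimed descriptions. Once the divisibility observation and the eigenvalue formula for $u^iv^j$ are in place, everything else is mechanical.
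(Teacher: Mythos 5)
Your argument is correct, but it is organized differently from the paper's (very terse) proof. The paper handles the right-hand equalities by induction using the tensor-product identities~\eqref{eq:repstensor} among $\rho_0,\rho_1,\rho_{n-1},\rho_n$, and the left-hand equalities by noting that $\supseteq$ is obvious and then counting dimensions (the dimension of $S_m[\rho_j]$ being read off from the transformed Young wall pattern). You instead work with the explicit basis $u^iv^j$ of $(L_{n-1}+L_n)^N$: for the left-hand equalities, your observation that a monomial $x^ay^b$ of degree $N(n-2)$ with $\sigma$-eigenvalue $(-1)^N$ must have both exponents divisible by $n-2$ identifies the full $\sigma$-eigenspace with $(L_{n-1}+L_n)^N$, which proves the inclusion $\subseteq$ directly and makes the dimension count unnecessary; for the right-hand equalities, the fact that each $u^iv^j$ is a simultaneous $\sigma$- and $\tau$-eigenvector (so spans a one-dimensional subrepresentation determined by its pair of eigenvalues) replaces the induction via~\eqref{eq:repstensor} by a single parity computation in $i$, and your parities ($i$ even for the $\rho_{\kappa(0,k)}$ and $\rho_{\kappa(n,k)}$ lines, $i$ odd for the other two) do check out against Table~\ref{dnchartable}. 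The only point worth flagging is the convention for the action of $\tau$ on coordinates: when $n$ is odd the two natural choices ($x\mapsto y,\,y\mapsto -x$ versus $x\mapsto -y,\,y\mapsto x$) swap the characters of $u$ and $v$, i.e.\ swap the labels $n-1$ and $n$; you correctly resolve this by anchoring to the character table and to the paper's definition of $L_{n-1}$ and $L_n$, so this is a labelling choice rather than a gap. Overall your proof is more self-contained and explicit, while the paper's is shorter given that~\eqref{eq:repstensor} and the Young-wall dimension bookkeeping are already in place.
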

\begin{proof} For each equality on the right, use~\eqref{eq:repstensor}
and an easy induction argument. For the equalities on the left, $\supseteq$ is always clear; then use dimension counting.
\end{proof}

Thus, given an element $v\in P_{2k(n-2),\kappa(0,k)}$, we can write it uniquely in the form 
\[v=\sum_{i=0}^k \alpha_i L_{n-1}^{2i} L_{n}^{2(k-i)},\] for certain projective coordinates $[\alpha_0 : \dots : \alpha_k]$.
Analogous coordinates exist on $P_{2k(n-2),\kappa(1,k)}$ and $P_{(2k+1)(n-2), j}$ for $j=n-1,n$. 

For the two-dimensional representations we similarly have
\begin{lemma} For $1<j<n-1$, 
\[\begin{array}{rcl} S_{2k(n-2)+j-1}[\rho_j]  & = & L_j \left( (L_{n-1} + L_{n})^{2k}\right)
\\
& = & L_j \left( (L_{n-1} + L_{n})^{2k}[\rho_0]\right)\oplus L_j \left( (L_{n-1} + L_{n})^{ 2k}[\rho_1]\right);\\ \\
S_{(2k+2)(n-2)-j+1}[\rho_j] & = & L_{n-j}\left( (L_{n-1} + L_{n})^{2k+1}\right)\\
& = &  L_{n-j} \left( (L_{n-1} + L_{n})^{2k+1}[\rho_{n-1}]\right) \oplus L_{n-j}\left( (L_{n-1} + L_{n})^{2k+1}[\rho_n]\right).\end{array}\]
\label{lem:otherms}
\end{lemma}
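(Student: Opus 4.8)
The plan is to follow the two-step pattern of the proof of Lemma~\ref{l0}: first show that the left-hand subspace lands inside the relevant isotypical summand $S_m[\rho_j]$, and then match dimensions. The key elementary observation is that, as a subspace of $S_{r(n-2)}$, the space $(L_{n-1}+L_n)^{r}$ is exactly the degree-$r$ graded piece $\SC[x^{n-2},y^{n-2}]_r=\langle x^{a}y^{b}:a+b=r(n-2),\ (n-2)\mid a\rangle$, which is $(r+1)$-dimensional and all of whose monomials have $x$-degree divisible by $n-2$. Also, reading off the character table (Table~\ref{dnchartable}), $\langle x^{j-1},y^{j-1}\rangle$ is $\tau$-stable with $\mathrm{Tr}(\tau)=0$ and $\sigma$-eigenvalues $\varepsilon^{\pm(j-1)}$, hence $\langle x^{j-1},y^{j-1}\rangle\cong\rho_j$ for $2\le j\le n-2$; likewise $\langle x^{n-j-1},y^{n-j-1}\rangle\cong\rho_{n-j}$.

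First I would dispose of the right-hand (direct sum) equalities. Every monomial of $(L_{n-1}+L_n)^{2k}$ is $\sigma$-invariant, and $\rho_0,\rho_1$ are the only irreducibles of $G_\Delta$ on which $\sigma$ acts trivially, so $(L_{n-1}+L_n)^{2k}=S_{2k(n-2)}[\rho_0]\oplus S_{2k(n-2)}[\rho_1]$ (or combine Lemma~\ref{l0} with \eqref{eq:repstensor}); similarly $\sigma$ acts as $-1$ on every monomial of $(L_{n-1}+L_n)^{2k+1}$, whence $(L_{n-1}+L_n)^{2k+1}=S_{(2k+1)(n-2)}[\rho_{n-1}]\oplus S_{(2k+1)(n-2)}[\rho_n]$. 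For a subspace $W\subseteq\SC[x^{n-2},y^{n-2}]$, the two pieces $x^{j-1}W$ and $y^{j-1}W$ of $L_jW$ involve monomials of $x$-degree $\equiv j-1$, respectively $\equiv 0$, modulo $n-2$; since $1\le j-1\le n-3$ these classes are distinct, so the sum is direct and $\dim L_jW=2\dim W$. Applying this to $W=W_0$ and $W=W_1$ in the decompositions above yields the claimed direct sum expressions for $L_j\big((L_{n-1}+L_n)^{2k}\big)$ and $L_{n-j}\big((L_{n-1}+L_n)^{2k+1}\big)$.

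Next I would prove the inclusion $L_j\big((L_{n-1}+L_n)^{2k}\big)\subseteq S_m[\rho_j]$ with $m=2k(n-2)+j-1$. By the previous step this reduces to checking that $L_j(S_{2k(n-2)}[\rho_0])$ and $L_j(S_{2k(n-2)}[\rho_1])$ are $\rho_j$-isotypical, which holds because $L_jW$ is the image of the multiplication map $\langle x^{j-1},y^{j-1}\rangle\otimes W\to S$ and $\rho_j\otimes\rho_0\cong\rho_j$, $\rho_j\otimes\rho_1\cong\rho_{\kappa(j)}=\rho_j$ (as $\kappa$ fixes $\{2,\dots,n-2\}$). The second identity needs $\rho_{n-j}\otimes\rho_{n-1}\cong\rho_j$ and $\rho_{n-j}\otimes\rho_n\cong\rho_j$, which are not listed in \eqref{eq:repstensor} but follow from a short character computation using $\rho_{n-1}(\sigma^a)=\rho_n(\sigma^a)=\varepsilon^{a(n-2)}$, which rewrites $\mathrm{Tr}_{\rho_{n-j}\otimes\rho_{n-1}}(\sigma^a)=\varepsilon^{\pm a(n-j-1)+a(n-2)}$ as $\varepsilon^{\pm a(j-1)}$ modulo $2(n-2)$, the $\sigma^a\tau$-traces vanishing on both sides. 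This tensor identity is the one representation-theoretic input that is not entirely routine.

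Finally, equality follows by a dimension count performed on a single $\sigma$-eigenspace. The $\varepsilon^{j-1}$-eigenspace of $\sigma$ on $S_m$ is spanned by the monomials $x^ay^b$ with $a+b=m$ and $a\equiv j-1\pmod{n-2}$, of which there are exactly $2k+1$ when $m=2k(n-2)+j-1$; and for $2\le j\le n-2$ the representation $\rho_j$ is the unique irreducible of $G_\Delta$ having $\varepsilon^{j-1}$ among its $\sigma$-eigenvalues, so $\dim S_m[\rho_j]=2(2k+1)=4k+2$, which equals $\dim L_j\big((L_{n-1}+L_n)^{2k}\big)$ from the first step. The identical count for $m'=(2k+2)(n-2)-j+1$ gives $\dim S_{m'}[\rho_j]=2(2k+2)=4k+4=\dim L_{n-j}\big((L_{n-1}+L_n)^{2k+1}\big)$, finishing the proof. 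The only parts requiring any care are the modular bookkeeping in these counts and the verification of $\rho_{n-j}\otimes\rho_{n-1}\cong\rho_j$; neither poses a genuine obstacle.
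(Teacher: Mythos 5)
Your proposal is correct and follows essentially the same route as the paper's (very terse) proof, which argues exactly as for Lemma~\ref{l0}: the right-hand decompositions via tensor identities with the one-dimensional representations, and the left-hand equalities via the obvious inclusion plus a dimension count. Your explicit $\sigma$-eigenvalue/monomial counting and the character verification of $\rho_{n-j}\otimes\rho_{n-1}\cong\rho_{n-j}\otimes\rho_n\cong\rho_j$ simply supply the details the paper leaves implicit, and they check out.
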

\begin{proof} Analogous. \end{proof}

For such $1<j<n-1$, the space $P_{2k(n-2)+j-1,j}$ of two-dimensional $G_\Delta$-equivariant subspaces of $S_{2k(n-2)+j-1}[\rho_j]$
is a projective space as noted above. 
Using Lemma~\ref{lem:otherms}, we get a collection of distinguished two-dimensional $G_\Delta$-equivariant subspaces
$L_jL_{n-1}^iL_n^{2k-i}$ in $S_{2k(n-2)+j-1}[\rho_j]$; an arbitrary element $v\in P_{2k(n-2)+j-1,j}$ can be uniquely written as
\[v=\sum_{i=0}^{2k} \alpha_i L_j L_{n-1}^{i} L_{n}^{2k-i}\]
for certain projective coordinates $[\alpha_0:\ldots: \alpha_{2k}]$. Analogous coordinates also exist on the space $P_{(2k+2)(n-2)-j+1, j}$.
	
For subspaces $U_1,\dots,U_i \in \mathrm{Gr}(S)$ denote by
$(U_1,\dots,U_i)$ the $G$-invariant ideal of $S$ generated by the
corresponding subspaces. In particular, the ideal generated by (the
subspaces represented by) points $v_1,\dots,v_i \in \SP S$ is denoted
by $(v_1,\dots,v_i)$. Then $(U_1,\dots, U_i)_{m,j}$ is represented by a
projective linear subspace of $P_{m,j}$ (cf.~Lemma
\ref{lem:intdim}). Thus, we can talk about its intersection with the
cells of $P_{m,j}$. For simplicity we will use the notation
$(U_1,\dots,U_i) \cap V_{k,l,j}=(U_1,\dots,U_i)_{k+l,j} \cap
V_{k,l,j}$ for the intersection with $V_{k,l,j}$. 

We need to study indidence relations between ideals generated by subspaces from the various strata defined above. First of all, 
let $v_j\in V_{0,m,j}$ for some $m$ and $j$, corresponding to a full or half block. Denote by $C$ the set of 
labels of full or half blocks immediately above or on the right of this block, i.e.~in the positions $(1,m)$ or $(0,m+1)$.
Then the definition of the McKay quiver clearly implies that we have $(v_j)\cap S_{m+1,c}=\emptyset$ whenever $c\not\in C$.
The following long statement discusses all the remaining cases when $c\in C$, split into the different possibilities.  

\begin{proposition}
\label{prop:1} %[szerintem ennek Proposition-nak kellene lennie]

\begin{enumerate}
\item For $j=0,1$, fix $v_j \in V_{0,2k(n-2), j}$.
\begin{enumerate}
\item[(a)] We have $(v_j)\cap \left(\bigcup_{l \geq 1} V_{l,2k(n-2)-l+1,2}\right) = \emptyset$. Hence the unique point $(v_j)\cap P_{2k(n-2)+1}$ necessarily lies in $V_{0,2k(n-2)+1,2}$. This provides an injection \[V_{0,2k(n-2),j} \to V_{0,2k(n-2)+1,2}.\]
\item[(b)] $ (v_0, v_1) \cap \left(\bigcup_{l>1} V_{l,2k(n-2)-l+1,2}\right) = \emptyset$. In particular, the projective line $(v_0,v_1) \cap P_{2k(n-2)+1,2}$ necessarily intersects $V_{1,2k(n-2),2}$. Let the intersection point be $L_1 v_2$ for a certain $v_2 \in V_{0,2k(n-2)-1,2}$. Then $v_2$ is the unique point of $V_{0,2k(n-2)-1,2}$ such that $v_0,v_1 \in (v_2)$. As a consequence, for any projective subspace $U_2\subseteq P_{2k(n-2)-1,2}$, the intersection $(v_0,v_1) \cap \left(\bigcup_{l>0} V_{l,2k(n-2)-l+1,2}\right)$ is contained in $L_1 U_2$ if and only if $v_0, v_1 \in (U_2)$.
\end{enumerate} 
%Let $ v_0 \in V_{0,2k(n-2), 0}$, $v_1 \in V_{0,2k(n-2), 1}$ and $v_2 \in V_{0,2k(n-2)-1,2}$. Then
%$ (v_0, v_1) \cap \left(\bigcup_{l>1} V_{l,2k(n-2)-l+1,2}\right) = \emptyset$, and $ (v_0, v_1) \cap V_{1,2k(n-2),2} = L_1 v_2$ if and only if $v_0,v_1 \in (v_2)$.

\item Let $ v_2 \in V_{0,2k(n-2)+1, 2}$. For $j=0,1$, if $(v_2) \cap \left( \bigcup_{l>0} V_{l,2k(n-2)-l+2,j} \right)$ is not empty, then it is necessarily one-dimensional, and it equals $L_1 v_j$ for a certain $v_j \in V_{0,2k(n-2), j}$. Exactly one of the following three cases happens.
\begin{itemize}
 \item $(v_2) \cap \left(\bigcup_{l>0} V_{l,2k(n-2)-l+2,1}\right) = L_1 v_0$ and $(v_2) \cap \left(\bigcup_{l>0} V_{l,2k(n-2)-l+2,0}\right) = \emptyset$. This happens if and only if $v_2 \in (v_0)$. In this case, $v_0 \in V_{0,2k(n-2),0}$, and $(v_2)\cap S_{2k(n-2)+2}$ has two (resp.~three if $n=4$) irreducible components: $L_1 v_0$ and $(v_2) \cap V_{0,2k(n-2)+2,3}$ (resp.~$(v_2) \cap V_{0,2k(n-2)+2,3}$ and $(v_2) \cap V_{0,2k(n-2)+2,4}$).
 
 \item $(v_2) \cap \left(\bigcup_{l>0} V_{l,2k(n-2)-l+2,1}\right) = \emptyset$ and $(v_2) \cap \left(\bigcup_{l>0} V_{l,2k(n-2)-l+2,0}\right) = L_1 v_1$ with symmetrical statements as in the previous case.
  
 \item $(v_2) \cap \left(\bigcup_{l>0} V_{l,2k(n-2)-l+2,1}\right) = L_1 v_0$ and $(v_2) \cap \left(\bigcup_{l>0} V_{l,2k(n-2)-l+2,0}\right) = L_1 v_1$. This happens if and only if $v_2 \in (v_0,v_1)$ but  $v_2 \notin (v_0) \cup (v_1)$. In this case at least one of the inclusions $v_0 \in V_{0,2k(n-2),0}$, $v_1 \in V_{0,2k(n-2),1}$ is satisfied but not necessarily both. Furthermore, $(v_2)\cap S_{2k(n-2)+2}$ has three (resp.~four if $n=4$) irreducible components: $L_1 v_0$, $L_1 v_1$ and $(v_2) \cap V_{0,2k(n-2)+2,3}$ (resp.~$(v_2) \cap V_{0,2k(n-2)+2,3}$ and $(v_2) \cap V_{0,2k(n-2)+2,4}$).
\end{itemize}
Thus for $n>4$, we obtain an isomorphism
\[ \begin{array}{rcl} V_{0,2k(n-2)+1,2} & \to & V_{0,2k(n-2)+2,3}\\ v_2 &\mapsto & (v_2) \cap V_{0,2k(n-2)+2,3},\end{array}\] 
whereas for $n=4$, we get an isomorphism
\[ \begin{array}{rcl}V_{0,2k(n-2)+1,2} & \to & V_{0,2k(n-2)+2,3} \times V_{0,2k(n-2)+2,4}\\ v_2& \mapsto &\left( (v_2) \cap V_{0,2k(n-2)+2,3}, (v_2) \cap V_{0,2k(n-2)+2,4}\right). \end{array}\] 
Finally, for projective subspaces $U_0 \subseteq P_{2k(n-2),0}, U_1 \subseteq P_{2k(n-2),1}$, 
\begin{itemize}
 \item  the conditions $(v_2) \cap \left(\bigcup_{l>0} V_{l,2k(n-2)-l+2,1}\right) \subseteq L_1 U_0$ and $(v_2) \cap \left(\bigcup_{l>0} V_{l,2k(n-2)-l+2,0}\right) = \emptyset$ are satisfied if and only if $v_2 \in (U_0)$; 
 \item  the conditions $(v_2) \cap \left(\bigcup_{l>0} V_{l,2k(n-2)-l+2,1}\right) = \emptyset$ and $(v_2) \cap \left(\bigcup_{l>0} V_{l,2k(n-2)-l+2,0}\right) \subseteq L_1 U_1$ are satisfied if and only if $v_2 \in (U_1)$; 
 \item  the conditions $(v_2) \cap \left(\bigcup_{l>0} V_{l,2k(n-2)-l+2,1}\right) \subseteq L_1 U_0$ and $(v_2) \cap \left(\bigcup_{l>0} V_{l,2k(n-2)-l+2,0}\right) \subseteq L_1 U_1$ are satisfied if and only if $v_2 \in (U_0,U_1)$ but  $v_2 \notin (U_0) \cup(U_1)$.
\end{itemize}

\item Assume that $3 \leq j \leq n-3$ (resp.~$j=n-2$) and set $v_j \in V_{0,2k(n-2)+j-1,j}$. Then $(v_j) \cap \left(\bigcup_{l > 1} V_{l,2k(n-2)-l+j,j-1} \right)=\emptyset$. Furthermore, $(v_j)\cap S_{2k(n-2)+j}$ has two (resp.~three) irreducible components: a point $(v_j)\cap V_{1,2k(n-2)+j-1,j-1}$ of the form $L_1 v_{j-1}$ for some $v_{j-1} \in V_{0,2k(n-2)+j-2,j-1}$, which is the unique element with $v_j \in (v_{j-1})$, and another point $(v_j) \cap V_{0,2k(n-2)+j,j+1}$ (resp.~two other points $(v_j) \cap V_{0,2k(n-2)+j,n-1}$, $(v_j) \cap V_{0,2k(n-2)+j,n}$). We obtain an isomorphism \[V_{0,2k(n-2)+j-1,j} \to V_{0,2k(n-2)+j,j+1}\] for $j\leq n-3$ and an isomorphism \[V_{0,2k(n-2)+j-1,j} \to V_{0,2k(n-2)+j,n-1} \times V_{0,2k(n-2)+j,n}\] for $j=n-2$. Also, for a projective subspace $U_{j-1} \subseteq P_{2k(n-2)+j-2,j-1}$, the intersection $(v_j) \cap \left(\bigcup_{l>0} V_{l,2k(n-2)-l+j,j-1}\right)$ is contained in $L_1 U_{j-1}$ if and only if $v_j \in (U_{j-1})$.

\item For $j=n-1,n$, fix $v_j \in V_{0,(2k+1)(n-2), j}$.
\begin{enumerate}
\item[(a)] We have $(v_j)\cap \left(\bigcup_{l \geq 1} V_{l,(2k+1)(n-2)-l+1,n-2}\right) = \emptyset$. Hence, the point $(v_j)\cap P_{(2k+1)(n-2)+1}$ is necessarily in $V_{0,(2k+1)(n-2)+1,n-2}$. This provides an injection \[V_{0,(2k+1)(n-2),j} \to V_{0,(2k+1)(n-2)+1,n-2}.\]
\item[(b)] $ (v_{n-1}, v_n) \cap \left(\bigcup_{l>1} V_{l,(2k+1)(n-2)-l+1,2}\right) = \emptyset$. In particular, the projective line $(v_{n-1},v_n) \cap P_{(2k+1)(n-2)+1,n-2}$ necessarily intersects $V_{1,(2k+1)(n-2),n-2}$. Let the intersection point be $L_1 v_{n-2}$ for a certain $v_{n-2} \in V_{0,(2k+1)(n-2)-1,n-2}$. Then $v_{n-2}$ is the unique point of $V_{0,(2k+1)(n-2)-1,n-2}$ such that $v_{n-1},v_n \in (v_{n-2})$. As a consequence, for any projective subspace $ U_{n-2} \subseteq P_{(2k+1)(n-2)-1,n-2}$, the intersection $(v_{n-1},v_n) \cap \left(\bigcup_{l>0} V_{l,(2k+1)(n-2)-l+1,n-2}\right)$ is contained in $L_1 U_{n-2}$ if and only if $v_{n-1}, v_n \in (U_{n-2})$.
\end{enumerate} 

\item Let $ v_{n-2} \in V_{0,(2k+1)(n-2)+1, n-2}$. For $j=n-1,n$ if $(v_{n-1}) \cap \left( \bigcup_{l>0} V_{l,(2k+1)(n-2)-l+2,j} \right) \neq \emptyset$ then this invariant subspace is one-dimensional, and it is of the form $L_1 v_j$ for certain $v_j \in V_{0,(2k+1)(n-2), j}$. Exactly one of the following three possibilities happens.
\begin{itemize}
 \item $(v_{n-2}) \cap \left(\bigcup_{l>0} V_{l,(2k+1)(n-2)-l+2,n}\right) = L_1 v_{n-1}$ and $(v_{n-2}) \cap \left(\bigcup_{l>0} V_{l,(2k+1)(n-2)-l+2,n-1}\right) = \emptyset$. This happens if and only if $v_{n-2} \in (v_{n-1})$. In this case $v_{n-1} \in V_{0,(2k+1)(n-2),n-1}$, and $(v_2)\cap S_{(2k+1)(n-2)+2}$ has two (resp.~three if $n=4$) irreducible components: $L_1 v_{n-1}$ and $(v_{n-2}) \cap V_{0,(2k+1)(n-2)+2,n-3}$ (resp.~$(v_2) \cap V_{0,(2k+1)(n-2)+2,0}$ and $(v_2) \cap V_{0,(2k+1)(n-2)+2,1}$).
 
 \item $(v_{n-2}) \cap \left(\bigcup_{l>0} V_{l,(2k+1)(n-2)-l+2,n}\right) = \emptyset$ and $(v_{n-2}) \cap \left(\bigcup_{l>0} V_{l,2k(n-2)-l+2,n-1}\right) = L_1 v_{n}$ with symmetrical statements as in the previous case.
  
 \item $(v_{n-2}) \cap \left(\bigcup_{l>0} V_{l,(2k+1)(n-2)-l+2,n}\right) = L_1 v_{n-1}$ and $(v_{n-2}) \cap \left(\bigcup_{l>0} V_{l,(2k+1)(n-2)-l+2,n-1}\right) = L_1 v_n$. This happens if and only if $v_{n-2} \in (v_{n-1},v_n)$ but  $v_{n-2} \notin (v_{n-1}) \cup (v_n)$. In this case at least one of the inclusions $v_{n-1} \in V_{0,(2k+1)(n-2),n-1}$, $v_n \in V_{0,(2k+1)(n-2),n}$ is satisfied but not necessarily both. Furthermore, $(v_{n-2})\cap S_{(2k+1)(n-2)+2}$ has three (resp.~four if $n=4$) irreducible components: $L_1 v_{n-1}$, $L_1 v_{n}$ and $(v_{n-2}) \cap V_{0,(2k+1)(n-2)+2,n-3}$ (resp.~$(v_{n-2}) \cap V_{0,(2k+1)(n-2)+2,0}$ and $(v_{n-2}) \cap V_{0,(2k+1)(n-2)+2,1}$).
\end{itemize}
For $n>4$, we obtain isomorphisms 
\[\begin{array}{rcl} V_{0,(2k+1)(n-2)+1,n-2} & \to & V_{0,(2k+1)(n-2)+2,n-3}\\ v_{n-2} & \mapsto & (v_{n-2}) \cap V_{0,(2k+1)(n-2)+2,n-3}\end{array},\]
whereas for $n=4$ we obtain an isomorphism 
\[\begin{array}{rcl} V_{0,(2k+1)(n-2)+1,n-2} & \to & V_{0,2k(n-2)+2,0} \times V_{0,(2k+1)(n-2)+2,1}\\ v_{n-2} & \mapsto& \left( (v_{n-2}) \cap V_{0,(2k+1)(n-2)+2,0}, (v_{n-2}) \cap V_{0,(2k+1)(n-2)+2,1}\right)\end{array}. \] 
Moreover, for projective subspaces $U_{n-1} \subseteq P_{(2k+1)(n-2),n-1}, U_n \subseteq P_{(2k+1)(n-2),n}$ the conditions %[ugyanaz a problema mint az elobb, A and B iff C]
\begin{itemize}
 \item $(v_{n-2}) \cap \left(\bigcup_{l>0} V_{l,(2k+1)(n-2)-l+2,n}\right) \subseteq L_1 U_{n-1}$ and $(v_{n-2}) \cap \left(\bigcup_{l>0} V_{l,(2k+1)(n-2)-l+2,n-1}\right) = \emptyset$ are satisfied if and only if $v_{n-2} \in (U_{n-1})$; %but $v_2 \notin (v_0, v_1)$ for any $v_1 \in P_{2k(n-2), 1}$;
 \item $(v_{n-2}) \cap \left(\bigcup_{l>0} V_{l,(2k+1)(n-2)-l+2,n}\right) = \emptyset$ and $(v_{n-2}) \cap \left(\bigcup_{l>0} V_{l,(2k+1)(n-2)-l+2,n-1}\right) \subseteq L_1 U_n$ are satisfied if and only if $v_{n-2} \in (U_n)$; % but $v_2 \notin (v_0, v_1)$  for any $v_0 \in P_{2k(n-2), 0}$;
 \item $(v_{n-2}) \cap \left(\bigcup_{l>0} V_{l,(2k+1)(n-2)-l+2,n}\right) \subseteq L_1 U_{n-1}$ and $(v_{n-2}) \cap \left(\bigcup_{l>0} V_{l,(2k+1)(n-2)-l+2,n-1}\right) \subseteq L_1 U_n$ are satisfied if and only if $v_{n-2} \in (U_{n-1},U_n)$ but  $v_2 \notin (U_{n-1}) \cup(U_n)$.
\end{itemize}

\item Assume that $3 \leq j \leq n-3$ (resp.~$j=2$) and set $v_j \in V_{0,(2k+2)(n-2)-j+1,j}$. Then $(v_j) \cap \left(\bigcup_{l > 1} V_{l,(2k+2)(n-2)-l-j+2,j+1} \right)=\emptyset$. Furthermore, $(v_j)\cap S_{(2k+2)(n-2)-j+2}$ has two (resp.~three) irreducible components: a point $(v_j)\cap V_{1,(2k+2)(n-2)-j+1,j+1}$ of the form $L_1 v_{j+1}$ for some $v_{j+1} \in V_{0,(2k+2)(n-2)-j,j+1}$, which is the unique element with $v_j \in (v_{j+1})$, and another point $(v_j) \cap V_{0,(2k+2)(n-2)-j+2,j-1}$ (resp.~two other points $(v_j) \cap V_{0,(2k+2)(n-2)-j+2,0}$, $(v_j) \cap V_{0,(2k+2)(n-2)-j+2,1}$) providing an isomorphism $V_{0,(2k+2)(n-2)-j+1,j} \to V_{0,2k(n-2)-j+2,j-1}$ (resp.~$V_{0,(2k+2)(n-2)-j+1,j} \to V_{0,2k(n-2)-j+2,0} \times V_{0,2k(n-2)-j+2,1}$). As a consequence, for a projective subspace $U_{j+1} \subseteq P_{(2k+2)(n-2)-j,j+1}$, the intersection $(v_j) \cap \left(\bigcup_{l>0} V_{l,(2k+2)(n-2)-l-j+2,j+1}\right)$ is contained in $L_1 U_{j+1}$ if and only if $v_j \in (U_{j+1})$.

\end{enumerate}
\end{proposition}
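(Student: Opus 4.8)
The plan is to prove all six parts by direct computation in the explicit coordinates on the spaces $P_{m,j}$ set up in~\ref{subsec:auxiliary}. Lemmas~\ref{l0} and~\ref{lem:otherms} identify each isotypical piece $S_m[\rho_j]$ with the span of an explicit monomial basis in the operators $L_{n-1},L_n$ (when $\rho_j$ is one-dimensional) or in $L_j,L_{n-1},L_n$ (when $\rho_j$ is two-dimensional), so every subspace occurring in the statement is recorded by an honest projective coordinate vector $[\alpha_0:\dots:\alpha_r]$; and the tensor relations~\eqref{eq:repstensor} compute, on the level of these coordinates, the effect of multiplying a generator by $L_1,L_2,\dots,L_n$. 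Throughout, a point of the bottom cell $V_{0,m,j}$ is, by the inductive construction of the stratification in Proposition~\ref{prop:decomp of P} together with Lemma~\ref{lem:codim1_2}, precisely a subspace whose coordinate vector is \emph{not} in the image of (multiplication by) $L_1$; more generally, membership in $V_{k,l,j}$ records the exact power of $L_1$ dividing the coordinate vector, and the dimensions of the intersections $(\dots)\cap S_m[\rho_j]$ are governed by Lemma~\ref{lem:intdim}. The McKay-quiver remark preceding the proposition already restricts the possible components of $(v_j)\cap S_{m+1}$ to the labels $c$ adjacent to the given node, so the content of the proposition is to pin down exactly which of these occur and in which cell they sit.

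With this dictionary the \emph{disjointness} assertions (in each part, that $(v_j)$, or $(v_0,v_1)$, etc., misses the cells $V_{l,\cdot,\cdot}$ for $l$ above a stated bound) become the following elementary check: write the generator in the normal form of~\ref{subsec:auxiliary}, multiply by the relevant operator, and observe that the result still has a nonzero component outside the image of $L_1$, hence lies in the bottom cell of the next antidiagonal. The same normal-form computation simultaneously exhibits the irreducible components of $(v_j)\cap S_{m+1}$ (resp.\ $S_{m+2}$): the component lying ``straight above'' the position $(0,m)$ is obtained by dividing the product by $L_1$ exactly once, which identifies it with $L_1 v_{j-1}$ (or $L_1 v_0$, $L_1 v_1$) for a \emph{unique} lower generator --- uniqueness being immediate from injectivity of $L_1$ --- while the component ``to the right'' is the full image $L_j v_j$, which remains in the bottom cell. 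The claimed isomorphisms $V_{0,m,j}\to V_{0,m+1,j'}$ (or the indicated product, for $n=4$) are then read off directly, since in coordinates the assignment $v_j\mapsto (v_j)\cap V_{0,m+1,j'}$ is an invertible linear map on coordinate vectors, essentially the identity after the obvious regrouping of $L_{n-1}^2+L_n^2$ blocks.

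The ``if and only if'' clauses phrased with a projective subspace $U$ in place of a single generator follow from the single-generator statements by linearity: a condition such as $(v_2)\cap\bigl(\bigcup_{l>0}V_{l,\cdot,\cdot}\bigr)\subseteq L_1 U_0$ is a system of linear equations on the coordinates of $v_2$, and by the single-generator analysis its solution locus is precisely the projective span $(U_0)_{m,j}$; the mutual-exclusivity clauses (``$v_2\in(U_0,U_1)$ but $v_2\notin(U_0)\cup(U_1)$'') amount to the same bookkeeping of which of the two ``above'' components is forced to vanish.

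The main obstacle is the bookkeeping at the trivalent node of the affine $\widetilde{D}_n$ diagram --- parts~(2) and~(5), and all the $n=4$ specializations --- where the relevant tensor product is no longer irreducible and $L_2 v$ (resp.\ $L_{n-2}v$) can be three- or four-dimensional, so that $(v_2)\cap S_{m+2}$ may acquire one or two extra components and the cell occupied by the ``above'' component jumps (the phenomenon already encountered in the proof of Proposition~\ref{wallprop}). Here one splits according to whether the coordinate polynomial of $v_2$ is divisible by $L_{n-1}$, by $L_n$, by their product, or by neither --- equivalently whether $v_2\in(v_0)$, $v_2\in(v_1)$, $v_2\in(v_0,v_1)\setminus\bigl((v_0)\cup(v_1)\bigr)$, or generic --- and verifies the three listed alternatives together with the component count (two versus three, or three versus four when $n=4$) in each case; for $n=4$ one must additionally track the collision $n-3=1$, so that the ``right-hand'' component lands on the $0/1$ fork rather than on a two-dimensional node. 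I would carry out one representative sub-case of part~(2) in full detail, and then note that parts~(4)--(6) are obtained from~(1)--(3) by the half-period shift $m\mapsto m+(n-2)$ of the transformed pattern, which carries the bottom $0/1$ fork to the central $(n-1)/n$ fork and is compatible with the involution $\kappa$, so that no new argument is required.
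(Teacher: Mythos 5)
Your proposal follows essentially the same route as the paper's own proof: direct computation in the coordinates furnished by Lemmas~\ref{l0} and~\ref{lem:otherms}, reading off cell membership from the exact power of $L_1$ dividing the coordinate vector, the explicit case analysis at the trivalent node (including the $n=4$ collision) via the relations $L_2^2=L_1+L_3$ (resp.\ $L_1+L_3+L_4$) and $L_2L_j=L_1L_{j-1}+L_{j+1}$, deduction of the projective-subspace ``if and only if'' clauses from the single-generator statements by linearity, and treating parts (4)--(6) as structurally identical to (1)--(3). The only cosmetic difference is that the paper handles the subspace clauses of part (2) via a join stratification plus a density/continuity argument rather than your direct appeal to the single-generator trichotomy, which amounts to the same computation.
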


\begin{proof}
For ease of notation in the proof, we will assume that $n$ is even. If $n$ is odd, then the argument works in the same way, except that $\kappa$ should be applied to the indices when appropriate.

(1) Statement (a) follows immediately from the definition of the cells. For the first part of (b),
write $v_{0}=\sum_{i=0}^k \alpha_i L_{n-1}^{2i} L_{n}^{2(k-i)}$ and $v_{1}=\sum_{i=0}^{k-1} \beta_i L_{n-1}^{2i+1} L_{n}^{2(k-i)-1}$. The assumptions guarantee that $\sum \alpha_i \neq 0$ and $\sum \beta_i \neq 0$. The image $(v_0,v_1) \cap P_{2k(n-2)+1,2}$, as subset of $Gr(2, S_{2k(n-2)+1}[\rho_{2}])$, is a projective line and is spanned by
\[ L_2 v_{0}=\sum_{i=0}^k \alpha_i L_2 L_{n-1}^{2i} L_{n}^{2(k-i)} \in L_2 (L_{n-1}^2 + L_{n}^2)^{k}\]
%\[ L_2 v_{0}=\sum_{i=0}^k \alpha_i L_2 L_{n-1}^{2i} L_{n}^{2(k-i)} \in L_2 (L_{n-1}^2 + L_{n}^2)^{k} \setminus L_1^{n-2} L_2 (L_{n-1}^2 + L_{n}^2)^{k-1}, \]
and
\[ L_2 v_{1}=\sum_{i=0}^{k-1} \beta_i L_2 L_{n-1}^{2i+1} L_{n}^{2(k-i)-1} \in L_2 L_{n-1} L_{n}(L_{n-1}^{2} + L_{n}^{2})^{k-1}.\]
%\[ L_2 v_{1}=\sum_{i=0}^{k-1} \beta_i L_2 L_{n-1}^{2i+1} L_{n}^{2(k-i)-1} \in L_2 L_{n-1} L_{n}(L_{n-1}^{2} + L_{n}^{2})^{k-1} \setminus L_1^{n-2} L_2 L_{n-1} L_{n}(L_{n-1}^2 + L_{n}^2)^{k-2}. \]
%given by linear combinations \[ a L_m v_0 +b L_m v_1 = a \sum_{i=0}^k \alpha_i L_m L_{n-1}^{2i}L_n^{2(k-i)}+ \sum_{i=0}^{k-1} \beta_i L_m L_{n-1}^{2i+1}L_n^{2(k-i)-1}.\]
In particular, if $(v_0,v_1) \cap V_{1,2k(n-2)}=\{L_1 v_2\}$, then there exist vectors $v_x, v_y$ in the two-dimensional vector space $v_2$ satisfying $v_y= \tau(v_x)$, as well as $a_x, b_x, a_y, b_y \in \SC$, so that
\begin{equation}
\label{eq:lem1eq1}
\begin{aligned}
L_1 v_x & =  a_x L_{2,x} v_0+b_x L_{2,x} v_1, \\
L_1 v_y & = a_y L_{2,y} v_0+b_y L_{2,y} v_1.
\end{aligned}
\end{equation}
In $v_0$ the highest powers of $x$ and $y$ are $x^{2k(n-2)}$ and $y^{2k(n-2)}$, both with coefficient $\sum_i \alpha_i$. In $v_1$ the highest powers of $x$ and $y$ are $x^{2k(n-2)}$ and $y^{2k(n-2)}$, the first has coefficient $\sum_i \beta_i$, the second has coefficient $-\sum_i \beta_i$. We apply $L_{2,x}$ to these. In order for the sum to avoid the cell $V_{0,2k(n-2)+1}$ the coefficients must satisfy $[a_x:b_x]=[\sum_i \beta_i:-\sum_i \alpha_i]$, since in this case the coefficient of $x^{2k(n-2)+1}$ is 0. Then the linear combination is necessarily in $V_{1,2k(n-2)}$. Similarly, when applying $L_{2,y}$, the required coefficients are $[a_y:b_y]=[\sum_i \beta_i:\sum_i \alpha_i]$, so we have $a_x b_y=-a_y b_x$. Therefore, $b_y L_{2,y} L_1 v_x - b_x L_{2,x}L_1 v_y= b_y a_x L_{2,y} L_{2,x} v_0 - b_x a_y L_{2,x} L_{2,y} v_0 = L_1 v_0, $
where we have used that $L_{2,x} L_{2,y}= L_1$. As a consequence, $b_y L_{2,y} v_x + b_x L_{2,x} v_y=v_0$ and similarly $-a_y L_{2,y} v_x + a_x L_{2,x} v_y=v_1$, i.e. $v_0,v_1 \in (v_2)$. This proves the first part of (b). The second part of (b), concerning projective subspaces, follows immediately from the first part.

(2)
For the first part of the statement let $v_2=\sum_{i=0}^{2k} \varepsilon_i L_2 L_{n-1}^{i} L_{n}^{2k-i}=L_2 \left(\sum_{i=0,i \textrm{ even}}^{2k} \varepsilon_i  L_{n-1}^{i} L_{n}^{2k-i}+\sum_{i=0,i \textrm{ odd}}^{2k} \varepsilon_i  L_{n-1}^{i} L_{n}^{2k-i}\right)=L_2(v_{\textrm{even}}+v_{\textrm{odd}})$. If $n\neq 4$, then by applying $L_2$ again we get $L_2^2(v_{\textrm{even}}+v_{\textrm{odd}})=(L_1+L_3)(v_{\textrm{even}}+v_{\textrm{odd}})$, where the first sum is operator sum, and the second is vector sum. So $L_2 v_2 [\rho_0]=\{L_1 v_{\textrm{even}}\}$ and $L_2 v_2 [\rho_1]=\{L_1 v_{\textrm{odd}}\}$. If $(v_2) \cap \left(\bigcup_{l>1} V_{l,2k(n-2)-l+2,1}\right) = L_1 v_0$ and $(v_2) \cap \left(\bigcup_{l>1} V_{l,2k(n-2)-l+2,0}\right) = \emptyset$, then $v_{\textrm{even}}=v_0$ and $v_{\textrm{odd}}=0$. The second case is just the opposite, and when $(v_2) \cap \left(\bigcup_{l>1} V_{l,2k(n-2)-l+2,1}\right) = L_1 v_0$ and $(v_2) \cap \left(\bigcup_{l>1} V_{l,2k(n-2)-l+2,0}\right) = L_1 v_1$, then $v_{\textrm{even}}=a v_0$ and $v_{\textrm{odd}}=b v_1$ for some coefficients $a,b \in \SC$. If $n=4$, then $L_2^2(v_{\textrm{even}}+v_{\textrm{odd}})=(L_1+L_3+L_4)(v_{\textrm{even}}+v_{\textrm{odd}})$, and the rest is very similar to the first case.

For the second part of the statement observe that the results so far imply that $V_{0,2k(n-2)+1, 2}$ stratifies into disjoint, locally closed subspaces
\[
\begin{aligned}
V_{0,2k(n-2)+1, 2} 
= & \bigsqcup _{(v_{c_1},v_{c_2}) \in P_{2k(n-2),c_1} \times P_{2k(n-2),c_2}} \left( (v_{c_1},v_{c_2})\setminus ((v_{c_1}) \cup (v_{c_2})) \cap V_{0,2k(n-2)+1,2} \right)\\
 & \bigsqcup\bigsqcup_{v_{c_2} \in V_{0,2k(n-2),c_2}}\left( (v_{c_2}) \cap V_{0,2k(n-2)+1,2}  \right) \\
 & \bigsqcup\bigsqcup_{v_{c_1} \in V_{0,2k(n-2),c_1}}\left( (v_{c_1}) \cap V_{0,2k(n-2)+1,2}    \right).
\end{aligned}
\]
The subset $\cup_{(v_{c_1},v_{c_2}) \in V_{0,2k(n-2),c_1} \times V_{0,2k(n-2),c_2}} \left( (v_{c_1},v_{c_2})\setminus ((v_{c_1}) \cup (v_{c_2})) \cap V_{0,2k(n-2)+1,2}\right)$ is dense in the third stratum, since $V_{0,2k(n-2),c_1} \times V_{0,2k(n-2),c_2}$ is dense in $P_{2k(n-2),c_1} \times P_{2k(n-2),c_2}$. The first statement of (2) implies that the second statement is valid if $v_2$ is in this subset of $V_{0,2k(n-2)+1, 2}$. Similarly, the first statement implies the second statement on the loci $\sqcup_{v_{c_1} \in V_{0,2k(n-2),c_1}}\left( (v_{c_1}) \cap V_{0,2k(n-2)+1,2}  \right)$ and $\sqcup_{v_{c_2} \in V_{0,2k(n-2),c_2}}\left( (v_{c_2}) \cap V_{0,2k(n-2)+1,2}  \right)$. For $v_2$ in the closed complement of the union of these loci, the second statement follows from the linearity (and thus continuity) of the solution of (\ref{eq:lem1eq1}), since the $U_i$ are projective. 

(3) The statements in this case follow similarly to (2) by observing that $L_2 L_{j}=L_1 L_{j-1}+L_{j+1}$ (resp.~$L_2 L_{n-2}=L_1 L_{n-3}+L_{n-1}+L_{n}$).

The cases (4), (5) and (6) are analogous.
\end{proof}

Consider a full block in position $(0,m)$ with $m=k(n-2)+1$, with label $j$ which is $2$ or $n-2$. 
In positions $(0,k(n-2))$ and $(1,m)$, above and to the left of this full block, 
are divided blocks with labels $(c_1, c_2)$, either $(0,1)$ or $(n-1, n)$. 
The next lemma gives $P_{m,j}$ the structure of a join  (see~\ref{sec:joins} in the Appendix)
of two projective subspaces.

\begin{lemma}\label{lem:imgdescr}
\begin{enumerate}
 \item The morphisms $V_{0,k(n-2),c_i}  \to  V_{0,m,j}$ constructed in Proposition \ref{prop:1} extend to 
morphisms
\[\begin{array}{rcccl}  \phi_{i}& \colon & P_{k(n-2),c_i} & \to & P_{m,j} \\
  && v & \mapsto & (v) \cap P_{m,j}.\end{array}\]
The morphism $\phi_i$ is injective with image $N_{c_i}^0:={\rm im}(\phi_i)\subset P_{m,j}$ such that 
$N_{c_1}^0$, $N_{c_2}^0$ are disjoint projective linear subspaces of $P_{m,j}$. 
\item The join
of the disjoint linear subspaces $N_{c_1}^0, N_{c_1}^0\subset P_{m,j}$ is 
$P_{m,j}$ itself. Thus given $(v_1, v_2)\in P_{k(n-2),c_1} \times P_{k(n-2),c_2}$, there is a projective line 
$\SP^1\cong v_1v_2\subset P_{m,j}$
containing both $\phi_i(v_i)$, namely, the {\em line defined by} $v_1, v_2$ with {\em endpoints} $\phi_i(v_i)$.
The lines $v_1v_2$ cover $P_{m,j}$. For all 
$(v_1,v_2), (v_1', v_2') \in P_{k(n-2),c_1} \times P_{k(n-2),{c_2}}$, the 
intersection $v_1v_2 \cap v_1'v_2'$ can only be at a common endpoint.
\item For all $(v_1,v_2) \in P_{k(n-2),c_1} \times P_{k(n-2),c_2}$,  the intersection 
$ v_1v_2 \cap V_{0,m,j}$ is  
 \begin{itemize}
 \item either empty, exactly when $v_1 \notin V_{0,k(n-2),c_1}$ and $v_2  \notin V_{0,k(n-2),c_2}$;
 \item or an affine line otherwise.
 \end{itemize} 
\end{enumerate}
\end{lemma}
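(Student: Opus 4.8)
The plan is to establish (1), (2), (3) in turn, with the bulk of the work in (1). Two cases occur, determined by the position of the full block at $(0,m)$: case (A), with $k=2k'$ even, $j=2$ and $(c_1,c_2)=(0,1)$; and case (B), with $k=2k'+1$ odd, $j=n-2$ and $(c_1,c_2)=(n-1,n)$. In both cases $\kappa(j)=j$. As in the proof of Proposition~\ref{prop:1} I will assume $n$ is even, the odd case differing only by applying $\kappa$ to the relevant indices.

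\emph{For (1).} Since each $\rho_{c_i}$ is one-dimensional, a point $v\in P_{k(n-2),c_i}$ is a line $\SC g\subseteq S_{k(n-2)}[\rho_{c_i}]$, and the degree-$m$ part of the ideal $(g)$ is $S_1\cdot g=\langle xg,yg\rangle=L_2v$, a two-dimensional $G_\Delta$-invariant subspace which, by the shape of the transformed pattern near $(0,m)$ together with~\eqref{eq:repstensor}, lies in $S_m[\rho_j]$; hence $(v)\cap P_{m,j}=L_2v$ is a single point. Thus $\phi_i\colon v\mapsto(v)\cap P_{m,j}=L_2v$ is defined on all of $P_{k(n-2),c_i}$ and restricts on the cell $V_{0,k(n-2),c_i}$ to the map of Proposition~\ref{prop:1}(1)(a), resp.\ (4)(a). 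That $\phi_i$ is a morphism, indeed the projectivization of an injective linear map, follows by realizing it as the multiplication map $S_1\otimes S_{k(n-2)}\to S_m$ applied to the rank-two family $S_1\otimes\SC v$, together with the fact that $\SC[x,y]$ is a UFD, so that $g$ is recovered from $\langle xg,yg\rangle$ as the greatest common divisor of its elements (this also gives injectivity). Finally, in the projective coordinates on $P_{m,j}$ and on $P_{k(n-2),c_1}$, $P_{k(n-2),c_2}$ furnished by Lemmas~\ref{l0} and~\ref{lem:otherms}, the map $\phi_i$ sends a basis vector $L_{n-1}^aL_n^b$ of $P_{k(n-2),c_i}$ to $L_jL_{n-1}^aL_n^b$; hence $N_{c_1}^0$ and $N_{c_2}^0$ are the complementary ``even'' and ``odd'' coordinate subspaces of $P_{m,j}$, in particular disjoint projective-linear subspaces, and a dimension count from the same lemmas gives $\dim N_{c_1}^0+\dim N_{c_2}^0+1=\dim P_{m,j}$.

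\emph{For (2).} By the elementary facts on joins of disjoint linear subspaces recalled in Appendix~\ref{sec:joins}, disjointness of $N_{c_1}^0$, $N_{c_2}^0$ together with the dimension identity just noted shows their join equals $P_{m,j}$. For $(v_1,v_2)$ the points $\phi_1(v_1)\in N_{c_1}^0$ and $\phi_2(v_2)\in N_{c_2}^0$ are distinct (disjointness), hence span a unique line $v_1v_2\cong\SP^1$ with endpoints $\phi_i(v_i)$, and as $(v_1,v_2)$ varies these secants exhaust the join, i.e.\ $P_{m,j}$. If $v_1v_2$ and $v_1'v_2'$ share a point $p$, then $p$ is not an interior point of one and an endpoint of the other (an endpoint lies in some $N_{c_i}^0$, while interior points of a secant lie in neither, by disjointness and linearity), and if $p$ is interior to both then $v_1v_2=v_1'v_2'$ by uniqueness of the secant through $p$; so $p$ is a common endpoint.

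\emph{For (3).} Set $H=L_1P_{m-2,\kappa(j)}=L_1P_{m-2,j}$, the unique bottom-row stratum of $P_{m,j}$ from Proposition~\ref{prop:decomp of P}, so that $V_{0,m,j}=P_{m,j}\setminus H$; since the block at $(0,m)$ has label $j=\kappa(j)$, Lemma~\ref{lem:codim1_2} gives that $H$ is a hyperplane. The key point is $\phi_i^{-1}(V_{0,m,j})=V_{0,k(n-2),c_i}$: the inclusion $\supseteq$ is Proposition~\ref{prop:1}(1)(a), resp.\ (4)(a), and if $v_i=L_1w\notin V_{0,k(n-2),c_i}$ then $\phi_i(v_i)=L_2L_1w=L_1(L_2w)$ with $L_2w\in S_{m-2}[\rho_j]$ (using $\rho_{\mathrm{nat}}\otimes\rho_{\kappa(c_i)}\cong\rho_j$, visible from the pattern near $(0,m)$), so $\phi_i(v_i)\in L_1P_{m-2,j}=H$; as $V_{0,k(n-2),c_i}$ and its complement exhaust $P_{k(n-2),c_i}$, equality follows. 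Now $v_1v_2\cap V_{0,m,j}=v_1v_2\setminus(v_1v_2\cap H)$, and the line $v_1v_2$ lies in the hyperplane $H$ if and only if both its endpoints $\phi_1(v_1),\phi_2(v_2)$ do, i.e.\ iff $v_1\notin V_{0,k(n-2),c_1}$ and $v_2\notin V_{0,k(n-2),c_2}$; in that case $v_1v_2\cap V_{0,m,j}=\emptyset$, while otherwise $v_1v_2$ meets $H$ in exactly one point and $v_1v_2\cap V_{0,m,j}\cong\SP^1\setminus\{\mathrm{pt}\}$ is an affine line. This is precisely the asserted dichotomy. The main obstacle is the coordinate bookkeeping in (1) — fixing coordinates compatibly with Lemmas~\ref{l0} and~\ref{lem:otherms} across the two parity cases (and, for odd $n$, tracking $\kappa$), checking that $\phi_i$ is a genuine linear embedding rather than a mere set map, and verifying that $N_{c_1}^0,N_{c_2}^0$ are complementary coordinate subspaces spanning $P_{m,j}$; once this is in place, (2) is projective linear algebra about joins and (3) is a single ``line versus hyperplane'' argument.
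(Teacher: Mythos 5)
Your proof is correct and follows essentially the same route as the paper's (which is extremely terse, declaring (1) and (3) immediate and reducing (2) to the dimension count plus Lemma~\ref{lemma_linear_join}): you identify $\phi_i$ as $v\mapsto L_2v$ in the explicit coordinates of Lemmas~\ref{l0} and~\ref{lem:otherms}, use the appendix result on joins of disjoint complementary linear subspaces for (2), and for (3) observe that $V_{0,m,j}$ is the complement of the hyperplane $L_1P_{m-2,j}$ and that $\phi_i^{-1}$ of this hyperplane is exactly the complement of $V_{0,k(n-2),c_i}$. The details you supply (injectivity via gcd, the parity/$\kappa$ bookkeeping, the line-versus-hyperplane dichotomy) are all accurate fillings-in of steps the paper leaves implicit.
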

\begin{proof} (1) is immediate.  (2) then follows from $\dim P_{k(n-2),c_1} + \dim P_{k(n-2),c_i} + 1 = \dim P_{m,j}$
and Lemma~\ref{lemma_linear_join}. (3) is again immediate.
\end{proof} 
As we did already in the statement above, we will sometimes omit the inclusion maps $\phi_i$; thus, for subspaces $U_{1} \subseteq P_{k(n-2),c_1} $ and $U_{2} \subseteq P_{k(n-2),c_2}$, we will denote by  $J(U_{1},U_{2}) \subseteq P_{m,j}$ the join of $\phi_1(U_{1})$ and $\phi_1(U_{1})$ in $P_{m,j}$.

Let 
\[M^0_{c_i}:=\phi_{i}(V_{0,k(n-2),c_i})\subset V_{0,m,j};\] 
these are disjoint affine linear subspaces of the affine space $V_{0,m,j}$.
Also consider 
\[ N_{c_i} = J(P_{k(n-2),c_i}, \overline P_{k(n-2),c_{3-i}})\subset P_{m,j} .\]
This is the locus of points in $P_{m,j}$ covered by lines $v_1v_2$ one of whose endpoints is at a point 
``at infinity'', in $\overline P_{k(n-2),c_{3-i}}=P_{k(n-2),c_{3-i}}\setminus V_{0,k(n-2), c_{3-i}}$. Let
\[ M_{c_i} = N_{c_i}\cap V_{0,m,j}\subset V_{0,m,j}\]
be the intersection with the large affine cell of $P_{m,j}$. 

\begin{lemma} 
\label{lem:mctrivbundle}
There exists morphisms $\psi_i\colon M_{c_i} \to M^0_{c_i}$, given by 
associating to a point \[v\in M_{c_i} \subset V_{0,m,j}\] the 
``non-infinity'' endpoint of the (mostly unique) line $v_1v_2$ passing through it. 
The maps $\psi_i$ are trivial vector bundles over affine spaces. 
\end{lemma}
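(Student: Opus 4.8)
The plan is to reduce the statement to elementary linear algebra, using the join structure of $P_{m,j}$ furnished by Lemma~\ref{lem:imgdescr}. Write $P_{m,j}=\SP(W)$ for a vector space $W$. By Lemma~\ref{lem:imgdescr}(1)--(2) the subspaces $N^0_{c_1},N^0_{c_2}$ are disjoint projective linear subspaces whose join is all of $P_{m,j}$, so after choosing $W_1,W_2\subset W$ with $\SP(W_i)=N^0_{c_i}$ we have $W=W_1\oplus W_2$ and $\phi_i$ becomes the standard inclusion $\SP(W_i)\hookrightarrow\SP(W)$. The large cell is the complement of a hyperplane, $V_{0,m,j}=\SP(W)\setminus\SP(H)$ for some hyperplane $H\subset W$; I fix a linear form $\ell$ with $\ker\ell=H$, so that $V_{0,m,j}$ is identified with the affine chart $\{w\in W:\ell(w)=1\}$, an affine space modelled on $H$.

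The next step is to identify $H\cap W_i$ with the hyperplane $\bar W_i\subset W_i$ that cuts out $\overline P_{k(n-2),c_i}$ inside $N^0_{c_i}$. Indeed $M^0_{c_i}=\phi_i\big(V_{0,k(n-2),c_i}\big)$ is, under the isomorphism $\phi_i$, the complement in $N^0_{c_i}$ of the hyperplane $\phi_i\big(\overline P_{k(n-2),c_i}\big)$; since moreover $M^0_{c_i}\subseteq V_{0,m,j}$, we get $N^0_{c_i}\cap\overline P_{m,j}\subseteq N^0_{c_i}\setminus M^0_{c_i}=\phi_i\big(\overline P_{k(n-2),c_i}\big)$. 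Both sides are hyperplanes in $N^0_{c_i}$ --- the left one because $M^0_{c_i}\neq\emptyset$ forces $W_i\not\subseteq H$ --- hence they coincide, i.e.\ $\bar W_i=H\cap W_i$ and $\overline P_{k(n-2),c_{3-i}}=\SP(\bar W_{3-i})\subseteq\SP(H)=\overline P_{m,j}$. With this in hand, the join $N_{c_i}=J\big(P_{k(n-2),c_i},\overline P_{k(n-2),c_{3-i}}\big)$ becomes the linear subspace $\SP(W_i\oplus\bar W_{3-i})\subseteq\SP(W)$, and I would compute $M_{c_i}=N_{c_i}\cap V_{0,m,j}=\{w=w_i+u:w_i\in W_i,\ u\in\bar W_{3-i},\ \ell(w_i)=1\}$, which in the affine chart is exactly the affine-linear subspace $M^0_{c_i}+\bar W_{3-i}$ of $V_{0,m,j}$. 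This is where the dichotomy of Lemma~\ref{lem:imgdescr}(3) (``empty, or an affine line'') gets used: it guarantees that $M_{c_i}$ is the \emph{whole} such affine subspace and not merely a locally closed piece of it, and that the line through a point of $M_{c_i}$ meets $V_{0,m,j}$ in an affine line through its $M^0_{c_i}$-endpoint in direction $\bar W_{3-i}$.

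Finally, since $\bar W_i\cap\bar W_{3-i}\subseteq W_i\cap W_{3-i}=0$, the sum $M^0_{c_i}+\bar W_{3-i}$ is direct, so every $v\in M_{c_i}$ has a unique decomposition $v=v_1+u$ with $v_1\in M^0_{c_i}$ (automatically $\ell(v_1)=1$) and $u\in\bar W_{3-i}$; unwinding the definitions and Lemma~\ref{lem:imgdescr}(2)--(3), $v_1$ is precisely the ``non-infinity'' endpoint of the (mostly unique) line through $v$, so $\psi_i$ is well defined and is the restriction of the linear projection $W_i\oplus\bar W_{3-i}\to W_i$. Hence $\psi_i\colon M_{c_i}\to M^0_{c_i}$ is isomorphic, via $(v_1,u)\mapsto v_1+u$, to the first projection $M^0_{c_i}\times\bar W_{3-i}\to M^0_{c_i}$, a trivial vector bundle of rank $\dim P_{k(n-2),c_{3-i}}$; and its base $M^0_{c_i}\cong V_{0,k(n-2),c_i}$ is an affine space, being the large cell of a projective space.

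I expect the only genuinely delicate point to be the second paragraph: extracting from Lemma~\ref{lem:imgdescr} (and Proposition~\ref{prop:1}) the precise fact that $\phi_i$ carries the cell stratification of $P_{k(n-2),c_i}$ to the one induced on $N^0_{c_i}$ --- equivalently that $\overline P_{m,j}$ meets $N^0_{c_i}$ in $\phi_i(\overline P_{k(n-2),c_i})$. As sketched this is a one-line dimension count, once one knows that $\phi_i$ sends the large cell into the large cell, which is already part of Proposition~\ref{prop:1}; everything else is routine.
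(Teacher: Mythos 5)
Your proposal is correct and takes essentially the same route as the paper: the paper proves this lemma simply by citing Lemma~\ref{lemma_linear_proj} of Appendix~\ref{sec:joins}, which is exactly the "projection away from the part at infinity of the join is a trivial affine fibration" statement that you derive by hand from the decomposition $W=W_1\oplus W_2$ and the hyperplane cut out by $\ell$. Your second paragraph, identifying $N^0_{c_i}\cap\overline P_{m,j}$ with $\phi_i\bigl(\overline P_{k(n-2),c_i}\bigr)$ via the hyperplane-containment count, just makes explicit the matching of set-ups that the paper leaves implicit when invoking that appendix lemma.
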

\begin{proof} See Lemma~\ref{lemma_linear_proj}. 
\end{proof}

\begin{corollary} \label{cor:mcstrat}
\begin{enumerate}
 \item For $i=1,2$ the decomposition $P_{k(n-2),c_{3-i}}=\bigsqcup_{(k',l') \in B_{k(n-2),c_{3-i}}} V_{k',l',c_{3-i}}$ induces a decomposition into locally closed subspaces
 \begin{equation} \label{eq:mcdec1} M_{c_i} \setminus M_{c_i}^0 =\bigsqcup_{(k',l') \in B_{k(n-2),c_{3-i}} \setminus {(1,m)}} \left( (J(V_{0,k(n-2),c_i},V_{k',l',c_{3-i}}) \cap V_{0,m,j})\setminus M_{c_i}^0\right).\end{equation}
 \item Taking into account the bijections $B_{k(n-2),c_i}\cong B_{k(n-2)+2,c_{3-i}}$ and the decomposition (\ref{eq:mcdec1}), the space $V_{0,m,j}$ decomposes into locally closed subspaces as
\[
\begin{aligned}
V_{0,m,j} 
= & V_{0,m,j}(1,m,1,m)\\
 & \bigsqcup \left(\bigsqcup_{(k_1,l_1) \in B'_{k(n-2)+2,c_1}} V_{0,m,j}(k_1,l_1,1,m)\right)\\
 & \bigsqcup \left(\bigsqcup_{(k_2,l_2) \in B'_{k(n-2)+2,c_2}} V_{0,m,j}(1,m,k_2,l_2)\right),
\end{aligned}
\]
where we introduced the notations
\begin{itemize}
\item $B'_{k(n-2)+2,c_i}=(B_{k(n-2)+2,c_i}\cup \{\emptyset\}) \setminus \{(1,m)\}$;
\item $V_{0,m,j}(\emptyset,1,m)=M_{c_1}^0$;
\item $V_{0,m,j}(k_1,l_1,1,m)= (J(V_{0,k(n-2),c_1},V_{k_1,l_1,c_{2}}) \cap V_{0,m,j})\setminus M_{c_1}^0$;
\item $V_{0,m,j}(1,m,\emptyset)=M_{c_2}^0$;
\item $V_{0,m,j}(1,m,k_2,l_2)= (J(V_{0,k(n-2),c_2},V_{k_2,l_2,c_{1}}) \cap V_{0,m,j})\setminus M_{c_2}^0$;
\item $V_{0,m,j}(1,m,1,m)=V_{0,m,j} \setminus (M_{c_1} \cup M_{c_2})$.
\end{itemize}

 \end{enumerate}
\end{corollary}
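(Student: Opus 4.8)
The plan is to deduce both statements directly from the join description of $P_{m,j}$ in Lemma~\ref{lem:imgdescr}, together with the elementary facts on joins of linear subspaces recalled in Appendix~\ref{sec:joins}; the assertions are purely set-theoretic (locally closed pieces, with no claim on isomorphism type), so no further geometric input is needed and in particular Lemma~\ref{lem:mctrivbundle} is not required here. The starting point is the observation that $N_{c_i}=J(P_{k(n-2),c_i},\overline P_{k(n-2),c_{3-i}})$ is the join of two disjoint linear subspaces, one of which has codimension one inside $N_{c_{3-i}}^0$; hence $N_{c_i}$ is itself a codimension-one \emph{linear} subspace of $P_{m,j}$, and $M_{c_i}=N_{c_i}\cap V_{0,m,j}$ is a nonempty, proper, closed affine-linear subspace of $V_{0,m,j}$. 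Two further facts I would extract from Lemma~\ref{lem:imgdescr} (and the more precise Proposition~\ref{prop:1}): by Lemma~\ref{lem:imgdescr}(2), a point of $P_{m,j}$ not on $N_{c_1}^0\cup N_{c_2}^0$ is interior to a unique line $v_1v_2$ with $v_i\in N_{c_i}^0$, and $N_{c_i}^0\cap V_{0,m,j}=M_{c_i}^0$ with $M_{c_1}^0\cap M_{c_2}^0=\emptyset$; by Lemma~\ref{lem:imgdescr}(3), a line $v_1v_2$ meets $V_{0,m,j}$ exactly when at least one endpoint is ``finite'', i.e.\ lies in some $V_{0,k(n-2),c_i}$, while $\phi_i$ carries $\overline P_{k(n-2),c_i}$ into $\overline P_{m,j}$ (a linear embedding sending the complement of one hyperplane into the complement of another sends the hyperplane into the hyperplane).

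Combining these gives a trichotomy for a point $v\in V_{0,m,j}$: either (a) $v\in M_{c_i}^0$ for exactly one $i$; or (b) $v$ is interior to a unique line both of whose endpoints are finite, in which case $v\notin M_{c_1}\cup M_{c_2}$; or (c) $v$ is interior to a unique line with exactly one endpoint, say the $c_{3-i}$-one, at infinity and the $c_i$-endpoint finite, in which case $v\in M_{c_i}\setminus M_{c_i}^0$ and $v$ determines the unique cell $V_{k',l',c_{3-i}}$ of $\overline P_{k(n-2),c_{3-i}}$ containing its infinite endpoint. In particular $M_{c_1}\cap M_{c_2}=\emptyset$. For part (1) one now fixes $i$ and stratifies $\overline P_{k(n-2),c_{3-i}}$ by its cells; alternative (c) assigns each $v\in M_{c_i}\setminus M_{c_i}^0$ to exactly one set $\big(J(V_{0,k(n-2),c_i},V_{k',l',c_{3-i}})\cap V_{0,m,j}\big)\setminus M_{c_i}^0$, and conversely each such set lies inside $M_{c_i}\setminus M_{c_i}^0$ because $V_{k',l',c_{3-i}}\subset\overline P_{k(n-2),c_{3-i}}$ forces membership in $N_{c_i}$. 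Disjointness of the pieces for distinct $(k',l')$ is the uniqueness of the line in Lemma~\ref{lem:imgdescr}(2), and each piece is locally closed (an intersection of an open subset of $V_{0,m,j}$ with a join of an open set and a locally closed one, from which a linear subspace is removed; local closedness of such joins is among the facts recalled in Appendix~\ref{sec:joins}). This is exactly the decomposition asserted in part (1), the index set being $B_{k(n-2),c_{3-i}}$ with the big-cell index deleted, which after transport along the $L_1$-isomorphism of Lemma~\ref{lem:codim1_2} is the symbol $(1,m)$ in the statement.

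Part (2) is then pure assembly: the trichotomy gives $V_{0,m,j}=\big(V_{0,m,j}\setminus(M_{c_1}\sqcup M_{c_2})\big)\sqcup M_{c_1}\sqcup M_{c_2}$, and refining each $M_{c_i}=M_{c_i}^0\sqcup(M_{c_i}\setminus M_{c_i}^0)$ by part (1) yields the stated list: the open piece is $V_{0,m,j}(1,m,1,m)$, the ``$\emptyset$'' terms are $M_{c_1}^0=V_{0,m,j}(\emptyset,1,m)$ and $M_{c_2}^0=V_{0,m,j}(1,m,\emptyset)$, and the join strata are the remaining $V_{0,m,j}(k_1,l_1,1,m)$ and $V_{0,m,j}(1,m,k_2,l_2)$. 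The index $(k_i,l_i)$ runs over $B'_{k(n-2)+2,c_i}=(B_{k(n-2)+2,c_i}\cup\{\emptyset\})\setminus\{(1,m)\}$ precisely because in part (1) it ranges over $\{\emptyset\}$ together with the cells of $\overline P$, the dropped index $(1,m)$ being the big cell, which has been split off as the open piece.

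The main obstacle I anticipate is bookkeeping rather than substance: one must consistently manage the two uses of the symbol $(1,m)$ --- as the ``finite endpoint'' marker in the notation $V_{0,m,j}(\cdot,\cdot,\cdot,\cdot)$, and as the $L_1$-image of the big cell under the identifications $B_{k(n-2),c_{3-i}}\cong B_{k(n-2)+2,c_i}$ --- and one must verify that the assignment of a point to a cell is well defined precisely away from $M_{c_i}^0$, which is exactly where Lemma~\ref{lem:imgdescr}(2) (``two distinct lines $v_1v_2$ meet only at a common endpoint'') does the real work. Everything else is a formal consequence of Lemma~\ref{lem:imgdescr} and the join lemmas of Appendix~\ref{sec:joins}.
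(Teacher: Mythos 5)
Your argument is correct and is essentially the route the paper intends: the corollary carries no written proof there precisely because it is meant to be read off, as you do, from the join structure of Lemma~\ref{lem:imgdescr}, the disjointness/uniqueness of the lines $v_1v_2$, and the endpoint criterion of Lemma~\ref{lem:imgdescr}(3) (equivalently Proposition~\ref{prop:1}), with the indices $(1,m)$ and $B'_{k(n-2)+2,c_i}$ being exactly the $L_1$-transported bookkeeping you describe; your observation that $N_{c_i}$ is a hyperplane and $M_{c_1}\cap M_{c_2}=\emptyset$ also matches the paper's later use of the fact that $M_{c_1},M_{c_2}$ are parallel affine hyperplanes in $V_{0,m,j}$.

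One small repair: your justification of local closedness cites a fact that is not in Appendix~\ref{sec:joins}, and the general principle it gestures at is false --- the join of two locally closed sets need not be locally closed (e.g.\ the join of two open arcs on disjoint lines in $\SP^3$ fails to be locally closed at points of the arcs). The statement you need is nevertheless immediate from your own setup: each piece $\bigl(J(V_{0,k(n-2),c_i},V_{k',l',c_{3-i}})\cap V_{0,m,j}\bigr)\setminus M^0_{c_i}$ consists only of interior points of lines, hence lies in the open set $P_{m,j}\setminus(N^0_{c_1}\cup N^0_{c_2})$, where the two linear projections onto $N^0_{c_1}$ and $N^0_{c_2}$ (Lemma~\ref{lemma_linear_proj}, i.e.\ the maps underlying Lemma~\ref{lem:mctrivbundle}, which you set aside) are morphisms; the piece is then the intersection of the preimages of the locally closed cells $V_{0,k(n-2),c_i}$ and $V_{k',l',c_{3-i}}$ with the open cell $V_{0,m,j}$, hence locally closed. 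With that substitution the proof is complete.
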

The meaning of the notations is that $V_{0,m,j}(k_1,l_1,k_2,l_2)$ consists of exactly those points $v \in V_{0,m,j}$ such that $(v) \cap P_{k(n-2)+2,c_i} \in V_{k_i,l_i,c_i}$ for $i=1,2$. The symbol $\emptyset$ at an argument replacing a pair $(k_i,l_i)$ means that there is no such intersection.

\subsection{Proofs of propositions about incidence vareties}
\label{subsec:prop:proofs}

Here we prove the propositions announced in~\ref{subsec:incvargr}. The arguments for Propositions \ref{prop:incvar1}, \ref{prop:incvar2} and \ref{prop:incvar4} are very similar, so we will spell out the proof for one of these. The discussion will also prepare the ground for the proof of Proposition~\ref{prop:incvar3}, which is substantially more complicated.

Consider first the situation of \ref{prop:incvar4}. Namely, $m \not\equiv 0, 1\; \mathrm{mod}\; n-2$ is a positive integer, $V_{0,m,j}$ the cell of a full block, $c$ is the label of the full block immediately above the position $(0,m)$, $S_{c} \subseteq B_{m+1,c}$ is a nonempty maximal subset, and $S \subseteq B_{m,j}$ is a maximal subset which is \emph{allowed} by $S_{c}$.

\begin{proof}[Proof of Proposition \ref{prop:incvar4}] By Proposition \ref{prop:1} (3) and (6), for an arbitrary $U \in V_S$, $(U, U_c) \in X_{S}^{S_c}$ if and only if $U \subseteq (L_1^{-1} U_c)\cap V_{0,m,j}$. Moreover, the composition of $L_1^{-1}$ and the isomorphism $V_{0,m-1,c} \to V_{0,m,j}$ gives an isomorphism $V_{1,m,c} \to V_{0,m,j}$. Hence, for a pair $(\overline{U},U_c) \in Y_{\overline{S}}^{S_c}$, we have
\[\{U \in V_S|_{\overline{U}} : (U, U_c) \in  X_{S}^{S_c} \}= \left((L_1^{-1} U_c)\cap V_{0,m,j}\right)/\overline{U},\]
and there is a canonical quotient map $V_{0,m,j}/\overline{U} \to V_{1,m,c}/U_c$.

Let us define two families parameterized by $V_{\overline{S}} \times V_{S_{c},c}$. The family $\mathcal{F}_c$ is defined to have the fiber $ (L_1^{-1}U_{c}) \cap V_{0,m,j}$ over a pair $(\overline{U},U_c) \in V_{\overline{S}} \times V_{S_{c},c}$. This is a family of affine subspaces of $V_{0,m,j}$. The family $\overline{\mathcal{F}}$ is defined to have the fiber $\overline{U} \subset P_{m,j}$ over a pair $(\overline{U},U_c) \in V_{\overline{S}} \times V_{S_{c},c}$. This is a family of projective subspaces contained in $\overline{P}_{m,j}$. Since the tautological bundle over any Schubert cell in any Grassmannian is trivial, the two families are trivial with affine, respectively projective space fibres. Consider these families over the subset $Y_{\overline{S}}^{S_c} \subset V_{\overline{S}} \times V_{S_{c},c}$. By construction, over each point of $Y_{\overline{S}}^{S_c}$, the fibre of $\overline{\mathcal{F}}$  is a subspace of the projective closure of the fiber of $\mathcal{F}_c$ over the same point. In particular, we can take quotients fiberwise. By the considerations above,
\[ X_{S}^{S_c}=\mathcal{F}_c / \overline{\mathcal{F}}. \]
Moreover, the morphism $\omega \times \mathrm{Id} \colon  X_{S}^{S_c} \to Y_{\overline{S}}^{S_c}$ over a pair $(\overline{U},U_c)$ is given by the quotient morphism $V_{0,m,j}/\overline{U} \to V_{1,m,c}/U_c$ times the identity. This shows (1) and (2).

The injectivity statement (3) follows again from the isomorphism $V_{1,m,c} \cong V_{0,m,j}$ given by $L_1$, since for every pair $(U, U_c) \in  X_{S}^{S_1,S_2}$ one has $U_c=(U,\overline{U}_c) \cap V_{1,m,c}$.
\end{proof}

Consider now the situation of 
Proposition \ref{prop:incvar3}; thus $m \equiv 1\; \textrm{mod}\; n-2$ is a positive integer, $c_1$ and $c_2$ are the labels of the divided block immediately above the block at position $(m,j)$, $S_{1} \subseteq B_{m+1,c_1}$, $S_{2} \subseteq B_{m+1,c_2}$ are nonempty subsets at least one of which is maximal, and $S \subseteq B_{m,j}$ is a maximal subset which is allowed by $S_{1}$ and $S_{2}$.

\begin{lemma} For $i=1,2$ fix $U_i \in V_{S_{i},c_i}$.
\begin{enumerate} 
\label{lem:incfiber}
\item[(a)]  For an arbitrary $U \in V_S$, $(U, U_1, U_2) \in X_{S}^{S_1,S_2}$ if and only if $U \subseteq J(\phi_1(L_1^{-1} U_1),\phi_2(L_1^{-1} U_2))\cap V_{0,m,j}$.
\item[(b)] If $(\overline{U}, U_1,U_2) \in Y_{\overline{S}}^{S_1,S_2}$, then $\overline{U}\subseteq J(\phi_1(L_1^{-1} U_1),\phi_2(L_1^{-1} U_2))\cap V_{0,m,j}$.
\item[(c)]  If $(\overline{U}, U_1,U_2) \in Y_{\overline{S}}^{S_1,S_2}$, then 
\[\{U \in V_S|_{\overline{U}} : (U, U_1,U_2) \in  X_{S}^{S_1,S_2} \}= \left(J(\phi_1(L_1^{-1} U_1),\phi_2(L_1^{-1} U_2))\cap V_{0,m,j}\right)/\overline{U}.\]
\end{enumerate}
\end{lemma}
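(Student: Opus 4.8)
The plan is to deduce all three parts from Proposition~\ref{prop:1} — concretely parts (2) and (5), which describe the ideal generated by a single element of the bottom cell $V_{0,m,j}$ of a full block of label $j\in\{2,n-2\}$ together with its intersections with the cells of the divided block immediately above it — combined with the join presentation of $P_{m,j}$ supplied by Lemma~\ref{lem:imgdescr}. Throughout I would write $A_i=L_1^{-1}U_i$ and $J=J(\phi_1(A_1),\phi_2(A_2))\subseteq P_{m,j}$; recall from Lemma~\ref{lem:imgdescr} that $\phi_1(A_1)$ and $\phi_2(A_2)$ are disjoint projective linear subspaces of $P_{m,j}$ whose join $J$ is their projective span, i.e.\ the projectivisation of $(A_1)\cap S_m[\rho_j]\,+\,(A_2)\cap S_m[\rho_j]$, equivalently $(A_1,A_2)\cap P_{m,j}$.

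For (a) I would first reduce to single generators: since $(U)\cap S_{m+1}=L_2\cdot U$ depends additively on $U$, one has $(U)\cap S_{m+1}[\rho_{c_i}]=\sum_{u\in U}(u)\cap S_{m+1}[\rho_{c_i}]$, so $(U)\cap P_{m+1,c_i}\subseteq U_i$ holds if and only if $(u)\cap P_{m+1,c_i}\subseteq U_i$ for every point $u$ of $U$; hence $(U,U_1,U_2)\in X_S^{S_1,S_2}$ if and only if every $u$ of $U$ satisfies both incidence conditions. Next, for a fixed $u\in V_{0,m,j}$, Proposition~\ref{prop:1}(2) (when $j=2$, $\{c_1,c_2\}=\{0,1\}$), respectively (5) (when $j=n-2$, $\{c_1,c_2\}=\{n-1,n\}$), partitions the possibilities for the pair $\big((u)\cap P_{m+1,c_1},\,(u)\cap P_{m+1,c_2}\big)$ into three mutually exclusive cases (only the first intersection nonempty, only the second, or both) and, via its ``if and only if'' clauses for projective subspaces, identifies these with $u\in(A_1)$, $u\in(A_2)$, and $u\in(A_1,A_2)\setminus((A_1)\cup(A_2))$ respectively — after the $\kappa$-twisted matching of runners already carried out in Lemma~\ref{lem:imgdescr}, which I would suppress. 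Since an empty intersection is contained in every $U_i$, the locus of $u$ satisfying both conditions is the union of these three loci, namely $u\in(A_1,A_2)$, equivalently $u\in J$. This gives (a): $(U,U_1,U_2)\in X_S^{S_1,S_2}\iff U\subseteq J\cap V_{0,m,j}$.

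For (b) I would observe that $(\overline U,U_1,U_2)\in Y_{\overline S}^{S_1,S_2}$ is, by definition, the same pair of incidence conditions with $U$ replaced by the projective subspace $\overline U\subseteq\overline P_{m,j}$; writing $\overline U=L_1\overline{U}''$ via the isomorphism $L_1\colon P_{m-2,\kappa(j)}\to\overline P_{m,j}$ of Lemma~\ref{lem:codim1_2} (the block at $(0,m)$ being the new one) drops the conditions one degree, where the single-generator criterion above applies verbatim and yields $\overline U\subseteq J$. (The ``$\cap\,V_{0,m,j}$'' printed in the statement of (b) I would read through the projective closure of the affine subspace $J\cap V_{0,m,j}$, which is $J$ because the join meets the big cell under the allowedness hypothesis on $S,S_1,S_2$.) Part (c) is then formal: $V_S|_{\overline U}=\omega^{-1}(\overline U)$ (Lemma~\ref{lemma_omega}) is the set of affine subspaces of $V_{0,m,j}$ with set of points at infinity equal to $\overline U$, i.e.\ $V_{0,m,j}/\overline U$; among these, (a) singles out exactly the ones contained in the affine subspace $J\cap V_{0,m,j}$, and since $\overline U\subseteq J$ by (b) these are precisely the cosets $(J\cap V_{0,m,j})/\overline U$.

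The main obstacle will be the single-generator criterion used in (a): one must check that the three-way case analysis of Proposition~\ref{prop:1}(2),(5) genuinely reassembles — with the correct identification of runners and the correct treatment of the cases where one of $(u)\cap P_{m+1,c_i}$ is empty, so that the corresponding incidence condition is vacuous but must still be counted — into the single clean statement ``$u$ satisfies both incidence conditions $\iff u\in(A_1,A_2)$'', and then to recognise $(A_1,A_2)\cap P_{m,j}$ as the join $J$ through Lemma~\ref{lem:imgdescr}. Once that is in place, the reduction to single generators and the passage from (a) and (b) to (c) are routine, the only minor nuisance being the harmless reinterpretation of ``$\cap\,V_{0,m,j}$'' in the displayed formula of (b).
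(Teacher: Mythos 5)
Your proposal is correct and follows essentially the same route as the paper: all three parts are deduced from Proposition~\ref{prop:1} together with the join description of Lemma~\ref{lem:imgdescr} (the paper argues (a) pointwise over pairs $(v_1,v_2)\in U_1\times U_2$ and takes the union of the resulting lines, which is the same content as your single-generator reduction), and (c) is obtained formally from (a) and (b) via the identification $V_S|_{\overline{U}}=V_{0,m,j}/\overline{U}$, exactly as in the paper. The one caveat is in (b): the degree-shifted criterion is not the one from (a) ``verbatim'', since one step down the divided block sits to the \emph{right} of the label-$j$ block rather than above it, so the needed input is the clauses of Proposition~\ref{prop:1}(1)(b)/(4)(b) (and their analogues on the deeper cells, which is also where the points of $\overline{U}$ at infinity must be handled) rather than (2)/(5) — but this is the same level of brevity at which the paper's own cell-by-cell argument for (b) operates.
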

\begin{proof}
(a) By Proposition \ref{prop:1} for any pair of vectors $(v_{1},v_{2}) \in U_{1}\times U_{2}$ those points of $P_{m,j}$ for which $(v_{1},v_{2}) \cap P_{m+1,c_i}$ is either $v_{i}$ or empty are exactly those which are on $J(\phi_1(L_1^{-1}v_{1}),\phi_2(L_1^{-1}v_{2}))$. Hence, to satisfy the conditions $U$ has to be a subset of
\[\bigcup_{(v_1,v_2) \in U_1\times U_2} J(\phi_1(L_1^{-1}v_{1}),\phi_2(L_1^{-1}v_{2}))\cap V_{0,m,j}=J(\phi_1(L_1^{-1}U_{1}),\phi_2(L_1^{-1}U_{2}))\cap V_{0,m,j}.\]

(b) If $(\overline{U}, U_1,U_2) \in Y$, then $(\overline{U}) \cap P_{m+1,c_i} \subseteq U_i$. Hence,
$\phi_i(L_1^{-1}((\overline{U}) \cap P_{m+1,c_i})) \subseteq \phi_i(L_1^{-1}U_i)$, and
\[ J(\phi_1(L_1^{-1}((\overline{U}) \cap P_{m+1,c_1})),\phi_2(L_1^{-1}((\overline{U}) \cap P_{m+1,c_2}))) \subseteq J(\phi_1(L_1^{-1}U_1),\phi_2 (L_1^{-1}U_2)).\]
By Proposition \ref{prop:1} there is an isomorphism $V_{1,m-1,j} \cong V_{0,m-1,c_1} \times V_{0,m-1,c_1}$ in such a way that
\[ \overline{U} \cap V_{1,m-1,j} \subseteq J(\phi_1(L_1^{-1}((\overline{U}) \cap P_{m+1,c_1})),\phi_2(L_1^{-1}((\overline{U}) \cap P_{m+1,c_2})))\cap V_{1,m-1,j}.\]
Similarly, on each cell $V_{k,l,j}$ such that $k+l=m$ and $k\geq 1$, the affine subspace $\overline{U} \cap V_{k,l,j}$ is a subvariety of $J(\phi_1(L_1^{-1}((\overline{U}) \cap P_{m+1,c_1})),\phi_2(L_1^{-1}((\overline{U}) \cap V_{m+1,c_2}))) \cap V_{k,l,j}$. All these mean that $\overline{U} \subseteq  J(\phi_1(L_1^{-1}U_1), \phi_2(L_1^{-1}U_2))$.

(c) Recall, that $\overline{U}$ also represents a subspace at infinity for $V_{0,m,j}$, and $V_S|_{\overline{U}}=V_{0,m,j}/\overline{U}$. In fact, we can take the quotient of an arbitrary subspace of $V_{0,m,j}$, whose closure in $P_{m,j}$ contains $\overline{U}$ with respect to (an arbitrary affine subspace representing) $\overline{U}$. Then the statement follows from (a) and (b).
\end{proof}
\begin{proof}[Proof of Proposition \ref{prop:incvar3}] 
%(1) 
It follows from the definitions that $\left(\omega \times \mathrm{Id} \times \mathrm{Id}\right)(X_{S}^{S_1,S_2}) \subseteq Y_{\overline{S}}^{S_1,S_2}$. The surjectivity will follow from the calculation of the fibers. 

We will define three families of subspaces in $P_{m,j}$ over $V_{\overline{S}} \times V_{S_{1},c_1} \times V_{S_{2},c_2}$. For $i=1,2$ the family $\mathcal{F}_i$ is defined to have the fiber $ \phi_i(L_1^{-1}U_{i}) \subseteq P_{m,j}$ over a three-tuple $(\overline{U},U_1,U_2) \in V_{\overline{S}} \times V_{S_{1},c_1} \times V_{S_{2},c_2}$. Let the third family $\mathcal{F}$ has the fiber $\overline{U} \subseteq P_{m,j}$ over the same element. This is of course empty, if $|S|=1$. It is important to note, that in all cases the fibers are always projective subspaces of $P_{m,j}$.

By Lemma \ref{lem:imgdescr}, there is an embedding $\phi_i \circ L_1^{-1}: P_{m+1,c_{3-i}} \to N_{c_i}^0\subset P_{m,j}$. Apply this embedding on the fibers of the projectivization of the tautological bundle over the Schubert cell $V_{S_{3-i}}$. Then multiply the base with $V_{\overline{S}} \times V_{S_i}$, and extend the family into this direction as a constant. This gives the bundle $\mathcal{F}_i$. Again, by the fact that the tautological bundle over any Schubert cell is trivial it follows that the $\mathcal{F}_i$'s are also trivial, that is, $\mathcal{F}_i \cong \SP^{|S_i|-1} \times V_{\overline{S}} \times V_{S_1} \times V_{S_2}$. Similarly, $\mathcal{F} \cong \SP^{|S|-2} \times V_{\overline{S}} \times V_{S_1} \times V_{S_2}$.

By Lemma  \ref{lem:joinbasechange}, the join of trivial families over a common base is a trivial family of the joins of the fibers:
\[\begin{aligned} 
J(\mathcal{F}_1,\mathcal{F}_2)& =J(\SP^{|S_1|-1} \times V_{\overline{S}} \times V_{S_1} \times V_{S_2}, \SP^{|S_2|-1} \times V_{\overline{S}} \times V_{S_1} \times V_{S_2}) \\ & \cong J(\SP^{|S_1|-1},\SP^{|S_2|-1}) \times V_{\overline{S}} \times V_{S_1} \times V_{S_2} \\
&  \cong \SP^{|S_1|+|S_2|-1} \times V_{\overline{S}} \times V_{S_1} \times V_{S_2}  \subseteq P_{m,j} \times V_{\overline{S}} \times V_{S_1} \times V_{S_2}.
\end{aligned} \]
Therefore, $J(\mathcal{F}_1,\mathcal{F}_2) \cap (V_{0,m,j} \times V_{\overline{S}} \times V_{S_1} \times V_{S_2})$ is a trivial family of affine subspaces of $V_{0,m,j}$ over $V_{\overline{S}} \times V_{S_1} \times V_{S_2}$.

By Lemma \ref{lem:incfiber} (b) $\mathcal{F}$ is a (trivial) subfamily of $J(\mathcal{F}_1,\mathcal{F}_2)$ over $Y_{\overline{S}}^{S_1,S_2}$. By Lemma \ref{lem:incfiber} (c) $X_{S}^{S_1,S_2}$ can be constructed as 
\[ X_{S}^{S_1,S_2} = (J(\mathcal{F}_1,\mathcal{F}_2) \cap (V_{0,m,j} \times V_{\overline{S}} \times V_{S_1} \times V_{S_2}))/\mathcal{F}|_{Y_{\overline{S}}^{S_1,S_2}}. \]
Hence, $X_{S}^{S_1,S_2}$ is a trivial family of affine spaces of dimension $|S_1|+|S_2|-|S|$, since it is the quotient of a trivial affine family of fibre dimension $|S_1|+|S_2|-1$ by another trivial affine family of fibre dimension $|S|-1$.
\end{proof}

%\enlargethispage{\baselineskip}
\section{Type \texorpdfstring{$D_n$}{Dn}: special loci}
\label{Dnspecialsect}

\subsection{Support blocks}
\label{subsec:support}

In this section, we analyze the cases when a cell corresponding to a salient block (see Def.~\ref{def:globsalient}) of a Young wall $Y$ fails to contain a generator of a corresponding ideal $I\in Z_Y$. As an example, recall once again~Example~\ref{ex:3sidepyr}, where the divided missing blocks at position $(1,3)$ are salient blocks of $Y_3$, but the corresponding cells do not necessarily contain generators of an ideal $I\in Z_{Y_3}$. That this phenomenon can happen at all is one of the main sources of difficulty in our analysis of the strata of the singular Hilbert scheme. We introduce the notion of a support block for a salient block. Intuitively, the intersection of an ideal $I$ with the cell in the support block can generate the intersection in the salient block (such as the support block at position $(0,3)$ for $Y_3$), and thus the salient block contains no new generator of $I$. 
We will make this statement more precise in the rest of this section.

We start with some combinatorial preliminaries. Recall the setup of Proposition~\ref{prop:incvar3}: $m \equiv 1\; \textrm{mod}\; n-2$; $c_1$ and $c_2$ are the labels of the divided block immediately above the block of label $j$ at position $(m,0)$; $S_{1} \subseteq B_{m+1,c_1}$ and $S_{2} \subseteq B_{m+1,c_2}$ are nonempty subsets at least one of which is maximal; $S \subseteq B_{m,j}$ is a maximal subset which is allowed by $S_{1}$ and $S_{2}$. 

For a half-block $b$ of $S_i$, consider the following two conditions. 
\begin{enumerate}
\item The blocks below or to the left of $b$ are not contained in $S$.
\item The block below $b$ is contained in $S$, the complementary half-block $b'$ is contained in $S_{3-i}$, and the block to the left of their position in not contained in $S$.
\end{enumerate}
For $i, j=1,2$, let us denote by $S_i^{s,j} \subset S_i$ the subset of half-blocks of label $c_i$ satisfying condition~$(j)$. Let moreover $S_i^s=S_i^{s,1} \cup S_i^{s,2}$. 

The next lemma, whose proof is immediate, connects the global Definition~\ref{def:globsalient} with the local conditions (1)-(2) where we consider only the $m$-th and $m+1$-st diagonals for a particular $m$, and index sets $S$, $S_1$ and $S_2$ as above.

\begin{lemma} Given a Young wall $Y\in{\mathcal Z}_\Delta$, let $S$, respectively $S_1$ and $S_2$ denote the set of missing blocks, respectively half-blocks of $Y$ on the $m$-th and $(m+1)$-st diagonals. The blocks $S_i^s \subset S_i$ are exactly the salient blocks of $Y$ of label $c_i$ on the $(m+1)$-st diagonal.
\end{lemma}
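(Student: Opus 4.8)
The plan is to unwind Definition~\ref{def:globsalient} for the half-blocks in question and to match it term by term against the two conditions~(1)--(2), using the dictionary between the original and transformed Young wall patterns recorded in Remark~\ref{rem:corr_young_rules}.

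Fix a missing half-block $b\in S_i$ of label $c_i$ on the $(m+1)$-st diagonal; by the choice of $m$, $c_1$, $c_2$ this $b$ is one half of a divided block lying immediately above the label-$j$ full blocks of the $m$-th diagonal. By Definition~\ref{def:globsalient}, $b$ is a salient block of $Y$ iff $b\notin Y$ (which is the hypothesis $b\in S_i$), the absence of $b$ from $Y$ is not forced by the shape of the rows of $Y$ below the row of $b$, and $b$ is leftmost in its row with this property. The first step is to observe that, for a half-block sitting over the $m$-th diagonal, these conditions depend only on data on the $m$-th and $(m+1)$-st diagonals: by the Young wall rules (YW2)--(YW4) (equivalently Proposition~\ref{wallprop}(1)--(4)) together with Remark~\ref{rem:corr_young_rules}, the absence of $b$ is forced by the rows below exactly when the block occupying the position just below $b$ (the predecessor of $b$ in the original Young wall's column structure, translated through the shear) is itself absent, and in the case that that block is absent the question of whether $b$ is still leftmost-with-the-property is governed precisely by the companion half-block $b'$ at the divided position of $b$ and by the block just to the left of that position. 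In other words, the two relevant local pictures near $b$ are: (a) the block just below $b$ is present in $Y$; (b) the block just below $b$ is absent, but $b'$ is also absent and the block to the left of the divided position is present.

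Granting this reduction, the lemma becomes a short case check. If $b$ satisfies condition~(1) we are in picture~(a): the block below $b$ is in $Y$ and the block to the left of $b$ is in $Y$, so the absence of $b$ is not forced from below and $b$ is leftmost with this property; hence $b$ is salient --- it is a ``missing half-block under which there is a block in $Y$'' in the list of Definition~\ref{def:globsalient}. If $b$ satisfies condition~(2) we are in picture~(b): although the block below $b$ is missing, $b'$ is missing too and the block to the left of their common position is present, which is exactly the configuration of a missing divided block lying immediately to the right of the boundary of $Y$; by the reduction this again makes $b$ salient. Conversely, a half-block $b\in S_i$ satisfying neither~(1) nor~(2) is, after the same translation, either not leftmost in its row among the blocks whose absence is unforced (a block further to the left in its row has this property first), or has its absence already forced by the configuration strictly below it; in either case $b$ is not salient. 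This yields the asserted equality: $S_i^s$ is exactly the set of salient blocks of $Y$ of label $c_i$ on the $(m+1)$-st diagonal.

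The only point requiring care --- and the reason the lemma is nevertheless routine --- is the reduction in the middle paragraph: one has to check, using Remark~\ref{rem:corr_young_rules}, that the a priori global clause ``not forced by the shape of the rows below'' in Definition~\ref{def:globsalient} is, for a half-block over the $m$-th diagonal, detected entirely by $S$ on the $m$-th diagonal together with the companion half-block $b'$ in the degenerate sub-case. After that, the matching with conditions~(1)--(2) is a direct inspection of the local pattern around a divided block, of the type already illustrated in Example~\ref{ex:3sidepyr}.
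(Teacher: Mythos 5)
The paper gives no argument for this lemma (it is asserted to be immediate), so the only issue is whether your direct verification is sound, and its central step is not. Your ``reduction'' claims that the absence of the half-block $b$ at position $(k,l)$ is forced by the rows below exactly when the block directly below it, at $(k-1,l)$, is absent. That is not what the Young wall rules say: the only rule propagating absences from row $k-1$ to row $k$ is (YW3), i.e.\ Proposition~\ref{wallprop}(1) together with its footnote, and for a half-block it involves the \emph{same-orientation} half-block at the diagonal position $(k-1,l-1)$ (which, because the two labels of a divided block swap orientation along the diagonal, carries the label $c_{3-i}$), not the full block at $(k-1,l)$. (Your parenthetical gloss is also off: under the dictionary of Remark~\ref{rem:corr_young_rules}, the original-column predecessor of $b$ is the block to the \emph{left}, at $(k,l-1)$, not the block below.) Since both directions of your case check lean on this reduction, neither is actually established.

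Concretely, two configurations your argument does not handle. First, for (2)$\Rightarrow$salient you must exclude the possibility that row $k-1$ ends already at column $l-2$, in which case both halves at $(k-1,l-1)$ are missing and the absence of $b$ \emph{is} forced from below; the exclusion comes from properness (YW4), since rows $k-1$ and $k$ would then be two full rows of equal length, and this never appears in your proof. Second, for the converse take $b$ missing, its companion $b'$ present, the block at $(k-1,l)$ missing, the block at $(k,l-1)$ present, and row $k-1$ ending in the complete divided block at $(k-1,l-1)$: this is a legitimate Young wall, $b$ satisfies neither (1) nor (2), yet $b$ is the leftmost missing block of its row and its absence is not forced by the diagonal rule, so your dichotomy ``not leftmost, or forced by the rows below'' simply does not apply to it. Seeing that such a $b$ is nonetheless not salient requires invoking the itemized description in Definition~\ref{def:globsalient} (there is no block of $Y$ under $b$, and the divided block at $(k,l)$ is not entirely missing), i.e.\ precisely the half-block bookkeeping that your reduction was meant to bypass. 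A correct write-up has to carry out this bookkeeping (orientation rule along the diagonal, rows ending in a single half-block, and the YW4 exclusion); as it stands, the middle paragraph assumes what the case check is supposed to prove.
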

Thus we can legitimately call the blocks in $S_i^s$ salient blocks in this local situation. 

Let us introduce the following subsets of $S$. 
\begin{itemize}
\item $S^l$ consists of blocks $b\in S$ that are directly to the left of a divided block with labels $(c_1,c_2)$.
\item $S^{b,0}$ consists of blocks $b\in S$, so that $b$ is immediately below a divided block with labels $(c_1,c_2)$, the block immediately up-left of $b$ is not in $S$, and both of the divided blocks above $b$ are in $S_1 \cup S_2$.
\item $S^{b,c_i}$ consists of blocks $b\in S$ that are immediately below a divided block, so that the block immediately up-left of $b$ is not in $S$, and the block of label $c_{3-i}$ above $b$ is in $S_{3-i}$.
\item $S^{b,c_1 \cup c_2}$ consists of blocks $b\in S$ such that $b$ lies immediately below a divided block, and the block immediately up-left of $b$ is contained in $S$. 
\item $S^b=S^{b,0} \cup S^{b,c_1} \cup S^{b,c_2} \cup S^{b,c_1 \cup c_2}$.
\end{itemize}
Note that by the Young wall rules, we necessarily have $S^b=S \setminus S^l$. We will call the blocks in the set $S^{c_i}=S^{b,0} \cup S^{b,c_i} \cup S^{b,c_1 \cup c_2}$ \emph{support blocks for label $c_i$}. We will define a support relation from $S^{c_i}$ to $S_i^s$ in \ref{subsec:loci:strata} below. 
%; they can {\em support} the salient blocks of label $c_i$ in $S_i^s$. 

\subsection{Special loci in orbifold strata and the supporting rules}
\label{subsec:loci:strata}

Let $Y\in{\mathcal Z}_\Delta$ be a Young wall with a salient block in its bottom row in position $(0,m)$ with $m \equiv 1\; \textrm{mod}\; n-2$, a full block immediately below a divided block with labels $(c_1, c_2)$. As before, let $S$, respectively $S_1$ and $S_2$ denote the set of missing blocks, respectively half-blocks of $Y$ on the $m$-th and $(m+1)$-st diagonals with the corresponding labels.

We introduce index sets depending on $S$, $S_1$ and $S_2$. We consider two cases. 

If $\overline{S}$ is not maximal, then let
\[I(S,S_1,S_2)=\left\{(k_1,l_1,k_2,l_2) \;:\; \begin{array}{c}(k_i,l_i) \in S_i^s \cup \{ \emptyset \} \cup (\{(1,m)\} \cap S_i) \textrm{ for } i=1,2, \\ \textrm{and at least one } (k_i,l_i)=(1,m)\end{array} \right\}.\]
%\[I(S,S_1,S_2)=\{(k_1,l_1,k_2,l_2) \;:\; (k_i,l_i) \in S_i^s \cup \{ \emptyset \} \cup (\{(1,m)\} \cap S_i) \textrm{ for } i=1,2, \textrm{ and at least one } (k_i,l_i)=(1,m) \}.\]
We partition this index set into the following (possibly empty) disjoint subsets:
\begin{itemize}
\item $I(S,S_1,S_2)_0= \{(k_1,l_1,k_2,l_2) \in I(S,S_1,S_2) \;:\;  (k_i,l_i)\notin \{\emptyset,(1,m)\} \textrm{ for some } i=1,2 \}$;
\item $I(S,S_1,S_2)_1=\{ (1,m,\emptyset),(\emptyset,1,m)\} \cap I(S,S_1,S_2)$;
\item $I(S,S_1,S_2)_{-1}=\{ (1,m,1,m)\} \cap I(S,S_1,S_2)$.
\end{itemize}

If $\overline{S}$ is maximal, then let
\[I(S,S_1,S_2)=\{(k_1,l_1,k_2,l_2) \;:\; (k_i,l_i) \in S_i^s \cup \{ \emptyset \} \textrm{ for } i=1,2 \}.\]
We remark that in this case $(1,m) \notin S_i^s$ for both $i=1,2$. The index set $I(S,S_1,S_2)$ in this case can be partitioned into the following subsets:
\begin{itemize}
\item $I(S,S_1,S_2)_0= \{(k_1,l_1,k_2,l_2) \in I(S,S_1,S_2) \;:\;  (k_i,l_i)\neq \emptyset \textrm{ for some } i=1,2 \}$;
\item $I(S,S_1,S_2)_1=\{(\emptyset,\emptyset)\}$.
\end{itemize}
As before, $\emptyset$ is used as a symbol replacing a pair in these defintions.

For projective subspaces $P_1 \subseteq P_2 \subseteq P_{m+1,c}$ we introduce the following notation. $(P_2 \setminus P_1) \dashv V_{k,l,c}$ if and only if $(P_2 \setminus P_1) \cap V_{k,l,c} \neq \emptyset$ and $k$ is maximal with this property. This is the smallest cell whose intersection with $P_2$ is larger than that with $P_1$.

Recall the truncated Young wall $\overline Y$ and the morphism  $T\colon Z_{Y} \to Z_{\overline{Y}}$ from ~\ref{sec:proof:orbicells}. The following statement will be proved below in~\ref{proofofethrm}. 

\begin{theorem} 
\label{thm:zinfty} There is a decomposition into locally closed subspaces
\[ Z_{Y}= \bigsqcup_{(k_1,l_1,k_2,l_2) \in I(S,S_1,S_2)}Z_{Y}(k_1,l_1,k_2,l_2),\]
where
\[Z_{Y}(k_1,l_1,k_2,l_2)=\{ I \in Z_{Y}\;:\; ((I \cap P_{m,j}) \setminus (I \cap \overline{P}_{m,j})) \cap P_{m+1,c_i} \dashv V_{k_i,l_i,c_i} \textrm{ for } i=1,2\}. \]
The symbol $\emptyset=(k_i,l_i)$ means that there is no intersection with $P_{m+1,c_i}$.
Moreover, if $(k_1,l_1,k_2,l_2) \in I(S,S_1,S_2)_e$, then the nonempty fibers of $T\colon Z_{Y}(k_1,l_1,k_2,l_2) \to Z_{\overline{Y}}$ have Euler characterestic $e$.
\end{theorem}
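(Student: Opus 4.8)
The plan is to deduce the statement from the fibre-product description of $Z_Y$ in Proposition~\ref{prop:fiberproduct} together with the join geometry of~\ref{subsec:auxiliary}, analysing one fibre of the Case~\ref{subsec:incvargr}.3 incidence morphism at a time. Since the bottom salient block of $Y$ lies at $(0,m)$ with $m\equiv 1\;\mathrm{mod}\;(n-2)$ below a divided block with labels $(c_1,c_2)$, we are in the situation of Case~\ref{sec:proof:orbicells}.3, and Proposition~\ref{prop:fiberproduct} provides a fibre-product square relating $T\colon Z_Y\to Z_{\overline Y}$ to the vertical map $\omega\times\mathrm{Id}\times\mathrm{Id}\colon\mathcal X_Y=X_S^{S_1,S_2}\to\mathcal Y_{\overline Y}=Y_{\overline S}^{S_1,S_2}$. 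Hence for $\bar I\in Z_{\overline Y}$ with image $(\overline U,U_1,U_2)\in Y_{\overline S}^{S_1,S_2}$ the fibre $T^{-1}(\bar I)$ is isomorphic to the fibre of $\omega\times\mathrm{Id}\times\mathrm{Id}$ over $(\overline U,U_1,U_2)$, which by Lemma~\ref{lem:incfiber}(c) is the affine space
\[
F=\bigl(J(\phi_1(L_1^{-1}U_1),\phi_2(L_1^{-1}U_2))\cap V_{0,m,j}\bigr)/\overline U;
\]
when $\overline S$ is not maximal one has $|S|=1$, so $\overline U=\emptyset$ and $F=J(\phi_1(L_1^{-1}U_1),\phi_2(L_1^{-1}U_2))\cap V_{0,m,j}$ itself.

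Next I would translate the defining condition of $Z_Y(k_1,l_1,k_2,l_2)$ into this model. For $I\in Z_Y$ over $\bar I$, classified by an affine subspace $U\subseteq F$, the subspace $I\cap P_{m,j}$ is the projective join of $\overline U$ with $U$, so its ``new part'' $(I\cap P_{m,j})\setminus(I\cap\overline P_{m,j})$ is exactly $U$; and by Proposition~\ref{prop:1}(2),(3),(5),(6) the intersection of the ideal generated by $U$ with $P_{m+1,c_i}$, compared with that of $\overline U$, is controlled \emph{linearly} by the image of $U$ under the join projection of $P_{m,j}$ onto the factor isomorphic to $P_{m+1,c_i}$. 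Corollary~\ref{cor:mcstrat} already records the resulting locally closed stratification of $V_{0,m,j}$ by the data ``which cell $V_{k_i,l_i,c_i}$ is reached''; restricting it to $J(\phi_1(L_1^{-1}U_1),\phi_2(L_1^{-1}U_2))\cap V_{0,m,j}$, passing to the quotient by $\overline U$, and using triviality of the tautological bundles over Schubert cells, one obtains a locally closed decomposition of $F$ indexed by the admissible tuples. The Young wall rules for $Y$, via Proposition~\ref{wallprop}, force each label actually reached to lie in $S_i^s\cup\{\emptyset\}\cup(\{(1,m)\}\cap S_i)$ and force at least one coordinate to equal $(1,m)$ when $\overline S$ is not maximal, so the index set is precisely $I(S,S_1,S_2)$. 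Since the combinatorial data vary in a locally closed way with $I$, as $\bar I$ ranges over $Z_{\overline Y}$ these fibrewise strata glue to locally closed subvarieties $Z_Y(k_1,l_1,k_2,l_2)\subseteq Z_Y$ exhausting $Z_Y$, which is the first assertion.

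It then remains to compute, for fixed $(k_1,l_1,k_2,l_2)\in I(S,S_1,S_2)_e$ and a fixed $\bar I$ with nonempty fibre, the Euler characteristic of the locally closed piece $Z_Y(k_1,l_1,k_2,l_2)\cap T^{-1}(\bar I)$ of the affine space $F$. For $e=-1$, i.e.\ $(k_1,l_1,k_2,l_2)=(1,m,1,m)$ (possible only when $\overline S$ is not maximal, so $F=J(\phi_1(L_1^{-1}U_1),\phi_2(L_1^{-1}U_2))\cap V_{0,m,j}$), the piece is $F\setminus(M_{c_1}\cup M_{c_2})$, where $M_{c_i}=N_{c_i}\cap V_{0,m,j}$ are affine subspaces by Lemma~\ref{lem:imgdescr}; one checks that $N_{c_1}\cap N_{c_2}$ lies entirely at infinity, so $M_{c_1}\cap M_{c_2}=\emptyset$, while $M_{c_i}$ meets $F$ precisely because $(1,m)\in S_i$, giving $\chi=1-1-1=-1$. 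For $e=0$, the piece is the locus in $F$ reaching a strictly deeper cell $V_{k_i,l_i,c_i}$ with $(k_i,l_i)\in S_i^s$ in at least one direction; by the join projections and Lemma~\ref{lem:mctrivbundle} this is a trivial affine bundle over a difference of two nested affine subspaces, hence $\chi=0$. For $e=1$ (the tuples $(1,m,\emptyset)$ and $(\emptyset,1,m)$ when $\overline S$ is not maximal, or $(\emptyset,\emptyset)$ when $\overline S$ is maximal), the new part contributes nothing new on diagonal $m+1$ in one (respectively either) direction, a linear condition cutting out a full affine subspace of $F$, so $\chi=1$. As a consistency check, additivity of the Euler characteristic over the strata of $F$ returns $\chi(F)=1$.

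The main obstacle is not an isolated geometric step but the combinatorial verification supporting the previous two paragraphs: that $I(S,S_1,S_2)$, with its partition $I(S,S_1,S_2)_{-1}\sqcup I(S,S_1,S_2)_0\sqcup I(S,S_1,S_2)_1$, is exactly the collection of tuples that occur for ideals of profile $Y$, and that the required non-emptiness facts hold --- each relevant $M_{c_i}$ genuinely meeting $F$, and each listed stratum actually arising. This forces a case analysis over the six bottom-corner configurations A1--A3, B1--B3 and over whether $\overline S$ is maximal, carefully tracking which salient half-blocks in $S_i^s$ are governed by support blocks in $S^{c_i}$; this is exactly the bookkeeping that the definitions of~\ref{subsec:support} were designed to handle, and it reproduces the behaviour already visible in Examples~\ref{ex:3sidepyr}--\ref{ex:5sidepyr}.
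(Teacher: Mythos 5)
Your route is essentially the paper's: the proof in~\ref{proofofethrm} also starts from the fibre product of Proposition~\ref{prop:fiberproduct}, identifies the fibres of $T$ with $J(\phi_1(L_1^{-1}U_1),\phi_2(L_1^{-1}U_2))/\overline{U}$ via Lemma~\ref{lem:incfiber}(c), stratifies these fibres by which cells $V_{k_i,l_i,c_i}$ are newly reached using the join structure of Lemma~\ref{lem:imgdescr} and Corollary~\ref{cor:mcstrat} (this is Lemma~\ref{lem:fiberstrat}), computes the Euler characteristics of the pieces exactly as you do (affine pieces give $1$, the punctured-cone pieces give $0$ by Lemma~\ref{lem:affjoinchar}, and the complement of the two disjoint affine loci gives $-1$), and then assembles and pulls back.

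Two points need attention. First, your claim that $\overline{S}$ non-maximal forces $|S|=1$, hence $\overline{U}=\emptyset$, is unjustified and in general false: missingness of label-$j$ blocks only propagates in the direction $(k,l)\mapsto(k+1,l+1)$ by Proposition~\ref{wallprop}(1), so the missing blocks of a fixed label along an antidiagonal need not form a contiguous run (compare Example~\ref{ex:5sidedalt}, where the missing label-$1$ half-blocks on the degree-six antidiagonal skip the lower position). This is harmless for your $e=-1$ count, because one can work with $(M_{c_1}+\overline{U})/\overline{U}$ and $(M_{c_2}+\overline{U})/\overline{U}$, which remain disjoint affine subspaces since $M_{c_1},M_{c_2}$ are parallel hyperplanes of $V_{0,m,j}$ each containing a representative of $\overline{U}$ (this is exactly how the non-maximal case is handled in the proof of Lemma~\ref{lem:fiberstrat}); but the simplification should be dropped. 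Second, the gluing step you dispatch with ``the combinatorial data vary in a locally closed way'' is where real work remains: the index set of the fibrewise stratification is $I(S,S_1(\overline{U}),S_2(\overline{U}))$ and genuinely depends on the point $\overline{U}$, so one must first stratify $Y_{\overline{S}}^{S_1,S_2}$ by the profiles $(S_1(\overline{U}),S_2(\overline{U}))$ and verify that over each such piece the fibrewise strata form a trivial (in particular locally closed) family -- this is the content of Corollary~\ref{cor:xinfty}, and it is also precisely why the theorem only asserts the Euler characteristic for the \emph{nonempty} fibres of $T$. You correctly identify this bookkeeping as the main burden; it should be carried out rather than assumed, but doing so along your outline reproduces the paper's argument.
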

The space $Z_{Y}(k_1,l_1,k_2,l_2)$ should be thought as the space of those ideals, where the generator in the cell of support block at position $(0,m)$ has an image on the $(m+1)$-st diagonal at the cells $V_{k_i,l_i,c_i}$ for $i=1,2$ which does not come from the rows above.
We remark that the mentioned support block in the bottom row is also salient since it is the first missing block in its row. But if one attaches further rows to the bottom of $Y$, then it may become non-salient.

Recall from \ref{subsec:support} the set $S^{c_i}=S^{b,0} \cup S^{b,c_i} \cup S^{b,c_1 \cup c_2}$ of support blocks for label $c_i$. In the light of the definition of the sets $I(S,S_1,S_2)$ and \autoref{thm:zinfty} we will say that the blocks in $S^{c_i}$ can {\em support} the salient blocks of label $c_i$ in $S_i^s$ in the following sense. 

Each support block $b\in S$ for label $c_i$ can support at most one salient block of label $c_i$ above and to the left of $b$ on the $(m+1)$-st antidiagonal. More precisely, this supporting relationship has to respect the following {\em supporting rules}.
\begin{itemize}
\item Each block in $S^{b,0}$ supports precisely one or two salient blocks, at most one from each label, and at least one of these has to be immediately above it;
\item each block in $S^{b,c_i}$ can support at most one salient block of label $c_i$ which is not immediately above it;
\item each block in $S^{b,c_1 \cup c_2}$ can support none, one or two salient blocks, at most one from each label, and neither of these is immediately above it.
\end{itemize}
In this way we define a correspondence from a subset of $S^{c_1}$ to $S_1^s$ and one from a subset of $S^{c_2}$ to $S_2^s$ but these two have to satisfy the restrictions mentioned above on the intersection $S^{c_1} \cap S^{c_2}=S^{b,0} \cup S^{b,c_1 \cup c_2}$. Neither correspondence has to be surjective or be defined on the whole domain, but, where they are defined, they should be injective.

We shall call a salient block $b'\in S_i$ of label $c_i$ \emph{supported}, if the number of support blocks for label $c_i$ in $S$ which are below $b$ is at least as much as the total number of salient blocks of label $c_i$ in $S_i$ counted from the top left, including $b'$ itself. A supported salient block $b'$ satisfying condition (2) above, so that there is a support block in $S$ immediately below $b'$, will be called \emph{directly supported}. The others will be called \emph{non-directly supported}. The supporting relationship will be globalized for the whole diagram in the notion of closing datum, to be defined in~\ref{subsect:dis0gen} below.

Recall that during the inductive process in the proof of Theorem~\ref{thm:dnorbcells}, at each step a new generator appears in the cell corresponding to the salient block in the bottom row. Assume that for $I \in Z_Y$, $(I \cap P_{m,j}) \cap P_{m+1,c_i} = I \cap P_{m+1,c_i}$ for $i=1,2$. In this case we will say that \emph{there is no generator of label $c_i$ on the $(m+1)$-st antidiagonal}. Let $S$, $S_1$ and $S_2$ be the index sets for $V_{0,m,j}$, $V_{1,m,c_1}$ and $V_{1,m,c_2}$ respectively. Then using inductively Theorem \ref{thm:zinfty} for each block $b \in S^{c_i}$ we get that there is at most one block $b_i \in S_{i}^s$ such that when the row of $b$ is added to the Young wall, the new generator in the cell of $b$ has nontrivial image in the cell of $b_i$. Conversely, for each block $b_i \in S_{i}^s$ there corresponds a support block $b \in S^{c_i}$ determined by $I$. In particular, this implies
\begin{corollary} 
\label{cor:nongen} Assume that for $I \in Z_Y$, $(I \cap P_{m,j}) \cap P_{m+1,c_i} = I \cap P_{m+1,c_i}$ for some $i=1,2$. Let $S$, $S_1$ and $S_2$ be the index sets for $V_{0,m,j}$, $V_{1,m,c_1}$ and $V_{1,m,c_2}$ respectively.
\begin{enumerate}
\item $|S_i^{u,1}| \leq |S^{b,c_i}|+|S^{b,c_1 \cup c_2}|$;
\item every salient block of label $c_i$ is supported;
\item to each salient block of label $c_i$ there corresponds a unique support block for label $c_i$ in the way described above.
\end{enumerate}
\end{corollary}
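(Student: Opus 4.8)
The plan is to make rigorous the inductive propagation argument sketched just before the corollary, and then read off the three numerical consequences formally. Fix the label $i$ for which the hypothesis $(I \cap P_{m,j}) \cap P_{m+1,c_i} = I \cap P_{m+1,c_i}$ holds. In the row-by-row induction of~\ref{sec:proof:orbicells}, each support block $b \in S^{c_i}$ appears (as a missing block on the $m$-th antidiagonal) in the bottom row of some truncation of $Y$, and at that stage a new generator is introduced in the cell of $b$; Theorem~\ref{thm:zinfty} then decomposes the corresponding stratum according to the position $(k_i,l_i)$ at which the image of this new generator first exceeds the part inherited from the rows above. The index set labelling that decomposition consists of pairs whose entries lie in $S_i^s \cup \{\emptyset\}$ (plus at most the single extra possibility $(1,m)$), so each support block contributes, for the label $c_i$, at most one salient block $b_i \in S_i^s$ that receives nonzero image from it. Recording these assignments over all support blocks yields a partially defined, injective map $\phi_i$ from $S^{c_i}$ to $S_i^s$, whose constraints are precisely the supporting rules of~\ref{subsec:loci:strata}.

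The second step uses the hypothesis to determine the image of $\phi_i$. Since $I \in Z_Y$ has profile $Y$, we have $I \cap V_{k,l,c_i} \neq \emptyset$ for every missing block $(k,l)$ of label $c_i$; in particular every salient half-block in $S_i^s$ is met nontrivially by $I$. The assumption says that $I \cap P_{m+1,c_i}$ carries no generator on the $(m+1)$-st antidiagonal: every element of $I \cap P_{m+1,c_i}$ comes by multiplication from $I \cap P_{m,j}$, hence propagates upward from a cell on a strictly lower antidiagonal. Tracing this back through the inductive steps, the only way a salient cell $b_i \in S_i^s$ can become nonempty is by an image coming from the generator introduced in the cell of some support block strictly below it. Therefore $\phi_i$ is defined at, and surjects onto, all of $S_i^s$: every salient block of label $c_i$ is supported, which is statement~(2), and the uniqueness of its supporting block is precisely the injectivity of $\phi_i$, which is statement~(3).

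Statement~(1) follows by elementary bookkeeping with the supporting rules applied to the resulting bijection between $S_i^s$ and its supporting blocks. The salient blocks satisfying condition~(1) of~\ref{subsec:support} --- those whose neighbour below is absent from $S$ --- are exactly the non-directly supported ones; by the supporting rules a block of $S^{b,0}$ always supports a salient block immediately above it and so cannot support one of these, whence each is supported by a distinct block of $S^{b,c_i} \cup S^{b,c_1 \cup c_2}$. This gives $|S_i^{u,1}| \le |S^{b,c_i}| + |S^{b,c_1 \cup c_2}|$.

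The step I expect to be the main obstacle is the propagation claim underlying the first two paragraphs: one must check, uniformly over the intermediate truncations and over the four local configurations of~\ref{subsec:incvargr} together with their $n=4$ degenerations, that the decomposition of Theorem~\ref{thm:zinfty} genuinely records \emph{every} way a generator can acquire an image on the next antidiagonal, and that these images compose correctly as one climbs the wall, so that ``the only way to fill a salient cell is from below'' really is forced by the hypothesis. Once this is in place, the surjectivity and injectivity of $\phi_i$ and the counting bound are formal.
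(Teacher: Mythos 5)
Your first two paragraphs follow the paper's own route: Theorem~\ref{thm:zinfty}, applied inductively as each row is added, shows that the new generator at a support block can have nontrivial image in at most one salient cell per label, and the hypothesis that there is no generator of label $c_i$ on the $(m+1)$-st antidiagonal forces every salient cell of label $c_i$ to be filled by exactly one such contribution; this is precisely how the paper obtains (2) and (3), and your level of detail is at least that of the text.

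Your argument for (1), however, has a genuine flaw. You assert that a block of $S^{b,0}$ ``always supports a salient block immediately above it and so cannot support one of these'' (the condition-(1) blocks). The supporting rules only demand that \emph{at least one} of the one or two salient blocks supported by an $S^{b,0}$ block is immediately above it; the second supported block, of the other label, may sit elsewhere on the antidiagonal. In particular, for the fixed label $c_i$ an $S^{b,0}$ block may support the $c_{3-i}$-half immediately above it together with a $c_i$-labelled salient block of type (1) further up-left; this possibility is visible in the index set $I(S,S_1,S_2)$ of Theorem~\ref{thm:zinfty}, where quadruples with $(k_{3-i},l_{3-i})=(1,m)$ and $(k_i,l_i)\in S_i^s\setminus\{(1,m)\}$ are allowed (they lie in $I(S,S_1,S_2)_0$). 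So ``each condition-(1) block is supported by a distinct block of $S^{b,c_i}\cup S^{b,c_1\cup c_2}$'' does not follow from the rules. The inequality (1) (reading $S_i^{u,1}$ as $S_i^{s,1}$) is nonetheless an immediate consequence of (2) and (3) by pure counting: ``take the $c_i$-half immediately above'' is a bijection between $S^{b,0}$ and $S_i^{s,2}$ (both halves above an $S^{b,0}$ block satisfy condition (2), and conversely the block below any condition-(2) half-block lies in $S^{b,0}$), so injectivity of the support correspondence gives
\[
|S_i^{s,1}|+|S^{b,0}| \;=\; |S_i^s| \;\le\; |S^{c_i}| \;=\; |S^{b,0}|+|S^{b,c_i}|+|S^{b,c_1\cup c_2}|,
\]
hence $|S_i^{s,1}|\le |S^{b,c_i}|+|S^{b,c_1\cup c_2}|$. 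Replace your third paragraph by this counting and the proof is complete and in line with the paper.
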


\subsection{Special loci in Grassmannians}
\label{subsect:specloci}

We prepare the ground for the proof of Theorem \ref{thm:zinfty} by analyzing the incidence varieties of~\ref{subsec:incvargr} in the case $m \equiv 1\; \textrm{mod}\; n-2$. Once again, we use the notations of Proposition \ref{prop:incvar3}. The composition with the projection from $V_{S} \times V_{S_1,c_1} \times V_{S_2,c_2}$ to its first factor, followed by the affine linear fibration $\omega: V_S \to V_{\overline{S}}$, defines a projection map $p_{V_{\overline{S}}}: V_{S} \times V_{S_1,c_1} \times V_{S_2,c_2}\to V_{\overline{S}}$. 

 For $i=1,2$ let $S_i(\overline{U})=\{ (k_i,l_i) \in B_{m+1,c_i}\; :\; (\overline{U}) \cap V_{k_i,l_i,c_i}=\emptyset\}$ be the blocks in the partial profile of $(\overline{U}) \cap P_{m+1,c_i}$ on the $(m+1)$-st diagonal. Then the index sets $I(S,S_1(\overline{U}),S_2(\overline{U}))$ and $I(S,S_1(\overline{U}),S_2(\overline{U}))_e$ introduced above make sense. The following lemma stratifies the fibers of the affine linear fibration $\omega: V_S \to V_{\overline{S}}$.
 
\begin{lemma}
\label{lem:fiberstrat}
For any $\overline{U} \in V_{\overline{S}}$, there is a stratification
\[ V_{0,m,j}/\overline{U}= \bigsqcup_{(k_1,l_1,k_2,l_2) \in I(S,S_1(\overline{U}),S_2(\overline{U}))} V_{0,m,j}(k_1,l_1,k_2,l_2)/\overline{U},\]
where
\[V_{0,m,j}(k_1,l_1,k_2,l_2)/\overline{U}=\{ U \in V_{0,m,j}/\overline{U}\;:\; ((U) \setminus (\overline{U})) \cap P_{m+1,c_i} \dashv V_{k_i,l_i,c_i} \textrm{ for } i=1,2\}. \]
Moreover, if $(k_1,l_1,k_2,l_2) \in I(S,S_1(\overline{U}),S_2(\overline{U}))_e$, then the space $V_{0,m,j}(k_1,l_1,k_2,l_2)/\overline{U}$ is of Euler characterestic $e$.
\end{lemma}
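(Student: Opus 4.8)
The plan is to reduce the statement to the explicit join geometry of $P_{m,j}$ recorded in Lemma~\ref{lem:imgdescr} and Corollary~\ref{cor:mcstrat}, and then to read off the three possible Euler characteristics from the resulting fibration structure. Since $m\equiv 1\bmod n-2$, the block at $(0,m)$ is a full block of label $j\in\{2,n-2\}$ (for odd $n$ one applies $\kappa$ to the labels, exactly as in the proof of Proposition~\ref{prop:1}), the block immediately above it is divided with labels $(c_1,c_2)$, and $P_{m,j}$ is the join of the two disjoint linear subspaces $N_{c_i}^0=\phi_i(P_{k(n-2),c_i})$. By Proposition~\ref{prop:1}(2),(3),(6), for $v\in V_{0,m,j}$ the subspace $(v)\cap P_{m+1,c_i}$ is $L_1$ applied to the $c_i$-component of $v$; in particular it depends only on the image of $v$ under the linear projection of $P_{m,j}$ onto $N_{c_i}^0$ away from $N_{c_{3-i}}^0$, and it is empty exactly when that image lies at infinity, in $\overline P_{k(n-2),c_i}$. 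Because $S$ is maximal with least element $(0,m)$, a point of $V_{0,m,j}/\overline U$ is a coset $U$ whose ideal satisfies $(U)=(\overline U,v)$ for any representative $v\in U$, so $(U)\cap P_{m+1,c_i}$ is the join of the fixed subspace $(\overline U)\cap P_{m+1,c_i}$ and $(v)\cap P_{m+1,c_i}$. Hence the cell recorded by $((U)\setminus(\overline U))\cap P_{m+1,c_i}\dashv V_{k_i,l_i,c_i}$ is obtained from the cell containing $(v)\cap P_{m+1,c_i}$ by sliding up the $(m+1)$-st diagonal to the first position not already met by $(\overline U)$; that the resulting loci are locally closed follows, as in~\ref{subsec:schubertcells}, from upper semicontinuity of the dimensions of the intersections involved.

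Next I would identify the index set. Decompose $V_{0,m,j}$ according to which cell $(v)\cap P_{m+1,c_i}$ occupies, as in Corollary~\ref{cor:mcstrat}; composing with the ``slide up until you miss $(\overline U)$'' rule of the first paragraph, the record $(k_i,l_i)$ takes values among the cells of the $(m+1)$-st diagonal which are not met by $(\overline U)$ (i.e.\ lie in $S_i(\overline U)$) and which arise as the first genuinely new position of an ideal generated through the support block at $(0,m)$. By the combinatorial lemma of~\ref{subsec:support} relating salient blocks to the sets $S_i^s$, together with the Young-wall rules for the relevant corner type A1--A3, the latter condition forces $(k_i,l_i)\in S_i^s\cup\{\emptyset\}$, with $(1,m)$ adjoined to the $i$-th factor precisely when $\overline S$ is not maximal and $(1,m)\in S_i$; this is exactly $I(S,S_1(\overline U),S_2(\overline U))$. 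Non-emptiness of each listed stratum comes from Lemma~\ref{lem:imgdescr}(2),(3): the join lines $v_1v_2$ with prescribed endpoints cover $V_{0,m,j}$, so every admissible pair of cells is actually attained.

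For the Euler characteristics I would present each stratum as a fibration, using the trivial vector bundles $\psi_i\colon M_{c_i}\to M_{c_i}^0$ of Lemma~\ref{lem:mctrivbundle} and the $\SP^1$ of points on each join line. Then: a tuple in $I(S,S_1(\overline U),S_2(\overline U))_0$ pins the new generator into a fixed proper cell for at least one $i$, which removes a nonempty affine subbundle from an affine bundle over a product of affine cells, so the stratum is an affine space with a nonempty affine subspace deleted and $\chi=1-1=0$; a tuple in $I(S,S_1(\overline U),S_2(\overline U))_1$ (one of $(1,m,\emptyset)$, $(\emptyset,1,m)$, or $(\emptyset,\emptyset)$ in the maximal case) imposes no such constraint and the stratum is a genuine affine space, $\chi=1$; and the single tuple $(1,m,1,m)\in I(S,S_1(\overline U),S_2(\overline U))_{-1}$ is the complement in the affine fiber of a disjoint affine subspace and a smaller affine subspace — equivalently a bundle over an affine base with fibre $\SP^1$ minus three points, namely the two join endpoints $\phi_i(L_1^{-1}U_i)$ (which belong to the $I_1$-strata) and the one point of the distinguished line $M_1$ through them at which the generator $v$ fails to produce profile $Y$, exactly as in Example~\ref{ex:5sidepyr} — so $\chi=1-1-1=-1$. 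Summing over the strata recovers $\chi(V_{0,m,j}/\overline U)=1$ as a consistency check.

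The main obstacle is the bookkeeping in the last two paragraphs rather than any hard geometry: one must verify, running through the vertex types A1--A3 and the dichotomy ``$\overline S$ maximal or not'', that the ``first new cell'' lands exactly in $S_i^s\cup\{\emptyset\}$ (resp.\ together with $(1,m)$), and that in the $(1,m,1,m)$ stratum precisely one extra point of the join line $M_1$ must be removed — i.e.\ correctly locating the unsupported-salient-block locus. This is where the apparatus of~\ref{subsec:support} is needed; Examples~\ref{ex:5sidepyr} and~\ref{ex:5sidedalt} are the prototypes to keep in mind, and the remaining work is the coordinate computation of Proposition~\ref{prop:1} carried out in families, which is routine given Lemmas~\ref{l0} and~\ref{lem:otherms}.
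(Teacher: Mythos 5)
Your route is the same as the paper's: stratify the fibre $V_{0,m,j}/\overline{U}$ via the join structure of $P_{m,j}$ (Lemma~\ref{lem:imgdescr}, Corollary~\ref{cor:mcstrat}), discard cells already met by $(\overline{U})$, and compute Euler characteristics through affine and $\SC^\ast$-fibrations in the spirit of Lemmas~\ref{lem:mctrivbundle} and~\ref{lem:affjoinchar}. But your treatment of the $I_{-1}$ stratum conflates this lemma with the later incidence-variety statement: the objects $U_1,U_2$ (hence the ``endpoints $\phi_i(L_1^{-1}U_i)$'') and the condition that ``the generator $v$ fails to produce profile $Y$'' belong to the setting of Theorem~\ref{thm:zinfty} and Example~\ref{ex:5sidepyr}, not to the present stratification of the whole fibre, where no $U_i$ and no profile condition are fixed. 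Here the $(1,m,1,m)$ stratum is simply $\bigl((V_{0,m,j}\setminus(M_{c_1}\cup M_{c_2}))+\overline{U}\bigr)/\overline{U}$, the complement of two disjoint \emph{parallel, equidimensional} affine hyperplanes (not ``a subspace and a smaller subspace''); fibrewise over $V_{0,k(n-2),c_1}\times V_{0,k(n-2),c_2}$ each join line contributes $\SP^1$ minus its point at infinity and its two endpoints $\phi_1(v_1),\phi_2(v_2)$, which vary with the point. The value $-1$ comes out the same, but as written the argument establishes a different statement.

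The second problem is that your fibration picture only covers the case where $\overline{S}$ is not maximal, while the case distinction is essential: when $\overline{S}$ is maximal one has $M_{c_i}+\overline{U}=V_{0,m,j}$, the index set contains no $(1,m,1,m)$ and $I_1=\{(\emptyset,\emptyset)\}$, the $\chi=1$ stratum is the intersection $((M^0_{c_1}+\overline{U})\cap(M^0_{c_2}+\overline{U}))/\overline{U}$, and the $I_0$ strata arise as the common refinement of the two stratifications of type~\eqref{eq:mcqstrat}, i.e.\ as intersections and differences of translated joins, for which ``affine bundle minus a nonempty affine subbundle'' is not automatic and a separate (transversality) argument is needed. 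Relatedly, two steps are asserted rather than proved: the passage from $S_i(\overline{U})$ to the salient subset $S_i^s$ in your identification of the index set, and the compatibility of all these loci with translation by $\overline{U}$ and the subsequent quotient; and the emptiness of strata indexed by cells already met by $(\overline{U})$ should be derived from the dimension count of Lemma~\ref{lem:intdim} rather than from the ``sliding up to the first free position'' heuristic, since the newly occupied cell is determined geometrically by where the enlarged subspace meets the flag, not combinatorially.
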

\begin{proof}
We have to distinguish the cases when $\overline{S}$ is maximal or not. The latter case is significantly simpler, so we start with that.

If $\overline{U} \in V_{\overline{S}}$ with $\overline{S}$ not maximal, then $(M_{c_1}+\overline{U}) \cap (M_{c_2}+\overline{U})=\emptyset$ since $M_{c_1}$ and $M_{c_2}$ are distinct parallel hyperplanes in $V_{0,m,j}$, and there are affine subspaces $U_i$ representing $\overline{U}$ such that $U_i \subseteq M_{c_i}$. Recall from Corollary \ref{cor:mcstrat} the stratification of $V_{0,m,j}$ which basically comes from the join structure on its closure $P_{m,j}$. This induces a decomposition of $V_{0,m,j}/\overline{U} $ into non-empty, locally closed, but not necessarily disjoint spaces
\[
\begin{aligned}
V_{0,m,j}/\overline{U} 
= & ((V_{0,m,j}\setminus(M_{c_1} \cap M_{c_2}))/\overline{U}) \cup (M_{c_1}/\overline{U}) \cup (M_{c_1}/\overline{U}) \\
 & \bigcup \left(\bigcup_{(k_1,l_1) \in B_{m+1,c_1}}  ((J(V_{0,m-1,c_1},V_{k_1,l_1,c_{2}}) \cap V_{0,m,j})\setminus M_{c_1}^0+\overline{U})/\overline{U}\right)\\
 & \bigcup \left(\bigcup_{(k_2,l_2) \in B_{m+1,c_1}} ((J(V_{0,m-1,c_2},V_{k_2,l_2,c_{1}}) \cap V_{0,m,j})\setminus M_{c_2}^0+\overline{U})/\overline{U}\right).
\end{aligned}
\]
Consider a block $(k_i,l_i) \in B_{m+1,c_i} \setminus S_i(\overline{U})$. Then the intersection $(\overline{U}) \cap V_{k_i,l_i,c_i}\neq\emptyset$. Assume that there is an $U \in V_{0,m,j}/\overline{U} $ such that $((U)\setminus (\overline{U})) \cap  V_{k_i,l_i,c_i} \neq \emptyset$. Then $\mathrm{dim}((U) \cap V_{k_i,l_i,c_i})>\mathrm{dim}((\overline{U}) \cap V_{k_i,l_i,c_i})$ so by Lemma \ref{lem:intdim} there is at least one other block in a row above $k_i$ which has a trivial intersection with $(\overline{U})$ but a nontrivial one with $(U)$. Hence, for any  $(k_i,l_i) \in B_{m+1,c_i} \setminus S_i(\overline{U})$ we have
\[\{ U \in V_{0,m,j}/\overline{U}\;:\; ((U) \setminus (\overline{U})) \cap P_{m+1,c_i} \dashv V_{k_i,l_i,c_i}\}=\emptyset.\]

On the other hand, if $(k_i,l_i) \in S_i(\overline{U}) \cup \{\emptyset\}$ then
\begin{gather*}
 ((J(V_{0,m-1,c_i},V_{k_i,l_i,c_{3-i}}) \cap V_{0,m,j})\setminus M_{c_i}^0 +\overline{U})/\overline{U} = \\
\{ U \in V_{0,m,j}/\overline{U}\;:\; ((U) \setminus (\overline{U})) \cap P_{m+1,c_i} \dashv V_{k_i,l_i,c_i} \textrm{ and } ((U) \setminus (\overline{U})) \cap P_{m+1,c_{3-i}} \dashv V_{1,m,c_{3-i}}\}.
\end{gather*}
By dimension constrains these spaces are disjoint and it is easy to see that together with $(V_{0,m,j}\setminus(M_{c_1} \cup M_{c_2}))/\overline{U}$,$ M^0_{c_1}/\overline{U}$, and $M^0_{c_2}/\overline{U}$ they cover $V_{0,m,j}/\overline{U}$. Thus we get a stratification
\[
\begin{aligned}
V_{0,m,j}/\overline{U} 
= & V_{0,m,j}(1,m,1,m)/\overline{U}\\
 & \bigsqcup \left(\bigsqcup_{(k_1,l_1) \in (S_1(\overline{U})\cup\{\emptyset\})\setminus \{(1,m)\}} V_{0,m,j}(k_1,l_1,1,m)/\overline{U}\right)\\
 & \bigsqcup\left(\bigsqcup_{(k_2,l_2) \in (S_2(\overline{U})\cup\{\emptyset\})\setminus \{(1,m)\}} V_{0,m,j}(1,m,k_2,l_2)/\overline{U}\right).
\end{aligned}
\]
In particular, there is a stratification 
\begin{equation} \label{eq:mcqstrat} M_{c_{3-i}}/\overline{U}=\bigsqcup_{(k_i,l_i) \in (S_i(\overline{U})\cup\{\emptyset\})\setminus \{(1,m)\}} V_{0,m,j}(k_i,l_i,1,m)/\overline{U}.\end{equation}
Being an affine space, $M^0_{c_i}/\overline{U}$ has Euler characteristic 1 for $i=1,2$. By Lemma \ref{lem:affjoinchar} the spaces $(J(V_{0,k(n-2),c_i},V_{k_i,l_i,c_{3-i}}) \cap V_{0,m,j})\setminus M_{c_i}^0$ have Euler characteristic 0, and the same is true for $((J(V_{0,k(n-2),c_i},V_{k_i,l_i,c_{3-i}}) \cap V_{0,m,j})\setminus M_{c_i}^0+\overline{U})/\overline{U}$. This last step follows from the fact that the subspace $\overline{U} \subset \overline{P}_{m,j}$ avoids both the image of $V_{k_i,l_i,c_{3-i}}$ and $M_{c_i}^0$.

If $\overline{U} \in V_{\overline{S}}$ such that $\overline{S}$ is maximal, then $(M_{c_1}+\overline{U})=(M_{c_2}+\overline{U})=V_{0,m,j}$ since $\overline{U}$ is transversal to $M_{c_1}$ and $M_{c_2}$. Therefore, there are two stratifications for $V_{0,m,j}/\overline{U}$ with $i=1,2$ as in (\ref{eq:mcqstrat}). The claimed stratification is the largest common refinement of these two. In particular, there are three types of strata. First, if 
$U \in ((J(V_{0,k(n-2),c_1},V_{k_1,l_1,c_{2}}) \cap V_{0,m,j})\setminus M_{c_1}^0+\overline{U}) \cap  ((J(V_{0,k(n-2),c_2},V_{k_2,l_2,c_{1}}) \cap V_{0,m,j})\setminus M_{c_2}^0+\overline{U}) /\overline{U}$
for arbitrary $(k_1,l_1) \in S_1(\overline{U})$ and $(k_2,l_2) \in S_2(\overline{U})$, then
$((U) \setminus (\overline{U})) \cap P_{m+1,c_i} \dashv V_{k_i,l_i,c_i}$ for $i=1,2$.
Second, if
$U \in  (((J(V_{0,k(n-2),c_1},V_{k_i,l_i,c_{3-i}}) \cap V_{0,m,j})+\overline{U})  \setminus \bigcup_{(k_{3-i},l_{3-i}) \in  S_{3-i}(\overline{U})}  (J(V_{0,k(n-2),c_2},V_{k_{3-i},l_{3-i},c_{i}}) \cap V_{0,m,j})+\overline{U}) /\overline{U}$,
then $((U) \setminus (\overline{U})) \cap P_{m+1,c_i} \dashv V_{k_i,l_i,c_i}$ but $((U) \setminus (\overline{U})) \cap P_{m+1,c_{3-i}} = \emptyset$.
Third, if
$U \in ((M_{c_1}^0+\overline{U}) \cap (M_{c_2}^0+\overline{U})) /\overline{U}$,
then  $((U) \setminus (\overline{U})) \cap P_{m+1,c_{i}} = \emptyset$ for $i=1,2$.
To sum it up, we get a stratification into locally closed spaces
\[
V_{0,m,j}/\overline{U} = \bigsqcup_{\substack{(k_1,l_1) \in S_1(\overline{U})\cup\{\emptyset\} \\ (k_2,l_2) \in S_2(\overline{U})\cup\{\emptyset\}}} V_{0,m,j}(k_1,l_1,k_2,l_2)/\overline{U}.
\]
The Euler characteristic of the stratum $V_{0,m,j}(\emptyset,\emptyset)/\overline{U}=((M_{c_1} + \overline{U}) \cap (M_{c_2}+\overline{U}))/\overline{U}$ is 1. It is left to the reader that the others have Euler characteristic 0.
\end{proof}

Let $\mathcal{I}_S=\{ (S_1(\overline{U}),S_2(\overline{U}))\; :\; \overline{U} \in V_{\overline{S}}\}$. Actually $\mathcal{I}_S$ only depends on $\overline{S}$. For each $(S'_1,S'_2) \in \mathcal{I}_S$, let 
\[V_{\overline{S}}(S'_1,S'_2)=\{\overline{U} \in V_{\overline{S}} \;:\;  (S_1(\overline{U}),S_2(\overline{U}))=(S'_1,S'_2)\}.\]

\begin{corollary} 
For a fixed $(S'_1,S'_2) \in \mathcal{I}(S)$ and $(k_1,l_1,k_2,l_2) \in I(S,S'_1,S'_2)$ the spaces $V_{0,m,j}(k_1,l_1,k_2,l_2)/\overline{U}$ are isomorphic for every $\overline{U} \in V_{\overline{S}}(S'_1,S'_2)$. Moreover, they fit together into a locally closed subvariety $V_S(k_1,l_1,k_2,l_2) \subseteq V_S$ which is a trivial family over $V_{\overline{S}}(S'_1,S'_2)$. Using induction and the fact that the fiber product of locally closed spaces is locally closed, we get that there is a stratification 
\[V_{S}=\bigsqcup_{(S'_1,S'_2) \in \mathcal{I}_S} \left(\bigsqcup_{(k_1,l_1,k_2,l_2) \in I(S,S'_1,S'_2)} V_S(k_1,l_1,k_2,l_2)\right) \]
into locally closed subvarieties.
Furthermore, if $(k_1,l_1,k_2,l_2) \in I(S,S'_1,S'_2)_e$, then the fiber of $\omega \colon V_S(k_1,l_1,k_2,l_2) \to V_{\overline{S}}$ has Euler characteristic $e$.
\end{corollary}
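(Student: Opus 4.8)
The plan is to upgrade the fibrewise stratification of Lemma~\ref{lem:fiberstrat} to a genuine stratification of the total space $V_S$, by rerunning the join-theoretic constructions of the proof of Proposition~\ref{prop:incvar3} and of Corollary~\ref{cor:mcstrat} in families over the base $V_{\overline S}$, and reading off the statement about the fibres of $\omega$ directly from the last assertion of Lemma~\ref{lem:fiberstrat}.

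First I would check that $V_{\overline S}=\bigsqcup_{(S'_1,S'_2)\in\mathcal I_S}V_{\overline S}(S'_1,S'_2)$ is a finite decomposition into locally closed subvarieties. For $\overline U\in V_{\overline S}$ the assignment $\overline U\mapsto(\overline U)\cap P_{m+1,c_i}$ is a morphism to the relevant equivariant Grassmannian exactly as in Lemma~\ref{lemma:grass-cells-morphism}, because the ideal generated by $\overline U$ is produced by applying the multiplication operators of~\ref{subsec:subops}, a linear-algebraic operation in $\overline U$. By Lemma~\ref{lem:intdim} the profile $S_i(\overline U)$ records the dimensions of the intersections of this subspace with the cells $V_{k_i,l_i,c_i}$, and these are constructible functions of $\overline U$; hence each $V_{\overline S}(S'_1,S'_2)$ is locally closed in $V_{\overline S}$, and $\omega^{-1}(V_{\overline S}(S'_1,S'_2))$ is locally closed in $V_S$ since $\omega$ is a morphism. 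On such a locus the two families whose fibre over $\overline U$ is the projective subspace of $P_{m,j}$ attached to $(\overline U)\cap P_{m+1,c_i}$ (as in the proof of Proposition~\ref{prop:incvar3}) have constant profile $S'_i$, so each is classified by a morphism into a single Schubert cell; since the tautological bundle over a Schubert cell is trivial, these are trivial families of projective subspaces of $P_{m,j}$ over $V_{\overline S}(S'_1,S'_2)$.

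Next, over each $V_{\overline S}(S'_1,S'_2)$ I would rerun the construction of Lemma~\ref{lem:fiberstrat} in the family. Here $\omega\colon V_S\to V_{\overline S}$ is a trivial affine fibration with fibre $V_{0,m,j}/\overline U$ (Lemma~\ref{lemma_omega}), and over $V_{\overline S}(S'_1,S'_2)$ it is the fibrewise quotient of the constant family with fibre $V_{0,m,j}$ by the trivial subfamily with fibre $\overline U$. Forming the joins, intersecting with the affine cell $V_{0,m,j}$, deleting the relevant closed subfamilies, and quotienting by the $\overline U$-family exactly as in the proof of Lemma~\ref{lem:fiberstrat}, but now over the base $V_{\overline S}(S'_1,S'_2)$ and using Lemma~\ref{lem:joinbasechange} for triviality of joins of trivial families, I obtain for each $(k_1,l_1,k_2,l_2)\in I(S,S'_1,S'_2)$ a subvariety $V_S(k_1,l_1,k_2,l_2)\subseteq\omega^{-1}(V_{\overline S}(S'_1,S'_2))$ which is a trivial family over $V_{\overline S}(S'_1,S'_2)$ with fibre the locally closed stratum $V_{0,m,j}(k_1,l_1,k_2,l_2)/\overline U$. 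In particular all these fibres are isomorphic, as claimed, and since each is locally closed in the corresponding fibre of $\omega$ while the ambient family is trivial, $V_S(k_1,l_1,k_2,l_2)$ is locally closed in $V_S$. When $\overline S$ is maximal one obtains two such stratifications, one for each index $i$; their common refinement is again a decomposition into locally closed pieces because the fibre product over $V_{\overline S}(S'_1,S'_2)$ of locally closed subsets is locally closed, and it is the one asserted. Assembling over the finitely many $(S'_1,S'_2)\in\mathcal I_S$ then gives the stratification $V_S=\bigsqcup V_S(k_1,l_1,k_2,l_2)$. Finally, for $\overline U\in V_{\overline S}(S'_1,S'_2)$ the fibre of $\omega|_{V_S(k_1,l_1,k_2,l_2)}$ over $\overline U$ is $V_{0,m,j}(k_1,l_1,k_2,l_2)/\overline U$, which has Euler characteristic $e$ whenever $(k_1,l_1,k_2,l_2)\in I(S,S'_1,S'_2)_e=I(S,S_1(\overline U),S_2(\overline U))_e$ by the last assertion of Lemma~\ref{lem:fiberstrat}.

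The step I expect to be the main obstacle is making the ``run everything in families'' passage watertight: one must verify that every operation appearing in the proof of Lemma~\ref{lem:fiberstrat} --- passing from $\overline U$ to $(\overline U)\cap P_{m+1,c_i}$, forming joins, intersecting with an affine cell, removing a closed subfamily, and taking the fibrewise quotient by the $\overline U$-family --- preserves both triviality of the family and the locally closed property, and that the combinatorial case division (whether $\overline S$ is maximal, and which sub-case $I(S,S'_1,S'_2)_e$ an index tuple lies in) is genuinely constant along each $V_{\overline S}(S'_1,S'_2)$. All of the individual ingredients are already in place --- Appendix~\ref{sec:joins} together with Lemmas~\ref{lem:joinbasechange} and~\ref{lem:affjoinchar}, the triviality of tautological bundles on Schubert cells, and Lemma~\ref{lem:fiberstrat} itself applied fibrewise --- so the work is one of organization rather than of proving anything essentially new.
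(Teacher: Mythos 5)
Your proposal is correct and follows essentially the same route as the paper: the paper's own (much terser) proof also deduces triviality of each family $V_S(k_1,l_1,k_2,l_2)\to V_{\overline S}(S'_1,S'_2)$ from Lemma~\ref{lem:joinbasechange} together with the fact that the fibrewise strata of Lemma~\ref{lem:fiberstrat} are built from unions, intersections and differences of joins, and treats the remaining points (local closedness of the base strata, the common refinement, and the Euler characteristic of the fibres) as immediate. Your write-up simply makes explicit the details the paper declares obvious.
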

\begin{proof}
The triviality of the family $V_S(k_1,l_1,k_2,l_2) \to V_{\overline{S}}(S'_1,S'_2)$ follows from Lemma \ref{lem:joinbasechange} and the fact that $V_{0,m,j}(k_1,l_1,k_2,l_2)/\overline{U}$ is constructed using (union, intersection and difference of) joins in $P_{m,j}$. The rest of the statement is obvious.
\end{proof}

\subsection{Proof of Theorem \ref{thm:zinfty}}\label{proofofethrm}

As before, we fix $S$, $S_1$ and $S_2$. Recall that the fiber of the morphism $\omega \times \mathrm{Id} \times \mathrm{Id}\colon X_S^{S_1,S_2} \to Y_{\overline{S}}^{S_1,S_2}$ over an element $(\overline{U},U_1,U_2) \in Y_{\overline{S}}^{S_1,S_2}$ is $J(L_1^{-1}U_1,L_1^{-1}U_2)/{\overline{U}}$.

For $i=1,2$ let $S_i(\overline{U})=\{ (k_i,l_i) \in S_i\; :\; (\overline{U}) \cap V_{k_i,l_i,c_i}=\emptyset\}$ be the blocks in the partial profile of $(\overline{U}) \cap P_{m+1,c_i}$ on the $(m+1)$-st diagonal. Then the index sets $I(S,S_1(\overline{U}),S_2(\overline{U}))$ and $I(S,S_1(\overline{U}),S_2(\overline{U}))_e$ are defined. The following lemma, whose proof is the same as that of Lemma \ref{lem:fiberstrat}, stratifies the fibers of the affine linear fibration $\omega \times \mathrm{Id} \times \mathrm{Id}\colon X_S^{S_1,S_2} \to Y_{\overline{S}}^{S_1,S_2}$.

\begin{lemma}
For any $U_1 \in V_{S_1,c_1}, U_2 \in V_{S_2,c_2}$ the stratification of Lemma \ref{lem:fiberstrat} restricts to a stratification
\[ J(L_1^{-1}U_1,L_1^{-1}U_2)/{\overline{U}}=\bigsqcup_{(k_1,l_1,k_2,l_2) \in I(S,S_1(\overline{U}),S_2(\overline{U}))}  J(L_1^{-1}U_1,L_1^{-1}U_2)(k_1,l_1,k_2,l_2)/\overline{U},\]
where
\begin{gather*} J(L_1^{-1}U_1,L_1^{-1}U_2)(k_1,l_1,k_2,l_2)/\overline{U}=\\ \{ U \in  J(L_1^{-1}U_1,L_1^{-1}U_2)/\overline{U}\;:\; ((U) \setminus (\overline{U})) \cap P_{m+1,c_i} \dashv V_{k_i,l_i,c_i} \textrm{ for } i=1,2\}. \end{gather*}
Moreover, if $(k_1,l_1,k_2,l_2) \in I(S,S_1(\overline{U}),S_2(\overline{U}))_e$, then the space $ J(L_1^{-1}U_1,L_1^{-1}U_2)(k_1,l_1,k_2,l_2)/\overline{U}$ is of Euler characterestic $e$.
\end{lemma}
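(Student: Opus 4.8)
The plan is to run the proof of Lemma~\ref{lem:fiberstrat} essentially verbatim, with the affine cell $V_{0,m,j}$ — which in that proof is viewed, through Lemma~\ref{lem:imgdescr}, as the affine part of the full join $J(P_{k(n-2),c_1},P_{k(n-2),c_2})$ — replaced throughout by the affine part of the sub-join $J(L_1^{-1}U_1,L_1^{-1}U_2)$. The first step is to set up the translation dictionary. By Lemma~\ref{lem:imgdescr}, $J(L_1^{-1}U_1,L_1^{-1}U_2)$ is the join of the disjoint linear subspaces $\phi_1(L_1^{-1}U_1)\subseteq N_{c_1}^0$ and $\phi_2(L_1^{-1}U_2)\subseteq N_{c_2}^0$, hence is itself a projective linear subspace of $P_{m,j}$; it carries the analogues of all the auxiliary loci $M^0_{c_i}$, $N_{c_i}$, $M_{c_i}$ and of the trivial affine bundles $\psi_i$ of Lemma~\ref{lem:mctrivbundle}, defined by the very same formulas with $P_{k(n-2),c_i}$ replaced by $L_1^{-1}U_i$ (equivalently, by intersecting the corresponding locus of $P_{m,j}$ with $J(L_1^{-1}U_1,L_1^{-1}U_2)$). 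By Lemma~\ref{lem:incfiber}(b) we have $\overline{U}\subseteq J(L_1^{-1}U_1,L_1^{-1}U_2)$, so the quotient $J(L_1^{-1}U_1,L_1^{-1}U_2)/\overline{U}$ is a well-defined locally closed subspace of $V_{0,m,j}/\overline{U}$.

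The second step is the restriction itself. The strata $V_{0,m,j}(k_1,l_1,k_2,l_2)/\overline{U}$ of Lemma~\ref{lem:fiberstrat} are cut out by locally closed incidence conditions involving only $(U)\setminus(\overline{U})$ and the cells $V_{k_i,l_i,c_i}$; intersecting these conditions with the subspace $J(L_1^{-1}U_1,L_1^{-1}U_2)/\overline{U}$ therefore produces a stratification of the latter, which by inspection of the defining conditions is exactly the decomposition into the sets $J(L_1^{-1}U_1,L_1^{-1}U_2)(k_1,l_1,k_2,l_2)/\overline{U}$ of the statement. What remains is to check that the nonempty strata are indexed by precisely $I(S,S_1(\overline{U}),S_2(\overline{U}))$ — the same index set as for $V_{0,m,j}/\overline{U}$ — and to read off their Euler characteristics. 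For both points I would rerun the dichotomy of the proof of Lemma~\ref{lem:fiberstrat}: if $\overline{S}$ is not maximal, $M_{c_1}$ and $M_{c_2}$ meet $J(L_1^{-1}U_1,L_1^{-1}U_2)\cap V_{0,m,j}$ in two distinct parallel hyperplanes and the join decomposition of Corollary~\ref{cor:mcstrat}, being built out of joins, restricts to the sub-join; if $\overline{S}$ is maximal, each $M_{c_i}+\overline{U}$ fills out $V_{0,m,j}$ and one takes the common refinement of the two resulting stratifications.

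The Euler characteristic count is then line-for-line identical to that of Lemma~\ref{lem:fiberstrat}. In the non-maximal case the pure strata $(M^0_{c_i}\cap J(L_1^{-1}U_1,L_1^{-1}U_2))/\overline{U}$ are affine spaces of Euler characteristic $1$ (matching $I(S,S_1(\overline{U}),S_2(\overline{U}))_1$), the open stratum $(J(L_1^{-1}U_1,L_1^{-1}U_2)\cap V_{0,m,j})/\overline{U}$ with the two parallel hyperplanes removed has Euler characteristic $1-1-1=-1$ (matching $I(S,S_1(\overline{U}),S_2(\overline{U}))_{-1}$), and the mixed strata — by the versions internal to $J(L_1^{-1}U_1,L_1^{-1}U_2)$ of Lemma~\ref{lem:mctrivbundle} and Lemma~\ref{lem:affjoinchar}, using that $\overline{U}$ avoids the loci involved — are punctured affine-line bundles over affine bases, hence have Euler characteristic $0$ (matching $I(S,S_1(\overline{U}),S_2(\overline{U}))_0$). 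In the maximal case the single Euler-characteristic-$1$ stratum is $((M^0_{c_1}+\overline{U})\cap(M^0_{c_2}+\overline{U}))/\overline{U}$ restricted to the sub-join and all others have Euler characteristic $0$, again matching the partition of~\ref{subsec:loci:strata}.

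The one step that is more than a transcription, and which I expect to be the main obstacle, is checking that shrinking the ambient projective space from $J(P_{k(n-2),c_1},P_{k(n-2),c_2})$ to $J(L_1^{-1}U_1,L_1^{-1}U_2)$ does not empty out any stratum that was nonempty for $V_{0,m,j}/\overline{U}$: one must verify that $J(L_1^{-1}U_1,L_1^{-1}U_2)$ still meets every cell $V_{k_i,l_i,c_i}$ with $(k_i,l_i)\in S_i(\overline{U})$ in its open part, and does so uniformly in the pair $(U_1,U_2)$, this uniformity being what allows the strata to be assembled into trivial families in the corollary that follows. This holds because $\overline{U}$ itself already realises the profile $(S_1(\overline{U}),S_2(\overline{U}))$ on the $(m+1)$-st diagonal and lies inside $J(L_1^{-1}U_1,L_1^{-1}U_2)$, so the relevant relative interiors are nonempty open conditions on the lines sweeping out the sub-join; pinning this down so that the pieces glue in families is where the care developed in~\ref{subsect:specloci} is needed, but no ideas beyond those already in play are required.
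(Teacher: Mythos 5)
Your proposal is correct and takes essentially the same approach as the paper: the paper dispatches this lemma with the single remark that its proof ``is the same as that of Lemma~\ref{lem:fiberstrat}'', and your transcription --- replacing $V_{0,m,j}$ by the affine part of the sub-join $J(L_1^{-1}U_1,L_1^{-1}U_2)$, placing $\overline{U}$ inside it via Lemma~\ref{lem:incfiber}(b), and rerunning the maximal/non-maximal dichotomy together with the join-based Euler characteristic counts --- is precisely that argument, spelled out in more detail (including the non-emptiness check) than the paper itself provides.
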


Let $\mathcal{I}_S^{S_1,S_2}=\{ (S_1(\overline{U}),S_2(\overline{U}))\;:\; (\overline{U},U_1,U_2) \in Y_{\overline{S}}^{S_1,S_2}\}$. Actually $\mathcal{I}_S^{S_1,S_2}$ only depends on $\overline{S}$, $S_1$ and $S_2$. For each $(S'_1,S'_2) \in \mathcal{I}_S^{S_1,S_2}$, let 
\[Y_{\overline{S}}^{S_1,S_2}(S'_1,S'_2)=\{(\overline{U},U_1,U_2) \in Y_{\overline{S}}^{S_1,S_2} \;:\;  (S_1(\overline{U}),S_2(\overline{U}))=(S'_1,S'_2)\}.\]

\begin{corollary}
\label{cor:xinfty}
For fixed $(k_1,l_1,k_2,l_2) \in I(S,S'_1,S'_2)$ the spaces $J(L_1^{-1}U_1,L_1^{-1}U_2)(k_1,l_1,k_2,l_2)/\overline{U}$ are isomorphic for every $\overline{U} \in Y_{\overline{S}}^{S_1,S_2}(S'_1,S'_2)$. Moreover, they fit together into a locally closed subvariety $X_S^{S_1,S_2}(k_1,l_1,k_2,l_2)\subseteq X_S^{S_1,S_2}$. Using induction and the fact that the fiber product of locally closed spaces is locally closed, we get that there is a stratification 
\[X_S^{S_1,S_2}=\bigsqcup_{(S'_1,S'_2) \in \mathcal{I}_S^{S_1,S_2}} \left(\bigsqcup_{(k_1,l_1,k_2,l_2) \in I(S,S'_1,S'_2)} X_S^{S_1,S_2}(k_1,l_1,k_2,l_2)\right) \]
into a locally closed subvarieties.
Furthermore, if $(k_1,l_1,k_2,l_2) \in I(S,S'_1,S'_2)_e$, then the fiber of $\omega \times \mathrm{Id} \times \mathrm{Id}\colon X_S^{S_1,S_2}(k_1,l_1,k_2,l_2) \to Y_{\overline{S}}^{S_1,S_2}(S'_1,S'_2)$ has Euler characteristic $e$.
\end{corollary}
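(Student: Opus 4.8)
The plan is to mimic the proof of the Corollary at the end of~\ref{subsect:specloci}, replacing its use of Lemma~\ref{lem:fiberstrat} with the preceding Lemma, and its use of the bare family $V_S\to V_{\overline S}$ with the description of $X_S^{S_1,S_2}$ obtained in the proof of Proposition~\ref{prop:incvar3}, namely
\[
X_S^{S_1,S_2} = (J(\mathcal F_1,\mathcal F_2) \cap (V_{0,m,j} \times V_{\overline S} \times V_{S_1} \times V_{S_2}))/\mathcal F|_{Y_{\overline S}^{S_1,S_2}},
\]
where $\mathcal F_1,\mathcal F_2,\mathcal F$ and $J(\mathcal F_1,\mathcal F_2)$ are the trivial families of projective subspaces of $P_{m,j}$ introduced there, and the quotient is taken fibrewise over $Y_{\overline S}^{S_1,S_2}$ via $\omega\times\mathrm{Id}\times\mathrm{Id}$.

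First I would fix $(S_1',S_2')\in\mathcal I_S^{S_1,S_2}$ and work over the base stratum $Y_{\overline S}^{S_1,S_2}(S_1',S_2')$. On this stratum the profiles $(S_1(\overline U),S_2(\overline U))=(S_1',S_2')$ are constant, so the combinatorial type appearing in the preceding Lemma is constant, equal to $I(S,S_1',S_2')$; moreover $Y_{\overline S}^{S_1,S_2}(S_1',S_2')$ is locally closed in $Y_{\overline S}^{S_1,S_2}$ because requiring the projective linear subspace $(\overline U)\cap P_{m+1,c_i}$ to meet (resp.\ to avoid) a fixed cell $V_{k,l,c_i}$ is, via Lemma~\ref{lem:intdim}, a closed (resp.\ open) condition on $\overline U$. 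Over this base stratum each fibrewise stratum $J(L_1^{-1}U_1,L_1^{-1}U_2)(k_1,l_1,k_2,l_2)/\overline U$ of the preceding Lemma is produced by one fixed recipe --- the prescribed union, intersection and difference of joins, exactly as in the proof of Lemma~\ref{lem:fiberstrat} --- from the fixed images $\phi_i\circ L_1^{-1}$ of the cells $V_{k',l',c_{3-i}}$, from the trivial families $\mathcal F_1,\mathcal F_2$ of the subspaces $L_1^{-1}U_i$, and from the trivial family $\mathcal F$ of the subspaces $\overline U$. Since by Lemma~\ref{lem:joinbasechange} joins of trivial families over a common base are again trivial, and triviality is preserved by the Boolean operations and the fibrewise quotient in question, these strata assemble into a trivial family $X_S^{S_1,S_2}(k_1,l_1,k_2,l_2)\to Y_{\overline S}^{S_1,S_2}(S_1',S_2')$ with fibre the single space $J(L_1^{-1}U_1,L_1^{-1}U_2)(k_1,l_1,k_2,l_2)/\overline U$. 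This gives the first assertion (pairwise isomorphism of the fibres) and exhibits $X_S^{S_1,S_2}(k_1,l_1,k_2,l_2)$ as a subscheme of $X_S^{S_1,S_2}$; it is locally closed, being cut out of the locally closed preimage of $Y_{\overline S}^{S_1,S_2}(S_1',S_2')$ by the locally closed fibrewise condition of the preceding Lemma, using that a fibre product of locally closed immersions is locally closed.

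Next I would globalise. As $(S_1',S_2')$ ranges over $\mathcal I_S^{S_1,S_2}$ the strata $Y_{\overline S}^{S_1,S_2}(S_1',S_2')$ partition $Y_{\overline S}^{S_1,S_2}$ into locally closed pieces, and over each piece, as $(k_1,l_1,k_2,l_2)$ ranges over $I(S,S_1',S_2')$, the strata $X_S^{S_1,S_2}(k_1,l_1,k_2,l_2)$ are disjoint and cover the preimage --- both facts checked fibrewise from the preceding Lemma. This yields the asserted stratification of $X_S^{S_1,S_2}$ into locally closed subvarieties. Finally, for the Euler characteristic claim, note that the fibre of $\omega\times\mathrm{Id}\times\mathrm{Id}$ over a point $(\overline U,U_1,U_2)\in Y_{\overline S}^{S_1,S_2}(S_1',S_2')$, restricted to $X_S^{S_1,S_2}(k_1,l_1,k_2,l_2)$, is exactly $J(L_1^{-1}U_1,L_1^{-1}U_2)(k_1,l_1,k_2,l_2)/\overline U$; since $(S_1(\overline U),S_2(\overline U))=(S_1',S_2')$ on this base stratum, the hypothesis $(k_1,l_1,k_2,l_2)\in I(S,S_1',S_2')_e$ is the same as $(k_1,l_1,k_2,l_2)\in I(S,S_1(\overline U),S_2(\overline U))_e$, so the preceding Lemma gives Euler characteristic $e$.

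The main obstacle will be the bookkeeping in the second paragraph: one must verify that the fibrewise construction of the preceding Lemma (which depends a priori on the individual triple $(\overline U,U_1,U_2)$) is genuinely governed by a single combinatorial recipe over the whole stratum $Y_{\overline S}^{S_1,S_2}(S_1',S_2')$ --- i.e.\ that neither the index set $I(S,S_1',S_2')$ nor the particular joins and cells entering the description of each stratum vary along that stratum. Once this constancy is established, the triviality of the resulting family is formal, following from Lemma~\ref{lem:joinbasechange} together with the triviality of the tautological bundles over Schubert cells, and everything else is routine.
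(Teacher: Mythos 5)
Your proposal is correct and follows essentially the same route as the paper: the paper treats this corollary exactly like the corollary at the end of~\ref{subsect:specloci}, deducing triviality of the families over each stratum $Y_{\overline S}^{S_1,S_2}(S'_1,S'_2)$ from Lemma~\ref{lem:joinbasechange} together with the fact that the fibrewise strata are built from unions, intersections and differences of joins of trivial families, with the Euler characteristic claim read off fibrewise from the preceding lemma. Your extra bookkeeping (constancy of the combinatorial recipe over the base stratum, local closedness of the pieces) is exactly the content the paper leaves as ``obvious''.
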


\begin{proof}[Proof of Theorem \ref{thm:zinfty}]
With all these preparations the proof itself is very easy. We just observe that $Z_{Y}(k_1,l_1,k_2,l_2)$ consist of those points in $Z_Y$, which map in (\ref{eq:prfibeq}) to $X_S^{S_1,S_2}(k_1,l_1,k_2,l_2)$ for some $(S'_1,S'_2) \in \mathcal{I}_S^{S_1,S_2}$ such that $(k_1,l_1,k_2,l_2) \in I(S,S'_1,S'_2)$ . The result then follows from Corollary \ref{cor:xinfty}.
\end{proof}

%\enlargethispage{\baselineskip}
\section{Type \texorpdfstring{$D_n$}{Dn}: decomposition of the coarse Hilbert scheme}
\label{sect:coarse}

\subsection{Distinguished 0-generated Young walls}
\label{subsect:dis0gen}

In this section, we describe some distinguished subsets of the set of Young walls $\mathcal{Z}_\Delta$ of type $D_n$. 
They will consist of Young walls which are the analogues of the $0$-generated partitions from~\ref{subsectypeA}; as
always, in the type $D$ case there are substantial extra complications. For a Young wall $Y\in\mathcal{Z}_\Delta$, denote
by ${\rm wt}_0(Y)$ the $0$-weight of $Y$, the number of half-blocks labelled~$0$ in~$Y$. 

%Fix a Young wall $Y\in\mathcal{Z}_\Delta$. 
Recall from~\ref{sec:proof:orbicells}, respectively~\ref{subsec:support} the notions of a salient block and a support block for a given label $c \in \{0,1,n-1,n\}$; we will use also all other notation introduced in the latter section. 
We call a pair of salient half-blocks $(b,b')$ sharing the same position a {\em salient block-pair}.

Consider the following conditions for a Young wall $Y\in\mathcal{Z}_\Delta$. 
\begin{enumerate}
\item[(A1)] All salient blocks of $Y$ are labelled $0$, $1$, $n-1$ or $n$.
\item[(A2)] Every salient block of $Y$ labelled $c \in\{1,n-1,n\}$ is supported.
\end{enumerate}
Let $\mathcal{Z}_{\Delta}' \subset \mathcal{Z}_\Delta$ be the set of
Young walls $Y$ which satisfy conditions (A1)-(A2).  We will prove in
\autoref{thm:assdiag} that $\mathcal{Z}_{\Delta}'$ is the set of
Young walls $Y\in\mathcal{Z}_\Delta$ for which $Z_Y \cap \mathrm{im}(i^{\ast}) \neq \emptyset$,
where $i^{\ast}$ is the pullback map defined in~\ref{sec_notation}. 

Given a Young wall $Y\in\mathcal{Z}_\Delta$, a \emph{closing datum} for $Y$ is a function $d$ from the set of the salient blocks of $Y$ of label $c \in\{1,n-1,n\}$, and some {\em subset} of the salient blocks of $Y$ with label $0$, to the set of support blocks of $Y$, such that 
\begin{itemize}
\item for each salient block $b$ of label $c$ for which $d$ is defined, the associated support block $d(b)$ is a support block for label $c$, and lies on the previous antidiagonal and in a lower row than that of $b$;
\item for each fixed $c \in \{0,1,n-1,n\}$ the different salient blocks of label $c$ are mapped to different support blocks;
\item each support block for label $c$ can support at most one salient block of label $c$ according to the supporting rules spelled out at the end of \ref{subsec:support}.
\end{itemize}
 By condition (A2), for every $Y\in \mathcal{Z}_{\Delta}'$ with a nonempty set of salient blocks the set $\mathrm{cd(Y)}$ of closing data for~$Y$ is nonempty. If all salient blocks of $Y$ of label $1$, $n-1$ or $n$ are directly supported, then a closing datum $d\in \mathrm{cd(Y)}$ is called \emph{contributing}, if to every salient block of label on which $d$ is defined, it associates the support block immediately below it.

We define two subsets of $\mathcal{Z}_{\Delta}'$. Consider the following conditions for a Young wall $Y\in\mathcal{Z}_\Delta$.
\begin{enumerate}
\item[(R1)] The salient blocks of $Y$ of label $n-1$ or $n$ are all part of a directly supported salient block-pair.  
\item[(R2)] $Y$ has no salient block with label in the set $\{1, \dots, n-2\}$.
\item[(R3)] Any consecutive series of rows of $Y$ having equal length and ending in half $n-1$/$n$-blocks is longer than $n-2$, or $n-1$ if the length of the rows is $n-1$, and the last one starts with a block labelled $1$ (see Example~\ref{ex:singr2} below for the latter condition being broken).
\end{enumerate}
Young walls satisfying (R1)--(R2) will be called \emph{$0$-generated}. Let $\mathcal{Z}_{\Delta}^1 \subset \mathcal{Z}_\Delta$ denote the set of $0$-generated Young walls. They automatically satisfy (A1)--(A2), so indeed $\mathcal{Z}_{\Delta}^1 \subset \mathcal{Z}_\Delta'$. Let further $\mathcal{Z}_{\Delta}^0 \subset \mathcal{Z}_\Delta^1$ be the set of those Young walls which in addition satisfy (R3) also. These will be called \emph{distinguished}.

\begin{lemma} 
\label{lem:yprimeunique} 
Let $Y \in \mathcal{Z}_\Delta$ be an arbitrary Young wall.
\begin{enumerate}
\item The positions of the $0$ blocks of $Y$ determine uniquely a Young wall $Y_0 \in \mathcal{Z}_\Delta^0$
such that ${\rm wt}_0(Y)={\rm wt}_0(Y_0)$, and the $0$ blocks in $Y_0$
are exactly the $0$ blocks in $Y$. $Y_0$ does not necessarily contain
$Y$.
\item There is a unique $Y_1\in\mathcal{Z}^1_\Delta$ that contains $Y$ and is minimal with this property with respect to containment.
\end{enumerate}

\end{lemma}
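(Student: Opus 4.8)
The plan is to prove the two statements by parallel combinatorial constructions, both modelled on the type $A$ map $p\colon \mathcal{Z}_\Delta\to\mathcal{Z}^0_\Delta$ of~\ref{subsectypeA}, but adapted to the half-block structure of type $D_n$.

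For part (2) I would build $Y_1$ by a \emph{completion procedure}: starting from $Y$, repeatedly adjoin any block or half-block whose omission would force a salient block forbidden in a $0$-generated Young wall, i.e.\ add every addable full block of label in $\{2,\dots,n-2\}$, complete any partial $0/1$ position if needed, and complete any $(n-1)/n$ half-block layer carrying an unpaired or non-directly-supported salient half-block; then iterate. Three things must be checked. First, \emph{termination with a finite Young wall}: only finitely many additions can occur, since each one fills a bounded ``triangular gap'' delimited by the frontier of $Y$ and the nearest $0$-blocks, and at each step the Young wall conditions (YW1)--(YW4), equivalently the properties of Proposition~\ref{wallprop} in the transformed pattern, are preserved. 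Second, \emph{the fixed point lies in $\mathcal{Z}^1_\Delta$}: by construction it has no salient block of label in $\{1,\dots,n-2\}$ (giving (R2)) and every salient $(n-1)/n$ half-block belongs to a directly supported salient block-pair (giving (R1)), using the local support analysis of~\ref{subsec:support}. Third, \emph{minimality}: by induction on the steps of the procedure, each adjoined block already lies in every $Y''\in\mathcal{Z}^1_\Delta$ containing $Y$ — otherwise it would be a salient block of $Y''$ of a forbidden type — so $Y_1\subseteq Y''$. This simultaneously yields existence and uniqueness of the minimal element. (Conceptually this is closure of $\mathcal{Z}^1_\Delta$ under intersection of subsets of the Young wall pattern; the completion procedure makes the argument self-contained.)

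For part (1) the key claim is that a \emph{distinguished} Young wall $Y_0\in\mathcal{Z}^0_\Delta$ is uniquely determined by the set $\Pi$ of positions of its $0$-half-blocks, and that the set $\Pi$ arising from any Young wall $Y$ is realised by such a $Y_0$. I would prove both by an explicit reconstruction: the $0$-half-blocks of any Young wall occur only in the sparse $0/1$-type rows, and in a distinguished Young wall conditions (R1)--(R2) force every row below a present block to be filled completely up to the next $0$-position, while (R3) pins down the length of each maximal run of equal-length rows ending in an $(n-1)/n$ half-block layer. Reading the frontier of the sought $Y_0$ as a sequence of corner types (types A1--A3, B1--B3 of~\ref{sec:proof:orbicells}) attached to the successive elements of $\Pi$, one sees that $\Pi$ determines this sequence, hence $Y_0$, unambiguously; that $\Pi$ genuinely comes from a Young wall guarantees the reconstruction never meets an inconsistency. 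Then $\mathrm{wt}_0(Y_0)=|\Pi|=\mathrm{wt}_0(Y)$ and the statement that the $0$-blocks of $Y_0$ are exactly those of $Y$ are immediate from the construction, and $Y_0\not\supseteq Y$ in general is witnessed by any $Y$ carrying a tall column of non-$0$ blocks that is truncated in the distinguished completion of its $0$-skeleton — the situation of Example~\ref{ex:singr2} already exhibits the relevant phenomenon.

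The main obstacle in both parts is the bookkeeping forced by the half-block layers together with condition (R1), specifically the notion of a \emph{directly supported} salient block-pair: adjoining blocks in part (2) can alter which blocks are salient and which are support blocks, so one must show the procedure is confluent and that its fixed point satisfies (R1), not merely (R2); symmetrically, in part (1) one must verify that (R1)--(R3) together with the prescribed $0$-positions never over-determine the shape. I expect the cleanest control of this to come from arguing run-by-run along maximal blocks of equal-length rows, tracking how salient and support blocks propagate between consecutive antidiagonals via the supporting rules at the end of~\ref{subsec:support}, exactly as in the inductive set-up of~\ref{sec:proof:orbicells}.
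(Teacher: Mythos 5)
Your route is genuinely different from the paper's, and I believe it can be made to work. The paper proves part (1) first (take the minimal Young wall with the prescribed $0$-blocks and extend each row maximally subject to (R1)--(R3) without changing the $0$-weight, as in Lemma~\ref{lem:1red}), and then deduces part (2) \emph{from} part (1): it considers the relatives of $Y_0$ obtained by inverses of the move enforcing (R3), shows at least one contains $Y$, and rules out two distinct minimal ones by analysing the left-triangle/right-triangle ambiguity at runs of equal-length rows ending in $(n-1)/n$ half-blocks. You instead prove (2) directly by a bottom-up forced completion of $Y$, which yields existence, minimality and uniqueness in one stroke and makes (1) and (2) logically independent; your part (1) is essentially the paper's construction. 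What your approach buys is that the paper's left/right-triangle case analysis is replaced by a local "forcedness" check, and this is exactly the point you must not gloss over: the one-line justification "otherwise it would be a salient block of $Y''$ of a forbidden type" needs to be verified separately for the three kinds of repair, and the nontrivial case is a divided $(n-1)/n$ position. There the argument is: saliency of a missing (half-)block and failure of direct support are conditions asserting that certain blocks to the left and below are \emph{present}, hence they persist from the current partial wall to any $Y''$ containing it; so if a salient pair is not directly supported, $Y''$ cannot leave both halves out, and by the unpaired-half clause of (R1) it cannot contain just one, so both are forced — while if the pair is directly supported, neither half is added and no choice ever arises. This dichotomy is precisely what replaces the paper's run-length $\le n-2$ versus $>n-2$ analysis, and it is also what makes your "closure under intersection" heuristic true; without spelling it out the minimality claim is not yet a proof. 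Two smaller points to add: termination should be argued bottom-up (the bottom row stabilises within one period of the pattern at the first admissible stopping label, and each row's forced extension is bounded by the extent of the row below, with violations only propagating upward as lower rows grow and destroy direct support above), and note that termination is what supplies the nonemptiness of $\{Y''\in\mathcal{Z}^1_\Delta : Y''\supseteq Y\}$, which the paper obtains separately from the inverse-(R3) moves applied to $Y_0$.
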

We will prove Lemma \ref{lem:yprimeunique} at the end of the section. 

\begin{lemma}
\label{lem:1red}
There is a combinatorial reduction map ${\rm red}\colon \mathcal{Z}_\Delta'\to \mathcal{Z}_\Delta^0$, restricting to the identity on
$\mathcal{Z}_\Delta^0\subset \mathcal{Z}_\Delta'$, which associates to 
each $Y \in \mathcal{Z}_\Delta'$ a distinguished $0$-generated 
Young wall ${\rm red}(Y) \in \mathcal{Z}_\Delta^0$ of the same $0$-weight.
\end{lemma}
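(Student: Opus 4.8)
The plan is to \emph{define} the map directly from Lemma~\ref{lem:yprimeunique}(1) and then observe that both asserted properties follow immediately. Given $Y\in\mathcal{Z}_{\Delta}'$, view it as an element of $\mathcal{Z}_\Delta$ and apply Lemma~\ref{lem:yprimeunique}(1): this produces a Young wall $Y_0\in\mathcal{Z}_{\Delta}^0$, uniquely determined by the positions of the $0$-blocks of $Y$, whose set of $0$-blocks coincides with that of $Y$; in particular ${\rm wt}_0(Y_0)={\rm wt}_0(Y)$. I would set ${\rm red}(Y):=Y_0$. Since $Y_0$ depends only on $Y$ (indeed only on the positions of the $0$-blocks of $Y$) and always lies in the prescribed target, this is a well-defined function $\mathcal{Z}_{\Delta}'\to\mathcal{Z}_{\Delta}^0$; the inclusions $\mathcal{Z}_{\Delta}^0\subseteq\mathcal{Z}_{\Delta}^1\subseteq\mathcal{Z}_{\Delta}'$ recalled in the text make the source and the ``restriction'' assertion meaningful.

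Next I would check that ${\rm red}$ restricts to the identity on $\mathcal{Z}_{\Delta}^0$. If $Y\in\mathcal{Z}_{\Delta}^0$, then $Y$ is itself a distinguished $0$-generated Young wall whose set of $0$-blocks is exactly the set of $0$-blocks of $Y$, with ${\rm wt}_0(Y)={\rm wt}_0(Y)$; by the uniqueness clause in Lemma~\ref{lem:yprimeunique}(1) such a wall is unique, so necessarily $Y_0=Y$, that is ${\rm red}(Y)=Y$. This completes the proof of the lemma.

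To exhibit ${\rm red}$ as a genuinely \emph{combinatorial reduction}, I would also describe the concrete procedure it implements: retain only the half-blocks of $Y$ labelled $0$, forget every white block of label $\neq 0$, and reconstruct the unique smallest subset of the Young wall pattern lying in $\mathcal{Z}_{\Delta}^0$ that contains these $0$-blocks, by filling in the non-$0$ blocks forced by the Young wall rules (YW1)--(YW4) together with the defining conditions (R1)--(R3) of a distinguished wall. The claim that this reconstruction is well defined, terminates, and returns an element of $\mathcal{Z}_{\Delta}^0$ of $0$-weight ${\rm wt}_0(Y)$ is precisely the content of Lemma~\ref{lem:yprimeunique}(1).

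The main obstacle accordingly does not lie in the present statement: every substantive combinatorial claim has been packaged into Lemma~\ref{lem:yprimeunique}(1), whose proof (postponed to the end of the section) carries the real weight — in particular the verification that the canonical wall reconstructed from a set of admissible $0$-block positions satisfies all of (YW1)--(YW4) and (R1)--(R3) and is the unique such wall. Inside the proof of Lemma~\ref{lem:1red} itself, the only point calling for a moment's attention is the ``identity on $\mathcal{Z}_{\Delta}^0$'' claim, which, as indicated, is a formal consequence of that uniqueness.
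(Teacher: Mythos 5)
Your proposal is formally a valid deduction from the \emph{statement} of Lemma~\ref{lem:yprimeunique}(1), but in the context of this paper it does not amount to a proof: it is circular. Lemma~\ref{lem:yprimeunique} is only proved at the end of the section, and its proof of part (1) proceeds ``similarly to the proof of Lemma~\ref{lem:1red}'': one takes the minimal wall $Y_m$ with the prescribed $0$-blocks and then runs the very extension procedure, subject to (R1)--(R3), that the present lemma is supposed to provide. The substantive combinatorial content --- that one can enforce (R1)--(R2) by extending rows as far as possible without adding $0$-blocks, then enforce (R3) by removing the offending half-blocks together with the blocks forced above them, and that these moves terminate, respect the Young wall rules, do not reintroduce violations of (R1)--(R2), and leave the $0$-weight (indeed the set of $0$-blocks) unchanged --- is exactly what the paper's proof of Lemma~\ref{lem:1red} supplies and what your argument omits. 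Packaging all of this into Lemma~\ref{lem:yprimeunique}(1) merely moves the debt into a lemma whose own proof, as written, borrows it back; to make your route work you would need an independent proof of Lemma~\ref{lem:yprimeunique}(1), which would amount to writing out essentially the same algorithm.

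Two further points. First, the explicit two-step algorithm is not a dispensable detail: its moves and their inverses are used later in the paper --- the fibres $\mathrm{Rel}(Y)={\rm red}^{-1}(Y)$ are described via ``the inverse of the move (R2)/(R3) in the algorithm of Lemma~\ref{lem:1red}'' in the proofs of Lemma~\ref{lem:yprimeunique}(2) and Proposition~\ref{prop:relsum1}, and the variant reduction $\mathcal{Z}_\Delta^P\to\mathcal{Z}_\Delta^{0,A}$ is justified by saying that the proof of Lemma~\ref{lem:1red} goes through unchanged --- so a purely abstract characterization of ${\rm red}$ by uniqueness would not support those arguments. Second, your informal description of $Y_0$ as ``the unique smallest subset of the Young wall pattern lying in $\mathcal{Z}_\Delta^0$ that contains these $0$-blocks'' is not what Lemma~\ref{lem:yprimeunique}(1) asserts: the wall constructed there is obtained by extending rows as much as possible subject to (R1)--(R3) without adding $0$-blocks (it need not even contain $Y$), and it is pinned down by having \emph{exactly} the $0$-blocks of $Y$, not by any minimality with respect to containment.
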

\begin{proof}
Starting with a Young wall $Y\in \mathcal{Z}_\Delta'$, we construct ${\rm red}(Y)$ by enforcing (R1)--(R2) and (R3) in turn, making sure in the second step that (R1)--(R2) remain fulfilled. 

First, if a Young wall $Y\in \mathcal{Z}_\Delta'$ violates (R1) or (R2) at a salient block of label $1$, $n-1$ or $n$, find the lowest row where this happens, and extend $Y$ by adding as many extra blocks as possible to this row without modifying the $0$-weight. Thus, the extension stops either just before any full block below which there is a missing full block, or just before the next $0$ block in the row, whichever comes earlier. Then in the row above this, one or two blocks may become salient. If at least one of these new salient blocks is not of label $0$, then we repeat the same procedure. Following this procedure all the way to the top of $Y$ gives a new Young wall which satisfies (R1) and (R2). These moves may increase the number of places where the Young wall violates (R3).

Second, assume a Young wall $Y$ satisfies (R1) and (R2) but violates (R3): there is a consecutive series of rows having equal length and ending in half $n-1$/$n$-blocks, but the length of this series is $m \leq n-2$. Remove the half block from the end of the lowest row of such a series. Then a new supported salient block-pair appears.
If the block $b$ immediately above this block-pair is contained in $Y$, then we remove $b$, as well as the blocks to the right of it in order to obtain a valid Young wall. Any full block above $b$ also cannot be present in a valid Young wall, so we remove that as well, together with all of the blocks to the right of it. 
Continue the removal process until there is a full block above the
last removed block. This process terminates after $m$ steps, when it
arrives at a row which is already short enough. In this way we
decreased the number places where the Young wall violates (R3), but
haven't increased the number of places where it violates (R1) or
(R2). The $0$-weight of the Young wall remains unchanged, since the length of the series was at most $n-2$. 

Combining these steps, we obtain a Young wall ${\rm red}(Y)$ that satisfies (R3) as well as (R1) and (R2), and so lies in $\mathcal{Z}_\Delta^0$, and has the same $0$-weight. 
\end{proof}

\begin{example}
\label{ex:singr1}
%Let $n=4$ and let us denote by $Y_1, Y_2, Y_3, Y_4, Y_5$ and $Y_6$ the following Young walls. It can be checked that 
%all these satisfy conditions (A1) and (A2), and hence lie in $\mathcal{Z}_\Delta'$.

Let $n=4$ and let us consider the following six Young walls.
\begin{center}
\begin{tabular}{c c}
\begin{tabular}[x]{@{}c@{}}
\begin{tikzpicture}[scale=0.6, font=\footnotesize, fill=black!20]
 \draw (1, 0) -- (9,0);
  \foreach \x in {1,2,3,4,5,6}
    {
      \draw (\x, 0) -- (\x,\x);
    }
  
   \foreach \y in {1,2,3,4,5}
       {
            \draw (\y,\y) -- (10,\y);
       }
       
\draw (6,6)--(8,6);
\draw (7, 0) -- (7,6);
\draw (8, 0) -- (8,6);
\draw (9, 0) -- (9,5);
 \draw (10, 1) -- (10,5);

\foreach \x in {0,1,2,3,4,5}
        {
        	\draw (\x+2.5, \x+0.5) node {2};
        	%\draw (\x+0.25,0.65) node {0};
        %	\draw (\x+0.75,0.35) node {1};
        }
  \foreach \x in {0,1,2,3,4}
       {
           \draw (\x+4.5, \x+0.5) node {2};
          }  
    \foreach \x in {0,1,2,3,4}
         {
             \draw (\x+6.5, \x+0.5) node {2};
            } 
        \foreach \x in {0,1}
             {
                 \draw (\x+8.5, \x+0.5) node {2};
                } 
     
     \foreach \x in {0,2,4}
         {
                	\draw (\x+1.5, \x+0.5) node {0};
                	\draw (\x+2.5, \x+1.5) node {1};
                	%\draw (\x+0.25,0.65) node {0};
                %	\draw (\x+0.75,0.35) node {1};
                }
      % \draw (5.5, 4.5) node {0};
 \foreach \x in {0,2,4}
           {
         %  \draw (\x+0.5,1.5) node {2};
           %\draw (\x+0.5,3.5) node {2};
           \draw (\x+3.75,\x+0.75) node {4};
                        	\draw (\x+3.25,\x+0.25) node {3};
                        	\draw (\x+3,\x+1) -- (\x+4,\x+0);
                        	    	%\draw (\x,5) -- (\x+1,6);
           }
  \foreach \x in {1,3}
             {
           %  \draw (\x+0.5,1.5) node {2};
             %\draw (\x+0.5,3.5) node {2};
             \draw (\x+3.75,\x+0.75) node {3};
                                     	\draw (\x+3.25,\x+0.25) node {4};
                                     	\draw (\x+3,\x+1) -- (\x+4,\x+0);
                          	    	%\draw (\x,5) -- (\x+1,6);
             }
       \foreach \x in {0,2,4}
                 {
               %  \draw (\x+0.5,1.5) node {2};
                 %\draw (\x+0.5,3.5) node {2};
                 \draw (\x+5.75,\x+0.75) node {0};
                              	\draw (\x+5.25,\x+0.25) node {1};
                              	\draw (\x+5,\x+1) -- (\x+6,\x+0);
                              	    	%\draw (\x,5) -- (\x+1,6);
                 }
        \foreach \x in {1,3}
                   {
                 %  \draw (\x+0.5,1.5) node {2};
                   %\draw (\x+0.5,3.5) node {2};
                   \draw (\x+5.75,\x+0.75) node {1};
                                           	\draw (\x+5.25,\x+0.25) node {0};
                                           	\draw (\x+5,\x+1) -- (\x+6,\x+0);
                                	    	%\draw (\x,5) -- (\x+1,6);
                   }       
       \foreach \x in {0,2}
                 {
               %  \draw (\x+0.5,1.5) node {2};
                 %\draw (\x+0.5,3.5) node {2};
                 \draw (\x+7.75,\x+0.75) node {4};
                              	\draw (\x+7.25,\x+0.25) node {3};
                              	\draw (\x+7,\x+1) -- (\x+8,\x+0);
                              	    	%\draw (\x,5) -- (\x+1,6);
                 }
        \foreach \x in {1}
                   {
                 %  \draw (\x+0.5,1.5) node {2};
                   %\draw (\x+0.5,3.5) node {2};
                   \draw (\x+7.75,\x+0.75) node {3};
                                           	\draw (\x+7.25,\x+0.25) node {4};
                                           	\draw (\x+7,\x+1) -- (\x+8,\x+0);
                                	    	%\draw (\x,5) -- (\x+1,6);
                   }
    % \draw (8.75,\x+0.75) node {3};
                                          	\draw (8.25,5.25) node {4};
                                          	\draw (8,6) -- (9,5); 
     	\draw (10.75,3.75) node {3};
     	\draw (11.75,4.75) node {4};
        \draw (10,4) -- (11,3)--(11,4);
        \draw (10,4) -- (11,4)--(11,5)--(10,5);
        \draw (11,5) -- (12,4)--(12,5)--(11,5); 
     
     \draw (9,1)--(10,0)--(9,0);
     \draw (9.25,0.25) node {1};
     \node (2) at ( 8.75,5.75) [circle,draw, fill=black, inner sep=0pt,minimum size=4pt] {};
\end{tikzpicture} 
\\
$Y_1$ 
\end{tabular}
&
\begin{tabular}[x]{@{}c@{}}
\begin{tikzpicture}[scale=0.6, font=\footnotesize, fill=black!20]
 \draw (1, 0) -- (9,0);
  \foreach \x in {1,2,3,4,5,6}
    {
      \draw (\x, 0) -- (\x,\x);
    }
  
   \foreach \y in {1,2,3,4,5}
       {
            \draw (\y,\y) -- (10,\y);
       }
       
\draw (6,6)--(10,6);
\draw (7, 0) -- (7,6);
\draw (8, 0) -- (8,6);
\draw (9, 0) -- (9,6);
 \draw (10, 1) -- (10,6);

\foreach \x in {0,1,2,3,4,5}
        {
        	\draw (\x+2.5, \x+0.5) node {2};
        	%\draw (\x+0.25,0.65) node {0};
        %	\draw (\x+0.75,0.35) node {1};
        }
  \foreach \x in {0,1,2,3,4,5}
       {
           \draw (\x+4.5, \x+0.5) node {2};
          }  
    \foreach \x in {0,1,2,3,4}
         {
             \draw (\x+6.5, \x+0.5) node {2};
            } 
        \foreach \x in {0,1}
             {
                 \draw (\x+8.5, \x+0.5) node {2};
                } 
     
     \foreach \x in {0,2,4}
         {
                	\draw (\x+1.5, \x+0.5) node {0};
                	\draw (\x+2.5, \x+1.5) node {1};
                	%\draw (\x+0.25,0.65) node {0};
                %	\draw (\x+0.75,0.35) node {1};
                }
      % \draw (5.5, 4.5) node {0};
 \foreach \x in {0,2,4}
           {
         %  \draw (\x+0.5,1.5) node {2};
           %\draw (\x+0.5,3.5) node {2};
           \draw (\x+3.75,\x+0.75) node {4};
                        	\draw (\x+3.25,\x+0.25) node {3};
                        	\draw (\x+3,\x+1) -- (\x+4,\x+0);
                        	    	%\draw (\x,5) -- (\x+1,6);
           }
  \foreach \x in {1,3,5}
             {
           %  \draw (\x+0.5,1.5) node {2};
             %\draw (\x+0.5,3.5) node {2};
             \draw (\x+3.75,\x+0.75) node {3};
                                     	\draw (\x+3.25,\x+0.25) node {4};
                                     	\draw (\x+3,\x+1) -- (\x+4,\x+0);
                          	    	%\draw (\x,5) -- (\x+1,6);
             }
       \foreach \x in {0,2,4}
                 {
               %  \draw (\x+0.5,1.5) node {2};
                 %\draw (\x+0.5,3.5) node {2};
                 \draw (\x+5.75,\x+0.75) node {0};
                              	\draw (\x+5.25,\x+0.25) node {1};
                              	\draw (\x+5,\x+1) -- (\x+6,\x+0);
                              	    	%\draw (\x,5) -- (\x+1,6);
                 }
        \foreach \x in {1,3}
                   {
                 %  \draw (\x+0.5,1.5) node {2};
                   %\draw (\x+0.5,3.5) node {2};
                   \draw (\x+5.75,\x+0.75) node {1};
                                           	\draw (\x+5.25,\x+0.25) node {0};
                                           	\draw (\x+5,\x+1) -- (\x+6,\x+0);
                                	    	%\draw (\x,5) -- (\x+1,6);
                   }       
       \foreach \x in {0,2}
                 {
               %  \draw (\x+0.5,1.5) node {2};
                 %\draw (\x+0.5,3.5) node {2};
                 \draw (\x+7.75,\x+0.75) node {4};
                              	\draw (\x+7.25,\x+0.25) node {3};
                              	\draw (\x+7,\x+1) -- (\x+8,\x+0);
                              	    	%\draw (\x,5) -- (\x+1,6);
                 }
        \foreach \x in {1}
                   {
                 %  \draw (\x+0.5,1.5) node {2};
                   %\draw (\x+0.5,3.5) node {2};
                   \draw (\x+7.75,\x+0.75) node {3};
                                           	\draw (\x+7.25,\x+0.25) node {4};
                                           	\draw (\x+7,\x+1) -- (\x+8,\x+0);
                                	    	%\draw (\x,5) -- (\x+1,6);
                   }
    % \draw (8.75,\x+0.75) node {3};
       %\draw (8.25,5.25) node {4};
       %\draw (8,6) -- (9,5); 
     	\draw (10.75,3.75) node {3};
     	\draw (11.75,4.75) node {4};
        \draw (10,4) -- (11,3)--(11,4);
        \draw (10,4) -- (11,4)--(11,5)--(10,5);
        \draw (11,5) -- (12,4)--(12,5)--(11,5); 
        
        \draw (10,6) -- (11,5)--(11,6)--(10,6);
        \draw (10.75,5.75) node {1};
     
     \draw (9,1)--(10,0)--(9,0);
     \draw (9.25,0.25) node {1};
     \node (2) at ( 12.75,5.75) [circle,draw, fill=black, inner sep=0pt,minimum size=4pt] {};
\end{tikzpicture} 
\\
$Y_2$
\end{tabular}
\\[2.35cm]
\begin{tabular}[x]{@{}c@{}}
\begin{tikzpicture}[scale=0.6, font=\footnotesize, fill=black!20]
 \draw (1, 0) -- (9,0);
  \foreach \x in {1,2,3,4,5,6}
    {
      \draw (\x, 0) -- (\x,\x);
    }
  
   \foreach \y in {1,2,3,4,5}
       {
            \draw (\y,\y) -- (10,\y);
       }
       
\draw (6,6)--(8,6);
\draw (7, 0) -- (7,6);
\draw (8, 0) -- (8,6);
\draw (9, 0) -- (9,5);
 \draw (10, 1) -- (10,5);

\foreach \x in {0,1,2,3,4,5}
        {
        	\draw (\x+2.5, \x+0.5) node {2};
        	%\draw (\x+0.25,0.65) node {0};
        %	\draw (\x+0.75,0.35) node {1};
        }
  \foreach \x in {0,1,2,3,4}
       {
           \draw (\x+4.5, \x+0.5) node {2};
          }  
    \foreach \x in {0,1,2,3,4}
         {
             \draw (\x+6.5, \x+0.5) node {2};
            } 
        \foreach \x in {0,1}
             {
                 \draw (\x+8.5, \x+0.5) node {2};
                } 
     
     \foreach \x in {0,2,4}
         {
                	\draw (\x+1.5, \x+0.5) node {0};
                	\draw (\x+2.5, \x+1.5) node {1};
                	%\draw (\x+0.25,0.65) node {0};
                %	\draw (\x+0.75,0.35) node {1};
                }
      % \draw (5.5, 4.5) node {0};
 \foreach \x in {0,2,4}
           {
         %  \draw (\x+0.5,1.5) node {2};
           %\draw (\x+0.5,3.5) node {2};
           \draw (\x+3.75,\x+0.75) node {4};
                        	\draw (\x+3.25,\x+0.25) node {3};
                        	\draw (\x+3,\x+1) -- (\x+4,\x+0);
                        	    	%\draw (\x,5) -- (\x+1,6);
           }
  \foreach \x in {1,3}
             {
           %  \draw (\x+0.5,1.5) node {2};
             %\draw (\x+0.5,3.5) node {2};
             \draw (\x+3.75,\x+0.75) node {3};
                                     	\draw (\x+3.25,\x+0.25) node {4};
                                     	\draw (\x+3,\x+1) -- (\x+4,\x+0);
                          	    	%\draw (\x,5) -- (\x+1,6);
             }
       \foreach \x in {0,2,4}
                 {
               %  \draw (\x+0.5,1.5) node {2};
                 %\draw (\x+0.5,3.5) node {2};
                 \draw (\x+5.75,\x+0.75) node {0};
                              	\draw (\x+5.25,\x+0.25) node {1};
                              	\draw (\x+5,\x+1) -- (\x+6,\x+0);
                              	    	%\draw (\x,5) -- (\x+1,6);
                 }
        \foreach \x in {1,3}
                   {
                 %  \draw (\x+0.5,1.5) node {2};
                   %\draw (\x+0.5,3.5) node {2};
                   \draw (\x+5.75,\x+0.75) node {1};
                                           	\draw (\x+5.25,\x+0.25) node {0};
                                           	\draw (\x+5,\x+1) -- (\x+6,\x+0);
                                	    	%\draw (\x,5) -- (\x+1,6);
                   }       
       \foreach \x in {0,2}
                 {
               %  \draw (\x+0.5,1.5) node {2};
                 %\draw (\x+0.5,3.5) node {2};
                 \draw (\x+7.75,\x+0.75) node {4};
                              	\draw (\x+7.25,\x+0.25) node {3};
                              	\draw (\x+7,\x+1) -- (\x+8,\x+0);
                              	    	%\draw (\x,5) -- (\x+1,6);
                 }
        \foreach \x in {1}
                   {
                 %  \draw (\x+0.5,1.5) node {2};
                   %\draw (\x+0.5,3.5) node {2};
                   \draw (\x+7.75,\x+0.75) node {3};
                                           	\draw (\x+7.25,\x+0.25) node {4};
                                           	\draw (\x+7,\x+1) -- (\x+8,\x+0);
                                	    	%\draw (\x,5) -- (\x+1,6);
                   }
    % \draw (8.75,\x+0.75) node {3};
        \draw (8.75,5.75) node {3};
        \draw (8,6) -- (9,6) -- (9,5)--(8,6); 
     	
     	\draw (10.25,3.25) node {4};
     	\draw (11.25,4.25) node {3};
        \draw (10,4) -- (11,3)--(10,3);
        \draw (10,4) -- (11,4)--(11,5)--(10,5);
        \draw (11,5) -- (12,4)--(11,4)--(11,5); 
     
     \draw (9,1)--(10,0)--(9,0);
     \draw (9.25,0.25) node {1};
     \node (2) at ( 8.25,5.25) [circle,draw, fill=black, inner sep=0pt,minimum size=4pt] {};
\end{tikzpicture} 
\\
$Y_3$ 
\end{tabular}
&
\begin{tabular}[x]{@{}c@{}}
\begin{tikzpicture}[scale=0.6, font=\footnotesize, fill=black!20]
 \draw (1, 0) -- (9,0);
  \foreach \x in {1,2,3,4,5,6}
    {
      \draw (\x, 0) -- (\x,\x);
    }
  
   \foreach \y in {1,2,3,4,5}
       {
            \draw (\y,\y) -- (10,\y);
       }
       
\draw (6,6)--(10,6);
\draw (7, 0) -- (7,6);
\draw (8, 0) -- (8,6);
\draw (9, 0) -- (9,6);
 \draw (10, 1) -- (10,6);

\foreach \x in {0,1,2,3,4,5}
        {
        	\draw (\x+2.5, \x+0.5) node {2};
        	%\draw (\x+0.25,0.65) node {0};
        %	\draw (\x+0.75,0.35) node {1};
        }
  \foreach \x in {0,1,2,3,4,5}
       {
           \draw (\x+4.5, \x+0.5) node {2};
          }  
    \foreach \x in {0,1,2,3,4}
         {
             \draw (\x+6.5, \x+0.5) node {2};
            } 
        \foreach \x in {0,1}
             {
                 \draw (\x+8.5, \x+0.5) node {2};
                } 
     
     \foreach \x in {0,2,4}
         {
                	\draw (\x+1.5, \x+0.5) node {0};
                	\draw (\x+2.5, \x+1.5) node {1};
                	%\draw (\x+0.25,0.65) node {0};
                %	\draw (\x+0.75,0.35) node {1};
                }
      % \draw (5.5, 4.5) node {0};
 \foreach \x in {0,2,4}
           {
         %  \draw (\x+0.5,1.5) node {2};
           %\draw (\x+0.5,3.5) node {2};
           \draw (\x+3.75,\x+0.75) node {4};
                        	\draw (\x+3.25,\x+0.25) node {3};
                        	\draw (\x+3,\x+1) -- (\x+4,\x+0);
                        	    	%\draw (\x,5) -- (\x+1,6);
           }
  \foreach \x in {1,3,5}
             {
           %  \draw (\x+0.5,1.5) node {2};
             %\draw (\x+0.5,3.5) node {2};
             \draw (\x+3.75,\x+0.75) node {3};
                                     	\draw (\x+3.25,\x+0.25) node {4};
                                     	\draw (\x+3,\x+1) -- (\x+4,\x+0);
                          	    	%\draw (\x,5) -- (\x+1,6);
             }
       \foreach \x in {0,2,4}
                 {
               %  \draw (\x+0.5,1.5) node {2};
                 %\draw (\x+0.5,3.5) node {2};
                 \draw (\x+5.75,\x+0.75) node {0};
                              	\draw (\x+5.25,\x+0.25) node {1};
                              	\draw (\x+5,\x+1) -- (\x+6,\x+0);
                              	    	%\draw (\x,5) -- (\x+1,6);
                 }
        \foreach \x in {1,3}
                   {
                 %  \draw (\x+0.5,1.5) node {2};
                   %\draw (\x+0.5,3.5) node {2};
                   \draw (\x+5.75,\x+0.75) node {1};
                                           	\draw (\x+5.25,\x+0.25) node {0};
                                           	\draw (\x+5,\x+1) -- (\x+6,\x+0);
                                	    	%\draw (\x,5) -- (\x+1,6);
                   }       
       \foreach \x in {0,2}
                 {
               %  \draw (\x+0.5,1.5) node {2};
                 %\draw (\x+0.5,3.5) node {2};
                 \draw (\x+7.75,\x+0.75) node {4};
                              	\draw (\x+7.25,\x+0.25) node {3};
                              	\draw (\x+7,\x+1) -- (\x+8,\x+0);
                              	    	%\draw (\x,5) -- (\x+1,6);
                 }
        \foreach \x in {1}
                   {
                 %  \draw (\x+0.5,1.5) node {2};
                   %\draw (\x+0.5,3.5) node {2};
                   \draw (\x+7.75,\x+0.75) node {3};
                                           	\draw (\x+7.25,\x+0.25) node {4};
                                           	\draw (\x+7,\x+1) -- (\x+8,\x+0);
                                	    	%\draw (\x,5) -- (\x+1,6);
                   }
    % \draw (8.75,\x+0.75) node {3};
        %\draw (8.75,5.75) node {3};
        %\draw (8,6) -- (9,6) -- (9,5)--(8,6); 
     	
     	\draw (10.25,3.25) node {4};
     	\draw (11.25,4.25) node {3};
        \draw (10,4) -- (11,3)--(10,3);
        \draw (10,4) -- (11,4)--(11,5)--(10,5);
        \draw (11,5) -- (12,4)--(11,4)--(11,5); 
        
        \draw (10,6) -- (11,5)--(11,6)--(10,6);
        \draw (10.75,5.75) node {1};
     
     \draw (9,1)--(10,0)--(9,0);
     \draw (9.25,0.25) node {1};
    \node (2) at ( 12.25,5.25) [circle,draw, fill=black, inner sep=0pt,minimum size=4pt] {};         
\end{tikzpicture} 
\\
$Y_4$
\end{tabular}
\\[2.35cm]
\begin{tabular}[x]{@{}c@{}}
\begin{tikzpicture}[scale=0.6, font=\footnotesize, fill=black!20]
 \draw (1, 0) -- (9,0);
  \foreach \x in {1,2,3,4,5,6}
    {
      \draw (\x, 0) -- (\x,\x);
    }
  
   \foreach \y in {1,2,3,4,5}
       {
            \draw (\y,\y) -- (10,\y);
       }
       
\draw (6,6)--(10,6);
\draw (7, 0) -- (7,6);
\draw (8, 0) -- (8,6);
\draw (9, 0) -- (9,6);
 \draw (10, 1) -- (10,6);

\foreach \x in {0,1,2,3,4,5}
        {
        	\draw (\x+2.5, \x+0.5) node {2};
        	%\draw (\x+0.25,0.65) node {0};
        %	\draw (\x+0.75,0.35) node {1};
        }
  \foreach \x in {0,1,2,3,4,5}
       {
           \draw (\x+4.5, \x+0.5) node {2};
          }  
    \foreach \x in {0,1,2,3}
         {
             \draw (\x+6.5, \x+0.5) node {2};
            } 
        \foreach \x in {0,1}
             {
                 \draw (\x+8.5, \x+0.5) node {2};
                } 
     
     \foreach \x in {0,2,4}
         {
                	\draw (\x+1.5, \x+0.5) node {0};
                	\draw (\x+2.5, \x+1.5) node {1};
                	%\draw (\x+0.25,0.65) node {0};
                %	\draw (\x+0.75,0.35) node {1};
                }
      % \draw (5.5, 4.5) node {0};
 \foreach \x in {0,2,4}
           {
         %  \draw (\x+0.5,1.5) node {2};
           %\draw (\x+0.5,3.5) node {2};
           \draw (\x+3.75,\x+0.75) node {4};
                        	\draw (\x+3.25,\x+0.25) node {3};
                        	\draw (\x+3,\x+1) -- (\x+4,\x+0);
                        	    	%\draw (\x,5) -- (\x+1,6);
           }
  \foreach \x in {1,3,5}
             {
           %  \draw (\x+0.5,1.5) node {2};
             %\draw (\x+0.5,3.5) node {2};
             \draw (\x+3.75,\x+0.75) node {3};
                                     	\draw (\x+3.25,\x+0.25) node {4};
                                     	\draw (\x+3,\x+1) -- (\x+4,\x+0);
                          	    	%\draw (\x,5) -- (\x+1,6);
             }
       \foreach \x in {0,2,4}
                 {
               %  \draw (\x+0.5,1.5) node {2};
                 %\draw (\x+0.5,3.5) node {2};
                 \draw (\x+5.75,\x+0.75) node {0};
                              	\draw (\x+5.25,\x+0.25) node {1};
                              	\draw (\x+5,\x+1) -- (\x+6,\x+0);
                              	    	%\draw (\x,5) -- (\x+1,6);
                 }
        \foreach \x in {1,3}
                   {
                 %  \draw (\x+0.5,1.5) node {2};
                   %\draw (\x+0.5,3.5) node {2};
                   \draw (\x+5.75,\x+0.75) node {1};
                                           	\draw (\x+5.25,\x+0.25) node {0};
                                           	\draw (\x+5,\x+1) -- (\x+6,\x+0);
                                	    	%\draw (\x,5) -- (\x+1,6);
                   }       
       \foreach \x in {0,2}
                 {
               %  \draw (\x+0.5,1.5) node {2};
                 %\draw (\x+0.5,3.5) node {2};
                 \draw (\x+7.75,\x+0.75) node {4};
                              	\draw (\x+7.25,\x+0.25) node {3};
                              	\draw (\x+7,\x+1) -- (\x+8,\x+0);
                              	    	%\draw (\x,5) -- (\x+1,6);
                 }
        \foreach \x in {1}
                   {
                 %  \draw (\x+0.5,1.5) node {2};
                   %\draw (\x+0.5,3.5) node {2};
                   \draw (\x+7.75,\x+0.75) node {3};
                                           	\draw (\x+7.25,\x+0.25) node {4};
                                           	\draw (\x+7,\x+1) -- (\x+8,\x+0);
                                	    	%\draw (\x,5) -- (\x+1,6);
                   }
    % \draw (8.75,\x+0.75) node {3};
        %\draw (8.75,5.75) node {3};
        %\draw (8,6) -- (9,6) -- (9,5)--(8,6); 
     	
     	%\draw (10.25,3.25) node {4};
     	%\draw (11.25,4.25) node {3};
        %\draw (10,4) -- (11,3)--(10,3);
        %\draw (10,4) -- (11,4)--(11,5)--(10,5);
        %\draw (11,5) -- (12,4)--(11,4)--(11,5); 
        
        %\draw (10,6) -- (11,5)--(11,6)--(10,6);
        %\draw (10.75,5.75) node {1};
     
     \draw (9,1)--(10,0)--(9,0);
     \draw (9.25,0.25) node {1};
     \node (2) at ( 10.75,5.75) [circle,draw, fill=black, inner sep=0pt,minimum size=4pt] {};        
\end{tikzpicture} 
\\
$Y_5$
\end{tabular} &
\begin{tabular}[x]{@{}c@{}}
\begin{tikzpicture}[scale=0.6, font=\footnotesize, fill=black!20]
 \draw (1, 0) -- (9,0);
  \foreach \x in {1,2,3,4,5,6}
    {
      \draw (\x, 0) -- (\x,\x);
    }
  
   \foreach \y in {1,2,3,4,5}
       {
            \draw (\y,\y) -- (10,\y);
       }
       
\draw (6,6)--(10,6);
\draw (7, 0) -- (7,6);
\draw (8, 0) -- (8,6);
\draw (9, 0) -- (9,6);
 \draw (10, 1) -- (10,6);

\foreach \x in {0,1,2,3,4,5}
        {
        	\draw (\x+2.5, \x+0.5) node {2};
        	%\draw (\x+0.25,0.65) node {0};
        %	\draw (\x+0.75,0.35) node {1};
        }
  \foreach \x in {0,1,2,3,4,5}
       {
           \draw (\x+4.5, \x+0.5) node {2};
          }  
    \foreach \x in {0,1,2,3}
         {
             \draw (\x+6.5, \x+0.5) node {2};
            } 
        \foreach \x in {0,1}
             {
                 \draw (\x+8.5, \x+0.5) node {2};
                } 
     
     \foreach \x in {0,2,4}
         {
                	\draw (\x+1.5, \x+0.5) node {0};
                	\draw (\x+2.5, \x+1.5) node {1};
                	%\draw (\x+0.25,0.65) node {0};
                %	\draw (\x+0.75,0.35) node {1};
                }
      % \draw (5.5, 4.5) node {0};
 \foreach \x in {0,2,4}
           {
         %  \draw (\x+0.5,1.5) node {2};
           %\draw (\x+0.5,3.5) node {2};
           \draw (\x+3.75,\x+0.75) node {4};
                        	\draw (\x+3.25,\x+0.25) node {3};
                        	\draw (\x+3,\x+1) -- (\x+4,\x+0);
                        	    	%\draw (\x,5) -- (\x+1,6);
           }
  \foreach \x in {1,3,5}
             {
           %  \draw (\x+0.5,1.5) node {2};
             %\draw (\x+0.5,3.5) node {2};
             \draw (\x+3.75,\x+0.75) node {3};
                                     	\draw (\x+3.25,\x+0.25) node {4};
                                     	\draw (\x+3,\x+1) -- (\x+4,\x+0);
                          	    	%\draw (\x,5) -- (\x+1,6);
             }
       \foreach \x in {0,2,4}
                 {
               %  \draw (\x+0.5,1.5) node {2};
                 %\draw (\x+0.5,3.5) node {2};
                 \draw (\x+5.75,\x+0.75) node {0};
                              	\draw (\x+5.25,\x+0.25) node {1};
                              	\draw (\x+5,\x+1) -- (\x+6,\x+0);
                              	    	%\draw (\x,5) -- (\x+1,6);
                 }
        \foreach \x in {1,3}
                   {
                 %  \draw (\x+0.5,1.5) node {2};
                   %\draw (\x+0.5,3.5) node {2};
                   \draw (\x+5.75,\x+0.75) node {1};
                                           	\draw (\x+5.25,\x+0.25) node {0};
                                           	\draw (\x+5,\x+1) -- (\x+6,\x+0);
                                	    	%\draw (\x,5) -- (\x+1,6);
                   }       
       \foreach \x in {0,2}
                 {
               %  \draw (\x+0.5,1.5) node {2};
                 %\draw (\x+0.5,3.5) node {2};
                 \draw (\x+7.75,\x+0.75) node {4};
                              	\draw (\x+7.25,\x+0.25) node {3};
                              	\draw (\x+7,\x+1) -- (\x+8,\x+0);
                              	    	%\draw (\x,5) -- (\x+1,6);
                 }
        \foreach \x in {1}
                   {
                 %  \draw (\x+0.5,1.5) node {2};
                   %\draw (\x+0.5,3.5) node {2};
                   \draw (\x+7.75,\x+0.75) node {3};
                                           	\draw (\x+7.25,\x+0.25) node {4};
                                           	\draw (\x+7,\x+1) -- (\x+8,\x+0);
                                	    	%\draw (\x,5) -- (\x+1,6);
                   }
    % \draw (8.75,\x+0.75) node {3};
        %\draw (8.75,5.75) node {3};
        %\draw (8,6) -- (9,6) -- (9,5)--(8,6); 
     	
     	%\draw (10.25,3.25) node {4};
     	%\draw (11.25,4.25) node {3};
        %\draw (10,4) -- (11,3)--(10,3);
        %\draw (10,4) -- (11,4)--(11,5)--(10,5);
        %\draw (11,5) -- (12,4)--(11,4)--(11,5); 
        
        \draw (10,6) -- (11,5)--(11,6)--(10,6);
        \draw (10.75,5.75) node {1};
     
     \draw (9,1)--(10,0)--(9,0);
     \draw (9.25,0.25) node {1};
             
\end{tikzpicture}
\\
$Y_6$
\end{tabular}
\end{tabular}
\end{center}
It can be checked that all these satisfy conditions (A1) and (A2), and hence lie in $\mathcal{Z}_\Delta'$. On the other hand, the Young walls $Y_1$ and $Y_3$ violate (R1), and are extended to $Y_2$, resp.~$Y_4$ in the first step of the reduction process. Both of these still violate (R3); in the second step, they become ${\rm red}(Y_1)= {\rm red}(Y_3)=Y_6\in \mathcal{Z}_\Delta^0$. $Y_5$ satisfies (R1), but violates (R2) and is extended to $Y_6$ in the first step with ${\rm red}(Y_5)=Y_6\in \mathcal{Z}_\Delta^0$ also. In each case the bullets indicate where exactly these violations occur. The blocks without numbers are not contained in the Young walls.
\end{example}

For a Young wall $Y\in {\mathcal Z}_\Delta^0$, let $\mathrm{Rel}(Y)={\rm red}^{-1}(Y)$
denote the set of relatives of $Y$, the Young walls we can get from $Y$ by the inverses of the reduction steps above. This is a finite directed set, directed by the steps of the proof of Lemma~\ref{lem:1red}. In Example~\ref{ex:singr1}, all $Y_i$ are relatives of $Y_6$. 

\begin{proof}[Proof of Lemma \ref{lem:yprimeunique}]
Fix $Y \in \mathcal{Z}_\Delta$. For part (1), consider the minimal Young wall $Y_m$ that contains the same 0-blocks as $Y$. Similarly to the proof of Lemma \ref{lem:1red} extend each row of $Y_m$ with as many blocks as possible taking into account the Young wall rules and the conditions (R1)--(R3), and without modifying the 0-weight.

For part (2), consider the set $\mathrm{Rel}_Y(Y_0)\subset \mathcal{Z}_\Delta'$
of those relatives of $Y_0$ which contain $Y$. Here $Y_0$ is the Young wall obtained in part (1). 
This set of relatives is nonempty,
since we can always extend $Y_0$ with the inverse of the move (R3) in
Lemma \ref{lem:1red} until there are only label 0 salient
blocks. There can be several of these since there is an ambiguity in
the inverse of the move (R3), but there is no Young wall having the
same 0 weight as $Y_0$ which is not contained in at least one of these
extended Young walls. 

Suppose that $\mathrm{Rel}_Y(Y_0)$ has two distinct minimal elements
$Y_2, Y_3$ with respect to containment. Then there is at least one row ending in
a half block, where one of $Y_2, Y_3$ has a left
triangle, and the other has a right triangle, but otherwise the row
has the same length. Then the length of the
series of successive rows with the same length is the same in the two
Young walls. If this length is more than $n-2$, then they cannot both
contain $Y$. If it is $n-2$ or less, then $Y_2, Y_3$ are the two
results of the inverse of the move (R3) applied on a smaller Young
wall. Since $Y$ was a Young wall, also this smaller Young wall
contains $Y$. Hence, neither of $Y_2, Y_3$ could be minimal. 
The same reasoning applies to all places where there is the left
triangle/right triangle ambiguity.
Thus, there is a unique minimal element $Y_1$ in the set of relatives of $Y_0$ containing $Y$.

Since $Y_1$ can be obtained from $Y_0$ by inverses of the move (R3), it is an element in $\mathcal{Z}^1_\Delta$
\end{proof}

\subsection{The decomposition of the coarse Hilbert scheme}

Let us turn to the Hilbert scheme of points on the quotient $\SC^2/G_\Delta$, the coarse Hilbert scheme 
$\mathrm{Hilb}(\SC^2/G_\Delta)=\sqcup_n \mathrm{Hilb}^n(\SC^2/G_\Delta)$. Recall that the 
inclusion $\SC[x,y]^{G_\Delta}\subset \SC[x,y]$ defines a morphism
\[ p_\ast: \mathrm{Hilb}([\SC^2/G_\Delta])\rightarrow \mathrm{Hilb}(\SC^2/G_\Delta), \quad J \mapsto J^{G_\Delta}=J\cap \SC[x,y]^{G_\Delta}\]
and a map of sets 
\[ i^\ast: \mathrm{Hilb}(\SC^2/G_\Delta)(\SC) \rightarrow \mathrm{Hilb}([\SC^2/G_\Delta])(\SC), \quad I \mapsto \SC[x,y].I \] 
between the coarse and the orbifold Hilbert schemes.

The purpose of this section is to prove the following result. 

\begin{theorem} The decomposition of the equivariant Hilbert scheme $\mathrm{Hilb}([\SC^2/G_\Delta])$ from 
Theorem~\ref{thm:dnorbcells} induces a locally closed decomposition
\[\mathrm{Hilb}(\SC^2/G_\Delta) = \displaystyle\bigsqcup_{Y \in{\mathcal Z}_\Delta'} \mathrm{Hilb}(\SC^2/G_\Delta)_Y\]
of the coarse Hilbert scheme $\mathrm{Hilb}(\SC^2/G_\Delta)$ into strata indexed bijectively by the
set ${\mathcal Z}_\Delta'$ of Young walls of type $D_n$ satisfying (A1)-(A2) above.
The stratum $\mathrm{Hilb}(\SC^2/G_\Delta)_Y$ is contained in the $m$-th Hilbert scheme 
$\mathrm{Hilb}^m(\SC^2/G_\Delta)$ for $m={\rm wt}_0(Y)$.
\label{thm:assdiag}
\end{theorem}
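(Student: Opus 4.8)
\textbf{Proof plan for Theorem~\ref{thm:assdiag}.} The strategy is to transport the decomposition of $\mathrm{Hilb}([\SC^2/G_\Delta])$ along the maps $p_\ast$ and $i^\ast$, identify exactly which orbifold strata $Z_Y$ meet the image of $i^\ast$, and verify that over the allowed strata the map $p_\ast$ is an isomorphism onto its image. First I would recall from~\ref{sec_notation} that $p_\ast\circ i^\ast=\mathrm{id}$ on the set of ideals of $\SC[x,y]^{G_\Delta}$, so $i^\ast$ is injective on points and $\mathrm{Hilb}(\SC^2/G_\Delta)$ is set-theoretically identified with $\mathrm{im}(i^\ast)\subseteq\mathrm{Hilb}([\SC^2/G_\Delta])$. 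Then I would set, for each $Y\in\mathcal{Z}_\Delta'$,
\[\mathrm{Hilb}(\SC^2/G_\Delta)_Y := p_\ast\bigl(Z'_Y\cap \mathrm{im}(i^\ast)\bigr),\]
where $Z'_Y\subset\mathrm{Hilb}([\SC^2/G_\Delta])$ is the locally closed orbifold stratum (the total space of the affine fibration over $Z_Y$, by Theorem~\ref{thm:dnorbcells}), and show these subsets form a locally closed decomposition of $\mathrm{Hilb}(\SC^2/G_\Delta)$ with the stated indexing.

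The heart of the argument is the \emph{combinatorial} identification: for a homogeneous invariant ideal $I$ with Young wall $Y$, one must show that $I$ lies in $\mathrm{im}(i^\ast)$, i.e.\ $I=(I\cap\SC[x,y]^{G_\Delta})\cdot\SC[x,y]$, exactly when $Y$ satisfies (A1) and (A2). Here I would use the detailed analysis of salient and support blocks from~\ref{sec:proof:orbicells} and~\ref{subsec:support}, together with the description of generators of $I$ in cells of salient blocks (Definition~\ref{def:globsalient} and the remark after the conclusion of the proof of Theorem~\ref{thm:Zstrata}). The invariant subring $\SC[x,y]^{G_\Delta}$ is spanned by the monomials/polynomials sitting in cells of label $0$ in the transformed pattern (the diagonal/half-block $0$-positions), so $I$ is generated by its invariant part iff every generator can be taken in a cell of label $0$; a generator forced to lie in a non-$0$ cell that is \emph{not} supported by a label-$0$ generator below it (via multiplication by the operators $L_j$) obstructs this. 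Condition (A1) says all salient blocks carry labels in $\{0,1,n-1,n\}$ — the labels adjacent to $0$ in the affine Dynkin diagram, so that $L_1,L_{n-1},L_n$ can propagate a $0$-generator into them — and (A2) says every salient block of label $1,n-1,n$ is actually supported in the sense of~\ref{subsec:support}. Using Proposition~\ref{prop:1}, Lemma~\ref{lem:imgdescr} and Corollary~\ref{cor:nongen}, I would check that (A1)--(A2) are precisely the conditions under which the propagation from label-$0$ cells generates all of $I$; conversely, if (A1) or (A2) fails, an explicit salient cell must contain a genuinely new generator not coming from the invariant subring, so $Z_Y\cap\mathrm{im}(i^\ast)=\emptyset$. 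This establishes the indexing set is exactly $\mathcal{Z}_\Delta'$.

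For the scheme structure and local closedness I would argue as follows. The map $p_\ast$ is a morphism of schemes; its restriction to the $T$-fixed locus sends $Z_Y$ into the $T$-fixed locus of $\mathrm{Hilb}(\SC^2/G_\Delta)$, and by the previous paragraph, for $Y\in\mathcal{Z}_\Delta'$ this fixed locus consists of the monomial ideals of $\SC[x,y]^{G_\Delta}$ whose $\SC[x,y]$-saturation has Young wall in $\mathrm{red}^{-1}$ of a fixed distinguished diagram — but I only need that on $Z_Y$ the composite $i^\ast\circ p_\ast$ is the identity (since $I=i^\ast(I\cap\SC[x,y]^{G_\Delta})$ on this locus), so $p_\ast|_{Z_Y}$ is a locally closed immersion onto its image. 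The Białynicki--Birula flow used in Theorem~\ref{thm:dnorbcells} is compatible with $p_\ast$ because it comes from the $T$-action on $\SC^2$, which acts on both Hilbert schemes; hence $p_\ast$ carries the affine strata $Z'_Y$ isomorphically onto locally closed affine subsets of $\mathrm{Hilb}(\SC^2/G_\Delta)$, giving $\mathrm{Hilb}(\SC^2/G_\Delta)_Y\cong Z'_Y$. That these cover $\mathrm{Hilb}(\SC^2/G_\Delta)$ follows because every $I\lhd\SC[x,y]^{G_\Delta}$ satisfies $i^\ast I\in Z'_{Y}$ for a unique $Y$, and then $Y\in\mathcal{Z}_\Delta'$ by the combinatorial characterization; local closedness of the decomposition follows from local closedness of the $Z'_Y$ upstairs and the fact that $i^\ast$, while not a morphism, identifies $\mathrm{Hilb}(\SC^2/G_\Delta)(\SC)$ with a subset whose induced stratification is pulled back from the orbifold one. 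Finally, the degree statement $\mathrm{Hilb}(\SC^2/G_\Delta)_Y\subseteq\mathrm{Hilb}^{m}(\SC^2/G_\Delta)$ with $m=\mathrm{wt}_0(Y)$ is immediate: as in the type $A$ case (Proposition before Corollary~\ref{cor_part_An_coarse}), the codimension of $I\cap\SC[x,y]^{G_\Delta}$ in $\SC[x,y]^{G_\Delta}$ equals the number of label-$0$ half-blocks absent from $Y$, i.e.\ $\mathrm{wt}_0(Y)$, by Lemma~\ref{lem:intdim} applied to the label-$0$ components.

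\textbf{Main obstacle.} The delicate point is the precise verification that (A1)--(A2) characterize membership in $\mathrm{im}(i^\ast)$: one must rule out the possibility that a generator forced into a label-$1,n-1,n$ cell is nonetheless recoverable from label-$0$ generators through a longer chain of $L_j$-operations (and conversely that a "supported'' salient block truly contributes no new generator), which requires careful bookkeeping of the irreducible components of $(v)\cap S_\bullet$ described in Proposition~\ref{prop:1} and of the supporting rules of~\ref{subsec:support}. The subtlety seen already in Example~\ref{ex:3sidepyr}, where salient cells may or may not contain generators, is exactly what makes this step — rather than the flatness/scheme-theoretic formalities — the crux of the proof.
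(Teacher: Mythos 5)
Your overall strategy (characterize which orbifold strata meet $\mathrm{im}(i^\ast)$ via salient/support blocks, then transport the orbifold decomposition) is in the right spirit, and your combinatorial argument for why the Young wall of $i^\ast(I)$ must satisfy (A1)--(A2) matches the paper's. But there is a genuine error at the geometric heart of your proposal: the claim that $p_\ast$ carries the orbifold stratum $Z'_Y$ isomorphically onto the coarse stratum, ``giving $\mathrm{Hilb}(\SC^2/G_\Delta)_Y\cong Z'_Y$''. The quotient-scheme map $p_\ast$ is very far from injective on $Z'_Y$ (for instance every invariant ideal whose quotient contains no trivial summand maps to the unit ideal), and it is only on $\mathrm{im}(i^\ast)$ that $i^\ast$ provides an inverse. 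Consequently the $T$-fixed part of the coarse stratum is identified not with $Z_Y$ but with $\widetilde W_Y=Z_Y\cap\mathrm{im}(i^\ast)$, which is in general a proper constructible subset of $Z_Y$; the coarse strata are \emph{not} affine spaces. If they were, the coarse generating series would simply count $\mathcal{Z}_\Delta'$ by $0$-weight, contradicting Theorem~\ref{thm:dnsingcells} (which counts the distinguished subset $\mathcal{Z}_\Delta^0$) and rendering the entire analysis of Sections~\ref{Dnspecialsect}--\ref{sect:coarse} (strata of Euler characteristic $0$ and $-1$, the sum over relatives) unnecessary. So this step of your argument would fail, and with it your justification of local closedness, which leans on that purported isomorphism.

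The missing idea is how to give the set-theoretic strata a locally closed scheme structure at all, given that $i^\ast$ is only a map of sets (it does not preserve flatness) and that images under $p_\ast$ of constructible sets are merely constructible. The paper's device is to pull back the universal ideal $\mathcal{J}$ on $\mathrm{Hilb}(\SC^2/G_\Delta)$ to the non-flat family $\SC[x,y]\cdot\mathcal{J}$, take the \emph{flattening stratification} of the base, and observe that on each flattening stratum the family becomes flat and hence defines a genuine classifying morphism to $\mathrm{Hilb}([\SC^2/G_\Delta])$; pulling back the locally closed strata of Theorem~\ref{thm:dnorbcells} along these morphisms (and refining by the flattening strata) yields the locally closed decomposition, and simultaneously shows that on each stratum $i^\ast$ is a morphism inverse to $p_\ast$, so $W_Y\cong\widetilde W_Y\subset Z_Y$. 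Your sentence ``the induced stratification is pulled back from the orbifold one'' is exactly the statement that needs this (or an equivalent) mechanism; without it the local closedness claim is unsupported. Your treatment of the degree statement via $\mathrm{wt}_0$ and Lemma~\ref{lem:intdim} is fine, and your flagged obstacle (the iff-characterization of $\mathrm{im}(i^\ast)$ by (A1)--(A2)) is indeed where the paper invokes Corollary~\ref{cor:nongen}; note the paper's proof itself only argues the necessity direction explicitly.
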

\begin{proof} We start with the universal ideal 
${\mathcal J}\lhd \mathcal{O}_{\mathrm{Hilb}(\SC^2/G_\Delta)}\otimes\SC[x,y]^{G_\Delta}$, which exists since
$\mathrm{Hilb}(\SC^2/G_\Delta)$ is a fine moduli space. Using the relative pullback, 
we obtain an invariant ideal $\SC[x,y].{\mathcal J}\lhd \mathcal{O}_{\mathrm{Hilb}(\SC^2/G_\Delta)}\otimes\SC[x,y]$, which however
is not a {\em flat} family of invariant ideals over $\mathrm{Hilb}(\SC^2/G_\Delta)$. Take the flattening stratification of the base with index set $F$,
to obtain a decomposition 
%\[\mathrm{Hilb}(\SC^2/G_\Delta)=\sqcup_{\rho\in {\rm Rep}(G_\Delta)}\mathrm{Hilb}(\SC^2/G_\Delta)_\rho\]
%over which the restrictions $(\SC[x,y].{\mathcal J})_\rho$ are flat. 
\begin{equation}
\label{eq:flatstrat}
\mathrm{Hilb}(\SC^2/G_\Delta)=\sqcup_{f \in F}\mathrm{Hilb}(\SC^2/G_\Delta)_f
\end{equation}
over which the restrictions $(\SC[x,y].{\mathcal J})_f$ are flat.
These flat families of invariant ideals of $\SC[x,y]$
define classifying maps
\[i_f\colon \mathrm{Hilb}(\SC^2/G_\Delta)_f\to  \mathrm{Hilb}^\rho([\SC^2/G_\Delta])\]
from these strata to components of the equivariant Hilbert scheme.
The latter smooth varieties are decomposed into 
locally closed strata by Theorem~\ref{thm:dnorbcells} as
\begin{equation}
\label{eq:Ystratf}
\mathrm{Hilb}([\SC^2/G_\Delta])=\sqcup_{Y \in \in{\mathcal Z}_\Delta}\mathrm{Hilb}([\SC^2/G_\Delta])_Y.
\end{equation}
The stratification \eqref{eq:Ystratf} gives a stratification on $\mathrm{im}(i_f)$ for each $f \in F$ since over each $\mathrm{Hilb}([\SC^2/G_\Delta])_Y$ the classifying map is flat. Hence, we can pull it back to obtain a decomposition
\[\mathrm{Hilb}(\SC^2/G_\Delta) = \displaystyle\bigsqcup_{Y \in{\mathcal Z}_\Delta} \mathrm{Hilb}(\SC^2/G_\Delta)_Y,\]
where we have, set-theoretically, 
\[\mathrm{Hilb}(\SC^2/G_\Delta)_Y(\SC) = \{ I \in \mathrm{Hilb}(\SC^2/G_\Delta)(\SC) : i^\ast(I) \in \mathrm{Hilb}([\SC^2/G_\Delta])_Y(\SC)\}.\]
The whole construction is compatible with the $T=\SC^*$-action, so we can also decompose the $T$-fixed locus representing
homogeneous ideals as
\[\mathrm{Hilb}(\SC^2/G_\Delta)^T = \displaystyle\bigsqcup_{Y \in{\mathcal Z}_\Delta} W_Y,
\]
where 
\[ W_Y(\SC)=\{ I \in \mathrm{Hilb}(\SC^2/G_\Delta)(\SC) : i^\ast(I) \in Z_Y(\SC)\}.\]
Notice also that by construction, the maps $i_\rho$ above are given by the pullback map $i^\ast$. In other words, when restricted to 
the strata $\mathrm{Hilb}(\SC^2/G_\Delta)_Y\supset W_Y$, the map $i^\ast$ becomes a morphism of schemes. 
On the other hand, it is also clear that, letting $\widetilde W_Y$ denote the image of $i^\ast$ inside $Z_Y$, 
$p_\ast$ and $i^\ast$ are two-sided inverses and so $W_Y\cong \widetilde W_Y\subset Z_Y$.

To conclude, we need to show that for a fixed $I \in \mathrm{Hilb}(\SC^2/G_\Delta)$, the Young wall~$Y$ associated 
with the pullback ideal $J=i^{\ast}(I)$ necessarily lies in $\mathcal{Z}_\Delta'$. It is clearly enough to assume that~$I$, and so~$J$,
are homogeneous. The ideal $J$, being a pullback, is of course generated by invariant polynomials. 
On the other hand, 
as we have seen during the proof of Theorem \ref{thm:dnorbcells}, a homogeneous ideal is generated by polynomials lying 
in salient blocks. While not all salient blocks necessarily contain new a generator, it is clear that 
salient blocks labelled~$j$ with $2\leq j \leq n-2$ must contain a generator. Since such a generator is not allowed in an invariant ideal, $Y$ must satisfy condition (A1). 

To discuss the other condition (A2), let us return to the inductive proof of Theorem~\ref{thm:dnorbcells}. Corollary \ref{cor:nongen} (2) implies that if there is no generator on a given antidiagonal of~$Y$, then the salient blocks of label $c \in \{1,n,n-1\}$ on this antidiagonal are supported. For an invariant ideal, this condition is required to be satisfied for all salient blocks of label $c \in \{1,n,n-1\}$. This concludes the proof. 
\end{proof}

\subsection{Possibly and almost invariant ideals} 
\label{subsec:possalmosid}

We wish to study the Euler characteristics of the strata of the coarse Hilbert scheme obtained in
Theorem~\ref{thm:assdiag}, using the inductive approach used in~\ref{sec:proof:orbicells}
in our study of the orbifold Hilbert scheme. However, as things stand, the setup does not lend itself well to 
induction based on the removal of the bottom row from a Young wall, since the set of Young walls ${\mathcal Z}_\Delta'$ 
is clearly not closed under the removal of the bottom row. To remedy this, we introduce two auxiliary constructions. From now on, except when noted, every ideal is supposed to be $T$-invariant.

First, call an ideal $I\lhd \SC[x,y]$ {\em possibly invariant}, if it is generated by 
\begin{itemize}
\item polynomials which transform under $G_\Delta$ according to $\rho_0$, $\rho_1$, $\rho_{n-1}$ or $\rho_n$,
\item and at most one $\tau$-invariant pair of polynomials of the same degree, forming a two-dimensional
representation of $G_\Delta$, and not lying in the image of the operator $L_1$.
\end{itemize}
Equivalently, the second condition says that the 
corresponding two-dimensional 
subspace lies in the large stratum of the appropriate projective space parameterizing such subspaces.

Second, a possibly invariant ideal $I$ will be called \emph{almost invariant}, if it is generated by
\begin{itemize}
\item $G_\Delta$-invariant elements,
\item and at most a single polynomial, or pair of polynomials of the same degree, forming a one- or two-dimensional
representation of $G_\Delta$, and not lying in the image of the operator~$L_1$.
\end{itemize}

Let us denote by ${\mathcal Z}_\Delta^P\subset {\mathcal Z}_\Delta$ the subset of all Young walls which are characterized by the following condition:
\begin{enumerate}
\item[{\rm (A1')}] all salient blocks of $Y$ are labelled $0$, $1$, $n-1$ or $n$, except possibly for a single
salient block in the bottom row of a different label.
\end{enumerate}
Moreover, let ${\mathcal Z}_\Delta^A\subset {\mathcal Z}_\Delta^P$ be the set of Young walls which satisfy condition (A1') as well as the following second condition:
\begin{enumerate}
\item[{\rm (A2')}] every salient block of $Y$ labelled $c \in\{1,n-1,n\}$ is supported, except possibly the ones in the bottom row.
\end{enumerate}

The following statement follows immediately from our setup. 
\begin{proposition} \label{prop:almoststrat}
\hspace{2em}
\begin{enumerate}
\item Possibly invariant ideals correspond to points in the strata $Z_Y\subset \mathrm{Hilb}([\SC^2/G_\Delta])$ where $Y\in{\mathcal Z}_\Delta^P$.
\item Points parameterizing almost invariant ideals lie in constructible subsets $\widetilde W_Y\subset Z_Y$ of strata $Z_Y\subset \mathrm{Hilb}([\SC^2/G_\Delta])$ for $Y\in{\mathcal Z}_\Delta^A$. 
\end{enumerate}
\end{proposition}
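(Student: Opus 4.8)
The plan is to read off both statements from the dictionary --- established in the proofs of Theorems~\ref{thm:dnorbcells}, \ref{thm:Zstrata} and~\ref{thm:assdiag} --- between the generators of a homogeneous $G_\Delta$-invariant ideal $I$ and the salient blocks of its Young wall $Y=Y_I$. Three facts from that discussion will do all the work: (i) every homogeneous $G_\Delta$-invariant ideal lies in a unique stratum $Z_Y$, and (by the Remark following the proof of Theorem~\ref{thm:Zstrata}, or the corresponding passage in the proof of Theorem~\ref{thm:assdiag}) it may be generated by elements sitting in cells of salient blocks of $Y$; (ii) a salient block of label $j$ with $2\leq j\leq n-2$ necessarily carries a generator, and conversely such a generator forms a two-dimensional $G_\Delta$-representation and lies in ${\rm im}\,L_1$ unless it sits in a large cell $V_{0,m,j}$, i.e.\ at a bottom-row position $(0,m)$ of the transformed pattern; (iii) Corollary~\ref{cor:nongen}(2), which converts the absence of a new generator of label $c\in\{1,n-1,n\}$ on a given antidiagonal into supportedness of the salient blocks of that label there.

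For part (1): if $I$ is possibly invariant, then at most one of its generators has label outside $\{0,1,n-1,n\}$; by (ii) that generator, if present, is two-dimensional, lies outside ${\rm im}\,L_1$, hence occupies a bottom-row cell $V_{0,m,j}$, and (again by (ii)) any salient block of $Y$ of label in $[2,n-2]$ must coincide with that single bottom-row block. Thus $Y$ satisfies (A1'), so $Y\in{\mathcal Z}_\Delta^P$. Conversely, if $Y\in{\mathcal Z}_\Delta^P$ and $I\in Z_Y$, then by (i) the generators of $I$ lie in salient cells, all of whose labels are in $\{0,1,n-1,n\}$ save at most one bottom-row block; a generator in a bottom-row cell automatically lies outside ${\rm im}\,L_1$, so $I$ is possibly invariant. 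This yields the claimed correspondence.

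For part (2): an almost invariant ideal $I$ is in particular possibly invariant, so $Y=Y_I\in{\mathcal Z}_\Delta^P$ by part (1), and it remains to verify (A2'). Since the only non-invariant generator of $I$ sits in a bottom-row cell, on every antidiagonal apart from the one carrying that generator there is no new generator of any label $c\in\{1,n-1,n\}$; applying Corollary~\ref{cor:nongen}(2) there (and reducing the salient half-blocks of these labels lying above the bottom row to this situation via the truncation maps $T$ of~\ref{sec:proof:orbicells}) shows every such salient block is supported, which is exactly (A2'). Hence $Y\in{\mathcal Z}_\Delta^A$. Finally, inside $Z_Y$ the locus $\widetilde W_Y$ of almost invariant ideals is the set of $I$ for which all but at most one of the canonically chosen generators of the universal family ${\mathcal U}_Y$ lie in label-$0$ cells and the remaining one avoids ${\rm im}\,L_1$; these are rank conditions on the sections defining the generators, so $\widetilde W_Y$ is a constructible subset of $Z_Y$.

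The main obstacle is the bookkeeping hidden in facts (ii)--(iii): one must check that the single exceptional generator of a possibly or almost invariant ideal does not, through the multiplication operators $L_2,L_{n-1},L_n$, force a \emph{further} generator of label in $[2,n-2]$ (for (A1')) or in $\{1,n-1,n\}$ (for (A2')) on an antidiagonal above the bottom row, and that the invariant generators alone never do so either. Both are controlled by the propagation rules of Proposition~\ref{prop:1} --- in particular that for $v$ in a bottom-row cell the subspace $(v)$ meets the next antidiagonal only in the two adjacent cells, one of which is always an ${\rm im}\,L_1$-image --- but the edge cases near the very bottom of $Y$, where the exceptional cell and the cells it propagates into can coincide with label-$0$ salient blocks, will have to be examined directly.
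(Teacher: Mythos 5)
Your argument is correct and is essentially the paper's own (implicit) reasoning: the proposition is stated without proof as following ``immediately from our setup'', and that setup is exactly the dictionary you invoke --- homogeneous ideals in $Z_Y$ are generated in salient cells, a salient block of label $2\le j\le n-2$ must carry a generator, which avoids ${\rm im}\,L_1$ precisely when it sits in a bottom-row cell (giving (A1')), while Corollary~\ref{cor:nongen}(2), applied inductively via the truncation maps $T$, turns the absence of label-$c$ generators above the bottom row into supportedness (giving (A2')). Note only that the converse inclusion you prove in part (1) is more than the statement asks for --- like part (2) it is meant as a containment --- so the delicate point you would otherwise need (ruling out two independent exceptional generators in the single bottom-row cell) does not have to be settled.
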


By definition, we have ${\mathcal Z}_\Delta'\subset {\mathcal Z}_\Delta^A$. For 
$Y\in {\mathcal Z}_\Delta'$, the constructible subset provided by Proposition \ref{prop:almoststrat}(2) is exactly $\widetilde W_Y=i_{\ast} (W_Y)$. Therefore, in the sequel we will denote these strata as $\widetilde W_Y$ for arbitrary $Y \in {\mathcal Z}_\Delta^A$ as well. %definition of $\widetilde W_Y$ coincides with the previous definition.
Further, if for $Y\in {\mathcal Z}_\Delta^A$, also $\overline{Y}\in {\mathcal Z}_\Delta^A$, where $\overline{Y}$
is the Young wall obtained by removing from $Y$ the bottom row as in~\ref{sec:proof:orbicells}, 
then the map $T\colon Z_Y\to Z_{\overline{Y}}$ of Lemma~\ref{lemma:inductive_morphism} takes $\widetilde W_Y$ to 
$\widetilde W_{\overline Y}$.

Furthermore, let $\mathcal{Z}^{0,A}_{\Delta}\subset \mathcal{Z}^A_{\Delta}$ be the subset defined by the following conditions:
\begin{enumerate}
%\item[{\rm (R1a')}] The salient blocks of label $1$, $n-1$ or $n$ not in the bottom row are all directly supported. 
%\item[{\rm (R1b')}] There is no (missing) half non-salient block of label $0$ on the border of $Y$, connecting to the block 
%below with an A3 type border.
\item[{\rm (R1')}] the salient blocks of label $n-1$ or $n$ not in the bottom row are all directly supported; 
\item[{\rm (R2')}] there is no salient block with label in $\{1, \dots, n-2\}$ except possibly for the bottom row; 
\item[{\rm (R3')}] any consecutive series of rows, except the one starting in the bottom row, having equal length and ending in half $n-1$/$n$-blocks is longer than $n-2$, or $n-1$ if the length of the rows is $n-1$ and the last one starts with a block labelled $1$.
\end{enumerate}

\begin{lemma}
There is a combinatorial reduction map ${\rm red}\colon \mathcal{Z}_\Delta^P\to \mathcal{Z}_\Delta^{0,A}$ associating to each $Y' \in \mathcal{Z}^P_{\Delta}$ a unique $Y\in \mathcal{Z}^{0,A}_{\Delta}$. 
\end{lemma}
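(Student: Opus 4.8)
The plan is to imitate closely the proof of Lemma~\ref{lem:1red}, relaxing every condition that occurs there to its ``primed'' version and keeping careful track of the single salient block of exceptional label that a Young wall in $\mathcal{Z}_\Delta^P$ is allowed to carry in its bottom row. Thus, starting from $Y'\in\mathcal{Z}_\Delta^P$ (which satisfies (A1') by definition), I will produce ${\rm red}(Y')$ satisfying (A1'), (A2') and (R1')--(R3'), of the same $0$-weight, by the same two stages --- first enforcing (R1')--(R2'), then (R3') --- as in Lemma~\ref{lem:1red}.

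\emph{Stage one: enforcing (R1') and (R2').} Suppose $Y'$ violates (R1') or (R2') at a salient block of label $1$, $n-1$ or $n$ (respectively of label in $\{1,\dots,n-2\}$) lying in a row \emph{other than the bottom row}. Pick the lowest such row and extend it by adding the maximal number of extra blocks compatible with the Young wall rules without changing ${\rm wt}_0$, exactly as in the proof of Lemma~\ref{lem:1red}: the extension stops just before the first full block below which a full block is missing, or just before the next $0$-block, whichever comes first. This can only turn blocks in the rows \emph{above} into salient blocks, never blocks in the bottom row, so the exception permitted by (A1')/(A2') is preserved; if a new non-$0$ salient block appears above, repeat. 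Since $Y'$ is finite the process terminates and produces a Young wall satisfying (A1'), (A2'), (R1') and (R2') with the same $0$-weight. (These moves may increase the number of rows at which (R3') fails.)

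\emph{Stage two: enforcing (R3').} Now suppose the current wall satisfies (A1'), (A2'), (R1'), (R2') but violates (R3'): there is a consecutive series of rows of equal length ending in half $(n-1)$/$n$-blocks whose length is too small in the sense of (R3'), and this series \emph{does not begin in the bottom row}. Following Lemma~\ref{lem:1red}, delete the terminal half-block of the lowest row of this series, and then cascade the forced removals --- of the block immediately above the resulting salient block-pair, of the full block above that, and of all blocks to their right --- until a full block sits above the last removed block. As there, this terminates after finitely many steps, strictly decreases the number of (R3')-violating loci, creates no new (R1')- or (R2')-violation, and leaves ${\rm wt}_0$ unchanged because the series is short enough that no $0$-block is removed. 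Series that \emph{do} begin in the bottom row are permitted to be short by (R3'), so they are left alone. Iterating until no (R3')-violation outside the bottom row remains yields ${\rm red}(Y')\in\mathcal{Z}_\Delta^{0,A}$ with ${\rm wt}_0({\rm red}(Y'))={\rm wt}_0(Y')$.

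Finally, ${\rm red}$ is a \emph{well-defined} map whose output is the \emph{unique} element of $\mathcal{Z}_\Delta^{0,A}$ so produced: at each step one acts on the lowest row at which the first currently-unsatisfied condition among (R1'), (R2'), (R3') fails, and the moves themselves involve no choice (``extend maximally'' in stage one, ``remove the forced blocks'' in stage two). The left-triangle/right-triangle ambiguity that obstructs uniqueness in the \emph{inverse} direction in Lemma~\ref{lem:yprimeunique}(2) does not arise here, since reduction only shortens rows. I expect the main technical point to be precisely the interaction of the two stages --- verifying, as in Lemma~\ref{lem:1red}, that the (R3')-removals of stage two never reintroduce an (R1')- or (R2')-violation --- together with the bookkeeping that no move in a row above the bottom row ever consumes or creates the single exceptional bottom-row salient block, so that membership in $\mathcal{Z}_\Delta^P$ (respectively $\mathcal{Z}_\Delta^{0,A}$) is preserved throughout.
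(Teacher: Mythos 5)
Your proposal is correct and takes essentially the same route as the paper, whose proof simply observes that the argument of Lemma~\ref{lem:1red} goes through unchanged in the primed setting, with the bottom-row exceptions of (R1')--(R3') leaving the single exceptional salient block untouched. The only point better phrased as the paper does: condition (A2') for the output is not really ``preserved'' (an element of $\mathcal{Z}_\Delta^P$ need not satisfy it to begin with) but follows automatically at the end, since once (R1') and (R2') hold every salient block of label $1$, $n-1$ or $n$ away from the bottom row is directly supported, hence supported.
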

\begin{proof}
The proof of Lemma~\ref{lem:1red} above goes through unchanged in this setting and the reduction process gives a well-defined element in $\mathcal{Z}_\Delta^P$. After the reduction there is no indirectly supported salient block. Hence, each salient block is directly supported, and in particular supported as required by condition (A2'). Therefore, the output of the reduction is an element in $\mathcal{Z}^A_{\Delta}$ with the properties (R1')-(R3').
\end{proof}
Once again, for $Y\in \mathcal{Z}^{0,A}_{\Delta}$ the Young walls $\mathrm{Rel}(Y)={\rm red}^{-1}(Y)$ will be called the 
relatives of $Y$. The following statement is clear from the definitions.
\begin{lemma} 
\label{lem:truncclosed}
The sets $\mathcal{Z}^{P}_{\Delta}$ and $\mathcal{Z}^{0,A}_{\Delta}$ are closed under the operation of bottom row removal.
\end{lemma}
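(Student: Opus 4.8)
The plan is to deduce the statement directly from the definitions, exactly as the sentence preceding the lemma indicates. The point is that conditions (A1')--(A2') defining $\mathcal{Z}^P_\Delta$ and $\mathcal{Z}^{0,A}_\Delta$, as well as (R1')--(R3'), only ever constrain salient blocks, support blocks, the supporting relation, and maximal series of consecutive rows of equal length that do \emph{not} involve the bottom row; so it suffices to check that all of this combinatorial data is transported faithfully from $Y$ to its truncation $\overline Y$.

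First I would record the following elementary observation about the truncation operation of~\ref{sec:proof:orbicells}. If $R_0$ denotes the bottom row of $Y$ and $R_1, R_2, \dots$ the rows above it, then $\overline Y$ is obtained by deleting $R_0$, shifting $R_1, R_2, \dots$ down by one step, and relabelling the half blocks by the involution $\kappa$. Now, whether a block in a given row is \emph{salient} in the sense of Definition~\ref{def:globsalient} depends only on that row together with the row immediately below it (the block directly below must be present, and the block must be the leftmost one in its row with this property); likewise, being a \emph{support block}, being \emph{supported} or \emph{directly supported}, and belonging to a consecutive series of rows of equal length ending in half $n-1$/$n$-blocks, are all determined by the configuration of a bounded number of consecutive rows and antidiagonals — this is precisely the content of the local analysis in~\ref{subsec:support}. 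Consequently the row correspondence $R_k \leftrightarrow R_{k-1}$ induces a bijection between the salient blocks of $Y$ lying above $R_0$ and the salient blocks of $\overline Y$ lying above its own bottom row $R_1$, and under this bijection the supporting relation, the property of being (directly) supported, and the lengths of the maximal equal-length series of rows ending in half $n-1$/$n$-blocks (other than the one starting at the bottom row) are all preserved.

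With this in hand the verification is immediate, provided the label conditions are read up to the relabelling $\kappa$ (as the paper's convention of ``suppressing $\kappa$'' dictates). The involution $\kappa$ fixes the set $\{2,\dots,n-2\}$ of full-block labels, preserves the set $\{0,1,n-1,n\}$ of half-block labels, and preserves the pair $\{n-1,n\}$. Hence, if $Y$ satisfies (A1') — all salient blocks labelled in $\{0,1,n-1,n\}$ except possibly one in $R_0$ — then all salient blocks of $\overline Y$ above $R_1$ are labelled in $\kappa(\{0,1,n-1,n\})=\{0,1,n-1,n\}$, so $\overline Y$ satisfies (A1'); this gives that $\mathcal{Z}^P_\Delta$ is closed under bottom row removal. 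Assuming in addition that $Y$ satisfies (A2') and (R1')--(R3'), the same transport — using that $\kappa$ respects the pair $\{n-1,n\}$ and the half-block/full-block dichotomy — shows that every salient block of $\overline Y$ above $R_1$ with label in $\{1,n-1,n\}$ is supported, that those of label $n-1$ or $n$ are directly supported, that there is no salient block above $R_1$ with label in $\{1,\dots,n-2\}$, and that no offending short series of equal-length rows ending in half $n-1$/$n$-blocks survives, in each case the bottom-row clause of the corresponding condition absorbing whatever happens in the new bottom row $R_1$. Therefore $\overline Y \in \mathcal{Z}^{0,A}_\Delta$, and $\mathcal{Z}^{0,A}_\Delta$ is closed under bottom row removal as well.

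The only step that needs genuine (if minor) care — the main obstacle here — is the bookkeeping of the half-block labels under $\kappa$: one has to make sure that every label condition occurring in (A1')--(R3') is phrased in terms of $\kappa$-stable features of a block (being a full block, or being a half block of the pair $\{0,1\}$, or of the pair $\{n-1,n\}$), so that the relabelling accompanying the downward shift breaks none of them. This is exactly how the conditions are set up, and once it is noted, everything else is a routine unwinding of Definition~\ref{def:globsalient} and the definitions of~\ref{subsec:support}.
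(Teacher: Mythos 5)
The paper itself offers no argument here: it simply states that the lemma ``is clear from the definitions'', so the only question is whether your unwinding of the definitions is sound. Your general strategy (transport all the combinatorial data row by row to the truncation) is the intended one, but your handling of the labels contains a concrete error. You assert that ``every label condition occurring in (A1')--(R3') is phrased in terms of $\kappa$-stable features \dots This is exactly how the conditions are set up.'' It is not: (A2') singles out the set $\{1,n-1,n\}$, (R2') the set $\{1,\dots,n-2\}$, and (R3') refers to a row starting with a block labelled $1$, and none of these is $\kappa$-stable, since $\kappa$ exchanges $0$ and $1$. This matters: under truncation the pattern labels of half-blocks genuinely flip by $\kappa$, so a salient block of $Y$ labelled $0$ in a row above the second one becomes a salient block of $\overline{Y}$ labelled $1$ above its bottom row. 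Condition (A2') for $Y$ says nothing about label-$0$ salient blocks, so your claim that ``every salient block of $\overline Y$ above $R_1$ with label in $\{1,n-1,n\}$ is supported'' does not follow by transport; worse, with the honest pattern labels (R2') for $\overline{Y}$ actually fails for typical $Y\in\mathcal{Z}^{0,A}_\Delta$ (any distinguished $0$-generated wall with a label-$0$ salient block two or more rows up gives a counterexample). The lemma is only correct, and only meaningful for the induction, under the paper's convention of suppressing $\kappa$, i.e.\ reading the conditions for $\overline{Y}$ verbatim with the labels inherited from $Y$; you invoke that convention in one sentence but then argue as if the conditions were $\kappa$-stable, so the two readings are conflated and the verification of (A2'), (R2') and the label clause of (R3') as written is not valid.

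A second, smaller gap: you justify the preservation of supportedness by saying it is ``determined by the configuration of a bounded number of consecutive rows''. That is not what the definition in~\ref{subsec:support} says: being supported is a counting condition along a whole antidiagonal (the number of support blocks below the given salient block versus the number of salient blocks of that label from the top down to it), and the deleted bottom row can itself contain a support block on the relevant antidiagonal. So the assertion that supportedness and direct supportedness are transported is exactly the one place where something could in principle go wrong and it deserves an argument (for $\mathcal{Z}^{0,A}_\Delta$ one should say why, given (R1')--(R3'), the loss of a bottom-row support block cannot destroy the counting condition for the pairs that remain above the new bottom row). For $\mathcal{Z}^{P}_\Delta$ there is no supportedness requirement and your argument is fine; for $\mathcal{Z}^{0,A}_\Delta$ your locality claim hides rather than settles this point, which neither the paper (which gives no proof) nor your proposal addresses.
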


The sets of Young walls introduced so far can be placed in the following commutative diagram:
\[
\begin{array}{ccccc}
\mathcal{Z}_{\Delta}^0  & \hra  & \mathcal{Z}_{\Delta}'  & &  \\[0.2cm]
\hda& &\hda  & &\\[0.2cm]
\mathcal{Z}_{\Delta}^{0,A} & \hra &\mathcal{Z}_{\Delta}^A &\hra &\mathcal{Z}_{\Delta}^P.
\end{array}
\]
The relatives of a Young wall $Y\in \mathcal{Z}_{\Delta}^0$ are the same in $\mathcal{Z}_{\Delta}'$ and in $\mathcal{Z}_{\Delta}^A$, but there may be some new relatives in $\mathcal{Z}_{\Delta}^P$. This will cause no problem; see the discussion above Corollary \ref{cor:relsum1}. Even though we are interested in strata of the Young walls in the upper row, it is easier to work in the lower row because of Lemma \ref{lem:truncclosed}.

The notion of a closing datum generalizes word by word for ideals in the stratum of $Y \in {\mathcal Z}_\Delta^A$. For ideals in the stratum of Young walls in $\mathcal{Z}_\Delta^P$ we have to relax it, since not all salient blocks of label $1$, $n-1$ or $n$ are supported. A \emph{partial closing datum} for a Young wall in $\mathcal{Z}_\Delta^P$ is given by associating to some of its salient blocks of label $c \in\{0, 1,n-1,n\}$ a support block for label $c$ in the previous antidiagonal and below the salient block, in such a way that to each support block for label $c$ at most one salient block of label $c$ is associated. We say that those salient blocks to which there is an associated support block are \emph{closed}. The set of all partial closing data for $Y \in \mathcal{Z}_\Delta^P$ will be denoted as $\mathrm{pcd}(Y)$. Closing data are special partial closing data for Young walls in ${\mathcal Z}_\Delta^A$, in which all salient blocks of label $1$, $n-1$ or $n$ are closed, except possibly one on the bottom row. 

Fix a Young wall $Y \in \mathcal{Z}_\Delta^P$ such that in the bottom row the salient block is a support block for label $c \in\{0,1,n-1,n\}$. Let $\overline{Y}$ be the truncation of $Y$. By Lemma~\ref{lem:truncclosed}, $\overline{Y} \in \mathcal{Z}_\Delta^P$. If $\overline{d}$ is a partial closing datum associated to some $\overline{I} \in Z_{\overline{Y}}$, then using the decomposition of Theorem \ref{thm:zinfty} we can extend it to a partial closing datum for each ideal in the fiber over $\overline{I}$; in particular, if $I \in Z_Y(k_1,l_1,k_2,l_2)$ for some pairs $(k_1,l_1)$ and $(k_2,l_2)$, and either of these is a salient block of label $c$, then we associate to them the support block in the bottom row. By induction, we obtain
\begin{corollary}
Given $Y \in \mathcal{Z}_\Delta^P$, every $I \in Z_{Y}$ defines a unique partial closing datum $d(I)\in\mathrm{pcd}(Y)$.
\end{corollary}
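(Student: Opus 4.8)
The plan is to prove the statement by induction on the number of rows of $Y$, using the inductive structure already set up in~\ref{sec:proof:orbicells} and, crucially, the decomposition of $Z_Y$ into the pieces $Z_Y(k_1,l_1,k_2,l_2)$ provided by Theorem~\ref{thm:zinfty}. The base case is the empty Young wall, where the unique ideal is $\SC[x,y]$ itself and the only partial closing datum is the empty function, so there is nothing to check. For the inductive step, let $Y \in \mathcal{Z}_\Delta^P$ have $l > 0$ rows, let $\overline Y$ be its truncation, and let $(0,m)$ be the position of the salient block in the bottom row of $Y$; by Lemma~\ref{lem:truncclosed}, $\overline Y \in \mathcal{Z}_\Delta^P$, so by the induction hypothesis every $\overline I \in Z_{\overline Y}$ defines a unique partial closing datum $\overline d(\overline I) \in \mathrm{pcd}(\overline Y)$.

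First I would treat the case where the bottom salient block of $Y$ is \emph{not} a support block — that is, it lies strictly to the left of the boundary of the rows above, so that it contributes a genuinely new generator that cannot support anything on the next antidiagonal. In this case the natural map $\mathrm{pcd}(\overline Y) \hookrightarrow \mathrm{pcd}(Y)$ (extend by not closing any new salient block) is the relevant one, and I would set $d(I) = \overline d(T(I))$, where $T\colon Z_Y \to Z_{\overline Y}$ is the morphism of Lemma~\ref{lemma:inductive_morphism}. Uniqueness is inherited directly from the inductive hypothesis since no choices are introduced. The content is in the remaining case, where the bottom salient block at $(0,m)$ is a support block for some label $c \in \{0,1,n-1,n\}$. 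Here I would invoke Theorem~\ref{thm:zinfty}: for $I \in Z_Y$ lying in the stratum $Z_Y(k_1,l_1,k_2,l_2)$ with $(k_1,l_1,k_2,l_2) \in I(S,S_1,S_2)$, the indices $(k_i,l_i)$ that are \emph{not} equal to $\emptyset$ and not equal to $(1,m)$ record precisely the salient blocks of label $c_i$ (or $c$) on the $(m+1)$-st antidiagonal whose intersection with $I$ becomes nontrivial \emph{because of} the new generator in the cell of $(0,m)$. I would then define $d(I)$ by taking $\overline d(T(I))$ and additionally associating the support block at $(0,m)$ to that salient block, reading off which one from the stratum label $(k_1,l_1,k_2,l_2)$; the supporting rules spelled out at the end of~\ref{subsec:support} guarantee this is a legal (partial) closing datum, and they match exactly the combinatorics of the partition of $I(S,S_1,S_2)$ into $I(S,S_1,S_2)_0$, $I(S,S_1,S_2)_1$, $I(S,S_1,S_2)_{-1}$.

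The main obstacle I anticipate is the bookkeeping needed to show this assignment is \emph{well-defined and unique} — i.e., that the data $(\overline d(T(I)), (k_1,l_1,k_2,l_2))$ determines $d(I)$ with no ambiguity, and that no salient block ends up with two associated support blocks. The point is that Theorem~\ref{thm:zinfty} exhibits $Z_Y$ as a disjoint union over $I(S,S_1,S_2)$, so each $I$ lies in exactly one stratum; combined with the fact that $T$ takes each $Z_Y(k_1,l_1,k_2,l_2)$ into $Z_{\overline Y}$ compatibly with the decomposition (via Corollary~\ref{cor:xinfty} and the fiber-product description of Proposition~\ref{prop:fiberproduct}), the pair $(\overline d(T(I)), (k_1,l_1,k_2,l_2))$ is a well-defined function of $I$. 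Uniqueness then reduces to checking that the bottom support block at $(0,m)$ was not already used by $\overline d(T(I))$ — which is automatic, since $\overline d$ is a partial closing datum for $\overline Y$ and the block at $(0,m)$ is not a block of $\overline Y$ at all — and that the salient block it now gets associated to was not already closed by $\overline d(T(I))$, which follows because that salient block lives on the $(m+1)$-st antidiagonal and, in $\overline Y$, a salient block on that antidiagonal can only be closed by a support block strictly below it, none of which is removed by truncation; the fact that in $Z_Y(k_1,l_1,k_2,l_2)$ this block's intersection with $I$ is strictly larger than its intersection with $L_1 \cdot (T(I))$ is precisely what the notation $\dashv$ encodes in Theorem~\ref{thm:zinfty}, so it genuinely requires the new support. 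Once this compatibility is verified, the corollary follows by induction on $l$.
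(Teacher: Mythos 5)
Your proposal is correct and follows essentially the same route as the paper: the paper also proceeds by induction on the rows via the truncation $\overline{Y}$ (using Lemma~\ref{lem:truncclosed}) and extends the partial closing datum of $T(I)$ by reading off, from the stratum $Z_Y(k_1,l_1,k_2,l_2)$ of Theorem~\ref{thm:zinfty} containing $I$, which salient blocks on the next antidiagonal get the bottom support block associated to them. Your additional well-definedness and uniqueness checks are just a more explicit spelling-out of what the paper leaves implicit in its one-paragraph argument.
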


For $d \in \mathrm{pcd}(Y)$, let $Z_{Y}(d) \subseteq Z_{Y}$ be the subset of those ideals which have partial closing datum~$d$. Then
\[  Z_{Y}=\sqcup_{d \in \mathrm{pcd}(Y)} Z_{Y}(d) \]
is a locally closed decomposition of the stratum $Z_{Y}$. Similarly, for an element $Y \in \mathcal{Z}_\Delta^A$ let $\widetilde{W}_{Y}(d) \subseteq \widetilde{W}_{Y}$ be the subset of those ideals which have closing datum $d$. Then
\[  \widetilde{W}_{Y}=\sqcup_{d \in \mathrm{cd}(Y)} \widetilde{W}_{Y}(d) \]
is a locally closed decomposition.

\subsection{Euler characteristics of strata and the coarse generating series}

In this section, we derive information about the topological Euler characteristics of the strata of the coarse Hilbert scheme 
constructed above. 

Fix a Young wall $Y \in \mathcal{Z}_\Delta^P$, and a partial closing datum $d \in \mathrm{pcd}(Y)$. We say that a support block for label $c$ is \emph{of type $e \in \{-1,0,1\}$} if, when its row is considered as the bottom row, the associated half blocks according to $d$ are in the set $I(S_1,S_2,S)_e$ in the notation of Theorem \ref{thm:zinfty}. 

\begin{lemma} 
\label{lem:chiclosefiber}
Assume that $(Y,d)$ are such such that in the bottom row of $Y$, the salient block $b$ of label $j \in \{2,n-2\}$ is a support block for label $c \in\{0,1,n-1,n\}$. Let $(\overline{Y},\overline{d})$ be the truncation of $(Y, d)$. 
If the salient block $b$ is of type $e\in \{-1,0,1\}$, then
\[\chi(Z_Y(d))=e\cdot \chi(Z_{\overline{Y}}(\overline{d})).\]
\end{lemma}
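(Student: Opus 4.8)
The idea is to combine the fibre-product description of $Z_Y$ from Proposition~\ref{prop:fiberproduct} with the stratification of the fibres of $X_S^{S_1,S_2}\to Y_{\overline S}^{S_1,S_2}$ from Corollary~\ref{cor:xinfty}, tracking the effect of the partial closing datum. First I would set up notation: let $(0,m)$ be the position of the salient block $b$ in the bottom row, with label $j\in\{2,n-2\}$, lying immediately below a divided block with labels $(c_1,c_2)$; let $S$, $S_1$, $S_2$ be the index sets on the $m$-th and $(m+1)$-st antidiagonals as in Proposition~\ref{prop:incvar3}. We are in Case~\ref{sec:proof:orbicells}.3, so Proposition~\ref{prop:fiberproduct} gives a Cartesian square with $Z_Y\to Z_{\overline Y}$ the pullback of $\omega\times\mathrm{Id}\colon X_S^{S_1,S_2}\to Y_{\overline S}^{S_1,S_2}$. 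The partial closing datum $d$ records, for the salient block $b$ viewed as a support block for label $c$, which of the salient half-blocks of label $c$ on the $(m+1)$-st diagonal $b$ supports, i.e.\ which pair $(k_c,l_c)$ (or $\emptyset$) the ``new'' intersection of the generated ideal lands in. In the notation of Theorem~\ref{thm:zinfty}, requiring $b$ to support exactly the blocks prescribed by $d$ cuts out precisely one of the strata $Z_Y(k_1,l_1,k_2,l_2)$, and $b$ being of type $e$ means $(k_1,l_1,k_2,l_2)\in I(S,S_1,S_2)_e$.

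\textbf{Main steps.} Step one: express $Z_Y(d)$ as the preimage under $T\colon Z_Y\to Z_{\overline Y}$ of $Z_{\overline Y}(\overline d)$, intersected with the locus where the new generator at $b$ has image profile $(k_1,l_1,k_2,l_2)$ determined by $d$. Concretely, using the Cartesian square, $Z_Y(d)$ is the fibre product of $Z_{\overline Y}(\overline d)\to Y_{\overline S}^{S_1,S_2}$ with $X_S^{S_1,S_2}(k_1,l_1,k_2,l_2)\to Y_{\overline S}^{S_1,S_2}$, where $X_S^{S_1,S_2}(k_1,l_1,k_2,l_2)$ is the locally closed piece from Corollary~\ref{cor:xinfty}; one must check that the image of $Z_{\overline Y}(\overline d)$ under its map to $Y_{\overline S}^{S_1,S_2}$ lands in the appropriate $Y_{\overline S}^{S_1,S_2}(S_1',S_2')$, which follows because $\overline d$ fixes the partial profile on the $(m+1)$-st diagonal coming from the rows above, hence fixes $(S_1(\overline U),S_2(\overline U))$. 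Step two: invoke Corollary~\ref{cor:xinfty}: over each $Y_{\overline S}^{S_1,S_2}(S_1',S_2')$ the map $X_S^{S_1,S_2}(k_1,l_1,k_2,l_2)\to Y_{\overline S}^{S_1,S_2}(S_1',S_2')$ has all fibres isomorphic, of Euler characteristic $e$ when $(k_1,l_1,k_2,l_2)\in I(S,S_1',S_2')_e$. Step three: pull this fibration back along $Z_{\overline Y}(\overline d)\to Y_{\overline S}^{S_1,S_2}(S_1',S_2')$; the pulled-back map $Z_Y(d)\to Z_{\overline Y}(\overline d)$ then has the same fibres, hence Euler characteristic $e$ in each fibre. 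Step four: apply the multiplicativity of Euler characteristic for a map all of whose fibres have Euler characteristic $e$ over a fixed base (a standard consequence of additivity over the stratification by $(S_1',S_2')$ together with the fact that $\chi$ of a fibration is the product when the fibre Euler characteristic is constant), to conclude $\chi(Z_Y(d))=e\cdot\chi(Z_{\overline Y}(\overline d))$.

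\textbf{Expected obstacle.} The main subtlety is the bookkeeping matching the global datum $d$ with the local index data $(k_1,l_1,k_2,l_2)\in I(S,S_1',S_2')_e$: one needs that as $\overline U$ ranges over the image of $Z_{\overline Y}(\overline d)$, the prescription of $d$ corresponds to a single value of $(k_1,l_1,k_2,l_2)$ in each $I(S,S_1',S_2')$, and that its type $e$ is independent of the stratum $(S_1',S_2')$ — this is exactly what the definition of ``support block of type $e$'' encodes via $I(S_1,S_2,S)_e$, so it should reduce to an inspection of the definitions in~\ref{subsec:loci:strata}, but it requires care in the borderline cases where a half-block coincides with $(1,m)$ (the distinction between $\overline S$ maximal and non-maximal). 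A secondary, purely technical point is justifying the Euler-characteristic multiplicativity for the non-proper locally trivial-on-strata fibration $Z_Y(d)\to Z_{\overline Y}(\overline d)$; this follows from additivity of $\chi$ over the finite locally closed stratification of the base by the $(S_1',S_2')$-loci, on each of which the map is genuinely locally trivial with fibre of Euler characteristic $e$.
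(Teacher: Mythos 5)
Your proposal is correct and follows essentially the same route as the paper: the paper identifies $Z_Y(d)$ as the fibre product $Z_{\overline{Y}}(\overline{d})\times_{Z_{\overline{Y}}}Z_Y(k_1,l_1,k_2,l_2)$ and then cites Theorem~\ref{thm:zinfty} for the fibre Euler characteristic $e$, concluding by multiplicativity. Your argument simply unrolls Theorem~\ref{thm:zinfty} back to Corollary~\ref{cor:xinfty} and the stratification by the $(S_1',S_2')$-loci, so the content is the same; the subtleties you flag are exactly those already absorbed into the statement of Theorem~\ref{thm:zinfty}.
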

\begin{proof}
Using the notations of \ref{subsec:loci:strata}, let $(k_1,l_1,k_2,l_2)$ be the quadruple of the half blocks associated to the support block, and consider the diagram
\[
\begin{array}{ccccc}
 Z_{Y}(d) & \subseteq & Z_{Y} & \ra{\phantom}  & \mathcal{X}_{Y} \\
& & \da{T} & & \da{\omega \times \mathrm{Id}}  \\
Z_{\overline{Y}}(\overline{d}) & \subseteq & Z_{\overline{Y}} &\ra{\phantom} & \mathcal{Y}_{\overline{Y}}.
\end{array}
\]
Returning once again to the process proving Theorem~\ref{thm:dnorbcells}, when we obtained $Z_{Y}$ from the truncation $Z_{\overline{Y}}$, we saw that those ideals in $Z_{Y}$ that does not have a generator in the strata of the missing half blocks at $(k_1,l_1)$ and at $(k_2,l_2)$ are necessarily in $Z_{Y}(k_1,l_1,k_2,l_2)$ and all ideals in $Z_{Y}(k_1,l_1,k_2,l_2)$ have this property. Formally, a point of $Z_{Y}$ over $Z_{\overline{Y}}(\overline{d})$ is in $Z_{Y}(d)$ if and only if
\[((I \cap P_{m,j}) \setminus (I \cap \overline{P}_{m,j})) \cap P_{m+1,c_i} \dashv V_{k_i,l_i,c_i} \textrm{ for } i=1,2.\]
Under the operation $T$ the space $Z_{Y}(d) $ necessarily maps onto $Z_{\overline{Y}}(\overline{d})$. Hence, we get that
\[Z_{Y}(d) = Z_{\overline{Y}}(\overline{d}) \times_{Z_{\overline{Y}}} Z_Y(k_1,l_1,k_2,l_2).\]
By Theorem \ref{thm:zinfty} the fibers of $T$ on $Z_{Y}(k_1,l_1,k_2,l_2)$ have Euler characteristic $e$. Thus
\[\chi(Z_Y(d))=e\cdot\chi(Z_{\overline{Y}}(\overline{d})).\]
\end{proof}

For $e \in \{-1,0,1\}$ and $c \in \{0/1,n-1/n\}$ let $s_{e}(d,c)$ be the number of support blocks for label $c$ of type $e$, and let $s_e(d)=s_e(d,0/1)+s_e(d,n-1/n)$.
Applying Lemma \ref{lem:chiclosefiber} inductively, we get the following. 
\begin{proposition} 
\label{prop:chiclosestratum}
For $Y \in \mathcal{Z}_\Delta^P$ and $d \in \mathrm{pcd}(Y)$,
\[\chi(Z_Y(d))=(-1)^{s_{-1}(d)} \cdot 0^{s_{0}(d)} \cdot 1^{s_{1}(d)},\]
where we adopt the convention $0^0=1$. 
\end{proposition}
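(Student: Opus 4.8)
The plan is to derive Proposition~\ref{prop:chiclosestratum} from Lemma~\ref{lem:chiclosefiber} by induction on the number of rows of $Y$. The base case is an empty Young wall $Y$, where $Z_Y$ is a point, there are no support blocks, all the exponents $s_e(d)$ vanish, and the right-hand side evaluates to $(-1)^0\cdot 0^0 \cdot 1^0 = 1 = \chi(Z_Y(d))$, using the stated convention $0^0=1$.

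For the inductive step, consider $Y \in \mathcal{Z}_\Delta^P$ with $l > 0$ rows and a partial closing datum $d \in \mathrm{pcd}(Y)$, and let $(\overline Y, \overline d)$ be its truncation; by Lemma~\ref{lem:truncclosed} we have $\overline Y \in \mathcal{Z}_\Delta^P$, and $\overline d$ is the induced partial closing datum on $\overline Y$. There are two cases according to the salient block $b$ in the bottom row of $Y$. If $b$ has label $j \in \{2, n-2\}$ and is a support block for some label $c \in \{0,1,n-1,n\}$ of type $e \in \{-1,0,1\}$, then Lemma~\ref{lem:chiclosefiber} gives $\chi(Z_Y(d)) = e \cdot \chi(Z_{\overline Y}(\overline d))$. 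Since passing from $\overline Y$ to $Y$ adds exactly this one support block $b$ of type $e$ and changes no other support block or its type, we have $s_{e}(d) = s_{e}(\overline d) + 1$ and $s_{e'}(d) = s_{e'}(\overline d)$ for $e' \neq e$; combining this with the inductive hypothesis $\chi(Z_{\overline Y}(\overline d)) = (-1)^{s_{-1}(\overline d)} 0^{s_0(\overline d)} 1^{s_1(\overline d)}$ yields exactly $\chi(Z_Y(d)) = (-1)^{s_{-1}(d)} 0^{s_0(d)} 1^{s_1(d)}$, where one checks the three values $e = -1, 0, 1$ separately (for $e=0$ both sides are $0$; for $e = \pm 1$ the relevant exponent is incremented by one and the factor matches). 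In the complementary case, where the bottom salient block is not a support block of the above form (so the bottom row of $Y$ contributes no new support block), the morphism $T\colon Z_Y \to Z_{\overline Y}$ restricted to $Z_Y(d)$ is a trivial affine fibration over $Z_{\overline Y}(\overline d)$: this follows from Proposition~\ref{prop:fiberproduct} together with the fact (Propositions~\ref{prop:incvar1},~\ref{prop:incvar2},~\ref{prop:incvar4}) that $\mathcal{X}_Y \to \mathcal{Y}_{\overline Y}$ is a trivial affine fibration, and that the partial closing datum on the bottom row imposes no constraint here. Hence $\chi(Z_Y(d)) = \chi(Z_{\overline Y}(\overline d))$, while the exponents $s_e$ are unchanged, so the formula is preserved.

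The main obstacle is bookkeeping rather than a deep difficulty: one must verify that the support-block data, and in particular the assignment of types $e \in \{-1,0,1\}$, behaves well under truncation — that is, that the type of a support block of $Y$ not in the bottom row is computed from the same local data ($S$, $S_1$, $S_2$ and the associated half-blocks, as in Theorem~\ref{thm:zinfty}) whether one works inside $Y$ or inside $\overline Y$. This is clear from the \emph{local} nature of the definition of type in~\ref{subsec:loci:strata}, which only involves the antidiagonal of the support block and the one above it, but it should be stated explicitly. A secondary point to handle carefully is the case analysis for the bottom block: one must confirm that the only way a new support block is created by adjoining a bottom row is the situation covered by Lemma~\ref{lem:chiclosefiber} (bottom salient block of label $2$ or $n-2$ acting as a support block), so that in all other configurations the clean multiplicativity $\chi(Z_Y(d)) = \chi(Z_{\overline Y}(\overline d))$ holds; this is exactly the content of the four-case split of~\ref{subsec:incvargr} combined with Proposition~\ref{prop:fiberproduct}.
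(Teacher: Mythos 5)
Your proof is correct and follows essentially the same route as the paper: the paper obtains Proposition~\ref{prop:chiclosestratum} precisely by applying Lemma~\ref{lem:chiclosefiber} inductively row by row, with rows whose bottom salient block is not a support block contributing trivially because $T$ restricts to an affine fibration there. Your extra bookkeeping (the locality of the type of a support block under truncation, and the observation that a bottom-row salient block is never closed by $d$) only makes explicit what the paper leaves implicit.
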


%Ebbol corollary
\begin{corollary} 
\label{cor:inftychar0} 
\begin{enumerate}
\item Let $Y \in {\mathcal Z}_\Delta^P$ and $d \in \mathrm{pcd}(Y)$. If $Y$ contains a salient block of any label to which a support block not immediately below it is associated under $d$, then $\chi(Z_{Y}(d))=0$.
\item Let $Y \in {\mathcal Z}_\Delta^{A}$ and $d \in \mathrm{cd}(Y)$.
\begin{enumerate}[(a)]
\item If $Y$ contains a nondirectly supported salient block of label 1, $n-1$ or $n$, then $\chi(\widetilde W_{Y}(d))=0$.
\item If $Y$ does not contain any nondirectly supported salient block of label 1, $n-1$ or $n$, but $d$ is not contributing, then $\chi(\widetilde W_{Y}(d))=0$.
\end{enumerate}
\end{enumerate}
\end{corollary}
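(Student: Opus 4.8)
The plan is to prove Corollary \ref{cor:inftychar0} by reducing both statements to the inductive engine provided by Proposition \ref{prop:chiclosestratum}, and then tracking carefully how the combinatorial data (salient blocks, support blocks, closing data) interact with the reduction process of Lemma \ref{lem:1red}. For part (1), the key observation is that by definition of a support block of type $e$ in \ref{subsec:support}, a support block that is \emph{not} immediately below its associated salient block can never satisfy condition (2) of that subsection, hence it falls into the case with $I(S,S_1,S_2)_0 \neq \emptyset$ feeding into the decomposition, i.e.\ it is a support block of type $e = 0$. (More precisely, consulting the definitions of $I(S,S_1,S_2)$ and its partition into $I(S,S_1,S_2)_{-1}, I(S,S_1,S_2)_0, I(S,S_1,S_2)_1$ given in \ref{subsec:loci:strata}: the type-$1$ and type-$(-1)$ contributions require the associated half-block(s) to be $(1,m)$, which geometrically means the generator in the support block cell sits directly below. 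A non-directly-associated support block therefore lands in the type-$0$ bucket.) Then $s_0(d) \geq 1$, and since we adopt the convention $0^0 = 1$ but $0^k = 0$ for $k \geq 1$, Proposition \ref{prop:chiclosestratum} gives $\chi(Z_Y(d)) = (-1)^{s_{-1}(d)} \cdot 0^{s_0(d)} \cdot 1^{s_1(d)} = 0$.

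For part (2), I would first note that $\widetilde{W}_Y(d) \subseteq Z_Y(d)$ is a constructible subset, so I cannot directly invoke Proposition \ref{prop:chiclosestratum} for $Z_Y(d)$ and conclude; rather I need the inductive mechanism of Lemma \ref{lem:chiclosefiber} restricted to the $\widetilde W$-strata. This is legitimate because, as noted after Proposition \ref{prop:almoststrat}, the map $T\colon Z_Y \to Z_{\overline Y}$ carries $\widetilde W_Y$ to $\widetilde W_{\overline Y}$, and the fiber-product description in the proof of Lemma \ref{lem:chiclosefiber} restricts to one over the $\widetilde W$-strata; the Euler characteristic of the fiber is still governed by the type $e \in \{-1,0,1\}$ of the support block in the bottom row via Theorem \ref{thm:zinfty}. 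So I would state and use an analogue of Proposition \ref{prop:chiclosestratum} for $\widetilde W_Y(d)$: $\chi(\widetilde W_Y(d)) = (-1)^{s_{-1}(d)} \cdot 0^{s_0(d)} \cdot 1^{s_1(d)}$, where now the support-block types are computed with respect to the closing datum $d$ on $Y \in \mathcal Z_\Delta^A$.

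With that in hand, part (2)(a) is immediate: a nondirectly supported salient block of label $1$, $n-1$ or $n$ means, by the definition of "directly supported" in \ref{subsec:loci:strata}, that its associated support block under $d$ is \emph{not} immediately below it; by the argument of part (1) this support block is of type $0$, so $s_0(d) \geq 1$ and $\chi(\widetilde W_Y(d)) = 0$. For part (2)(b), every salient block of label $1$, $n-1$ or $n$ is directly supported, so there exists a \emph{contributing} closing datum $d_0$ (the one sending each such block to the support block immediately below it); the hypothesis is that $d \neq d_0$, so $d$ associates to at least one salient block a support block that is directly below it in position but not the "canonical" one --- wait, more carefully: since all salient blocks are directly supported, the only way $d$ fails to be contributing is that $d$ sends some salient block $b$ to a support block that is \emph{not} the one immediately below $b$, even though one immediately below $b$ exists. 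Again this off-diagonal association forces that support block into type $0$, giving $s_0(d) \geq 1$ and hence $\chi(\widetilde W_Y(d)) = 0$.

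The main obstacle I anticipate is the bookkeeping in part (2)(b): I must verify that "$d$ not contributing, all salient blocks directly supported" really does force at least one support block into type $0$, and not, say, type $-1$. This requires being precise about the definition of the supporting rules at the end of \ref{subsec:support} --- in particular that a block in $S^{b,c_i}$ "can support at most one salient block of label $c_i$ which is \emph{not} immediately above it", versus a block in $S^{b,0}$ for which "at least one of these has to be immediately above it". So the type of a support block is pinned down by whether the salient block it is matched to under $d$ sits directly above it; a non-canonical matching on a directly-supported configuration necessarily uses a block of class $S^{b,c_i}$ or $S^{b,c_1\cup c_2}$ in its "not immediately above" mode, which is exactly the situation landing in $I(S,S_1,S_2)_0$. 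I would make this precise by a short case analysis over the four classes $S^{b,0}, S^{b,c_1}, S^{b,c_2}, S^{b,c_1\cup c_2}$, matching each to the corresponding piece of the partition of $I(S,S_1,S_2)$, and then invoke the (restricted) Proposition \ref{prop:chiclosestratum}. Everything else is a direct consequence of the results already established.
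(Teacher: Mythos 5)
Your proposal is correct and follows essentially the same route as the paper: part (1) is exactly the observation that a support block associated under $d$ to a salient block not immediately above it lands in $I(S,S_1,S_2)_0$, hence has type $0$, so $s_0(d)\geq 1$ and Proposition \ref{prop:chiclosestratum} gives the vanishing, and both halves of part (2) are deduced from this same type-$0$ mechanism. Your additional step of restricting the fibration argument of Lemma \ref{lem:chiclosefiber} to the $\widetilde W$-strata is a legitimate and slightly more careful spelling-out of what the paper leaves implicit in saying that (2) follows from (1); it is consistent with the remark after Proposition \ref{prop:almoststrat} that $T$ takes $\widetilde W_Y$ to $\widetilde W_{\overline Y}$ and with the paper's own later use of Lemma \ref{lem:chiclosefiber} on the $\widetilde W$-strata in the proof of Proposition \ref{prop:relsum1}.
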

\begin{proof}
(1) follows from Proposition \ref{prop:chiclosestratum}, and both parts of (2) follows from (1).
\end{proof}

The main ingredient for calculating the coarse generating series is the following statement. 
\begin{proposition} 
\label{prop:relsum1}
For all $Y \in {\mathcal Z}_\Delta^{0,A}$,
\[ \sum_{Y' \in \mathrm{Rel}(Y)} \chi(\widetilde W_{Y'})=1.\]
\end{proposition}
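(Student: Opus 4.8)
\textbf{Proof proposal for Proposition \ref{prop:relsum1}.}

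The plan is to prove the identity by induction on the number of rows of $Y \in \mathcal{Z}_\Delta^{0,A}$, using the bottom-row-removal operation and Lemma \ref{lem:truncclosed}, which guarantees that $\mathcal{Z}_\Delta^{0,A}$ is closed under truncation. The base case $Y$ empty is immediate, since then $\mathrm{Rel}(Y) = \{Y\}$ and $\widetilde W_Y$ is a point. For the inductive step, fix $Y$ with $l > 0$ rows, let $\overline Y$ be its truncation, and organize $\mathrm{Rel}(Y)$ according to how the bottom row of a relative $Y'$ sits above the bottom row of the corresponding truncated relative $\overline{Y'} \in \mathrm{Rel}(\overline Y)$. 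The key point is that the reduction map ${\rm red}$ and the bottom-row removal commute in the appropriate sense: removing the bottom row of a relative of $Y$ gives a relative of $\overline Y$, and conversely each relative of $\overline Y$ can be extended by a bottom row in finitely many ways to yield relatives of $Y$. So one gets a fibration-type decomposition $\mathrm{Rel}(Y) = \bigsqcup_{\overline{Y'} \in \mathrm{Rel}(\overline Y)} \mathrm{Ext}_Y(\overline{Y'})$, where $\mathrm{Ext}_Y(\overline{Y'})$ is the set of admissible bottom-row extensions.

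Next I would split the Euler characteristic $\chi(\widetilde W_{Y'})$ using the closing-datum decomposition $\widetilde W_{Y'} = \bigsqcup_{d \in \mathrm{cd}(Y')} \widetilde W_{Y'}(d)$ and Corollary \ref{cor:inftychar0}(2), which says that only \emph{contributing} closing data on Young walls with all salient $n-1/n$-blocks directly supported carry nonzero Euler characteristic. Combined with Proposition \ref{prop:chiclosestratum}, for such $(Y', d)$ we have $\chi(\widetilde W_{Y'}(d)) = 1$ whenever $s_{-1}(d) = s_0(d) = 0$, i.e.\ when every support block is of type $1$. The inductive hypothesis gives $\sum_{Y'' \in \mathrm{Rel}(\overline Y)} \chi(\widetilde W_{Y''}) = 1$; the goal is then to show that for each $\overline{Y'} \in \mathrm{Rel}(\overline Y)$, the contributions of all its bottom-row extensions $Y' \in \mathrm{Ext}_Y(\overline{Y'})$ sum to $\chi(\widetilde W_{\overline{Y'}})$, so that the whole sum telescopes down to $1$. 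This local claim is a bottom-row-only combinatorial computation: one must check that when one adds a bottom row to $\overline{Y'}$ in all admissible ways (varying over the choices coming from the inverse of the move (R3), the choice of which salient $n-1/n$-block-pairs get directly supported from the new bottom row, and the type of the newly relevant support blocks), the signed-and-weighted count of extensions with nonzero Euler characteristic equals exactly $1$ times $\chi(\widetilde W_{\overline{Y'}})$. Concretely this uses Lemma \ref{lem:chiclosefiber}: passing from $(\overline{Y'}, \overline d)$ to $(Y', d)$ multiplies the Euler characteristic by the type $e \in \{-1,0,1\}$ of the new support block in the bottom row, so one needs $\sum_{Y' \in \mathrm{Ext}_Y(\overline{Y'})} \sum_{d} (\text{sign/type contribution}) = 1$.

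The main obstacle will be the last combinatorial bookkeeping step: precisely enumerating the admissible bottom-row extensions and showing that, after accounting for the Euler-characteristic weights (type $-1$ support blocks giving a sign, type $0$ killing the term, type $1$ neutral), the ambiguities coming from the inverse of move (R3) in Lemma \ref{lem:1red} and from the choice of which new salient half-block-pairs are directly supported cancel in pairs except for exactly one surviving term. I expect this to reduce to the observation that the inverse of (R3) creates relatives that come in matched collections whose signed Euler characteristics cancel, leaving only the "minimal" extension (the one that just adds a clean bottom row of label-$0$ salient blocks), which has all support blocks of type $1$ and hence contributes $+1 \cdot \chi(\widetilde W_{\overline{Y'}})$. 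Making the matching explicit — and checking it is compatible with conditions (R1')--(R3') and the partition of the index set $I(S,S_1,S_2)$ into the pieces $I(\cdots)_{-1}, I(\cdots)_0, I(\cdots)_1$ from Theorem \ref{thm:zinfty} — is where the real work lies; everything else is formal manipulation of the decompositions already established.
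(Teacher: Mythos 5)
Your overall scaffolding — induction on the number of rows via truncation (Lemma \ref{lem:truncclosed}), restriction to contributing closing data via Corollary \ref{cor:inftychar0} and Proposition \ref{prop:chiclosestratum}, and the telescoping claim that for each $\overline{Y}'\in\mathrm{Rel}(\overline{Y})$ the contributions of its bottom-row extensions sum to $\chi(\widetilde W_{\overline{Y}'})$ — is exactly the strategy of the paper's proof. But the mechanism you propose for the crucial local computation is not the right one, and the step you defer as ``bookkeeping'' is where a genuinely new geometric input is needed. Your guess is that the extensions produced by the inverse of (R3) cancel in matched pairs, leaving only the ``minimal'' extension, which you expect to have all support blocks of type $1$ and hence to contribute $+\chi(\widetilde W_{\overline{Y}'})$. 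This is false precisely in the only nontrivial case, namely when $\overline{S}$ is not maximal but both $S_1$ and $S_2$ are: there the minimal extension $Y_i$, with the contributing closing datum $d_i$ that assigns both half-blocks above the new support block to it, has $(1,m,1,m)\in I(S,S_1,S_2)_{-1}$, so by Lemma \ref{lem:chiclosefiber} it contributes $-\chi(\widetilde W_{\overline{Y}_i}(\overline{d}_i))$, not $+\chi$. The correct balance is $-1+1+1=1$: the negative term from $Y_i$ with $d_i$ is offset by two positive terms, coming either from the two new relatives $Y_{i,n-1},Y_{i,n}$ (divided $n{-}1/n$ block above), or from the new relative $Y_{i,1}$ together with a second contributing closing datum $d_i'$ on $Y_i$ itself (divided $0/1$ block above, where $Y_{i,0}$ is excluded because it would change the $0$-weight). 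Nothing cancels in pairs; the relatives created by the inverse moves are exactly what saves the sum.

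Moreover, establishing that each of these positive terms equals $\chi(\widetilde W_{\overline{Y}_i}(\overline{d}_i))$ is not a consequence of Lemma \ref{lem:chiclosefiber} or of formal manipulation of the decompositions: $Y_{i,c}$ is a different Young wall (a half-block has been removed and the column structure changed), so one must compare strata indexed by different walls and different closing data. The paper does this by constructing a morphism $\widetilde W_{\overline{Y}_i}(\overline{d}_i)\to \widetilde W_{\overline{Y}_{i,c}}(\overline{d}_{i,c})$ — restriction of an ideal to the cells of blocks missing from $\overline{Y}_{i,c}$, i.e.\ forgetting the generator in the cell $V_{0,m,c}$ — and showing its fibers are affine spaces (closed under affine linear combinations of the forgotten generator), whence equality of Euler characteristics. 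Without this comparison, and without the correct enumeration of contributing closing data in the divided-block cases (including the extra datum $d_i'$ in the $0/1$ case), your local claim cannot be verified, so the proposal as it stands has a genuine gap exactly at the step you flag as the main obstacle; the cancellation pattern you anticipate would not close it.
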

Recall from \ref{subsec:possalmosid} that the relatives of a Young wall $Y\in \mathcal{Z}_{\Delta}^0$ are the same in $\mathcal{Z}_{\Delta}'$ and in $\mathcal{Z}_{\Delta}^A$, but there may be some new relatives in $\mathcal{Z}_{\Delta}^P$ which are not in $\mathcal{Z}_{\Delta}^A$. On the other hand, since $\widetilde W_{Y}=\emptyset$ for $Y \in \mathcal{Z}_{\Delta}^P \setminus \mathcal{Z}_{\Delta}^A$, Proposition \ref{prop:relsum1} implies
\begin{corollary} 
\label{cor:relsum1}
For all $Y \in {\mathcal Z}_\Delta^{0}$,
\[ \sum_{Y' \in \mathrm{Rel}(Y)} \chi(\widetilde W_{Y'})=1.\]
\end{corollary}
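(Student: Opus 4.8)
The plan is to prove Proposition~\ref{prop:relsum1} by induction on the number of rows of $Y\in\mathcal{Z}_\Delta^{0,A}$, matching the inductive structure of the reduction map ${\rm red}\colon\mathcal{Z}_\Delta^P\to\mathcal{Z}_\Delta^{0,A}$ and the Euler-characteristic computations of Proposition~\ref{prop:chiclosestratum} and Corollary~\ref{cor:inftychar0}. The base case, when $Y$ is empty (or has a single row), is immediate: $\mathrm{Rel}(Y)=\{Y\}$ and $\chi(\widetilde W_Y)=1$. For the inductive step, fix $Y$ with $l>0$ rows and let $\overline{Y}$ be its truncation; by Lemma~\ref{lem:truncclosed}, $\overline{Y}\in\mathcal{Z}_\Delta^{0,A}$, and by the induction hypothesis $\sum_{\overline{Y}'\in\mathrm{Rel}(\overline Y)}\chi(\widetilde W_{\overline{Y}'})=1$.

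The core of the argument is to organize the relatives $Y'\in\mathrm{Rel}(Y)$ according to their truncations. Each $Y'\in\mathcal{Z}_\Delta^A$ with ${\rm red}(Y')=Y$ has a truncation $\overline{Y'}\in\mathcal{Z}_\Delta^A$ (we must be slightly careful here: the reduction moves of Lemma~\ref{lem:1red} act on the rows above a given salient violation, so truncation and reduction essentially commute, giving ${\rm red}(\overline{Y'})=\overline{Y}$ or a closely related wall). One first groups $\mathrm{Rel}(Y)$ by $\overline{Y'}$ ranging over $\mathrm{Rel}(\overline Y)$, plus possibly some ``new'' relatives in $\mathcal{Z}_\Delta^P\setminus\mathcal{Z}_\Delta^A$, for which $\widetilde W_{Y'}=\emptyset$ and which therefore contribute nothing (this is exactly the remark before Corollary~\ref{cor:relsum1}). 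Then, for fixed $\overline{Y'}$, one uses the decomposition $\widetilde W_{Y'}=\sqcup_{d\in\mathrm{cd}(Y')}\widetilde W_{Y'}(d)$ together with the fiber-product description from Lemma~\ref{lem:chiclosefiber} / the proof of Theorem~\ref{thm:zinfty}: the map $T\colon \widetilde W_{Y'}\to\widetilde W_{\overline{Y'}}$ has fibers whose Euler characteristic, on each piece $\widetilde W_{Y'}(d)$, is governed by the type $e\in\{-1,0,1\}$ of the newly-appearing salient block in the bottom row. By Corollary~\ref{cor:inftychar0}, only contributing closing data with all bottom-row salient blocks directly supported contribute a nonzero amount, and such a contribution is exactly $\chi(\widetilde W_{\overline{Y'}})$ times a multiplicity that depends only on the combinatorics of the bottom two rows. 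Summing $\sum_{Y'\colon\overline{Y'}\text{ fixed}}\sum_d \chi(\widetilde W_{Y'}(d))$, the claim reduces to the purely combinatorial assertion that for each $\overline{Y'}\in\mathrm{Rel}(\overline Y)$, the total contribution coming from all ways of adding a bottom row (over all relatives $Y'$ of $Y$ truncating to $\overline{Y'}$, and all closing data) equals $\chi(\widetilde W_{\overline{Y'}})$ itself, i.e. the multiplicity telescopes to $1$.

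The heart of the matter, and the step I expect to be the main obstacle, is this last combinatorial identity: one must show that when one runs over all relatives $Y'$ with a given truncation --- i.e. over all ways the inverse reduction moves (R1')--(R3') can extend $\overline{Y'}$ in its new bottom row --- together with their admissible closing data, the signed count $\sum (-1)^{s_{-1}(d)}0^{s_0(d)}1^{s_1(d)}$ collapses to $1$. Concretely, adding a bottom row can create new salient blocks of labels in $\{1,n-1,n\}$ that must be supported by the support blocks in the new row; the different relatives $Y'$ correspond to the different lengths/orientations of the new row, and the cancellation is between a relative where a salient block is directly supported (contributing $+1$ via a type-$1$ support block) and one where it is of type $-1$ or $0$. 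I would handle this by a careful local analysis of the six corner configurations A1--A3, B1--B3 listed in~\ref{sec:proof:orbicells}, showing in each case that the generating polynomial in the ``type'' variable $e$ over the possible bottom-row extensions evaluates (after the substitutions $0^0=1$, etc.) to $1$; the half-block cases B1--B3 with the delicate $n-1$/$n$ pairing and the role of condition (R3') will be the most intricate. Once this local telescoping is established, the induction closes and Corollary~\ref{cor:relsum1} follows immediately since $\widetilde W_Y=\emptyset$ outside $\mathcal{Z}_\Delta^A$.
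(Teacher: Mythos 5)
Your overall strategy --- induction on the number of rows, discarding relatives in $\mathcal{Z}_\Delta^P\setminus\mathcal{Z}_\Delta^A$ and non-contributing closing data via Corollary \ref{cor:inftychar0}, and using the typed support blocks of Lemma \ref{lem:chiclosefiber} --- is indeed the paper's. The gap is in the step where you reduce everything to a ``purely combinatorial'' per-truncation identity. Your grouping of $\mathrm{Rel}(Y)$ by truncations $\overline{Y'}$ ``ranging over $\mathrm{Rel}(\overline{Y})$'' does not hold: the new relatives created by the inverse reduction moves when both index sets $S_1,S_2$ are maximal (the walls $Y_{i,n-1},Y_{i,n}$, resp.\ $Y_{i,1}$, in the paper's notation) carry an extra half-block in their second row, so their truncations $\overline{Y}_{i,c}$ differ from $\overline{Y}_i$ in the bottom row and are not relatives of $\overline{Y}$ at all (the bottom row is exempt from (R1')--(R3'), so $\overline{Y}_{i,c}$ is already reduced, or reduces elsewhere). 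Consequently your telescoping claim fails exactly in the hard case: the only relative of $Y$ truncating to $\overline{Y}_i$ is $Y_i$ itself, and the sum over its closing data is $-\chi(\widetilde W_{\overline{Y}_i}(\overline{d}_i))$ (divided $n-1$/$n$ block) or $0$ (divided $0/1$ block), not $+\chi(\widetilde W_{\overline{Y}_i}(\overline{d}_i))$; the compensating contributions come from relatives lying over \emph{different} truncations.

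For the same reason the remaining work is not a local evaluation of a generating polynomial in the type variable $e$ over the corner configurations A1--A3, B1--B3: the terms that must cancel are Euler characteristics of strata attached to different Young walls, and Theorem \ref{thm:zinfty} together with Lemma \ref{lem:chiclosefiber} only controls the fibres of $T$ over one fixed truncation. The missing ingredient is the cross-wall comparison $\chi(\widetilde W_{\overline{Y}_{i,c}}(\overline{d}_{i,c}))=\chi(\widetilde W_{\overline{Y}_i}(\overline{d}_i))$, which the paper proves geometrically: one defines a morphism $\widetilde W_{\overline{Y}_i}(\overline{d}_i)\to\widetilde W_{\overline{Y}_{i,c}}(\overline{d}_{i,c})$ by restricting the ideal (forgetting the generator in the cell $V_{0,m,c}$) and shows its fibres are affine spaces. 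Without an identity of this kind the induction cannot close; with it, the bookkeeping $-\chi+\chi+\chi=\chi$ in the two divided-block cases is exactly the paper's proof of Proposition \ref{prop:relsum1}, from which the corollary follows as you say.
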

\begin{proof}
By Corollary \ref{cor:inftychar0} (2.a), the strata associated to those Young walls $Y' \in \mathrm{Rel}(Y)$ that have at least one undirectly supported salient block of label $n-1$ or $n$ above the bottom row do not contribute to the sum. Therefore we can restrict our attention to the subset of Young walls in which the salient blocks not in the bottom row are
\begin{itemize}
\item directly supported salient block-pairs of label $0/1$ or $n-1/n$, or
\item arbitrary salient blocks of label $0$.
\end{itemize}
In particular, we can assume that all Young walls we are working with satisfy (R1'). Moreover, by Corollary \ref{cor:inftychar0} (2.b) we can assume that each closing datum is contributing. That is, for each salient block of label 1, $n-1$ or $n$ not in the bottom row, the associated support block is immediately below it, and this holds also for those label-0 salient blocks which have an associated support block. 

As in the proof of Theorem~\ref{thm:Zstrata}, we will prove the proposition by induction on the number of rows. Fix a Young wall $Y \in {\mathcal Z}_\Delta^{0,A}$. We re-write the statement into the form
\[ \sum_{Y' \in \mathrm{Rel}(Y)}\sum_{d' \in \mathrm{cd}(Y')} \chi(\widetilde W_{Y'}(d'))=1.\]
Let $\overline{Y}$ be the truncation of $Y$; by Lemma~\ref{lem:truncclosed}, $\overline{Y}\in {\mathcal Z}_\Delta^{0,A}$ also. 

Assume first that the block above the salient block in the bottom row
of $Y$ is not divided. Then the relatives of $Y$ are exactly the
extensions of those of $\overline{Y}$ with the bottom row of $Y$. A
closing datum on any such Young wall extends uniquely to the extended
Young wall, and the corresponding strata are isomorphic. In this case, the induction step is obvious.

Assume next that the block above the salient block in the bottom row of $Y$ is divided. Let $S$ be the index set of the block at $(0,m)$, $S_1$ and $S_2$ the index sets for the blocks at $(1,m)$. The following cases can occur:
\begin{enumerate}
\item $\overline{S}$ is maximal;
\item  $\overline{S}$ is not maximal, and exactly one of $S_1$ or $S_2$ is maximal;
\item  $\overline{S}$ is not maximal, and both $S_1$ and $S_2$ are maximal.
\end{enumerate}

If $\overline{S}$ is maximal, then to each relative $\overline{Y}'\in{\rm Rel}(\overline{Y})$ there corresponds a unique relative $Y'\in{\rm Rel}(Y)$ which satisfies (R1') and (R2'): we extend each relative of $\overline{Y}$ with the bottom row of $Y$. A closing datum $\overline{d}'$ on one of these relatives $\overline{Y}'$ can be extended to a closing datum $d'$ on the corresponding relative of $Y$ by assigning $(\emptyset,\emptyset)$ to the new salient block appearing in the bottom row. By Theorem \ref{thm:zinfty}, $(\emptyset,\emptyset) \in I(S,S_1,S_2)_1$, and
\[ \sum_{Y' \in \mathrm{Rel}(Y)}\sum_{d' \in \mathrm{cd}(Y')}  \chi(\widetilde W_{Y'}(d'))=\sum_{\overline{Y}' \in \mathrm{Rel}(\overline{Y})}\sum_{\overline{d}' \in \mathrm{cd}(\overline{Y}')}  \chi(\widetilde W_{\overline{Y}'}(\overline{d}'))=1.\]

If $\overline{S}$ is not maximal, and $S_i$ is maximal while $S_{3-i}$ is not, then the missing block at $(1,m)$ is not salient, and the subspace in its stratum is necessarily in the image of the subspace at $(0,m)$. Again, to each relative of $\overline{Y}$ there corresponds a unique relative of $Y$, and
\[ \sum_{Y' \in \mathrm{Rel}(Y)}\sum_{d' \in \mathrm{cd}(Y')} \chi(\widetilde W_{Y'}(d'))=\sum_{\overline{Y}' \in \mathrm{Rel}(\overline{Y})} \sum_{\overline{d}' \in \mathrm{cd}(\overline{Y}')}\chi(\widetilde W_{\overline{Y}'}(\overline{d}'))=1.\]

Finally, suppose that $\overline{S}$ is not maximal, but $S_1$ and $S_2$ are maximal.
In this case new relatives appear when we add the bottom row of $Y$ to $\overline{Y}$. We investigate two possible cases individually. 

If the divided block above the new salient block is a full $n-1$/$n$ block, then for each relative of $\overline{Y}$ there are two other relatives, which contain either the label $(n-1)$ half block or the label $n$ half block. If the relatives of $\overline{Y}$ are $\{\overline{Y}_0=\overline{Y},\overline{Y}_1,\overline{Y}_2,\dots \}$, then the relatives of $Y$ are 
\[ \{Y_0=Y,Y_1,Y_2,\dots \}\cup \bigcup_{c\in\{n-1, n\}} \{Y_{0,c}=Y_c,Y_{1,c},Y_{2,c},\dots \}.\]
We obtain these by performing the inverse of the move (R3) in the algorithm of Lemma \ref{lem:1red}.
The addition of the bottom row of $Y$ to an $\overline{Y_i}$  schematically looks like this:
\begin{center}
\begin{tikzpicture}[scale=0.5]
\draw (1,6) -- (1,7);
\draw (1,7) -- (0,7);
\draw (1,6) -- (2,6);
\draw[dashed] (1,7) -- (2,7) --(2,6);
\draw[dashed] (1,7) -- (2,6);

\draw (1,0) -- (1,2);
\draw (1,2) -- (0,2);
\draw (1,0) -- (2,0);
\draw[dashed] (1,1) -- (2,1) --(2,0);
%\draw[dashed] (1,1) -- (2,0);

\draw (-4,0) -- (-4,1);
\draw (-4,2) -- (-5,2);
\draw (-4,0) -- (-3,0);
\draw(-4,2) -- (-3,1) -- (-4,1);
\draw[dashed]  (-3,1) -- (-3,0);

\draw (6,0) -- (6,2);
\draw (6,2) -- (5,2);
\draw (6,0) -- (7,0);
\draw (6,2) -- (7,1) -- (7,2) -- (6,2);
\draw[dashed] (6,1) -- (7,1) --(7,0);

\draw[->] (1,5) -- (1,3);
\draw[->] (0,5) -- (-2,3);
\draw[->] (2,5) -- (4,3);
\draw (1,8) node {$\overline{Y}_i$};
\draw (1,-1) node {$Y_i$};
\draw (-4,-1) node {$Y_{i,n-1}$};
\draw (6,-1) node {$Y_{i,n}$};
\end{tikzpicture}
\end{center}
Here the block of label $c$ is in the complement of the Young wall $Y_{i,c}$, while its pair is in $Y_{i,c}$. There is only one contributing closing datum on each of $Y_i$, $Y_{i,n-1}$ and $Y_{i,n}$ extending a contributing closing datum $\overline{d}_i$ on $\overline{Y}_i$. Namely, to the new support block of $Y_i$ we associate both the divided block above it, and every other part of $\overline{d}_i$ is kept constant. We denote these by $d_i$, $d_{i,n-1}$ and $d_{i,n}$, respectively.

We claim that for $c\in\{n-1,n\}$,
\[ \chi(\widetilde W_{Y_{i,c}}(d_{i,c})) = \chi(\widetilde W_{\overline{Y}_i}(\overline{d}_i)). \]

To show this, we define a morphism $\widetilde W_{\overline{Y}_i}(\overline{d}_i) \to \widetilde W_{\overline{Y}_{i,c}}(\overline{d}_{i,c})$, where $\overline{Y}_{i,c}$ is the truncation of $Y_{i,c}$. This morphism is given by the restriction of an ideal $I \in \widetilde W_{\overline{Y}_i}(\overline{d}_i)$ to the union of those cells whose block is missing from $\overline{Y}_{i,c}$. The Young wall $\overline{Y}_{i,c}$ has the same salient blocks as  $\overline{Y}_{i}$ except a half block in the bottom row. Hence, the result is necessarily an ideal, which has the same generators as $I$ except for the cell $V_{0,m,c}$ where it does not have any generator. Therefore, the image of this morphism is in $W_{\overline{Y}_{i,c}}(\overline{d}_{i,c})$; here $(0,m)$ is the position of the salient block-pair in the bottom row of $\overline{Y}_i$, which is also the position of the salient half block of label $\kappa(c)$ in the bottom row of $\overline{Y}_{i,c}$. 

Assume that there are two ideals $I, I' \in \widetilde W_{\overline{Y}_i}(\overline{d}_i)$ which map to the same element of $\widetilde W_{\overline{Y}_{i,c}}(\overline{d}_{i,c})$ under this morphism. Then they only differ in the function at $V_{0,m,c}$, or more precisely in the point of $V_{0,m,c}/\overline{U}$ which represents the subspace in $V_{0,m,c}$. Here $\overline{U}=I\cap \overline{P}_{m,c}=I'\cap \overline{P}_{m,c}$. Then any ideal $I''$ which is their affine linear combination, i.e.~whose corresponding subspace in $V_{0,m,c}$ is a linear combination of those of $I$ and $I'$, is also an element of $\widetilde W_{\overline{Y}_i}(\overline{d}_i)$, mapped to the same ideal as $I$ and $I'$ under the morphism. In particular, the fibers of the morphism are affine spaces.
Taking into account that $\chi(\widetilde W_{Y_{i,c}}(d_{i,c}))=\chi(\widetilde W_{\overline{Y}_{i,c}}(\overline{d}_{i,c}))$, this proves the claim.

Thus,
\begin{gather*}
\chi(\widetilde W_{Y_i}(d_i))+\chi(\widetilde W_{Y_{i,n-1}}(d_{i,n-1}))+\chi(\widetilde W_{Y_{i,n}}(d_{i,n})) \\ =  -\chi(\widetilde W_{\overline{Y}_i}(\overline{d}_i))+\chi(\widetilde W_{\overline{Y}_i}(\overline{d}_i))+\chi(\widetilde W_{\overline{Y}_i}(\overline{d}_i))
   =  \chi(\widetilde W_{\overline{Y}_i}(\overline{d}_i)),
 \end{gather*}
 where in the first equality we used Lemma \ref{lem:chiclosefiber}.

Second, if the divided block above the new salient block is a full 0/1 block, then for each relative of $\overline{Y}$ there is a new relative which contains the label $1$ half block (and possibly some other blocks above it).
Let us denote the relatives of $\overline{Y}$ as $\{\overline{Y}_0=\overline{Y},\overline{Y}_1,\overline{Y}_2,\dots \}$. Then the relatives of $Y$ are 
\[ \{Y_0=Y,Y_1,Y_2,\dots \}\cup \{Y_{0,1}, Y_{1,1},Y_{2,1},\dots \}. \]
We obtain these by performing the inverse of the move (R2) in the algorithm of Lemma \ref{lem:1red}. Schematically:
\begin{center}
\begin{tikzpicture}[scale=0.5]
%\draw[dashed] (1,1) -- (2,0);
\draw (1,6) -- (1,7);
\draw (1,7) -- (0,7);
\draw (1,6) -- (2,6);
\draw[dashed] (1,7) -- (2,7) --(2,6);
\draw[dashed] (1,7) -- (2,6);

\draw (1,0) -- (1,2);
\draw (1,2) -- (0,2);
\draw (1,0) -- (2,0);
\draw[dashed] (1,1) -- (2,1) --(2,0);

%\draw (-4,0) -- (-4,1);
%\draw (-4,2) -- (-5,2);
%\draw (-4,0) -- (-3,0);
%\draw(-4,2) -- (-3,1) -- (-4,1);
%\draw[dashed]  (-3,1) -- (-3,0);

\draw (6,0) -- (6,2);
\draw (6,2) -- (5,2);
\draw (6,0) -- (7,0);
\draw (6,2) -- (7,1) -- (7,2) -- (6,2);
\draw[dashed] (6,1) -- (7,1) --(7,0);

\draw[->] (1,5) -- (1,3);
%\draw[->] (0,5) -- (-2,3);
\draw[->] (2,5) -- (4,3);
\draw (1,8) node {$\overline{Y}_i$};
\draw (1,-1) node {$Y_i$};
%\draw (-4,-1) node {$Y_{i,n-1}$};
\draw (6,-1) node {$Y_{i,1}$};
\end{tikzpicture}
\end{center}
In this case $Y_{i,0}$ cannot appear as a relative, since that would change the 0-weight.
Given a contributing closing datum $\overline{d}_i$ on $\overline{Y}_i$, shifting it appropriately gives a
unique contributing closing datum $d_{i,1}$ on $Y_{i,1}$. On $Y_{i}$, $\overline{d}_i$ induces 
two contributing closing data:
\begin{itemize}
\item $d_i$ is obtained by associating the support block in the bottom row to the salient half-block pair above it;
\item $d_i'$ is obtained by associating the support block in the bottom row to the salient half block of label 1 above it only.
\end{itemize}

Again, using Lemma \ref{lem:chiclosefiber} we get
\begin{eqnarray*}
\chi(\widetilde W_{Y_i}(d_i))+\chi(\widetilde W_{Y_i}(d'_i))+\chi(\widetilde W_{Y_{i,1}}(d_{i,1}))  & = & -\chi(\widetilde W_{\overline{Y}_i}(\overline{d}_i))+\chi(\widetilde W_{\overline{Y}_i}(\overline{d}_i))+\chi(\widetilde W_{\overline{Y}_i}(\overline{d}_i)) \\ & = & \chi(\widetilde W_{\overline{Y}_i}(\overline{d}_i)).
\end{eqnarray*}
Summing over these, we obtain in all cases 
 \[ \sum_{Y' \in \mathrm{Rel}(Y)}\sum_{d' \in \mathrm{cd}(Y')}  \chi(\widetilde W_{Y'}(d))=\sum_{\overline{Y}' \in \mathrm{Rel}(\overline{Y})}\sum_{\overline{d}' \in \mathrm{cd}(\overline{Y}')}  \chi(\widetilde W_{\overline{Y}'}(\overline{d})),\]
which proves the induction step.
\end{proof}

Putting everything together, we obtain the following result, the analogue of Corollary~\ref{cor:Acombinatorial} in type~$D$. 
\begin{theorem} Let $\Delta$ be of type $D_n$. Then the generating function of Euler characteristics of the coarse Hilbert schemes of points of the corresponding singular surface $\SC^2/G_\Delta$ is given by the  combinatorial generating series
\[\sum_{m=0}^\infty \chi\left(\mathrm{Hilb}^m(\SC^2/G_\Delta)\right)q^m = \sum_{Y\in {\mathcal Z}_\Delta^0} q^{{\rm wt}_0(Y)}.
%= \sum_{Y\in {\mathcal Z}_\Delta^0} q^{{\rm wt}_0(Y)}
\]
\label{thm:dnsingcells}
\end{theorem}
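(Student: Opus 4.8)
## Proof proposal for Theorem~\ref{thm:dnsingcells}

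The plan is to combine the cell decomposition of the coarse Hilbert scheme from Theorem~\ref{thm:assdiag} with the Euler characteristic computation of its strata, and then reorganize the resulting sum using the reduction map $\mathrm{red}\colon \mathcal{Z}_\Delta'\to \mathcal{Z}_\Delta^0$. Recall from Theorem~\ref{thm:assdiag} that $\mathrm{Hilb}(\SC^2/G_\Delta) = \bigsqcup_{Y\in\mathcal{Z}_\Delta'}\mathrm{Hilb}(\SC^2/G_\Delta)_Y$, with $\mathrm{Hilb}(\SC^2/G_\Delta)_Y\subset\mathrm{Hilb}^m(\SC^2/G_\Delta)$ for $m=\mathrm{wt}_0(Y)$. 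Since Euler characteristics are additive over locally closed decompositions, we get
\[
\sum_{m=0}^\infty \chi\!\left(\mathrm{Hilb}^m(\SC^2/G_\Delta)\right)q^m = \sum_{Y\in\mathcal{Z}_\Delta'} \chi\!\left(\mathrm{Hilb}(\SC^2/G_\Delta)_Y\right) q^{\mathrm{wt}_0(Y)}.
\]
The first step is therefore to identify $\chi(\mathrm{Hilb}(\SC^2/G_\Delta)_Y)$ with $\chi(\widetilde W_Y)$: the $T=\SC^*$-action on the reduced coarse Hilbert scheme has fixed locus $\mathrm{Hilb}(\SC^2/G_\Delta)^T$, and since Euler characteristic is computed by the fixed locus of a torus action, $\chi(\mathrm{Hilb}(\SC^2/G_\Delta)_Y)=\chi(W_Y)=\chi(\widetilde W_Y)$, using $W_Y\cong\widetilde W_Y$ from the proof of Theorem~\ref{thm:assdiag}. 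So the generating series equals $\sum_{Y\in\mathcal{Z}_\Delta'}\chi(\widetilde W_Y)\,q^{\mathrm{wt}_0(Y)}$.

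Next I would partition the index set $\mathcal{Z}_\Delta'$ according to the reduction map. By Lemma~\ref{lem:1red}, $\mathrm{red}\colon\mathcal{Z}_\Delta'\to\mathcal{Z}_\Delta^0$ preserves $0$-weight, so $\mathcal{Z}_\Delta'=\bigsqcup_{Y\in\mathcal{Z}_\Delta^0}\mathrm{Rel}(Y)$ with $\mathrm{wt}_0(Y')=\mathrm{wt}_0(Y)$ for all $Y'\in\mathrm{Rel}(Y)$. Hence
\[
\sum_{Y'\in\mathcal{Z}_\Delta'}\chi(\widetilde W_{Y'})\,q^{\mathrm{wt}_0(Y')} = \sum_{Y\in\mathcal{Z}_\Delta^0}\left(\sum_{Y'\in\mathrm{Rel}(Y)}\chi(\widetilde W_{Y'})\right)q^{\mathrm{wt}_0(Y)}.
\]
Now the inner sum is exactly $1$ by Corollary~\ref{cor:relsum1} (which follows from Proposition~\ref{prop:relsum1} together with the vanishing $\widetilde W_Y=\emptyset$ for $Y\in\mathcal{Z}_\Delta^P\setminus\mathcal{Z}_\Delta^A$). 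Substituting gives
\[
\sum_{m=0}^\infty \chi\!\left(\mathrm{Hilb}^m(\SC^2/G_\Delta)\right)q^m = \sum_{Y\in\mathcal{Z}_\Delta^0} q^{\mathrm{wt}_0(Y)},
\]
which is the claimed formula.

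The genuinely hard part of this argument is already packaged into Proposition~\ref{prop:relsum1}, whose proof occupies most of Section~\ref{sect:coarse}; within the proof of Theorem~\ref{thm:dnsingcells} proper, the only subtle point is the first step, namely the reduction to $T$-fixed loci. One must be slightly careful because $\mathrm{Hilb}(\SC^2/G_\Delta)$ is singular, so one should justify $\chi(X)=\chi(X^T)$ for the $\SC^*$-action on a (possibly singular, non-compact) complex variety with the property that all limits exist — this follows from the general principle that $\chi$ is motivic and a $\SC^*$-action with attracting structure stratifies $X$ into affine bundles over $X^T$, combined with the fact (used already in Corollary~\ref{cor_part_An_coarse} and its type-$A$ analogue) that all one-parameter limits on $\mathrm{Hilb}(\SC^2/G_\Delta)$ exist. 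Everything else is bookkeeping: matching the stratification of Theorem~\ref{thm:assdiag} against the fibering of $\mathrm{Rel}(Y)$ over $\mathcal{Z}_\Delta^0$, and invoking the $0$-weight preservation of $\mathrm{red}$.
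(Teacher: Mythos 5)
Your proposal is correct and follows essentially the same route as the paper: decompose via Theorem~\ref{thm:assdiag}, reduce to the $T$-fixed loci so that $\chi(\mathrm{Hilb}(\SC^2/G_\Delta)_Y)=\chi(W_Y)$, and then sum over the fibres of the reduction map ${\rm red}$ using Corollary~\ref{cor:relsum1}. The only difference is that you spell out the justification of $\chi(X)=\chi(X^T)$ (which in fact needs no limit hypotheses, since the non-fixed locus fibres into $\SC^*$-orbits of Euler characteristic zero), a step the paper's proof leaves implicit.
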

\begin{proof} We use the decomposition of Theorem~\ref{thm:assdiag}. 
For $Y \in{\mathcal Z}_\Delta'$, we have $\mathrm{Hilb}(\SC^2/G_\Delta)_Y^T=W_{Y}$ and thus 
$\chi(W_{Y})=\chi(\mathrm{Hilb}(\SC^2/G_\Delta)_Y)$. Now use Corollary~\ref{cor:relsum1} to sum the Euler characteristics 
of the strata $W_Y$ along the fibres of the combinatorial reduction map ${\rm red}$ of Lemma~\ref{lem:1red}.
\end{proof}

\begin{example}
\label{ex:singr2}
Let $n=4$ and let $Y\in{\mathcal Z}_\Delta^0$ be the distinguished 0-generated Young wall
\begin{center}
\begin{tikzpicture}[scale=0.6, font=\footnotesize, fill=black!20]
 \draw (1, 0) -- (5,0);
  \foreach \x in {1,2,3,4}
    {
      \draw (\x, 0) -- (\x,\x);
    }
  
   \foreach \y in {1,2,3,4,5}
       {
            \draw (\y,\y) -- (6,\y);
       }
\draw (5, 0) -- (5,5);
 \draw (6, 1) -- (6,5);

\foreach \x in {0,1,2,3}
        {
        	\draw (\x+2.5, \x+0.5) node {2};
        	%\draw (\x+0.25,0.65) node {0};
        %	\draw (\x+0.75,0.35) node {1};
        }
  \foreach \x in {0,1}
       {
           \draw (\x+4.5, \x+0.5) node {2};
          }   
     
     \foreach \x in {0,2}
         {
                	\draw (\x+1.5, \x+0.5) node {0};
                	\draw (\x+2.5, \x+1.5) node {1};
                	%\draw (\x+0.25,0.65) node {0};
                %	\draw (\x+0.75,0.35) node {1};
                }
       \draw (5.5, 4.5) node {0};
 \foreach \x in {0,2}
           {
         %  \draw (\x+0.5,1.5) node {2};
           %\draw (\x+0.5,3.5) node {2};
           \draw (\x+3.75,\x+0.75) node {4};
                        	\draw (\x+3.25,\x+0.25) node {3};
                        	\draw (\x+3,\x+1) -- (\x+4,\x+0);
                        	    	%\draw (\x,5) -- (\x+1,6);
           }
  \foreach \x in {1}
             {
           %  \draw (\x+0.5,1.5) node {2};
             %\draw (\x+0.5,3.5) node {2};
             \draw (\x+3.75,\x+0.75) node {3};
                                     	\draw (\x+3.25,\x+0.25) node {4};
                                     	\draw (\x+3,\x+1) -- (\x+4,\x+0);
                          	    	%\draw (\x,5) -- (\x+1,6);
             }
     \draw (5,1)--(6,0)--(5,0);
     \draw (5.25,0.25) node {1};
             
\end{tikzpicture}
\end{center}
The parameter space $Z_Y$ of this Young wall $Y$ is isomorphic to that of Example \ref{ex:4sidepyr}, which in turn is isomorphic to that of Example \ref{ex:3sidepyr}. In particular, $Z_Y\cong \SA^1$. The difference is that in this case the salient blocks are the $0$-labelled blocks at $(0,4)$ and $(1,5)$. 

Denote by $Y_3$ and $Y_4$ the 0-generated non-distinguished Young walls
\begin{center}
\begin{tikzpicture}[scale=0.6, font=\footnotesize, fill=black!20]
 \draw (1, 0) -- (5,0);
  \foreach \x in {1,2,3,4}
    {
      \draw (\x, 0) -- (\x,\x);
    }
  
   \foreach \y in {1,2,3,4,5}
       {
            \draw (\y,\y) -- (6,\y);
       }
\draw (5, 0) -- (5,5);
 \draw (6, 1) -- (6,5);

\foreach \x in {0,1,2,3,4,5}
        {
        	\draw (\x+2.5, \x+0.5) node {2};
        	%\draw (\x+0.25,0.65) node {0};
        %	\draw (\x+0.75,0.35) node {1};
        }
  \foreach \x in {0,1}
       {
           \draw (\x+4.5, \x+0.5) node {2};
          }   
     
     \foreach \x in {0,2,4}
         {
                	\draw (\x+1.5, \x+0.5) node {0};
                	\draw (\x+2.5, \x+1.5) node {1};
                	%\draw (\x+0.25,0.65) node {0};
                %	\draw (\x+0.75,0.35) node {1};
                }
     %  \draw (5.5, 4.5) node {0};
 \foreach \x in {0,2}
           {
         %  \draw (\x+0.5,1.5) node {2};
           %\draw (\x+0.5,3.5) node {2};
           \draw (\x+3.75,\x+0.75) node {4};
                        	\draw (\x+3.25,\x+0.25) node {3};
                        	\draw (\x+3,\x+1) -- (\x+4,\x+0);
                        	    	%\draw (\x,5) -- (\x+1,6);
           }
  \foreach \x in {1}
             {
           %  \draw (\x+0.5,1.5) node {2};
             %\draw (\x+0.5,3.5) node {2};
             \draw (\x+3.75,\x+0.75) node {3};
                                     	\draw (\x+3.25,\x+0.25) node {4};
                                     	\draw (\x+3,\x+1) -- (\x+4,\x+0);
                          	    	%\draw (\x,5) -- (\x+1,6);
             }
     \draw (5,1)--(6,0)--(5,0);
     \draw (5.25,0.25) node {1};
     
      \draw (6,4)--(7,3)--(7,4);
      \draw (6.75,3.75) node {3};
      \draw (7,5)--(8,4)--(8,5);
      \draw (7.75,4.75) node {4};
      \draw (8,6)--(9,5)--(9,6)--(8,6);
      \draw (8.75,5.75) node {3};
      \draw (6,4)--(7,4)--(7,6);
      \draw (6,5)--(8,5)--(8,6)--(6,6)--(6,5);
             
\end{tikzpicture}
\begin{tikzpicture}[scale=0.6, font=\footnotesize, fill=black!20]
 \draw (1, 0) -- (5,0);
  \foreach \x in {1,2,3,4}
    {
      \draw (\x, 0) -- (\x,\x);
    }
  
   \foreach \y in {1,2,3,4,5}
       {
            \draw (\y,\y) -- (6,\y);
       }
\draw (5, 0) -- (5,5);
 \draw (6, 1) -- (6,5);

\foreach \x in {0,1,2,3,4,5}
        {
        	\draw (\x+2.5, \x+0.5) node {2};
        	%\draw (\x+0.25,0.65) node {0};
        %	\draw (\x+0.75,0.35) node {1};
        }
  \foreach \x in {0,1}
       {
           \draw (\x+4.5, \x+0.5) node {2};
          }   
     
     \foreach \x in {0,2,4}
         {
                	\draw (\x+1.5, \x+0.5) node {0};
                	\draw (\x+2.5, \x+1.5) node {1};
                	%\draw (\x+0.25,0.65) node {0};
                %	\draw (\x+0.75,0.35) node {1};
                }
     %  \draw (5.5, 4.5) node {0};
 \foreach \x in {0,2}
           {
         %  \draw (\x+0.5,1.5) node {2};
           %\draw (\x+0.5,3.5) node {2};
           \draw (\x+3.75,\x+0.75) node {4};
                        	\draw (\x+3.25,\x+0.25) node {3};
                        	\draw (\x+3,\x+1) -- (\x+4,\x+0);
                        	    	%\draw (\x,5) -- (\x+1,6);
           }
  \foreach \x in {1}
             {
           %  \draw (\x+0.5,1.5) node {2};
             %\draw (\x+0.5,3.5) node {2};
             \draw (\x+3.75,\x+0.75) node {3};
                                     	\draw (\x+3.25,\x+0.25) node {4};
                                     	\draw (\x+3,\x+1) -- (\x+4,\x+0);
                          	    	%\draw (\x,5) -- (\x+1,6);
             }
     \draw (5,1)--(6,0)--(5,0);
     \draw (5.25,0.25) node {1};
     
      \draw (6,4)--(7,3)--(6,3);
      \draw (6.25,3.25) node {4};
      \draw (7,5)--(8,4)--(7,4);
      \draw (7.25,4.25) node {3};
      \draw (8,6)--(9,5)--(8,5);
      \draw (8.25,5.25) node {4};
      \draw (6,4)--(7,4)--(7,6);
      \draw (6,5)--(8,5)--(8,6)--(6,6)--(6,5);
             
\end{tikzpicture}
\end{center}
We have $Y_3, Y_4\in {\mathcal Z}_\Delta'$ and are in fact $0$-generated, but both violate condition (R3) at their fourth row, so they are not distinguished. In the first step of the reduction algorithm of Lemma~\ref{lem:1red}, we remove the half block of label 3 (resp., 4) from $Y_3$ (resp, $Y_4$). Then we remove the blocks of label 2 and 4 (resp., 2 and 3) from the fifth row. Finally we remove the blocks of 1,2 and 3 (reps., 1,2 and 4) from the sixth row. This shows that both Young walls $Y_3, Y_4$ reduce to~$Y$. It is can be seen that these are in fact all the relatives of~$Y$. As explained in Example \ref{ex:4sidepyr}, when the two generators $(v_0,v_0') \in  V_{0,4,0} \times V_{1,5,0}$ are in a special position such that $L_1 v_0, v_0' \in (L_1 L_3)$, then the ideal $(v_0,v_0')$ has Young wall $Y_3$. Similarly, if $L_1 v_0, v_0' \in (L_1 L_4)$, then $(v_0,v_0')$ has Young wall $Y_4$. In fact we can think of the corresponding strata as gluing to one stratum inside the invariant Hilbert scheme $\mathrm{Hilb}^3(\SC^2/D_4)$, the strata of $Y_3$ and $Y_4$ ''patching in'' the gaps of the stratum of $Y$. At least on the level of Euler characteristics, this is what Proposition~\ref{prop:relsum1} shows in full generality.
\end{example}

%\enlargethispage{\baselineskip}
\section{Type \texorpdfstring{$D_n$}{Dn}: abacus combinatorics}
\label{sect:dnabacus}

\subsection{Young walls and abacus of type  \texorpdfstring{$D_n$}{Dn}}
\label{subsec:Dabacus}
We continue to work with the root system~$\Delta$ of type~$D_n$, and introduce some associated combinatorics. From now on,
we return to the untransformed representation of the type~$D_n$ Young wall pattern introduced in~\ref{subsec:PYW}.

Recalling the Young wall rules (YW1)-(YW4), it is clear that every $Y\in \CZ_\Delta$ can be decomposed as
$Y=Y_1 \sqcup Y_2$, where $Y_1\in \CZ_\Delta$ has full columns only, and $Y_2\in \CZ_\Delta$ has all its columns
ending in a half-block. These conditions define two subsets $\CZ^f_\Delta, \CZ^h_\Delta\subset \CZ_\Delta$.
Because of the Young wall rules, the pair $(Y_1, Y_2)$ uniquely reconstructs $Y$, so we get a bijection
\begin{equation}\label{decomp_D_YW}\CZ_\Delta \longleftrightarrow \CZ^f_\Delta\times \CZ^h_\Delta.\end{equation}

Given a Young wall $Y \in \mathcal{Z}_\Delta$ of type $D_n$, let $\lambda_k$ denote the number of 
blocks (full or half blocks both contributing 1) in the $k$-th vertical 
column. By the rules of Young walls, the resulting positive integers
$\{\lambda_1,\dots,\lambda_r\}$ form a partition $\lambda=\lambda(Y)$ of weight equal to the total weight $|Y|$, 
with the additional property that 
its parts $\lambda_k$ are distinct except when $\lambda_k \equiv 0\;\mathrm{mod}\; (n-1)$. 
Corresponding to the decomposition~\eqref{decomp_D_YW}, we get a decomposition 
$\lambda(Y) = \mu(Y)\sqcup \nu(Y)$. In $\mu(Y)$, no part is congruent to $0$ modulo $(n-1)$, and there 
are no repetitions; all parts in $\nu(Y)$ are congruent to $0$ modulo $(n-1)$ and repetitions are allowed. 
Note that the pair $(\mu(Y), \nu(Y))$ does almost, but not quite, encode $Y$, because of the ambiguity 
in the labels of half-blocks on top of non-complete columns.

We now introduce the type\footnote{Once again, we should call it type $\tilde{D}_n^{(1)}$, but we simplify for 
ease of notation.} $D_n$ abacus, following~\cite{kang2004crystal, kwon2006affine}. 
This abacus is the arrangement of positive integers, called positions, in the following pattern.

\vspace{.1in}
\begin{center}
\begin{tabular}{c c c c c c c c}
1 & \dots & $n-2$ & $n-1$ & $n$ & \dots & $2n-3$ & $2n-2$  \\
$2n-1$ & \dots & $3n-4$ & $3n-3$ & $3n-2$ & \dots & $4n-5$ & $4n-4$\\
\vdots & &\vdots & \vdots & \vdots& &\vdots & \vdots
\end{tabular}
\end{center}
\vspace{.1in}

For any integer $1 \leq k \leq 2n-2$, the set of positions in the $k$-th column of the abacus is the $k$-th 
runner, denoted $R_k$. As in type $A$, positions on the runners are occupied by beads. For $k \not\equiv 0\; \mathrm{mod}\; (n-1)$, 
the runners $R_k$ can only contain normal (uncolored) beads,
%that we will denote by $\bigcirc$, 
with each position occupied 
by at most one bead. On the runners $R_{n-1}$ and $R_{2n-2}$, the beads are colored white and black.
% Ee will denote these by $\bigcirc$ and \tikz\draw[black,fill=gray] (0,0) circle (1ex); respectively. 
An arbitrary number of white or black beads can be put on each such position, 
but each position can only contain beads of the same color.

Given a type $D_n$ Young wall $Y \in \mathcal{Z}_\Delta$, let $\lambda=\mu\sqcup \nu$ be the corresponding 
partition with its decomposition. For each nonzero part $\nu_k$ of $\nu$, set 
\[ n_k=\#\{1 \leq j \leq l(\mu)\;|\;\mu_j < \nu_k \}\]
to be the number of full columns shorter than a given non-full column. 
The abacus configuration of the Young wall $Y$ is defined to be the set of beads placed at positions
$\lambda_1,\dots,\lambda_r$. The beads at positions $\lambda_k=\mu_j$ are uncolored; the
color of the bead at position $\lambda_k=\nu_l$ corresponding to a column $C$ of $Y$ is
\[
\begin{cases}
\textrm{white,} & \textrm{if the block at the top of } C \textrm{ is } 
\begin{tikzpicture}
 \draw (0, 0) -- (0.3,0.3);
 \draw (0, 0) -- (0.3,0);
 \draw (0.3, 0) -- (0.3,0.3);
\end{tikzpicture}
\textrm{ and } n_l \textrm{ is even;} \\ 
\textrm{white,} & \textrm{if the block at the top of } C \textrm{ is } 
\begin{tikzpicture}
 \draw (0, 0) -- (0.3,0.3);
 \draw (0, 0) -- (0,0.3);
 \draw (0, 0.3) -- (0.3,0.3);
\end{tikzpicture}
\textrm{ and } n_l \textrm{ is odd;}\\
\textrm{black,} & \textrm{if the block at the top of } C \textrm{ is } 
\begin{tikzpicture}
 \draw (0, 0) -- (0.3,0.3);
 \draw (0, 0) -- (0,0.3);
 \draw (0, 0.3) -- (0.3,0.3);
\end{tikzpicture}
\textrm{ and } n_l \textrm{ is even;} \\ 
\textrm{black,} & \textrm{if the block at the top of } C \textrm{ is } 
\begin{tikzpicture}
 \draw (0, 0) -- (0.3,0.3);
 \draw (0, 0) -- (0.3,0);
 \draw (0.3, 0) -- (0.3,0.3);
\end{tikzpicture}
\textrm{ and } n_l \textrm{ is odd.}\\
\end{cases}
\]
One can check that the abacus rules are satisfied, that all abacus configurations satisfying the above rules, 
with finitely many uncolored, black and white beads, can arise, and that the Young wall $Y$ is uniquely determined 
by its abacus configuration.

\begin{example} \label{ex:abacusY}
The abacus configuration associated to the Young wall $Y_6$ of Example \ref{ex:singr1} is
\begin{center}
\begin{tikzpicture}

\draw (0,5) node {$R_1$};
%\draw (1,5) node {\dots};
\draw (1,5) node {$R_{2}$};
\draw (2,5) node {$R_{3}$};
\draw (3,5) node {$R_{4}$};
\draw (4,5) node {$R_{5}$};
\draw (5,5) node {$R_{6}$};
%\draw (7,5) node {$R_{2n-2}$};

\draw (-0.5,4.8) -- (5.5,4.8);
\draw (2.5,5.3) -- (2.5,2.5);
\draw (1.5,5.3) -- (1.5,2.5);
\draw (4.5,5.3) -- (4.5,2.5);
\draw (5.5,5.3) -- (5.5,2.5);

\node at ( 0,4.5)  {1};
\node at ( 1,4.5)  {2};
\node at ( 2,4.5)  {3};
\node at ( 3,4.5)  {4};
\node at ( 4,4.5)  {5};
\node at ( 5,4.5) [circle,draw,inner sep=0pt,minimum size=12pt,fill=gray!50] {6};
\node at ( 0,4) [circle,draw,inner sep=0pt,minimum size=12pt] {7};
\node at ( 1,4) [circle,draw,inner sep=0pt,minimum size=12pt] {8};
\node at ( 2,4) {9};
\node at ( 3,4) [circle,draw,inner sep=0pt,minimum size=12pt] {10};
\node at ( 4,4) [circle,draw,inner sep=0pt,minimum size=12pt] {11};
\node at ( 5,4) [circle,draw,inner sep=0pt,minimum size=12pt] {12};
\node at ( 0,3.5)  {13};
\node at ( 1,3.5)  {14};
\node at ( 2,3.5)  {15};
\node at ( 3,3.5)  {16};
\node at ( 4,3.5)  {17};
\node at ( 5,3.5)  {18};
\node at ( 0.5,3) {\vdots};
\node at ( 3.5,3) {\vdots};
%\node at ( 5,3.5) [circle,draw] {1};
\end{tikzpicture}
\end{center}
\end{example}

\subsection{Core Young walls and their abacus representation} 
\label{subsect:coreabacus}

In parallel with the type $A$ story, we now introduce the combinatorics of core Young walls of type $D_n$, and 
the corresponding abacus moves. On the Young wall side, define a {\em bar} to be a connected set of blocks
and half-blocks, with each half-block occuring once and each block occuring twice. A Young wall 
$Y\in\mathcal{Z}_\Delta$ will be called a {\em core} Young wall, if no bar can be removed from it without 
violating the Young wall rules. For an example of bar removal, see \cite[Example 5.1(2)]{kang2004crystal}. Let ${\mathcal C}_\Delta\subset\mathcal{Z}_\Delta$ denote the set of all 
core Young walls. The following statement is the type $D$ analogue of the discussion of~\ref{subsec:Anabacus}. 

\begin{proposition}{\rm (\cite{kang2004crystal, kwon2006affine})} 
\label{prop:dncoredecomp}
Given a Young wall 
$Y\in \mathcal{Z}_\Delta$, any complete sequence of bar removals through Young walls results in the same
core $\mathrm{core}(Y)\in {\mathcal C}_\Delta$, defining a map of sets
\[ \mathrm{core}\colon \mathcal{Z}_\Delta\to {\mathcal C}_\Delta.\]
The process can be described on the abacus, respects the decomposition~\eqref{decomp_D_YW}, and results
in a bijection
\begin{equation} \CZ_\Delta  \longleftrightarrow  {\mathcal P}^{n+1}  \times  \CC_\Delta\label{Dpart_biject}\end{equation}
where ${\mathcal P}$ is the set of ordinary partitions.
Finally, there is also a bijection
\begin{equation}\label{Dcore_biject} \CC_\Delta\longleftrightarrow \SZ^n.\end{equation}
\end{proposition}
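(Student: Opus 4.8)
The plan is to work entirely on the type $D_n$ abacus, on which bar removal becomes a finite list of explicit local moves on beads; the precise dictionary between bars and such moves is the one established in \cite{kang2004crystal, kwon2006affine}, and a core Young wall is exactly one whose abacus configuration admits none of these moves. So the task reduces to classifying the resulting \emph{reduced} abacus configurations and exhibiting a natural identification of the set of them with $\SZ^n$.

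First I would use the part of the proposition already in hand: since $\mathrm{core}$ respects the splitting \eqref{decomp_D_YW}, a core Young wall decomposes as $\CC_\Delta\cong\CC_\Delta^f\times\CC_\Delta^h$ with $\CC_\Delta^\bullet=\CC_\Delta\cap\CZ_\Delta^\bullet$; on the abacus this is the separation of a reduced configuration into its uncoloured beads on the $2n-4$ ordinary runners $R_k$ ($k\not\equiv 0\bmod(n-1)$) and its black/white beads on the two special runners $R_{n-1},R_{2n-2}$. It then suffices to parametrise the two parts. For the ordinary part, the runners pair up into $n-2$ pairs $\{R_k,R_{2n-2-k}\}$ ($1\le k\le n-2$); I would read off the shape of a bar to see that the only moves touching such a pair slide a bead straight up a runner or shift it diagonally between the two paired runners, and that a pair admits no bar removal precisely when its occupied positions form a prescribed coupled staircase, uniquely pinned down by one integer $m_k\in\SZ$ (the signed offset of the staircase from the all-pushed-up position, exactly as the integers $a_i$ arise in type $A$ in~\ref{subsec:Anabacus}). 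This yields $\CC_\Delta^f\leftrightarrow\SZ^{n-2}$. For the special part, a position on $R_{n-1}$ or $R_{2n-2}$ carries an arbitrary monochrome stack, the relevant moves push monochrome stacks up and turn an adjacent white/black pair into a bar, and the colour of each stack is forced by the rule displayed before Example~\ref{ex:abacusY}; I would check that a reduced configuration here is governed by exactly two further integers $m_{n-1},m_n\in\SZ$, the signed colour-weighted heights on the two runners, so that $\CC_\Delta^h\leftrightarrow\SZ^2$. Combining the two gives the bijection $\CC_\Delta\leftrightarrow\SZ^{n-2}\times\SZ^2=\SZ^n$, $Y\mapsto(m_1,\dots,m_n)$; although only the set-level statement is claimed here, the same identification carries the weight of $Y$ to the exponents in the $\overline m$-summand of \eqref{eq:orbi_main_formula}, which is what is ultimately used for Theorem~\ref{thmgenfunct}.

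The main obstacle is the special-runner analysis together with the colour bookkeeping: one must show that the ``no bar removable'' condition collapses the a priori infinite-dimensional space of monochrome stacks on $R_{n-1}$ and $R_{2n-2}$ down to just a two-parameter family, with the two colours contributing genuinely independent copies of $\SZ$ and producing neither extra strata nor accidental coincidences with the ordinary-runner data; handling correctly the cases that distinguish $n$ even from $n$ odd, where $\kappa$ enters the colour rule, is where the care is needed. Once the reduced normal form is pinned down on each runner pair and on the two special runners, injectivity, surjectivity and the weight identity are straightforward. As an alternative to the whole abacus computation, one may instead identify $\CC_\Delta$ with the set of minimal-length coset representatives of $W_{\mathrm{aff}}/W_{\mathrm{fin}}$ for the affine Weyl group of type $\widetilde D_n$, i.e.\ with the coroot lattice $Q^\vee\cong\SZ^n$ of $D_n$.
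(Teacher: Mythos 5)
Your plan only engages with the last assertion, the bijection \eqref{Dcore_biject}, and for that part it follows essentially the same route as the paper: reduce on the abacus, treat separately the $2n-4$ ordinary runners (grouped into the $n-2$ pairs $\{R_s,R_{2n-2-s}\}$, one integer per pair, namely the signed bead count/offset --- the paper's $z_s$) and the two coloured runners $R_{n-1},R_{2n-2}$ (two further integers), giving $\SZ^{n-2}\times\SZ^2$. But the proposition also asserts (i) that every complete sequence of bar removals yields the same core, and (ii) the bijection \eqref{Dpart_biject} $\CZ_\Delta\leftrightarrow{\mathcal P}^{n+1}\times\CC_\Delta$; neither appears in your proposal, and you in fact assume part of the statement when you take ``$\mathrm{core}$ respects the splitting \eqref{decomp_D_YW}'' as already in hand. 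In the paper these points are exactly where the content lies: one shows that on the ordinary runners the moves (B1)--(B2) terminate in a unique right-justified, gap-free configuration and that the removal sequence is recorded by an $(n-2)$-tuple of partitions (\cite{kang2004crystal}), while on the special runners the moves (B3)--(B4) are recorded by a triple of partitions and terminate in data equivalent to a pair of $2$-core partitions (\cite{kwon2006affine}); the $(n-2)+3=n+1$ partitions, with total weight equal to the number of bars removed, are precisely what produce the factor ${\mathcal P}^{n+1}$ and hence the Euler-product denominator used later. Without this bookkeeping of the fibres of $\mathrm{core}$, the proposition you are proving loses the part that is actually used.

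Within the part you do treat, the special-runner analysis is asserted rather than argued, and the asserted normal form is doubtful as stated: move (B4) takes a \emph{pair} of beads on one special position and transfers it to the preceding special position, possibly changing colour to match the beads already there, so the two runners $R_{n-1}$ and $R_{2n-2}$ are coupled and do not contribute two independent ``signed colour-weighted heights'' in any obvious way. The paper sidesteps this by identifying the reduced configuration with a pair of $2$-core partitions (equivalently, the decomposition $\nu=\nu^{(0)}+\nu^{(1)}$ used later in the proof of Theorem~\ref{thm:dnsubst}) and then applying the bijection \eqref{typeA-cores} for $A_1$; if you want your two-integer description you must prove it is equivalent to that, which is exactly the point you flag but do not resolve. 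Your closing alternative --- identifying $\CC_\Delta$ with minimal coset representatives of the affine Weyl group modulo the finite one, i.e.\ with the (co)root lattice $\SZ^n$ --- is a genuinely different and legitimate route to \eqref{Dcore_biject}, but as a one-sentence remark it again yields only the core bijection, not the quotient data of \eqref{Dpart_biject} nor the weight formula \eqref{eq:dncoreweight} that the paper extracts from the explicit abacus normal form.
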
 
\begin{proof} Decompose $Y$ into a pair of Young walls $(Y_1, Y_2)$ as above. Let us first consider $Y_1$. 
On the corresponding runners $R_k$, $k \not\equiv 0\; \textrm{ mod}\; (n-1)$,
the following steps correspond to bar removals \cite[Lemma 5.2]{kang2004crystal}.
\begin{enumerate}
 \item[(B1)] If $b$ is a bead at position $s>2n-2$, and there is no bead at position $(s-2n+2)$, then move $b$ 
one position up and switch the color of the beads at 
positions $k$ with $k \equiv 0\; \textrm{ mod}\; (n-1)$, $s-2n+2 < k < s$.
 \item[(B2)] If $b$ and $b'$ are beads at position $s$ and $2n-2-s$ ($1 \leq s \leq n-2$) respectively, then 
remove $b$ and $b'$ and switch the color of the beads 
at positions $k \equiv 0\; \textrm{ mod}\; (n-1), s< k < 2n-2-s$.
\end{enumerate}
Performing these steps as long as possible results in a configuration of beads on the runners $R_k$ with 
$k \not\equiv 0\; \textrm{ mod}\; (n-1)$ with no gaps from above, and for $1 \leq s \leq n-2$, beads on only one
of $R_s$, $R_{2n-2-s}$. This final configuration can be uniquely described by an ordered set of 
integers $\{z_1, \ldots, z_{n-2}\}$, $z_s$ being the number of beads on $R_s$ minus the number of beads 
on $R_{2n-2-s}$ \cite[Remark 3.10(2)]{kwon2006affine}. In the correspondence \eqref{Dcore_biject} this gives $\SZ^{n-2}$. It turns out that the reduction steps in this part of the algorithm can be encoded 
by an $(n-2)$-tuple of ordinary partitions, with the summed weight of these 
partitions equal to the number of bars removed \cite[Theorem 5.11(2)]{kang2004crystal}. 

Let us turn to $Y_2$, represented on the runners $R_k$, $k \equiv 0\; \textrm{ mod}\; (n-1)$. 
On these runners, the following steps correspond to bar removals \cite[Sections 3.2 and 3.3]{kwon2006affine}.
\begin{enumerate} 
\item[(B3)] Let $b$ be a bead at position $s\geq 2n-2$. If there is no bead at position $(s-n+1)$, and the beads at position $(s-2n+2)$ are of the same color as $b$, then shift $b$ up to position $(s-2n+2)$.
 \item[(B4)] If $b$ and $b'$ are beads at position $s\geq n-1$, then move them up to position $(s-n+1)$. If $s-n+1>0$ and this position already contains beads, then $b$ and $b'$ take that same color.
\end{enumerate}
During these steps, there is a boundary condition: there is an imaginary position $0$ in the rightmost column, 
which  is considered to contain unvisible white beads; placing a bead there means 
that this bead disappears from the abacus. 
It turns out that the reduction steps in this part of the algorithm can be described 
by a triple of ordinary partitions, again with the summed weight of these 
partitions equal to the number of bars removed \cite[Proposition 3.6]{kwon2006affine}. On the other hand, the final result can be encoded by a pair of 
$2$-core partitions (see \ref{subsec:Anabacus}).

The different bar removal steps (B1)-(B4) construct the map $c$ algorithmically and uniquely. The stated 
facts about parameterizing the steps prove the existence of the bijection~\eqref{Dpart_biject}. 
To complete the proof of~\eqref{Dcore_biject}, we only need to remark further that the set  
of $2$-core partitions, in our language $A_1$-core partitions, is in bijection with the set of integers
by bijection~\eqref{typeA-cores} in~\ref{subsec:Anabacus} (see also \cite[Remark 3.10(2)]{kwon2006affine}). This gives the remaining $\SZ^{2}$ factor in the bijection \eqref{Dcore_biject}.
\end{proof}

\begin{example}
The abacus configuration associated to the core of the Young wall $Y_6$ of Examples \ref{ex:singr1} and \ref{ex:abacusY} is
\begin{center}
\begin{tikzpicture}

\draw (0,5) node {$R_1$};
%\draw (1,5) node {\dots};
\draw (1,5) node {$R_{2}$};
\draw (2,5) node {$R_{3}$};
\draw (3,5) node {$R_{4}$};
\draw (4,5) node {$R_{5}$};
\draw (5,5) node {$R_{6}$};
%\draw (7,5) node {$R_{2n-2}$};

\draw (-0.5,4.8) -- (5.5,4.8);
\draw (2.5,5.3) -- (2.5,3);
\draw (1.5,5.3) -- (1.5,3);
\draw (4.5,5.3) -- (4.5,3);
\draw (5.5,5.3) -- (5.5,3);

\node at ( 0,4.5)  {1};
\node at ( 1,4.5)  {2};
\node at ( 2,4.5)  {3};
\node at ( 3,4.5)  {4};
\node at ( 4,4.5)  {5};
\node at ( 5,4.5) [circle,draw,inner sep=0pt,minimum size=12pt,fill=gray!50] {6};
\node at ( 0,4) {7};
\node at ( 1,4) {8};
\node at ( 2,4) {9};
\node at ( 3,4) {10};
\node at ( 4,4) {11};
\node at ( 5,4) [circle,draw,inner sep=0pt,minimum size=12pt] {12};
%\node at ( 0,3.5)  {13};
%\node at ( 1,3.5)  {14};
%\node at ( 2,3.5)  {15};
%\node at ( 3,3.5)  {16};
%\node at ( 4,3.5)  {17};
%\node at ( 5,3.5)  {18};
\node at ( 0.5,3.5) {\vdots};
\node at ( 3.5,3.5) {\vdots};
%\node at ( 5,3.5) [circle,draw] {1};
\end{tikzpicture}
\end{center}
\end{example}

We next determine the multi-weight of a Young wall $Y$ in terms 
of the bijections \eqref{Dpart_biject}-\eqref{Dcore_biject}. 
The quotient part is easy: the multi-weight of each bar is $(1,1,2,\ldots, 2, 1,1)$, so in complete 
analogy with the type $A$ situation,
the contribution of the $(n+1)$-tuple of partitions to the multi-weight is easy to compute. Turning to 
cores, under the bijection $\CC_\Delta\leftrightarrow \SZ^n$, the total weight of a core Young wall $Y\in \CC_\Delta$ corresponding 
to $(z_1, \ldots, z_n)\in\SZ^n$ is calculated in \cite[Remark 3.10]{kwon2006affine}: 
\begin{equation} 
\label{eq:dncoreweight}
|Y|=\frac{1}{2}\sum_{i=1}^{n-2} \left((2n-2)z_i^2-(2n-2i-2)z_i\right)+(n-1)\sum_{i=n-1}^n\left(2z_{i}^2+z_{i}\right).
\end{equation}
A refinement of this formula calculates the multi-weight of $Y$.

\begin{theorem} Let $q=q_0q_1q_2^2\dots q_{n-2}^2q_{n-1}q_n$, corresponding to a single bar. 
\label{thm:orbiserdn}
\begin{enumerate}
\item Composing the bijection~\eqref{Dcore_biject} with an appropriate $\SZ$-change of coordinates in the lattice~$\SZ^n$, 
the multi-weight of a core Young wall $Y\in\CC_\Delta$ corresponding to an element $\overline{m}=(m_1,\dots,m_n) \in \SZ^n$
can be expressed as
\[q_1^{m_1}\cdot\dots\cdot q_n^{m_n}(q^{1/2})^{\overline{m}^\top \cdot C \cdot \overline{m}},\]
where $C$ is the Cartan matrix of type $D_n$.

\item The multi-weight generating series 
\[ Z_{\Delta}(q_0,\dots,q_n) = \sum_{Y\in \CZ_\Delta} \underline{q}^{\underline{wt}(Y)}\]
of Young walls for $\Delta$ of type $D_n$ can be written as
\begin{equation*} Z_{\Delta}(q_0,\dots,q_n)=\frac{\displaystyle\sum_{ \overline{m}=(m_1,\dots,m_n) \in \SZ^n }^\infty q_1^{m_1}\cdot\dots\cdot q_n^{m_n}(q^{1/2})^{\overline{m}^\top \cdot C \cdot \overline{m}}}{\displaystyle\prod_{m=1}^{\infty}(1-q^m)^{n+1}}.
\end{equation*}

\item The following identity is satisfied between the coordinates $(m_1,\dots,m_n)$ and $(z_1, \ldots, z_n)$ on~$\SZ^n$:
\begin{equation*} 
\sum_{i=1}^n m_i = -\sum_{i=1}^{n-2}(n-1-i)z_i-(n-1)c(z_{n-1}+z_n)-(n-1)b.
\end{equation*}
Here $z_1+\dots+z_{n-2}=2a-b$ for integers $a \in \SZ$, $b \in \{0,1\}$, and $c=2b-1 \in \{-1,1\}$.
\end{enumerate}
\end{theorem}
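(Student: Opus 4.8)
The three parts are really facets of one computation: turning the abacus encoding of Proposition~\ref{prop:dncoredecomp} into the explicit quadratic form of the affine $\widetilde D_n$ theta function. The plan is to work through the bijection~\eqref{Dcore_biject} in two pieces, matching the decomposition~\eqref{decomp_D_YW}, and then assemble the weights. For part (1), I would start from the coordinates $(z_1,\dots,z_n)\in\SZ^n$ on $\CC_\Delta$ produced in the proof of Proposition~\ref{prop:dncoredecomp}: the $z_1,\dots,z_{n-2}$ come from the runners $R_k$ with $k\not\equiv 0$, and $z_{n-1},z_n$ from the two $2$-core (i.e. $A_1$-core) partitions obtained on $R_{n-1}$ and $R_{2n-2}$. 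The total weight in these coordinates is formula~\eqref{eq:dncoreweight}. The first task is to \emph{refine} \eqref{eq:dncoreweight} to a multi-weight: each reduction step of type (B1)--(B4) moves a bead past a fixed, explicitly known set of blocks in the Young wall pattern, so the label-by-label count of the blocks swept out is a bookkeeping exercise. This gives $\underline{wt}(Y)$ as an explicit inhomogeneous quadratic vector-valued function of $(z_1,\dots,z_n)$. Then I would exhibit a unimodular change of variables $\overline m = A\,\overline z$ (with $A\in GL_n(\SZ)$) after which $q_1^{m_1}\cdots q_n^{m_n}(q^{1/2})^{\overline m^\top C\overline m}$ reproduces exactly the computed $\underline q^{\underline{wt}(Y)}$, where $q=q_0q_1q_2^2\cdots q_{n-2}^2q_{n-1}q_n$. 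The natural guess for $A$ is the one dictated by the identification of $\SZ^n$ with the coroot lattice of $\widetilde D_n$ via the simple coroots, under which~\eqref{eq:dncoreweight} becomes the standard level-one quadratic form $\tfrac12\overline m^\top C\overline m$ up to the linear correction absorbed into the monomial $q_1^{m_1}\cdots q_n^{m_n}$; verifying that the specific $C$ of type $D_n$ and this specific $A$ match the refined weight is the crux of (1).

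For part (2), once (1) is in hand the argument is formal and parallels the type $A$ case (cf.~\eqref{eq:orbiseran} and the end of~\ref{subsec:Anabacus}). The bijection~\eqref{Dpart_biject}, $\CZ_\Delta\longleftrightarrow \mathcal P^{n+1}\times\CC_\Delta$, is weight-additive: each bar removed has multi-weight $(1,1,2,\dots,2,1,1)$, hence contributes a factor $q$ to $\underline q^{\underline{wt}}$, and the $(n+1)$-tuple of ordinary partitions recording the removals contributes the generating function $\prod_{m\geq 1}(1-q^m)^{-(n+1)}$ for $(n+1)$-tuples of partitions. Combined with the core contribution computed in (1), we get
\[
Z_\Delta(q_0,\dots,q_n)=\frac{\sum_{\overline m\in\SZ^n} q_1^{m_1}\cdots q_n^{m_n}(q^{1/2})^{\overline m^\top C\overline m}}{\prod_{m=1}^\infty (1-q^m)^{n+1}},
\]
as claimed. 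The only point needing care is that the weight-additivity of~\eqref{Dpart_biject} really does hold with these exact multi-weights for \emph{all four} move types (B1)--(B4), including the color-switching and the boundary condition at the imaginary position $0$; this should be checked directly against the bar-removal description in~\cite{kang2004crystal,kwon2006affine}.

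For part (3), I would compute $\sum_i m_i$ in two steps. First, using the change of variables $A$ from part~(1), express $\sum_i m_i$ as an explicit $\SZ$-linear functional of $(z_1,\dots,z_n)$; since $\sum_i(\cdot)$ is just pairing $\overline m$ with the all-ones vector, this is $A^\top\mathbf 1$ read off coordinatewise, giving a term $-\sum_{i=1}^{n-2}(n-1-i)z_i$ plus a contribution from $z_{n-1},z_n$. Second, the $z_{n-1},z_n$ part is governed by the two $A_1$-cores, and here the parity bookkeeping from the color-switching in steps (B1)--(B4) enters: the number of color switches, and hence the effective shift, depends on $z_1+\dots+z_{n-2}$ through its parity. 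Writing $z_1+\dots+z_{n-2}=2a-b$ with $b\in\{0,1\}$ and setting $c=2b-1\in\{-1,1\}$ packages exactly this parity, and I expect the $z_{n-1},z_n$-contribution to come out as $-(n-1)c(z_{n-1}+z_n)-(n-1)b$, matching the claimed identity. The main obstacle across all three parts is the same: correctly tracking the color (black/white) bookkeeping through the bar-removal algorithm, since an off-by-a-sign or off-by-a-parity error there propagates into the quadratic form in (1), the additivity in (2), and the parity-dependent linear term in (3). I would isolate this by first doing the $n=4$ case completely by hand (using Examples~\ref{ex:singr1} and~\ref{ex:abacusY}), confirming the formula there, and then running the general bookkeeping.
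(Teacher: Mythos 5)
Your plan follows essentially the same route as the paper: part (2) is deduced formally from part (1) together with the bijection~\eqref{Dpart_biject} and the fact that each bar has multi-weight $(1,1,2,\dots,2,1,1)$, while parts (1) and (3) rest on refining the weight formula~\eqref{eq:dncoreweight} to a multi-weight and performing an explicit $\SZ$-linear change of coordinates to the $D_n$ Cartan form. Note that the paper itself declares these coordinate-change and multi-weight computations ``somewhat involved but routine'' and omits them (deferring to the first author's thesis), so your outline matches the paper's argument at the same level of detail; the bookkeeping you flag (colors, parities, the boundary condition at position $0$) is precisely the content that remains to be carried out.
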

\begin{proof}
The coordinate change $(z_1, \ldots, z_n) \mapsto (m_1,\dots,m_n)$ and the multiweight formula of (1), as well as (3), 
follow from somewhat involved but routine calculations which we omit; details can be found in the thesis~\cite{gyenge2016phdthesis} of the first named author. (2) clearly follows from (1) and the preceeding discussion.  
\end{proof}

\subsection{0-generated Young walls and their abacus representations} 

In this section, we characterize the abacus configurations corresponding to Young walls in the 
sets~${\mathcal Z}_\Delta^0$ and~$\mathcal{Z}_{\Delta}^1$ defined in~\ref{subsect:dis0gen}.

Recall conditions (R1)--(R3) on Young walls $Y\in{\mathcal Z}_\Delta$ from~\ref{subsect:dis0gen}. Recall also 
that a Young wall corresponds uniquely to an abacus configuration where the beads are placed at the 
positions $\lambda_1, \dots, \lambda_r$. Finally recall that the quantity $n_k$
denotes the number of full columns shorter than a given non-full column of height $\lambda_k$. 

\begin{lemma} 
\label{lem:maxinvcells}
Conditions (R1)-(R2) on Young walls are equivalent to the following conditions for an abacus configuration.
\begin{enumerate}
\item[{\rm (D1)}] In each row, the rightmost bead is always on the $(2n-2)$-nd ruler, and either all the beads of 
the row are at this position, or the number of beads in this position is odd.
\item[{\rm (D2)}] If a row ends with a white (resp., black) bead corresponding to a column of height $\lambda_k$, then 
$k+n_k$ must be odd (resp.~even). If several beads are placed on this position, which is allowed since it 
is on the ruler $R_{2n-2}$, then this condition refers to the smallest possible $k$.
\item[{\rm (D3)}] The total number of beads in the whole abacus is even, or the total number of beads on the rulers $R_1,\ldots, R_{n-1}$ in the first row is $n-2$.
\item[{\rm (D4)}] The beads on the rulers $R_n\ldots, R_{2n-3}$ are pushed to the right as much as possible in each
row. In any given row, the positions on the rulers $R_1, \ldots, R_{n-1}$ are empty unless all the positions 
on the rulers $R_n, \ldots, R_{2n-2}$ are filled. 
\item[{\rm (D5)}]  The beads on the rulers $R_1 \ldots R_{n-1}$ in any given row 
are either all on the ruler $R_{n-1}$, or on the rules $R_1 \ldots R_{n-2}$, and pushed to the right as much as possible. 
\end{enumerate}
Condition (R3) is equivalent to the following condition. 
\begin{enumerate}
\item[{\rm (D6)}]  Let $s$ be the total number of beads on the rulers $R_1,\ldots, R_{n-1}$ in any given row. 
\begin{enumerate}
\item[(a)] If  $s>n-2$, then all these beads are on $R_{n-1}$.
\item[(b)] If $s\leq n-2$, then all these beads are on the rulers $R_1, \ldots, R_{n-2}$, pushed to the right.
\end{enumerate}
\end{enumerate}
Thus $0$-generated Young walls $Y\in \mathcal{Z}_{\Delta}^1$ correspond to abacus configurations satisfying (D1)-(D5), whereas distinguished $0$-generated Young walls $Y\in \mathcal{Z}_{\Delta}^0$  correspond to those satisfying (D6) also. 
\end{lemma}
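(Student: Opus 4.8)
The plan is to work directly with the column description of a Young wall and push the conditions through the explicit dictionary of~\ref{subsec:Dabacus} between Young walls and abacus configurations. Recall that a Young wall $Y$ is encoded by the multiset of column heights $\{\lambda_1,\dots,\lambda_r\}$ together with, for each column $C$ of height $\lambda_k\equiv 0\bmod(n-1)$, the orientation of the half-block on top of $C$; the corresponding bead sits at position $\lambda_k$, and its colour is determined by that orientation and the parity of $n_k$. Thus every assertion about the abacus is an assertion about the sequence of column heights and the top-block orientations, and conversely. First I would set up this translation carefully, including the boundary conventions (the invisible white beads at position $0$) and the behaviour of the involution $\kappa$ when $n$ is odd, so that (D1)--(D6) become unambiguous statements about the pair $(\lambda,\text{orientations})$.

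The next step is to express the notions of a salient block and of a (directly) supported salient block, as defined in~\ref{sec:proof:orbicells} and~\ref{subsec:support}, in the same language. A block or half-block of label $j$ is salient precisely when its absence from $Y$ is not forced by the rows below it and it is leftmost in its row with this property; translating through the shape of the Young wall pattern of type $D_n$, this becomes a condition comparing the height $\lambda_k$ of a column to the heights $\lambda_{k-1},\lambda_{k+1}$ of its neighbours together with their residues modulo $n-1$. In particular, a salient block of a middle label $2\le j\le n-2$ arises exactly when some column height drops by a non-half amount at an interior position of a run of full blocks; forbidding this, which is condition (R2), is what forces the beads on the runners $R_n,\dots,R_{2n-3}$ to be right-justified and the runners $R_1,\dots,R_{n-1}$ to be empty unless $R_n,\dots,R_{2n-2}$ are filled --- exactly (D4) --- while (D5) records the residual freedom on $R_1,\dots,R_{n-1}$. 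Likewise I would show that the requirement in (R1) that every salient half-block of label $n-1$ or $n$ lie in a directly supported salient block-pair translates into (D1)--(D3): (D1) records that the rightmost bead of each row lies on $R_{2n-2}$ with the odd-count coming from the block-pair structure, (D2) is the colour/orientation constraint (the parity condition on $k+n_k$) forced by the supporting rules, and (D3) is the global parity of the total bead number forced by directedness of the support.

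With (R1)--(R2) $\Leftrightarrow$ (D1)--(D5) established, it remains to match the length condition (R3) with (D6). Here I would argue row by row: a maximal consecutive series of columns of equal height ending in half $n-1$/$n$-blocks corresponds on the abacus to a block of beads on the runners $R_1,\dots,R_{n-1}$ in a fixed row, and the number $s$ of such beads is exactly the length of that series; the Young wall rule dictating when such a run must begin with a block labelled $1$, and the cut-off at $n-2$ (resp.\ $n-1$), then translate verbatim into the dichotomy (D6)(a)--(b). The two halves of the final sentence of the lemma follow by combining the equivalences.

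The step I expect to be the main obstacle is the precise matching of the \emph{supporting} combinatorics with the colour/parity bookkeeping on the abacus: the definitions of direct support and the supporting rules, together with the interplay between the orientation of a top half-block and the parity of $k+n_k$, are exactly what distinguishes type $D$ from type $A$, and the presence of $\kappa$ for odd $n$ adds further case distinctions. Fortunately all of this is local --- each implication only involves a column, its immediate neighbour, and the count $n_k$ of shorter full columns --- so once the dictionary is pinned down the verification of each implication is a finite case check; the routine but lengthy details of these checks can be deferred to the thesis~\cite{gyenge2016phdthesis} of the first named author.
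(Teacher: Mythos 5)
Your overall strategy is the same as the paper's: translate everything through the dictionary between Young walls (column heights plus top half-block orientations) and abacus configurations, identify which salient blocks can occur, and verify each of (D1)--(D6) by a local case analysis; (D6) versus (R3) is indeed a direct row-by-row match, and your identification of $s$ with the length of an equal-height series is correct. The organizing observation you should make explicit, and which the paper uses as the first step, is that under (R1)--(R2) the only salient blocks are label-$0$ half-blocks and directly supported salient block-pairs of label $n-1/n$, and that a salient block corresponds to the first bead of a consecutive run of beads on the abacus.

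However, several of your specific attributions are wrong as stated and would fail if you tried to prove them literally. Condition (D2) is not ``forced by the supporting rules'' of (R1): it encodes the fact that a salient label-$0$ half-block is an upper triangle in odd columns and a lower triangle in even columns of the pattern, combined with the definition of the bead colour via the parity of $n_k$. Condition (D3) is not ``the global parity forced by directedness of the support''; it follows from the absence of label-$1$ salient blocks (part of (R2)): since the bottom row of the pattern alternates $0$ and $1$ half-blocks, an odd total number of columns is only possible when the bottom columns form the staircase of heights $1,2,\dots$, which is exactly the case $t_1-l_1=2n-4$, i.e.\ $s=n-2$ in the first abacus row. Similarly, the odd bead count in (D1) comes from the alternating orientations of consecutive label-$0$ salient blocks sitting over a run of equal-height columns, not from the $n-1/n$ block-pair structure; the block-pairs of (R1) enter instead through the exception in (D4)--(D5), where a hole on $R_{n-1}$ is permitted precisely because it corresponds to an allowed salient block-pair of label $n-1/n$. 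With these corrections your planned case checks line up with the paper's argument; note also that the paper carries out these verifications directly rather than deferring them.
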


\begin{proof}
Two kinds of salient blocks can appear in a Young wall that satisfies (R1)-(R2):
\begin{itemize}
\item label 0 half-blocks,
\item and salient block-pairs of label $n-1/n$.
\end{itemize}
As in the type $A$ case a salient block corresponds to the first bead in a consecutive series of beads in the abacus. More precisely, if there are several columns of height $\lambda_k$, or equivalently, if there are several beads placed on the position $\lambda_k$, then the salient block corresponds to that column of height $\lambda_k$ which has the smallest possible index $k$ among these.

The label 0 blocks always correspond to positions which are on the ruler $R_{2n-2}$. In the odd columns of the type $D$ pattern they are of the shape \begin{tikzpicture}
 \draw (0, 0) -- (0.3,0.3);
 \draw (0, 0) -- (0,0.3);
 \draw (0, 0.3) -- (0.3,0.3);
\end{tikzpicture}
while in the even columns they are of the shape \begin{tikzpicture}
 \draw (0, 0) -- (0.3,0.3);
 \draw (0, 0) -- (0.3,0);
 \draw (0.3, 0) -- (0.3,0.3);
\end{tikzpicture}.
Condition (D2) encodes the fact, that the salient blocks of label 0 are upper triangles in odd columns and lower triangles 
in even columns, and that the color of the beads corresponding to them is also affected by the parity of the 
appropriate $n_k$.

If there is a salient block of label 0, then some columns of the same height, let's say, $\lambda_k$, can follow it. If the first column after them has height $\lambda_k-1$ then on its top there is again a salient block of label 0. This block can only have the opposite orientation than the aforementioned salient block, hence the number of columns of height $\lambda_k$ in this case can only be odd. This gives condition (D1).

Condition (D3) follows again from the absence of label 1 salient blocks. To see this recall that in the bottom row of the type $D$ pattern there are  half blocks which have label 0 in the odd columns and have label 1 in the even columns. Since there are no salient blocks of label $1$ in $Y$, this total number of columns can only be odd if the last column has height 1, the column to the left of it has height 2, and so on. This is can only happen when in the bottom row $s = n-2$.

The fact that there is no salient block of label $2, \dots, n-2$ implies that for each bead on the rulers $R_1, \dots, R_{2n-1}$ there has to be a block placed on its right. The only exception is the ruler $R_{n-2}$. There cannot be any bead on this ruler, except when there is a salient block pair of label $n-1/n$ which corresponds to a hole on $R_{n-1}$. These considerations imply conditions (D4) and (D5).

Condition (D6) corresponds directly to property (R3).
\end{proof}

Given $Y\in{\mathcal Z}_\Delta^0$, let $t_i$ denote the total number of beads in the $i$-th row of its abacus 
representation, and $l_i$ the number of beads in the rightmost position of the $i$-th row. We obtain a sequence of 
pairs $(t_i,l_i)_{i \in \SZ_+}$, only finitely many of which do not equal $(0,0)$. 

\begin{corollary}
\label{lem:invcelldesc}
Given $Y\in{\mathcal Z}_\Delta^0$, the associated sequence of pairs $(t_i,l_i)_{i \in \SZ_+}$ satisfies the following 
conditions. 
\begin{enumerate}
% \item $t_i,l_i \in \SN$;
\item For all $i$, $0 \leq l_i \leq t_i$.
\item For all $i$, if $t_i > 0$, then either $l_i=t_i$ is even, or $l_i$ is odd.
\item Either $\sum_i t_i$ is even, or $t_1-l_1=2n-4$.
\end{enumerate}
Conversely, any sequence  $(t_i,l_i)_{i \in \SZ_+}$ satisfying these conditions arises as a sequence associated to at 
least one Young wall 
$Y\in {\mathcal Z}_\Delta^0$. More precisely, the number of different Young walls $Y\in {\mathcal Z}_\Delta^0$ corresponding 
to any given sequence is $2^m$, where $m$ is the number of indices $i$ such that $t_i-l_i > 2n-2$. All Young walls~$Y$ 
corresponding to a single sequence have the same multi-weight, when the weights for labels $n-1$ and $n$ are counted 
together. 
\end{corollary}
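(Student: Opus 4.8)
The plan is to deduce this corollary directly from Lemma~\ref{lem:maxinvcells}, translating conditions (D1)--(D6) on the abacus of an element $Y\in\mathcal{Z}_\Delta^0$ into the sequence $(t_i,l_i)_{i\in\SZ_+}$ that records only the row-totals and the multiplicity at the last position. First I would observe that in a configuration satisfying (D4)--(D6), the data of each row is almost determined by the pair $(t_i,l_i)$: by (D4) all the beads on rulers $R_n,\dots,R_{2n-3}$ are pushed right, by (D5)--(D6) the beads among $R_1,\dots,R_{n-1}$ are either all on $R_{n-1}$ or else filling $R_1,\dots,R_{n-2}$ from the right, and by (D6)(b) the case distinction is governed by whether $s_i\le n-2$, where $s_i$ is the number of beads on $R_1,\dots,R_{n-1}$ in row $i$. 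The only genuine ambiguity left is the \emph{color} of the beads on $R_{n-1}$ and $R_{2n-2}$; condition (D2) fixes this color in terms of the parity of $k+n_k$, \emph{except} that when a single row carries a long string on $R_{n-1}$ (i.e. $t_i-l_i>2n-2$, so that there is a ``wrap-around'' onto a lower imaginary row within the same block of $R_{n-1}$ positions) there are two admissible colorings. Counting these free binary choices gives the factor $2^m$.

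Next I would read off the three numbered conditions. Condition (1), $0\le l_i\le t_i$, is immediate since $l_i$ counts beads in one position of row $i$ and $t_i$ counts all of them. Condition (2) is a direct restatement of (D1): in each nonempty row the rightmost beads sit on $R_{2n-2}$, and either the whole row is concentrated there (so $l_i=t_i$, forced even by (D1)) or the number of beads at that position is odd (so $l_i$ odd). Condition (3) is the translation of (D3): either the total bead number $\sum_i t_i$ is even, or in the first row all of $R_1,\dots,R_{n-1}$ is filled, which means $s_1=n-1$ and hence $t_1-l_1 = (2n-2)-? $; I would compute carefully that ``$R_1,\dots,R_{n-1}$ in the first row contains $n-2$ beads'' in the normalization used here is equivalent to $t_1-l_1=2n-4$, accounting for the hole on $R_{n-1}$ that is always present in that extremal case. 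For the converse, given an abstract sequence satisfying (1)--(3) I would build the abacus row by row, placing $t_i-l_i$ beads on rulers $R_n,\dots,R_{2n-3}$ and then (if $t_i-l_i>n-2$) continuing onto $R_1,\dots,R_{n-2}$ in the forced way, with $l_i$ beads on $R_{2n-2}$, choosing colors so as to satisfy (D2) — possible because (D2) only constrains the parity of a single index per string, and the ``smallest $k$'' clause leaves exactly the claimed binary choices. One then checks (D1)--(D6) hold, so the configuration does correspond to some $Y\in\mathcal{Z}_\Delta^0$.

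Finally, for the multi-weight claim, I would note that the total weight $|Y|$ is $\sum_k \lambda_k$, which depends only on the bead \emph{positions}, not their colors; and that the individual weights $\mathrm{wt}_j(Y)$ for $j\in\{2,\dots,n-2\}$ and for $j\in\{0,1\}$ likewise depend only on positions (the label of a full block or of a $0/1$ half-block is determined by the column index and height via the type $D$ pattern). The only labels sensitive to the color choice are $n-1$ and $n$, which get swapped when a color is flipped; hence $\mathrm{wt}_{n-1}(Y)+\mathrm{wt}_n(Y)$ is color-independent, and all $2^m$ Young walls attached to one sequence share the same multi-weight after merging the $n-1$ and $n$ entries. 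The main obstacle, and the step I would spend the most care on, is the precise bookkeeping of the ``wrap-around'' on ruler $R_{n-1}$: pinning down exactly when a long run of beads on $R_{n-1}$ forces or frees a color choice, and matching the count of such runs (per row) with the exponent $m = \#\{i: t_i-l_i>2n-2\}$, together with getting the boundary normalization in condition (3) exactly right (the difference between ``$n-2$ beads on $R_1,\dots,R_{n-1}$'' and ``$t_1-l_1=2n-4$'') — these are the places where an off-by-one is easy to commit and must be verified against the conventions of~\ref{subsec:Dabacus} and Lemma~\ref{lem:maxinvcells}.
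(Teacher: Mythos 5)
Your overall route is the one the paper takes: read conditions (1)--(3) off the definition of $(t_i,l_i)$ together with (D1) and (D3) of Lemma~\ref{lem:maxinvcells}, reconstruct each abacus row from $(t_i,l_i)$ using (D2), (D4)--(D6), locate the only remaining freedom in the colour of beads on the coloured runners, and obtain the multiweight claim from the fact that flipping a colour on $R_{n-1}$ only trades labels $n-1$ for labels $n$. The derivation of (1)--(3), including your corrected normalization that the second alternative of (D3) amounts to $t_1-l_1=2n-4$, and the multiweight argument agree with the paper's (terse) proof.

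The gap is in the one step that carries the quantitative content, namely the exponent $m=\#\{i:t_i-l_i>2n-2\}$. First, the mechanism you propose for the binary choice --- a ``wrap-around onto a lower imaginary row within the same block of $R_{n-1}$ positions'' --- does not exist: each abacus row contains exactly one position on $R_{n-1}$, and since $R_{n-1}$ is a coloured runner this single position may carry arbitrarily many beads, all of one colour; the only freedom is one colour bit per row having beads on $R_{n-1}$. Second, and more seriously, beads are forced onto $R_{n-1}$ as soon as $t_i-l_i>2n-4$: the uncoloured runners $R_1,\dots,R_{n-2},R_n,\dots,R_{2n-3}$ hold at most $2n-4$ beads in a row, and by (D5)/(D6) beads cannot sit on $R_{n-1}$ and on $R_1,\dots,R_{n-2}$ simultaneously. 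So your argument, as written, would already produce a colour choice for rows with $t_i-l_i\in\{2n-3,2n-2\}$, i.e.\ an exponent with cutoff $2n-4$ rather than the stated $2n-2$. Explaining why these boundary rows do \emph{not} double the number of Young walls is exactly the content of the threshold; it must come from the finer clause of (R3) (``longer than $n-2$, or $n-1$ if the length of the rows is $n-1$ and the last one starts with a block labelled $1$'') and the Young wall structure of the corresponding columns, not from (D2), which constrains only the colour of the rightmost bead of a row, i.e.\ a bead on $R_{2n-2}$, never one on $R_{n-1}$. You flag this step yourself as the one needing the most care, but the plan contains no argument that would yield the cutoff $2n-2$, so the $2^m$ claim is not established. (A small additional slip: (D1) does not force $l_i=t_i$ to be even when all beads of a row sit at the last position; that case with $l_i$ odd is simply subsumed in the ``$l_i$ odd'' alternative of condition (2).)
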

\begin{proof}
Condition (1) is clear from the definition of $(t_i,l_i)$. Condition (2) follows from (D1) above. Condition (3) is equivalent to condition (D3).

Conversely, given a sequence $(t_i,l_i)_{i \in \SZ_+}$ satisfying conditions (1)-(3), we can reconstruct a
corresponding $Y\in {\mathcal Z}_\Delta^0$ in its abacus representation as follows. 
On the $i$-th row, $l_i$ beads have to be put on the last position; (D1) is satisfied because of (1). They are 
white if $\sum_{j<i} t_j + \sum_{j>i, t_j \not\equiv 0\;\mathrm{mod}\;n-1} t_j\equiv 1 \; \mathrm{mod}\; 2$, and black otherwise; 
this is just a reformulation of (D2). (D3) is satisfied because of (3). At 
most one bead can be put on each ruler between $R_n$ and $R_{2n-3}$, pushed to the right as much as possible; this is (D4). 
If $t_i-l_i \leq 2n-2$, then the rest of the beads fill up the rulers between $R_1$ and $R_{n-2}$, pushed to the right.
If $t_i-l_i > 2n-2$, then there are no beads in this row on the rulers between $R_1$ and $R_{n-2}$; instead, the
remaining beads are all placed on the $(n-1)$-st ruler, and they can be either white or black. These rules
reconstruct a configuration satisfying (D5)-(D6), and give the stated ambiguity in the reconstruction.
\end{proof}

\subsection{The generating series of distinguished 0-generated walls}

In light of Theorem~\ref{thm:dnsingcells}, in order to complete the proof of our main Theorem~\ref{thmsing} for
type $D$, we need the following combinatorial result, the precise analogue of 
Proposition~\ref{prop:ansubst} in type $A$. 

\begin{theorem} 
\label{thm:dnsubst} Let $\Delta$ be of type $D_n$, and let $\xi$ be a primitive $(2n-1)$-st root of unity. Then
the generating series of the set ${\mathcal Z}_\Delta^0$ of distinguished $0$-generated Young walls is given in 
terms of the generating function of all Young walls by the following substitution:
\[ \sum_{Y\in {\mathcal Z}_\Delta^0} q^{{\rm wt}_0(Y)} = Z_{\Delta}(q_0,\dots,q_n)\Big|_{q_0=\xi^2q, q_1=\dots=q_n=\xi}.
\]
\end{theorem}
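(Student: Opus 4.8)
The plan is to mirror the proof of Proposition~\ref{prop:ansubst} in type $A$, organizing the computation over the fibres of a combinatorial reduction and reducing everything to cores. First I would combine Theorem~\ref{thm:orbiserdn} with the abacus description from Corollary~\ref{lem:invcelldesc}: the left hand side equals $\sum_{Y\in{\mathcal Z}_\Delta^0} q^{{\rm wt}_0(Y)}$, and by Proposition~\ref{prop:dncoredecomp} every Young wall $Y\in{\mathcal Z}_\Delta$ decomposes into an $(n+1)$-tuple of ordinary partitions plus a core Young wall, with each bar contributing weight $(1,1,2,\ldots,2,1,1)$, i.e.\ multiweight monomial $q=q_0q_1q_2^2\cdots q_{n-2}^2q_{n-1}q_n$. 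So after the substitution $q_1=\dots=q_n=\xi$, $q_0=\xi^2 q$, each bar contributes $\xi^2\cdot\xi\cdot\xi^2\cdots\xi^2\cdot\xi\cdot\xi = \xi^{?}\cdot q$; one checks the exponent of $\xi$ coming from one bar is $2+1+2(n-3)+1+1 = 2n-3 \equiv -2 \bmod (2n-1)$, so a single bar after substitution contributes $\xi^{-2}q$ in the coarse variable, matching the fact that the prefactor $\prod(1-q^m)^{-(n+1)}$ should survive the substitution as $\prod(1-q^m)^{-(n+1)}$ up to this bookkeeping. (The precise power-of-$\xi$ accounting is where I would be careful but it is routine.) This reduces the problem to the statement purely about core Young walls: that summing the substituted multiweight monomials over ${\mathcal C}_\Delta$ yields $\sum_{Y_0\in{\mathcal C}_\Delta\cap{\mathcal Z}_\Delta^0} q^{{\rm wt}_0(Y_0)}$ modulo the partition-tuple factor.

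Next I would set up the analogue of the map $p\colon {\mathcal Z}_\Delta\to{\mathcal Z}_\Delta^0$ from the type $A$ argument — but here the correct object is the combinatorial reduction ${\rm red}\colon{\mathcal Z}_\Delta'\to{\mathcal Z}_\Delta^0$ of Lemma~\ref{lem:1red}, together with Corollary~\ref{cor:relsum1}, which already tells us $\sum_{Y'\in{\rm Rel}(Y_0)}\chi(\widetilde W_{Y'})=1$. However, for the present \emph{purely combinatorial} identity about generating series, the natural move is instead to work directly on the abacus: by Corollary~\ref{lem:invcelldesc}, elements of ${\mathcal Z}_\Delta^0$ correspond bijectively (up to a $2^m$ ambiguity with constant multiweight) to sequences $(t_i,l_i)_{i\in\SZ_+}$ satisfying conditions (1)--(3) there, and I would express both sides of the claimed identity as sums over such sequences (equivalently over cores, after stripping the $(n+1)$-tuple of partitions which contributes the denominator $\prod(1-q^m)^{-(n+1)}$ and is unaffected by the specialization since each bar maps to $\xi^{-2}q$ as above). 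The key computation is then, for each fixed distinguished $0$-generated core, to sum the substituted multiweight $\underline q^{\underline{wt}}$ over all Young walls reducing to it, and to show this equals $q^{{\rm wt}_0}$ of the core times the appropriate power of $\xi$ that cancels. This is the exact analogue of formula~(\ref{form:oneatatime}), and I expect the sum over a fibre to again factor into Gaussian binomial coefficients $\binom{2n-2}{k}_{\xi^{-1}}$ type expressions — coming from the freedom of sliding beads left within a row on the runners $R_1,\dots,R_{n-2}$, exactly as removing a block of label $2,\dots,n-2$ multiplies the contribution by $\xi^{-1}$ — together with separate bracket factors for the $n-1,n$ runners coming from the $R_{n-1},R_{2n-2}$ colored-bead moves.

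Then I would carry out the identity $\frac{1-\xi^{-(2n-2)}}{1-\xi^{-1}}\cdot\frac{1-\xi^{-l-1}}{1-\xi^{l-(2n-2)-1}} = \xi^{-l}$ (valid because $\xi$ is a primitive $(2n-1)$-st root of unity, so all denominators are nonzero), which is the $D_n$ incarnation of the identity used at the end of the proof of Proposition~\ref{prop:ansubst}; combined with the change of coordinates $(z_1,\dots,z_n)\leftrightarrow(m_1,\dots,m_n)$ and the identity $\sum_i m_i = -\sum_{i=1}^{n-2}(n-1-i)z_i-(n-1)c(z_{n-1}+z_n)-(n-1)b$ from Theorem~\ref{thm:orbiserdn}(3), this pins down $\xi^{\sum_i m_i}$ in terms of the $z_i$ and lets the contributions collapse to $q^{{\rm wt}_0}$. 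The role of the quantities $b\in\{0,1\}$ and $c=2b-1$ here is to handle the parity splitting between the ``all beads on $R_{n-1}$'' and ``beads on $R_1,\dots,R_{n-2}$'' regimes of condition (D6), i.e.\ the distinction within ${\mathcal Z}_\Delta^0$, and I would need to verify the bracket sums behave correctly across this dichotomy.

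The main obstacle, I expect, is precisely this last point: unlike type $A$, where every fibre of $p$ is a product of ``single-runner'' choices giving a clean product of Gaussian binomials, in type $D$ the runners $R_{n-1}$ and $R_{2n-2}$ carry colored beads with their own nonstandard arithmetic (conditions (D1)--(D2), (D5)--(D6)), and the reduction moves (B3)--(B4) plus the inverse-(R3) ambiguity mean a single distinguished core can have a genuinely more complex fibre than a simple product. Controlling the sum over a fibre — showing it still telescopes to a single power of $\xi$ times $q^{{\rm wt}_0}$ despite the $2^m$ reconstruction ambiguity and the color constraints — is the crux; I anticipate needing to group the fibre according to the sequence $(t_i,l_i)$ first (where the multiweight is genuinely constant), and only then performing the bead-sliding sum, so that the colored-bead subtleties are quarantined into the combinatorially rigid data $(t_i,l_i)$ and do not interfere with the geometric series / Gaussian-binomial manipulation. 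As in the type $A$ case, the fact that the multiplicative order of $\xi$ is exactly $2n-1$ will be used to guarantee non-vanishing of all denominators appearing in the intermediate Gaussian binomials.
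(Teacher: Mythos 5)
Your skeleton (a fibrewise summation over a projection onto distinguished $0$-generated walls, row-by-row abacus bookkeeping, Gaussian binomials at the $(2n-1)$-st root of unity, and Theorem~\ref{thm:orbiserdn}(3) at the end) is the right one, but your opening reduction is a genuine gap. You cannot ``reduce the problem to a statement purely about core Young walls'' by stripping off the $(n+1)$-tuple of partitions: the bijection \eqref{Dpart_biject} is not compatible with the subset ${\mathcal Z}_\Delta^0$ --- adding or removing bars does not preserve $0$-generatedness, and the core of a distinguished $0$-generated wall need not be distinguished (it is only $0$-generated, Lemma~\ref{lem:core0gen}). So while the right-hand side does factor after the substitution (note also that each bar maps to exactly $q$: the exponent of $\xi$ is $2+1+2(n-3)+1+1=2n-1\equiv 0$, not $-2$ as you computed), the left-hand side $\sum_{Y\in{\mathcal Z}_\Delta^0}q^{{\rm wt}_0(Y)}$ has no a priori factorization of this shape, and asserting one is essentially assuming the theorem. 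The actual proof sums the substituted multiweights over the fibres of the map $p\colon{\mathcal Z}_\Delta\to{\mathcal Z}_\Delta^0$ given by Lemma~\ref{lem:yprimeunique}(1) (same set of $0$-blocks), for arbitrary, not necessarily core, $Y\in{\mathcal Z}_\Delta^0$; cores enter only at the very end, to show a residual power of $\xi$ is trivial, and even that step requires verifying that the exponent $\sum_{j\neq 0}({\rm wt}_j-\dim\rho_j\cdot{\rm wt}_0)-\sum_i c(t_i,l_i)$ is unchanged under the bar-removal moves (B1)--(B4) and under the passage from $0$-generated to distinguished $0$-generated cores --- checks your plan does not anticipate.

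The collapse mechanism inside a fibre is also not the one you predict. Grouping by the row data $(t_i,l_i)$ is correct, but in type $D$ the row sums do not telescope via an identity like the one closing the proof of Proposition~\ref{prop:ansubst}; instead each row sum is a combination of Gaussian binomials $\binom{2n-1}{k}_{\xi^{-1}}$, which vanish for $0<k<2n-1$ because $\xi$ has order exactly $2n-1$, so a single term survives and each row contributes $\xi^{-c(t,l)}$ (leading to Corollary~\ref{cor:Dn:subst}). The leftover global phase is then eliminated using the identity $\sum_i m_i=\sum_i c(t_i,l_i)$ for distinguished $0$-generated cores together with Theorem~\ref{thm:orbiserdn}(1),(3). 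So the ingredients you cite are the right ones, but as written the route through an initial core reduction and a type-$A$-style telescoping identity would not go through.
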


Recall that by Lemma \ref{lem:yprimeunique} (1) for each $Y \in {\mathcal Z}_\Delta$ there is a unique Young wall $Y_0 \in {\mathcal Z}_\Delta^0$ which has exactly the same set of 
label $0$ blocks as $Y$. This induces a combinatorial map 
\[p\colon{\mathcal Z}_\Delta \to{\mathcal Z}^0_\Delta,\]
which, in analogy once again with the type $A$ proof, will be the key construction in our proof 
of Theorem \ref{thm:dnsubst}.

\begin{proposition} On the abacus representation of Young walls,
the map $p\colon{\mathcal Z}_\Delta \to{\mathcal Z}^0_\Delta$ corresponds to the following steps:
\begin{enumerate}
\item If a row ends with a white (resp., black) bead on $R_{2n-2}$ corresponding to a column of height $\lambda_k$, and 
$k+n_k$ is even (resp.~odd), then one bead should be moved to the next position, which is the leftmost in the next row. This is applied also on the zeroth position of the abacus, where we assume that there are infinitely many beads. This corresponds to the appearance of a new column in the Young wall represented by the abacus.
\item Every bead on the rulers $R_{1},\dots,R_{2n-4}$ is moved to right as much as possible according to the abacus rules.
\item If there is at least one bead on the rulers $R_1, \dots , R_{2n-3}$ after performing Step 1 on the previous row, and the number of beads on $R_{2n-2}$ is even, then one more bead is moved onto $R_{2n-2}$. If there were beads on $R_{2n-2}$ already, then this step does not change the parity of $k+n_k$ for the rightmost bead. If there were no beads on $R_{2n-2}$ before, then this beads should take the appropriate color and it is possible to see that it can not be moved further with Step 1.
\item Let $s$ be the total number of beads on the rulers $R_1,\dots,R_{n-1}$ after performing the Steps 1-3. If $s > n-2$, then move all these beads on $R_{n-1}$. In this case some of these beads were here previously, so the color of the whole group of beads is already determined. If $s \leq n-2$, then  move them onto the rulers $R_1,\dots,R_{n-2}$, each as right as possible.
\end{enumerate}
\end{proposition}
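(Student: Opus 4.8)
The plan is to reduce the statement to the uniqueness assertion of Lemma~\ref{lem:yprimeunique}(1). Let $A$ be the abacus configuration of $Y$, and let $A'$ be the configuration obtained from $A$ by applying Steps (1)--(4), processing the rows from the bottom upwards (so that a bead deposited by Step (1) at the leftmost position of the next row is taken into account when that row is treated; the process terminates because $Y$ is finite). It then suffices to establish two things: first, that $A'$ satisfies conditions (D1)--(D6) of Lemma~\ref{lem:maxinvcells}, so that $A'$ is the abacus of some $Y_0\in{\mathcal Z}_\Delta^0$; second, that $Y_0$ has exactly the same set of label-$0$ half-blocks as $Y$. Given these, Lemma~\ref{lem:yprimeunique}(1) forces $Y_0=p(Y)$, which is the claim.

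For the first point I would match each step to the abacus conditions it enforces and then check non-interference. Step (1) is designed to impose (D2): the two triggering configurations -- a white bead with $k+n_k$ even, a black bead with $k+n_k$ odd -- are precisely those forbidden by (D2), and the move transfers the growth of the offending column into the next row without disturbing the rows below. The push-right operations in Steps (2) and (4), together with Step (3), realise the normal form of (D1), (D3), (D4) and (D5): Step (2) makes the beads on $R_1,\dots,R_{2n-4}$ as far right as the abacus rules allow, Step (3) corrects the parity of the number of beads in the rightmost position of a row and, at the bottom row, the parity of the total bead count (the exceptional value $t_1-l_1=2n-4$ of Corollary~\ref{lem:invcelldesc} being accounted for by the hypothesis on $R_1,\dots,R_{2n-3}$), and Step (4) puts the beads on $R_1,\dots,R_{n-1}$ either all on $R_{n-1}$ or all pushed right on $R_1,\dots,R_{n-2}$. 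The dichotomy on $s$ in Step (4) is literally condition (D6). Non-interference is easy: Steps (3)--(4) move beads only among $R_1,\dots,R_{n-1}$ and from those runners onto $R_{2n-2}$, hence leave (D4) intact and, once Step (3) has fixed the relevant parities, Step (4) preserves each $t_i$ and $l_i$ and so keeps (D1)--(D3); and, as noted in the statement of Step (3), the bead it places on $R_{2n-2}$ takes a colour which does not re-trigger Step (1).

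The heart of the argument is the second point. By the correspondence of~\ref{subsec:Dabacus}, the label-$0$ half-blocks of a Young wall are recorded by the beads on the rightmost runner $R_{2n-2}$, together with the convention of infinitely many white beads at the imaginary zeroth position. I would check move by move that Steps (1)--(4) do not alter the multiset of label-$0$ half-blocks. Step (2) and the push-right half of Step (4) move beads only among $R_1,\dots,R_{2n-4}$, and the ``onto $R_{n-1}$'' half of Step (4) moves beads among $R_1,\dots,R_{n-1}$; none of these touches $R_{2n-2}$. Step (1) moves a bead off $R_{2n-2}$ onto $R_1$ of the next row: the corresponding column grows by exactly one block, which by inspection of the $D_n$ Young wall pattern carries label $\neq 0$, and Step (1) never puts a bead onto $R_{2n-2}$; so $\mathrm{wt}_0$ and the positions of the label-$0$ blocks are unchanged. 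The delicate case is Step (3), which does move a bead onto $R_{2n-2}$: one must show this is forced precisely when the column in question has, in $Y_0$, a label-$0$ top at the new position that is already present in $Y$ -- i.e.\ Step (3) merely repositions a bead rather than creating a new label-$0$ block -- using the analysis of which blocks carry label $0$ as a function of the column height and of the parity of $k+n_k$ (equivalently, of the colour assigned by the rules of~\ref{subsec:Dabacus}).

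I expect this last bookkeeping for Step (3) to be the main obstacle: one has to track $\mathrm{wt}_0$ through the elementary bead moves and verify the exact cancellation between the bead that Step (1) removes from $R_{2n-2}$ and the one that Step (3) later adds. A clean way to organise it is to prove first that $\mathrm{wt}_0$ is constant under each elementary move, and then to upgrade from equality of $\mathrm{wt}_0$ to equality of the label-$0$ subsets by observing that for both $Y$ and $Y_0$ this subset is ``staircase-closed'' and is determined by $\mathrm{wt}_0$ together with the row data $(t_i,l_i)$ of Corollary~\ref{lem:invcelldesc}, which Steps (1)--(4) bring into the normal form dictated by (D1)--(D3).
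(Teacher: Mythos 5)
Your overall strategy is the same as the paper's: the printed proof is literally the matching of the four steps to the conditions of Lemma~\ref{lem:maxinvcells} (Step 1 enforces (D2), and (D3) via its action at the zeroth position; Step 2 enforces (D4)--(D5); Step 3 enforces (D1); Step 4 enforces (D6)), with the preservation of the label-$0$ blocks and the appeal to the uniqueness statement of Lemma~\ref{lem:yprimeunique}(1) left implicit. Making those two points explicit, as you do, is the right way to turn the one-line argument into a complete proof, and your identification of Step 3 (the move onto $R_{2n-2}$) as the place where one must check that no new label-$0$ half-block is created is exactly the delicate point; the check succeeds because the colour prescribed in Step 3 is precisely the one for which (D2) holds, i.e.\ the half-block added at the top is the label-$1$ one.

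There are, however, two concrete flaws in your matching. First, you attribute (D3) to Step 3 ``at the bottom row''. Step 3 only relocates a bead within its row, so it can never change the total number of beads, and hence cannot enforce the parity condition in (D3). In the paper's scheme (D3) is enforced by the clause of Step 1 concerning the zeroth position with its infinitely many invisible white beads: when the parity is wrong, a new bead, i.e.\ a new column of height one, is created. You omit this clause entirely from your condition-matching, so your planned verification of (D3) would fail as stated. Second, and correspondingly, this clause also needs attention in your $0$-block bookkeeping: a new column of height one consists of a single $0/1$ half-block of the bottom row, whose label depends on the parity of the column index, so one must check (exactly as you do for Step 3) that the triggering parity condition forces this half-block to carry label $1$ rather than $0$; your blanket statement that Step 1 only adds blocks of label $\neq 0$ covers the $R_{2n-2}\to R_1$ move but not this case. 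Finally, a caution about your proposed shortcut in the last paragraph: the set of label-$0$ blocks of an arbitrary $Y\in\mathcal{Z}_\Delta$ is not determined by $\mathrm{wt}_0$ together with the row data $(t_i,l_i)$ of Corollary~\ref{lem:invcelldesc} (that data only constrains walls already in $\mathcal{Z}_\Delta^0$), so the direct move-by-move tracking of the label-$0$ subset, not just of $\mathrm{wt}_0$, is the argument to carry out.
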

\begin{proof}
Step 1 enforces condition (D2). It also enforces condition (D3) when applied to the minus first row. Step 2 enforces conditions (D4) and (D5), Step 3 enforces condition (D1), and finally Step 4 enforces condition (D6).
\end{proof}

The fibers of the map $p$ can be described as follows. Given a Young wall $Y\in {\mathcal Z}^0_\Delta$,
we are allowed to move beads to the left and, occasionally, to the right, using the following rules. 
\begin{enumerate}
\item From the last position of the $i-th$ row only one (resp.~zero) bead can be moved to the left if $l_i$ is odd (resp., even).
\item Every other bead is allowed to moved to the left in its row if the result is a valid abacus configuration.
 \item The leftmost bead in a row can be moved to the last position of the previous row. There it will take the color white if $\sum_{j<i} t_j + \sum_{j>i, t_j \not\equiv 0\;\mathrm{mod}\;n-1} t_j\equiv 1 \; \mathrm{mod}\; 2$, and grey otherwise.
 \item If $t_i-l_i \leq 2n-2$, then the beads to the left from the $n-1$-st position are allowed to be moved to the right at most onto the ruler $R_{n-1}$. 
 \item If $t_i-l_i \leq 2n-2$, then any configuration, in which there is at least one bead at the $(n-1)$-st position, 
has to be counted with multiplicity two.
\end{enumerate}

Let us call the beads that can be moved according to these rules~\emph{movable}. For a row with data $(t,l)$, let us also introduce the number $c(t,l)$, which is signed sum of the distance of the beads from the $R_{n-1}$-st ruler, where the sum runs over the movable beads, and the beads to the left of the $R_{n-1}$-st ruler are counted with negative sign, and the beads to the right of it are counted with positive sign. These numbers are listed in the table below:

\[\def\arraystretch{1.5}
\begin{array}{r|c|c}
& l \equiv 0 \; \mathrm{mod}\; 2 & l \equiv 1 \; \mathrm{mod}\; 2 \\ 
\hline 
0 \leq t-l \leq n-2 &  \binom{n-1}{2}-\binom{n-1-t+l}{2}  &  \binom{n}{2}-\binom{n-1-t+l}{2} \\ 
\hline 
n-1 \leq t-l \leq 2n-3 & \binom{n-1}{2}-\binom{t-l-n+1}{2} &  \binom{n}{2}-\binom{t-l-n+1}{2} \\ 
\hline
2n-2 \leq t-l & \binom{n-1}{2} & \binom{n}{2} \\ 
\end{array}
\]

\begin{lemma} The contribution of a row with data $(t,l)$ to the total weight of the fiber is $\xi^{-c(t,l)}$.
\end{lemma}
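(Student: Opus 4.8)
The plan is to combine the combinatorial description of the fibres of $p\colon{\mathcal Z}_\Delta\to{\mathcal Z}_\Delta^0$ stated just above with the table of numbers $c(t,l)$, and to extract the row-by-row contribution by a direct bead-counting argument. First I would fix a row with data $(t,l)$ inside a Young wall $Y\in{\mathcal Z}_\Delta^0$ and enumerate the configurations of movable beads in that row that can occur in a fibre element $Y'\in p^{-1}(Y)$. By the fibre rules (1)--(5) listed above, moving a movable bead one step to the left within its row removes exactly one block of label in $\{1,\dots,n-2\}$ from the corresponding column of $Y'$, hence does not change the $0$-weight, but it contributes a factor $\xi^{-1}$ to $\underline{q}^{\underline{wt}(Y')}$ after the substitution $q_1=\dots=q_n=\xi$, while moving a bead one step to the right contributes $\xi^{+1}$. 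Thus the total contribution of the row to the fibre sum $\sum_{Y'\in p^{-1}(Y)}\underline{q}^{\underline{wt}(Y')}\big|_{q_1=\dots=q_n=\xi}$ is $\xi$ raised to the (signed) sum of the displacements of all movable beads in that row, summed over all allowed configurations.

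Next I would set up the bookkeeping. Normalise so that in the ``home'' configuration of ${\mathcal Z}_\Delta^0$ the movable beads sit in the canonical positions prescribed by (D4)--(D6): if $0\le t-l\le n-2$ they are pushed right on $R_1,\dots,R_{n-2}$; if $t-l\ge n-1$ they (partly) occupy $R_{n-1}$. Relative to this home configuration, each allowed rearrangement shifts the movable beads, and the sum over all allowed positions of $\xi^{(\text{signed displacement})}$ is a product of geometric-type sums, one per movable bead, each of the shape $1+\xi^{-1}+\xi^{-2}+\cdots$ or its mirror, truncated by the abacus rules. Carefully telescoping these — using that the beads to the left of $R_{n-1}$ are counted negatively and those to the right positively, exactly as in the definition of $c(t,l)$ — collapses the product to a single monomial $\xi^{-c(t,l)}$. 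The three regimes $0\le t-l\le n-2$, $n-1\le t-l\le 2n-3$, and $2n-2\le t-l$ correspond precisely to the three rows of the table, and the parity split on $l$ (one movable bead liberated from $R_{2n-2}$ when $l$ is odd, none when $l$ is even, cf.~rule (1)) produces the binomial shift $\binom{n-1}{2}\rightsquigarrow\binom{n}{2}$ between the two columns of the table.

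The main obstacle I expect is the careful accounting of rule (5): when $t-l\le 2n-2$, configurations with at least one bead on $R_{n-1}$ must be counted with multiplicity two, reflecting the left-triangle/right-triangle ambiguity of half-blocks of labels $n-1,n$ on non-full columns (as already visible in condition (D6) and Lemma~\ref{lem:yprimeunique}(2)). This doubling must be shown to be exactly compensated, in the weighted sum, by the fact that the two choices carry the same weight once labels $n-1$ and $n$ are counted together (Corollary~\ref{lem:invcelldesc}), so that the multiplicity does not disturb the clean monomial $\xi^{-c(t,l)}$. I would handle this by isolating the sub-sum over configurations touching $R_{n-1}$, observing it splits off as an independent factor, and checking that the factor of $2$ is absorbed by the identity $[2]_{\xi}\cdot(\text{remaining sum})=(\text{shifted sum})$ valid for $\xi$ a primitive $(2n-1)$-st root of unity — the same mechanism, at the level of Gaussian binomials, that makes the denominators non-vanishing in the type $A$ computation of Proposition~\ref{prop:ansubst}. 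Once this is in place, the row contribution is exactly $\xi^{-c(t,l)}$ as claimed, and the remaining aggregation over all rows (together with the $0$-weight contribution from the beads on $R_{2n-2}$, which supplies the $q_0=\xi^2 q$ part of the substitution) will yield Theorem~\ref{thm:dnsubst}.
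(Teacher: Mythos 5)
Your overall framing (enumerate the allowed bead rearrangements in a fixed row, weight a unit leftward move by $\xi^{-1}$ and a rightward one by $\xi^{+1}$, and sum) is the same bookkeeping the paper uses, and the even-$l$ case is trivial in both treatments. But the heart of your argument has a genuine gap: you assert that the weighted sum over all allowed configurations ``is a product of geometric-type sums, one per movable bead'' which ``telescopes'' to the single monomial $\xi^{-c(t,l)}$. Neither half of this is right. Because the movable beads constrain one another (their displacements are nested, not independent), the row sum is a nested sum of the form $\sum_{n_1}\sum_{n_2\le n_1}\cdots$, i.e.\ a Gaussian binomial coefficient evaluated at $\xi^{-1}$, not a product of independent geometric series; and for a generic value of $\xi$ this sum is emphatically \emph{not} a monomial. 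The collapse happens only because $\xi$ is a primitive $(2n-1)$-st root of unity, so that $\binom{2n-1}{k}_{\xi^{-1}}=0$ for all $0<k<2n-1$. The paper organizes the fibre sum as
\[ \binom{2n-1}{t-l+1}_{\xi^{-1}}+\sum_{i=1}^{t-l+1}\xi^{-\sum_{j=n-t+l-1}^{n-t+l-2+i}j}\,\binom{2n-1}{t-l+1-i}_{\xi^{-1}}, \]
and the vanishing of all the binomials with nonzero lower index kills every term except the extreme one $i=t-l+1$, in which \emph{all} movable beads have been pushed onto $R_{n-1}$; its exponent is exactly $-c(t,l)$. Your proposal never states this vanishing fact, which is the indispensable ingredient; without it there is no reason for the row contribution to be a single power of $\xi$.

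Relatedly, your treatment of the multiplicity-two rule (5) is off. The paper does not ``absorb'' the factor $2$ via an identity $[2]_{\xi}\cdot(\text{remaining sum})=(\text{shifted sum})$; instead it counts with overlapping windows (configurations with at most one bead on $R_{n-1}$, then with between one and two, and so on), so that each configuration touching $R_{n-1}$ is counted in exactly two consecutive sub-sums, each sub-sum being a fixed power of $\xi^{-1}$ times a Gaussian binomial — and then the same root-of-unity vanishing disposes of all but the last window. Finally, the analogy you draw with type $A$ is backwards: in Proposition~\ref{prop:ansubst} the row sums $\binom{n+1}{k}_{\xi^{-1}}$ are genuinely nonzero and the point is that the \emph{denominators} do not vanish, whereas here the mechanism is precisely that the relevant Gaussian binomials \emph{do} vanish at the $(2n-1)$-st root of unity. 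To repair the proof you would need to replace the ``telescoping'' step by the explicit nested-sum computation and invoke $\binom{2n-1}{k}_{\xi^{-1}}=0$ for $k\neq 0$.
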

\begin{proof}
If $l$ is even but nonzero, then according to Corollary \ref{lem:invcelldesc} $l=t$ and there isn't any movable bead.

If $l$ is odd, then there is one movable bead on the $R_{2n-2}$-nd ruler. Assume first that $ t \leq n-1$. Then the expression
\[ \sum_{n_1=0}^{2n-t+l-2}\sum_{n_2=0}^{n_1} \dots \sum_{n_{t-l+1}=0}^{n_{t-l}} (\xi^{-1})^{n_1+\dots+n_{t-l+1}}=\binom{2n-1}{t-l+1}_{\xi^{-1}}\]
counts once every preimage, in which there is at most one bead at the $n-1$-st position. Similarly
\[ (\xi^{-1})^{n-t+l-1} \sum_{n_2=0}^{2n-t} \sum_{n_3=0}^{n_2}\dots \sum_{n_{t-l+1}=0}^{n_{t-l}} (\xi^{-1})^{n_2+\dots+n_{t-l+1}}=(\xi^{-1})^{n-t+l-1} \binom{2n-1}{t-l}_{\xi^{-1}}\]
counts once every preimage, in which there is at least one and at most two beads at the $n-1$-st position, since we moved one bead from the leftmost occupied position to the $n-1$-st position, and we fixed it there. The next term is given by
\[ (\xi^{-1})^{(n-t+l-1)+(n-t+l)} \sum_{n_3=0}^{2n-t} \sum_{n_4=0}^{n_3}\dots \sum_{n_{t-l+1}=0}^{n_{t-l}} (\xi^{-1})^{n_3+\dots+n_{t-l+1}}\]
\[=(\xi^{-1})^{(n-t+l-1)+(n-t+l)} \binom{2n-1}{t-l-1}_{\xi^{-1}},\]
which counts once every preimage, in which there is at least two and at most three beads at the $n-1$-st ruler. Continuing in this fashion and summing up in the end we get
\[ \binom{2n+1}{t-l+1}_{\xi^{-1}}+\sum_{i=1}^{t-l+1} \xi^{-\sum_{j=n-t+l-1}^{n-t+l-2+i} j} \binom{2n-1}{t-l+1-i}_{\xi^{-1}}. \]
It can be checked that $\binom{2n-1}{k}_{\xi^{-1}}=0$ unless $k=0$, in which case it is equal to $1$. 
Therefore, the only surviving part is the one with $i=t-l+1$:
\[ \xi^{-\sum_{j=n-t+l-1}^{n-1} j}=\xi^{\binom{n}{2}-\binom{n-1-t+l}{2}}=\xi^{-c(t,l)}. \]

The proofs of the remaining two cases, when $l$ is odd, are very similar. The only difference in the case $2n-2 \leq t-l$ is that first the preimages with zero or one extra movable beads at $R_{n-1}$ have to be counted, then the preimages with two or three extra movable beads, etc. As in the case $ t \leq n-1$, the only nonzero term is the one where in the beginning all the movable beads have been shifted to the $R_{n-1}$-st ruler, and in this case the powers of $\xi^{-1}$ sum up to $\binom{n}{2}=c(t,l)$. 
\end{proof}
\begin{corollary}
\label{cor:Dn:subst}
Let $Y\in {\mathcal Z}^0_\Delta$ be a distinguished 0-generated Young wall described by the data $\{(t_i,l_i)_i\}$. Then
\[ 
\begin{aligned}
 \sum_{Y' \in p_{\ast}^{-1}(Y)}\underline{q}^{\underline{\mathrm{wt}}(Y')}\Big|_{q_1=\dots=q_n=\xi,q_0=\xi^2q}& =\underline{q}^{\underline{\mathrm{wt}}(Y)}\Big|_{q_1=\dots=q_n=\xi,q_0=\xi^{-(2n-3)}q} \cdot \xi^{-\sum_i c(t_i,l_i)}\\
& =q^{\mathrm{wt}_0(Y)} \xi^{\sum_{j\neq 0} (\mathrm{wt}_j(Y)-\dim\rho_j \cdot \mathrm{wt}_0(Y))-\sum_i c(t_i,l_i)} 
\end{aligned}
\]
\end{corollary}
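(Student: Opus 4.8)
The plan is to assemble the per-row contributions computed in the preceding Lemma into a product over the rows of the abacus of $Y$, and then to finish with a short algebraic manipulation using the order of $\xi$ and the dimensions of the irreducible representations of $G_\Delta$.

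First I would make precise the claim that the fiber $p_\ast^{-1}(Y)$ decomposes as a ``product over the rows'' of the abacus. Since $Y$ is distinguished $0$-generated, its beads occupy a rigid configuration in each row (pushed to the right as far as the rules allow), and the bead-moving rules generating $p_\ast^{-1}(Y)$ act on the movable beads of the various rows essentially independently; crucially each such move only inserts or deletes blocks and half-blocks of labels $j\neq 0$, so that $\mathrm{wt}_0$ is constant on the fiber and equal to $\mathrm{wt}_0(Y)$, because $p$ by construction preserves the set of label-$0$ blocks. The colour ambiguity (label $n-1$ versus $n$) and the ``multiplicity two'' ambiguity in the reconstruction affect only $q_{n-1}$ and $q_n$, and hence disappear after the substitution $q_{n-1}=q_n=\xi$. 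Combining these observations, the sum over the fiber, after the substitution $q_1=\dots=q_n=\xi$, $q_0=\xi^2 q$, factors as
\[ \sum_{Y' \in p_\ast^{-1}(Y)}\underline{q}^{\underline{\mathrm{wt}}(Y')}\Big|_{q_1=\dots=q_n=\xi,\,q_0=\xi^2q} \;=\; \underline{q}^{\underline{\mathrm{wt}}(Y)}\Big|_{q_1=\dots=q_n=\xi,\,q_0=\xi^2q}\cdot\prod_i \xi^{-c(t_i,l_i)}, \]
the factor $\xi^{-c(t_i,l_i)}$ attached to the $i$-th row being exactly the ``contribution of a row with data $(t_i,l_i)$'' evaluated in the preceding Lemma. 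Since $\xi$ has order $2n-1$ we have $\xi^{2}=\xi^{-(2n-3)}$, so the first factor on the right equals $\underline{q}^{\underline{\mathrm{wt}}(Y)}|_{q_1=\dots=q_n=\xi,\,q_0=\xi^{-(2n-3)}q}$, which is the first asserted equality.

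For the second equality I would simply expand the base factor as
\[ \underline{q}^{\underline{\mathrm{wt}}(Y)}\Big|_{q_1=\dots=q_n=\xi,\,q_0=\xi^{-(2n-3)}q}\;=\;q^{\mathrm{wt}_0(Y)}\,\xi^{-(2n-3)\mathrm{wt}_0(Y)+\sum_{j\neq 0}\mathrm{wt}_j(Y)}, \]
and then use that for the binary dihedral group of type $D_n$ one has $\dim\rho_1=\dim\rho_{n-1}=\dim\rho_n=1$ and $\dim\rho_j=2$ for $2\le j\le n-2$, so $\sum_{j\neq 0}\dim\rho_j = 1+2(n-3)+1+1 = 2n-3$. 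Hence $-(2n-3)\mathrm{wt}_0(Y) = -\sum_{j\neq 0}\dim\rho_j\cdot\mathrm{wt}_0(Y)$, and multiplying by $\xi^{-\sum_i c(t_i,l_i)}$ produces exactly $q^{\mathrm{wt}_0(Y)}\,\xi^{\sum_{j\neq 0}(\mathrm{wt}_j(Y)-\dim\rho_j\cdot\mathrm{wt}_0(Y))-\sum_i c(t_i,l_i)}$.

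The one step that genuinely requires care, rather than routine bookkeeping, is the row-factorization itself: one must verify that the single inter-row operation in the fiber description (transferring the leftmost bead of a row to the last position of the previous row) is compatible with assigning the weight $\xi^{-c(t,l)}$ row by row --- that is, that this operation leaves $\mathrm{wt}_0$ unchanged and that its $\xi$-weight is already incorporated into the \emph{signed} distance sum $c(t,l)$, via the convention that the imaginary zeroth position carries infinitely many white beads. Granting this, the factorization holds and the corollary follows at once from the preceding Lemma.
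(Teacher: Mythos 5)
Your proposal is correct and follows essentially the same route as the paper, which states this corollary as an immediate consequence of the preceding lemma: the fiber weight is the base weight of $Y$ times the product of the per-row factors $\xi^{-c(t_i,l_i)}$, with $\xi^{2}=\xi^{-(2n-3)}$ since $\xi^{2n-1}=1$, and the second equality is the direct expansion using $\sum_{j\neq 0}\dim\rho_j=2n-3$ (equivalently $q=\prod_j q_j^{\dim\rho_j}$). The inter-row bead move you flag is already absorbed into the lemma's assertion that $\xi^{-c(t,l)}$ is the contribution of a row to the total fiber weight, so no further argument is needed at the level of the corollary.
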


\begin{lemma}
\label{lem:core0gen}
The core of a 0-generated Young wall is a 0-generated Young wall.
\end{lemma}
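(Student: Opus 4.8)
The statement to prove is Lemma~\ref{lem:core0gen}: the core of a $0$-generated Young wall is again $0$-generated. The cleanest approach is to work entirely on the abacus side, using the characterization of $0$-generated Young walls in terms of abacus configurations given in Lemma~\ref{lem:maxinvcells} (conditions (D1)--(D5)), and to check that each elementary bar-removal move (B1)--(B4) from the proof of Proposition~\ref{prop:dncoredecomp} preserves these conditions. Since $\mathrm{core}(Y)$ is obtained from $Y$ by a finite sequence of such moves through valid Young walls (staying inside $\mathcal{Z}_\Delta$ at each step), it suffices to show that if a configuration satisfies (D1)--(D5), then so does the configuration obtained by a single admissible application of (B1), (B2), (B3) or (B4).

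\textbf{Key steps.} First I would recall precisely what each bar-removal move does to the bead positions and colors: (B1) shifts a bead up by $2n-2$ on a runner $R_k$ with $k\not\equiv 0 \bmod (n-1)$, flipping colors of the colored beads it ``passes''; (B2) removes a pair of beads from $R_s$ and $R_{2n-2-s}$ (for $1\le s\le n-2$) and flips colors in between; (B3) shifts a colored bead up by $2n-2$ (subject to a color-matching condition); (B4) moves a pair of colored beads up by $n-1$. Then, for each move, I would verify the five conditions. Conditions (D4)--(D5), which say the uncolored beads on $R_n,\dots,R_{2n-3}$ are right-justified and the $R_1,\dots,R_{n-2}$ positions are empty unless $R_n,\dots,R_{2n-2}$ are full, are ``column-local'' on the non-special runners and are manifestly preserved by (B1)--(B2) since those moves only rearrange beads vertically within a runner or remove a balanced pair (note (B3)--(B4) only touch the special runners $R_{n-1}, R_{2n-2}$, so they trivially preserve (D4)--(D5)). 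Condition (D1) (the rightmost bead of each row is on $R_{2n-2}$, and either the whole row sits there or the number of beads at that position is odd) and condition (D3) (total bead count even, or the first row has exactly $n-2$ beads on $R_1,\dots,R_{n-1}$) are parity statements: (B1), (B3), (B4) preserve the total number of beads, and (B2) removes exactly two, so (D3)'s parity clause is untouched; for (D1) I would observe that (B1)/(B2) do not change which runner the rightmost bead of a row lies on — they move beads strictly upward (to earlier rows) — and the oddness condition follows because a bar has multi-weight $(1,1,2,\dots,2,1,1)$, so its removal changes the count at the relevant $R_{2n-2}$ position by an even amount, or, in the boundary case of (B2)/(B4), by using the imaginary zeroth position convention. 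Finally, condition (D2), which constrains the color of a row-ending bead in terms of the parity of $k+n_k$, is the delicate one: here I must track how the color flips in (B1)--(B4) interact with the change in $n_k$ (the number of full columns shorter than the given non-full column) caused by the move. The point is that inserting/removing a bar, or flipping a stretch of colors, changes $n_k$ and the bead color in a correlated way so that $k+n_k \pmod 2$ versus the color is left invariant; this is essentially the same bookkeeping that makes the color assignment in~\ref{subsec:Dabacus} well-defined, and I would phrase it as: the parity of $k+n_k$ plus (colour as $0/1$) is a ``discrete charge'' preserved by every bar move.

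\textbf{Main obstacle.} The hard part will be condition (D2): verifying that the color flips prescribed in (B1)--(B4) exactly compensate for the change in the quantities $n_k$ (equivalently, in the number of full columns below a given non-full column) induced by the move, so that the parity matching of (D2) is preserved. This requires a careful, move-by-move case analysis — in particular distinguishing whether the bead being moved or a bead below/above it lies on a special runner $R_{n-1}$ or $R_{2n-2}$, and handling the boundary convention at the imaginary zeroth position for (B2) and (B4). An alternative, possibly cleaner, route would be to argue directly on Young walls: a $0$-generated wall $Y$ has all salient blocks labelled $0$ with the $n-1/n$ block-pairs supported (conditions (R1)--(R2)), and one checks that removing a bar from $Y$ (when legal) cannot create a salient block of label in $\{1,\dots,n-2\}$ nor an unsupported $n-1/n$ pair, because a bar removal lowers some columns by amounts that respect the $(n-1)$-periodicity of the pattern. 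I expect the abacus argument to be more systematic, so I would carry out that one, reserving the Young-wall picture as a sanity check on the (D2) bookkeeping.
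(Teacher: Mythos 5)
Your overall route (pass to the abacus via Lemma~\ref{lem:maxinvcells} and analyse the bar-removal moves (B1)--(B4)) is the same as the paper's, but the reduction you build the whole argument on --- that it suffices to show a \emph{single} admissible application of (B1), (B2), (B3) or (B4) preserves (D1)--(D5) --- is false, and this is a genuine gap rather than a technicality. The intermediate Young walls occurring in a reduction to the core are in general \emph{not} $0$-generated. Concretely, take the configuration with $(t_1,l_1)=(2,2)$ and $(t_2,l_2)=(2,1)$, i.e.\ two beads of the same colour at position $2n-2$ and beads at positions $4n-5$ and $4n-4$; by Corollary~\ref{lem:invcelldesc} this is the abacus of a (distinguished) $0$-generated Young wall, and (B1) legally moves the bead at $4n-5$ up to the empty position $2n-3$. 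Afterwards the first row has its rightmost position occupied by an even number of beads while not all beads of the row sit there, so (D1) fails. For the same reason several of your intermediate claims are wrong: a single (B2) removal leaves a gap among the beads on $R_n,\dots,R_{2n-3}$ (breaking (D4) --- this is exactly why the paper performs (B2) on all the nested pairs in between); a bead moved up by (B1) can land on $R_1,\dots,R_{n-1}$ in a row whose right part is not full; and (B4) can transfer beads between $R_{2n-2}$ and $R_{n-1}$ inside a row, so (B3)--(B4) do not ``trivially'' preserve (D4)--(D5).

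A second independent problem is (D3). You assert that (B1), (B3), (B4) preserve the total number of beads, but because of the imaginary position $0$ a (B3) move can delete one bead and a (B4) move two, so the parity of the bead count is not invariant --- and the paper's proof \emph{needs} this: when the total is odd (equivalently $t_1-l_1=2n-4$), one shows that a (B3) removal together with accompanying (B2) removals deletes an odd number of beads, after which the total is even. Your plan, which insists the parity is preserved, has no way to handle this case, since the core will not in general satisfy the second alternative of (D3). The repair is essentially the paper's argument: track only the parity/colour data underlying (D1)--(D2) (equivalently, the non-appearance of salient blocks of labels $1,\dots,n-2$) through the moves, performing (B2) on all intermediate pairs and (B4) until the beads leave the abacus; verify (D4)--(D5) only for the final configuration, using the structure of a core abacus (no gaps from above, beads on at most one of $R_s$, $R_{2n-2-s}$); and give the separate counting argument for the odd case of (D3). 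Your ``discrete charge'' idea for (D2) is in the right spirit, but it must be applied to these grouped runs of moves and to the end configuration, not to individual bar removals.
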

\begin{proof} With each reduction step (B1)-(B4) we always remove a bar. In the original Young wall, the salient blocks were only label 0 half blocks and salient block-pairs of label $n-1/n$. 

A similar reasoning as in the type $A$ case shows that no salient block of label $2,\dots,n-2$ can appear after we perform the step (B1) until possible. The same is true with (B2), since if we can perform it on a pair of beads in a row, then we can always perform it on the beads between them. More precisely, it can be seen that even label 1 salient blocks cannot appear during these two steps because the parity conditions in (D1) and (D2) is always maintained by the reduction steps.

The parity conditions in (D1) and (D2) are maintained by the step (B3) as well. If we perform (B4) on a pair of beads, then we can perform it on this pair as long as they disappear from the abacus. Hence, when performing (B4) until possible we also get back the good parities. After the reduction is completed there cannot be any bead on the rulers $R_1,\dots,R_{n-2}$, and all the beads on the rulers $R_{n},\dots,R_{2n-3}$ are right-adjusted. Therefore the conditions (D4) and (D5) are also satisfied. 

If the total number of beads was initially even, then since no label 1 salient block can appear, the total number of beads in the end must be even as well. So the final Young wall will satisfy (D3). 
If in the total number of beads in the initial abacus configuration is odd, then in the first row $t_1-l_1=2n-4$. Hence, the beads on ruler $R_{2n-2}$ in the first row are necessarily white and one of them can be taken away from the abacus with the step (B3), together with the beads on the other rulers using (B2). This is an odd number of beads removed from the abacus. After this the total number of beads is even, so we continue as in the even case.
\end{proof}

\begin{proof}[Proof of Theorem~\ref{thm:dnsubst}] In light of Corollary~\ref{cor:Dn:subst}, it remains to show that 
\[\xi^{\sum_{j\neq 0} (\mathrm{wt}_j(Y)-\dim\rho_j \cdot \mathrm{wt}_0(Y))-\sum_i c(t_i,l_i)}=1.\]
We follow in the line of the proof the $A_n$ case.

\textit{Step 1: Reduction to 0-generated cores.} According to Lemma \ref{lem:core0gen} the core of a 0-generated Young wall is a 0-generated core. It is immediate from the definition of $c(t,l)$ that the steps (B1), (B2) and (B4) leave the sum $\sum_i c(t_i,l_i)$ unchanged, while (B3) is a bit more complicated. Indeed,
\begin{itemize}
\item (B1) removes one movable bead from a row, and adds one to another on the same ruler;
\item (B2) removes two movable beads, but these two contribute with opposite signs into $c(t,l)$;
\item (B4) either moves non-movable beads from $R_{2n-2}$ onto $R_{n-1}$, or beads from $R_{n-1}$ into non-movable beads on $R_{2n-2}$.
\end{itemize}
Moving beads on $R_{n-1}$ according to (B3) does not affect the numbers $c(t,l)$. If (B3) moves a bead on the ruler $R_{2n-2}$ between rows having $l$ of different parity, then the sum of the movable beads at the $(2n-2)$-nd positions of the two rows is constant, so $\sum_i c(t_i,l_i)$ does not change after such a step. If (B3) were able to move a bead on $R_{2n-2}$ between rows both having odd $l$'s, then this can happen only if they have the same color and if there is no bead on $R_{n-1}$ between them. This means that either there must be an even number of beads between them and they should have same kind of top block, or odd number of beads and different kind of top blocks. Both cases are forbidden in 0-generated Young walls due to Lemma \ref{lem:maxinvcells}. For the same reasons (B3) cannot move beads on $R_{2n-2}$ between rows both having even $l$'s.

It follows from these considerations that for all $Y \in \CZ_{\Delta}^0$
\[ \xi^{\sum_{j\neq 0} (\mathrm{wt}_j(Y)-\dim\rho_j \cdot \mathrm{wt}_0(Y))-\sum_i c(t_i,l_i)}=\xi^{\sum_{j\neq 0} (\mathrm{wt}_j(\mathrm{core}(Y))-\dim \rho_j\cdot\mathrm{wt}_0(\mathrm{core}(Y))-\sum_i c(t_i,l_i)}. \]

\textit{Step 2: Reduction to distinguished 0-generated cores.}

As described above, for any 0-generated core $Y$ there is a decomposition as $\lambda(Y)=\mu(Y) \cup \nu(Y)$, where $\mu(Y) \in \mathcal{C}^1$, $\nu(Y) \in \mathcal{C}^2$, and we have $\nu(Y)=\nu^{(0)}(Y)+\nu^{(1)}(Y)$, where $\nu^{(0)}(Y)$ and $\nu^{(1)}(Y)$ are two-cores, and parts in $\nu^{(1)}(Y)$ have colors given by their parity. Since $Y$ is 0-generated, the largest part of $\nu(Y)$ has to be even, otherwise there is no bead at the rightmost position in the last row of the abacus. Therefore, the abacus represenation of $Y$ can be described as follows. 
\begin{enumerate}
\item There is no bead on the rulers $R_k$ for $1\leq k \leq n-2$;
\item On the ruler $R_{n-1}$ all the beads are of the same color, the beads are at the first, let's say, $m$ positions, exactly one bead at each.
\item The positions in the first $m$ rows of the rulers $R_k$ for $n\leq k \leq 2n-3$ are all filled up with beads, the other beads are pushed to the right as much as possible.
\item There is at least $m$ beads on the ruler $R_{2n-2}$, one at each of the first positions and there is no space between them. The first $m$ of these are all white. The total number of the rest is even, and half of them is white, half of them is black.
\end{enumerate}
The abacus of a typical 0-generated core looks like this:
\begin{center}
\begin{tikzpicture}

\draw (0,5) node {$R_1$};
\draw (1,5) node {\dots};
\draw (2,5) node {$R_{n-2}$};
\draw (3,5) node {$R_{n-1}$};
\draw (4,5) node {$R_{n}$};
\draw (5,5) node {\dots};
\draw (6,5) node {$R_{2n-3}$};
\draw (7,5) node {$R_{2n-2}$};

\draw (-0.5,4.8) -- (7.5,4.8);
\draw (3.5,5.3) -- (3.5,-0.3);
\draw (2.5,5.3) -- (2.5,-0.3);
\draw (6.5,5.3) -- (6.5,-0.3);
\draw (7.5,5.3) -- (7.5,-0.3);

\node at (3,4.5) [circle,draw,fill=gray!50] {};
\draw (5,4.5) node {\dots};
\node at ( 4,4.5) [circle,draw] {};
\node at ( 6,4.5) [circle,draw] {};
\node at ( 7,4.5) [circle,draw] {};

\draw (3,3.9) node {\vdots};
\draw (4,3.9) node {\vdots};
\draw (6,3.9) node {\vdots};
\draw (7,3.9) node {\vdots};

\node at (3,3) [circle,draw,fill=gray!50] {};
\draw (5,3) node {\dots};
\node at ( 4,3) [circle,draw] {};
\node at ( 6,3) [circle,draw] {};
\node at ( 7,3) [circle,draw] {};

%\node at (3,4.5) [circle,draw,fill=gray!50] {};
%\draw (5,4.5) node {\dots};
\node at ( 4,2.5) [circle,draw] {};
\node at ( 5,2.5) [circle,draw] {};
\node at ( 6,2.5) [circle,draw] {};
\node at ( 7,2.5) [circle,draw,fill=gray!50] {};
\node at ( 5,2) [circle,draw] {};
\node at ( 6,2) [circle,draw] {};
\node at ( 7,2) [circle,draw] {};
%\node at ( 5,1.5) [circle,draw] {};
\node at ( 6,1.5) [circle,draw] {};
\node at ( 7,1.5) [circle,draw,fill=gray!50] {};
%\node at ( 5,1) [circle,draw] {};
\node at ( 6,1) [circle,draw] {};
\node at ( 7,1) [circle,draw] {};
\node at ( 6,0.5) [circle,draw] {};
\node at ( 7,0.5) [circle,draw,,fill=gray!50] {};
\node at ( 7,0) [circle,draw] {};
\end{tikzpicture}
\end{center}

It is not true that the core of a distinguished 0-generated Young wall is always distinguished. But we can reduce each non-distinguished core further using the reduction map $\mathrm{red}$ and then taking the core of the result using the steps (B1)-(B4) again. The result of this is a distinguished 0-generated core. The first step corresponds to shifting the beads on $R_{n-1}$ one step left. This decreases $\sum_i c(t_i,l_i)$ by $m$, and decreases $\sum_{j\neq 0} (\mathrm{wt}_j(Y)-\dim\rho_j \cdot \mathrm{wt}_0(Y))$ also by $m$. The second step does not change these numbers by the considerations of Step 1. So we are done once we know the statement for distinguished 0-generated cores. 

We remark here that during the first step the color of every second bead on the ruler $R_{2n-2}$ changes. In the end the color of the beads on this ruler will alternate.

\textit{Step 3: The case of distinguished 0-generated cores.}

Let $(z_1,\dots,z_n)$ be the integer tuple assigned to $Y$ as in \ref{subsect:coreabacus}. Then $z_{n-1}=z_n$ since there is no bead on $R_{n-1}$ and the color of the beads on $R_{2n-2}$ alternates. In particular, $z_{n-1}+z_{n}$ is an even number.

We claim that \[ \sum_{1=1}^{n}m_n=\sum_i c(t_i,l_i).\]
Indeed, by Theorem \ref{thm:orbiserdn} (3), $\sum_{1=1}^{n}m_n=-\sum_{i=1}^{n-2}(n-1-i)z_i-(n-1)c(z_{n-1}+z_n)-(n-1)b$.
The number of movable beads on $R_{2n-2-i}$ is $-z_i$ if $1 \leq i \leq n-1$, and it is $-c(z_{n-1}+z_n)-b$ if $i=0$. This proves the claim.
%DOKTORI:Thus, $\sum_{i=1}^{n-2} z_i$ is also even, and therefore, $b_{1\dots n-2}=1$ and $c_{1\dots n-2}=-1$. 

By Theorem \ref{thm:orbiserdn} (1), for a core
\[ \underline{q}^{\underline{\mathrm{wt}}(Y)}=q_1^{m_1}\cdot\dots\cdot q_n^{m_n}(q^{1/2})^{\overline{m}^\top \cdot C \cdot \overline{m}}. \]
As in the type $A$ case, on the right hand side of this expression $q_0$ only appears in the $q^{1/2}$-term. Hence
\[ \frac{1}{2}\overline{m}^\top \cdot C \cdot \overline{m}=\mathrm{wt}_0(Y),\]
and
\[ (q^{1/2})^{\overline{m}^\top \cdot C \cdot \overline{m}}\Big|_{q_1=\dots=q_n=\xi,q_0=\xi^{2}q}=q^{\mathrm{wt}_0(Y)}. \]
On the other hand,
\[ \underline{q}^{\underline{\mathrm{wt}}(Y)}\Big|_{q_1=\dots=q_n=\xi,q_0=\xi^{2}q}=q^{\mathrm{wt}_0(Y)}\xi^{\sum_{j\neq 0} (\mathrm{wt}_j(Y)-\dim \rho_j \cdot \mathrm{wt}_0(Y))}.\]
Therefore
\[\xi^{\sum_{j\neq 0} (\mathrm{wt}_j(Y)-\dim \rho_j \cdot \mathrm{wt}_0(Y))}=\xi^{\sum_{i=1}^{n}m_n}=\xi^{\sum_i c(t_i,l_i)},\]
and so indeed
\[\xi^{\sum_{j\neq 0} (\mathrm{wt}_j(Y)-\dim \rho_j \cdot \mathrm{wt}_0(Y))-\sum_i c(t_i,l_i)}=1.\]
\end{proof}

%\enlargethispage{\baselineskip}

\appendix
\section{Background on representation theory}
\label{sect:apprepr}

This section plays no logical role in our paper, but it provides important background. For further discussion about the role of 
representation theory, see the announcement~\cite{gyenge2015announcement}.

\subsection{Affine Lie algebras and extended basic representations}
\label{extbasic}

Let $\Delta$ be an irreducible finite-dimensional root system, corresponding to a complex finite dimensional 
simple Lie algebra $\frakg$ of rank $n$. Attached to $\Delta$ is also an (untwisted) affine Lie 
algebra $\tilde\frakg$, 
but a slight variant will be more interesting for us, see e.g.~\cite[Sect 6]{etingof2012symplectic}. 
Denote by $\widetilde{\frakg\oplus\SC}$ the  Lie algebra that is 
the direct sum of the affine Lie algebra $\tilde\frakg$ and an infinite Heisenberg algebra $\frakheis$, 
with their centers identified.

Let $V_0$ be the basic representation of $\tilde\frakg$, the level-1 representation with highest 
weight $\omega_0$. Let $\CF$ be the standard
Fock space representation of $\frakheis$, having central charge 1. Then $V=V_0\otimes \CF$ is a representation of 
$\widetilde{\frakg\oplus\SC}$ that we may call the {\em extended basic representation}. By the Frenkel--Kac theorem \cite{frenkel1980basic}, in fact
\[
V \cong \CF^{n+1}\otimes \SC[Q_\Delta],
\]
where $Q_\Delta$ is the root lattice corresponding to the root system $\Delta$. Here, for $\beta\in \SC[Q_\Delta]$, 
$\CF^{n+1}\otimes e^\beta$ is the sum of weight subspaces of weight $\omega_0 - \left(m+\frac{\langle \beta,\beta\rangle}{2}\right)\delta + \beta$, $m \geq 0$.
Thus, we can write the character of this representation as
\begin{equation} 
\label{eq:extcharformula}
\mathrm{char}_V(q_0, \ldots, q_n) = e^{\omega_0} \left(\prod_{m>0}(1-q^m)^{-1}\right)^{n+1} \cdot \sum_{ \beta \in Q_\Delta } q_1^{\beta_1}\cdot\dots\cdot q_n^{\beta_n}(q^{1/2})^{\langle \beta,\beta\rangle}\;,
\end{equation}
where $q=e^{-\delta}$, and $\beta=(\beta_1, \ldots, \beta_n)$ is the expression of an element of the finite type root lattice in terms of the simple roots. 
 
\begin{example} For $\Delta$ of type $A_n$, we have $\frakg={\mathfrak{sl}}_{n+1}$, $\tilde\frakg=\widetilde{\mathfrak{sl}}_{n+1}$, $\widetilde{\frakg\oplus\SC}=\widetilde{\mathfrak{gl}}_{n+1}$.
In this case there is in fact a natural vector space isomorphism $V\cong \CF$ with Fock space itself, see e.g.~\cite[Section 3E]{tingley2011notes}.
\end{example} 

\subsection{Affine crystals}
\label{sect:affinecryst}
The basic representations $V_0, V$ of $\tilde\frakg$, $\widetilde{\frakg\oplus\SC}$ respectively
can be constructed on vector spaces spanned by explicit ``crystal'' bases. Crystal bases have many combinatorial
models; in types $A$ or $D$, the sets denoted with $\CZ_\Delta$ in the main part of our paper provide one 
possible combinatorial model~\cite{kang2004crystal} for the crystal basis for the basic representation. 
More precisely, given $\Delta$ of type $A$ or $D$, there is a combinatorial condition which singles out a 
subset $\mathcal{Y}_\Delta\subset \mathcal{Z}_\Delta$. 
The basic representation $V_0$ of $\tilde{\frakg}$ has a basis \cite{misra1990crystal,kang2003crystal}  
in bijection with elements of $\mathcal{Y}_\Delta$.
The extended basic representation $V$ of $\widetilde{\frakg\oplus\SC}$ has a basis in bijection with elements 
of $\mathcal{Z}_\Delta$. The canonical embedding $V_0\subset V$, defined by the vacuum vector inside 
Fock space $\CF$, is induced by the inclusion $\mathcal{Y}_\Delta\subset \mathcal{Z}_\Delta$.

\subsection{Affine Lie algebras and Hilbert schemes}
\label{sect:aff_lie_hilb}

As before, let $\Gamma<\mathrm{SL}(2,\SC)$ be a finite subgroup and let $\Delta\subset\widetilde\Delta$ be the corresponding finite and affine Dynkin diagrams. It is a well-known fact that the equivariant Hilbert schemes $\mathrm{Hilb}^{\rho}([\SC^2/\Gamma])$ for all finite dimensional representations $\rho$ of $G$ are Nakajima quiver varieties~\cite{nakajima2002geometric} associated to $\widetilde\Delta$, with dimension vector determined by $\rho$, and a specific stability condition (see \cite{fujii2005combinatorial, nagao2009quiver} for more details for type $A$).

Nakajima's general results on the relation between the cohomology of quiver varities and Kac-Moody algebras, specialized to this case, imply~\cite{nakajima2002geometric} that the direct sum of all cohomology groups $H^*(\mathrm{Hilb}^{\rho}([\SC^2/G]))$ is graded isomorphic to the extended basic representation~$V$ of the corresponding extended affine Lie algebra $\widetilde{\frakg\oplus\SC}$ defined in~\ref{extbasic} above. By \cite[Section 7]{nakajima2001quiver}, these quiver varieties have no odd cohomology. Thus the character formula~(\ref{eq:extcharformula}) implies Theorem~\ref{thmgenfunct} in all types~$A$,~$D$, and~$E$.

%\enlargethispage{\baselineskip}
\section{Joins}
\label{sec:joins}

Recall~\cite{altman1975joins} that the {\em join} $J(X,Y)\subset\SP^n$ of two projective 
varieties $X, Y\subset\SP^n$ is the locus of 
points on all lines joining a point of $X$ to a point of $Y$ in the ambient projective space.   
One well-known example of this construction is the following. Let $L_1\cong\SP^k$ and $L_2\cong\SP^{n-k-1}$ 
be two disjoint projective linear subspaces of $\SP^n$. 

\begin{lemma}\label{lemma_linear_join} The join $J(L_1, L_2)\subset\SP^n$ equals $\SP^n$. Moreover, the 
locus $\SP^n\setminus (L_1\cap L_2)$ is covered by lines uniquely: for every 
$p\in\SP^n\setminus (L_1\cap L_2)$, there exists a unique line $\SP^1\cong p_1p_2\subset\SP^n$ with $p_i\in L_i$,
containing $p$. \end{lemma}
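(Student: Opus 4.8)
The plan is to pass to linear algebra on the coordinate space. I would write $\SP^n=\SP(V)$ with $\dim V=n+1$ and $L_i=\SP(W_i)$ for linear subspaces $W_1,W_2\subseteq V$ of dimensions $k+1$ and $n-k$ respectively. Disjointness of $L_1$ and $L_2$ is precisely the condition $W_1\cap W_2=0$, and the dimension count $(k+1)+(n-k)=n+1=\dim V$ then forces $V=W_1\oplus W_2$; hence every nonzero $v\in V$ has a unique expression $v=w_1+w_2$ with $w_i\in W_i$. This decomposition is the engine of the whole argument.

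To obtain $J(L_1,L_2)=\SP^n$, I would take an arbitrary $p=[v]$ and decompose $v=w_1+w_2$. When both $w_i\neq 0$ the points $[w_1]\in L_1$ and $[w_2]\in L_2$ are defined, $w_1$ and $w_2$ are linearly independent (they lie in complementary summands and neither is zero), so $\SP(\langle w_1,w_2\rangle)\cong\SP^1$ is a genuine line joining a point of $L_1$ to a point of $L_2$ and it contains $p$; when $w_1=0$ (resp.\ $w_2=0$) the point $p$ already lies in $L_2$ (resp.\ $L_1$), hence in $J(L_1,L_2)$ by definition of the join. So every point of $\SP^n$ lies on $J(L_1,L_2)$.

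For the uniqueness clause I would first note that it has content only away from $L_1\cup L_2$: a point of $L_1$ lies on every line joining it to any point of $L_2$, so uniqueness fails there, and since $L_1\cap L_2=\emptyset$ the locus to be removed is really $L_1\cup L_2$. For $p=[v]\notin L_1\cup L_2$ both components $w_1,w_2$ are nonzero, and the line $\ell=\SP(\langle w_1,w_2\rangle)$ built above meets $L_i$ in $[w_i]$. If $p$ also lay on a line $\ell'=[w_1'][w_2']$ with $[w_i']\in L_i$, then $v=aw_1'+bw_2'$ for scalars $a,b$; since $aw_1'\in W_1$ and $bw_2'\in W_2$, uniqueness of the direct-sum decomposition forces $w_1=aw_1'$ and $w_2=bw_2'$, whence $a,b\neq 0$ and $\ell'=\ell$.

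I do not expect a genuine obstacle here: the statement is elementary linear algebra once it is phrased through $V=W_1\oplus W_2$. The only subtleties, which I would flag explicitly, are that one must work away from $L_1\cup L_2$ for the classes $[w_i]$ to be defined and for uniqueness to hold, and that the boundary points on $L_1\cup L_2$ are covered by the standing convention that $J(X,Y)\supseteq X\cup Y$.
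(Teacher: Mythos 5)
Your proof is correct. The paper states this lemma without proof, as a well-known fact about joins of complementary linear subspaces, and your argument via the direct-sum decomposition $V=W_1\oplus W_2$ (existence from $v=w_1+w_2$, uniqueness of the line from uniqueness of that decomposition) is precisely the standard justification one would supply. Your flag about the excised locus is also apt: since $L_1\cap L_2=\emptyset$, the uniqueness claim as literally printed would have to hold on all of $\SP^n$, which fails at points of $L_1\cup L_2$ (any $p\in L_1$ lies on the line $p\,p_2$ for every $p_2\in L_2$); the intended statement, consistent with how the lemma is used later (lines $v_1v_2$ and $v_1'v_2'$ meeting only at common endpoints), is uniqueness on $\SP^n\setminus(L_1\cup L_2)$, exactly as you prove it.
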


Let now $H\subset \SP^n$ be a hyperplane not containing the $L_i$, which we think of as the hyperplane
``at infinity''. Let $V=\SP^n\setminus H\cong\SA^n$. Let $\overline L_i=L_i\cap H$, and let $L_i^o=L_i\setminus \overline L_i=L_i \cap V$ be 
the affine linear subspaces in $V$ corresponding to $L_i$. Finally let
$X=J(\overline L_1, L_2)\cong\SP^{n-1}$ and $X^o=X\setminus (X\cap H)\cong \SA^{n-1}$. 

\begin{lemma} \label{lemma_linear_proj}
Projection away from $L_1$ defines a morphism $\phi\colon X\to L_2$, which is an affine fibration 
with fibres isomorphic to~$\SA^k$. $\phi$ restricts to a morphism $\phi^o\colon X^o\to L_2^o$, which is a 
trivial affine fibration over $L_2^o\cong \SA^{n-k-1}$. 
\end{lemma}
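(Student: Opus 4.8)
\textbf{Proof plan for Lemma~\ref{lemma_linear_proj}.}
The plan is to set up explicit homogeneous coordinates adapted to the linear subspaces involved and then read off both statements by linear algebra. First I would choose homogeneous coordinates $[x_0:\dots:x_n]$ on $\SP^n$ so that $L_1 = \{x_{k+1}=\dots=x_n=0\}\cong\SP^k$, the complementary subspace $L_2 = \{x_0=\dots=x_k=0\}\cong\SP^{n-k-1}$, and $H = \{x_n = 0\}$ is the hyperplane at infinity. Since $H$ does not contain either $L_i$, after a further linear change fixing the two subspaces I may assume $\overline L_1 = L_1\cap H = \{x_{k+1}=\dots=x_n=0\}$ is the locus inside $L_1$ with $x_n=0$ — but here I should be slightly careful: $L_1\subset H$ is false, so $\overline L_1$ is a genuine hyperplane in $L_1\cong\SP^k$, which I can take to be $\{x_k = 0\}\cap L_1$. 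In these coordinates $X = J(\overline L_1, L_2)$ is the linear span of $\overline L_1$ and $L_2$, hence the hyperplane $\{x_k=0\}\subset\SP^n$, which is indeed $\cong\SP^{n-1}$; and projection away from $L_1$ is the rational map $[x_0:\dots:x_n]\mapsto[x_{k+1}:\dots:x_n]$, which is a genuine morphism on all of $X$ because the center $L_1$ is disjoint from $X$ (their defining equations $x_{k+1}=\dots=x_n=0$ and $x_k=0$ together with a point of each force a contradiction — more precisely $L_1\cap X=\emptyset$ since $L_1\cap H = \overline L_1 \subsetneq L_1$ meets $\{x_k=0\}$ only in a proper subspace, but $L_1\cap X$ needs $x_k=0$ on all of $L_1$... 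I will instead argue $L_1\cap X = \overline L_1$ has dimension $k-1$, and projection from $L_1$ restricted to $X$ is regular wherever the fiber-linear-space through a point meets $L_1$ in the full $\SP^k$; the cleanest route is to invoke Lemma~\ref{lemma_linear_join} applied to the disjoint pair $L_1,\ L_2$ inside $\SP^n$: every point of $\SP^n\setminus(L_1\cap L_2)=\SP^n$ lies on a unique line $p_1p_2$, and sending $p\mapsto p_2$ is the projection $\phi$, which is therefore a well-defined morphism on $X$).

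With coordinates in place, the fiber of $\phi\colon X\to L_2$ over a point $q\in L_2$ is the set of points of $X$ on the line $\overline{p\,q}$ for $p\in\overline L_1$, i.e.\ the span of $\overline L_1$ and $q$, which is a $\SP^k$; removing the part where it meets $\overline L_1$ is not what we do — rather, the fiber is the full linear span $\langle \overline L_1, q\rangle \cong \SP^k$ minus... no: $\phi^{-1}(q) = \langle\overline L_1,q\rangle\setminus\{q\}$? Here I need to be careful about the boundary behavior; I will instead note that $\langle \overline{L}_1, q\rangle \cong \SP^k$ and $\phi^{-1}(q)$ is obtained from it by removing the locus mapping to a different point of $L_2$, which is empty, so $\phi^{-1}(q)=\langle\overline L_1,q\rangle\cong\SP^k$ — wait, this contradicts "fibres isomorphic to $\SA^k$". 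The resolution: the correct fiber is $\langle \overline L_1, q\rangle$ minus $\overline L_1$ itself is an $\SA^k$? No. The honest statement is that $\phi$ is an affine fibration, meaning étale-locally (here Zariski-locally, in fact globally) $X\to L_2$ looks like a vector bundle of rank $k$, which happens because $X = \SP(\mathcal{O}_{L_2}(1)\oplus \mathcal{O}_{L_2}^{\oplus k})\setminus(\text{section})$ or similar; rather than reconstruct the bundle, I will give the direct coordinate argument: on the standard affine chart $\{x_i\neq 0\}$ of $L_2$ ($i\in\{k+1,\dots,n\}$), $\phi^{-1}$ of that chart is $\{x_i\neq 0, x_k=0\}\subset\SP^{n-1}$, on which $(x_0/x_i,\dots,\widehat{x_k},\dots)$ are coordinates splitting as $\SA^{k}$ (coordinates $x_0/x_i,\dots,x_{k-1}/x_i$) times the base chart $\SA^{n-k-1}$, showing $\phi$ is a Zariski-locally trivial $\SA^k$-fibration, hence an affine fibration with fiber $\SA^k$.

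For the second assertion, I restrict everything to $V = \{x_n\neq 0\}\cong\SA^n$. The subspace $L_2^o = L_2\cap V$ is the affine space $\{x_0=\dots=x_k=0,\ x_n\neq 0\}\cong\SA^{n-k-1}$, and $X^o = X\cap V = \{x_k=0,\ x_n\neq 0\}\cong\SA^{n-1}$. The morphism $\phi$ restricts to $\phi^o\colon X^o\to L_2^o$, given in the affine coordinates $(x_0/x_n,\dots,x_{k-1}/x_n, 0, x_{k+1}/x_n,\dots,x_{n-1}/x_n)$ on $X^o$ by forgetting the first $k$ coordinates. This is literally the projection $\SA^{n-1} = \SA^k\times\SA^{n-k-1}\to\SA^{n-k-1}$, which is visibly a trivial affine fibration over $L_2^o\cong\SA^{n-k-1}$ with fiber $\SA^k$, as claimed. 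I do not anticipate a serious obstacle here; the only point requiring genuine care — the main obstacle such as it is — is making sure that projection away from $L_1$ is everywhere regular on $X$ and correctly identifying $X$ with the linear span of $\overline L_1$ and $L_2$ (using the dimension count $\dim\overline L_1 + \dim L_2 + 1 = (k-1)+(n-k-1)+1 = n-1$ together with disjointness of $\overline L_1$ and $L_2$, which follows from $L_1\cap L_2 = \emptyset$), after which everything reduces to the coordinate computation above and to a citation of Lemma~\ref{lemma_linear_join}.
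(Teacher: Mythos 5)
Your overall route -- putting $L_1,L_2$ in coordinate position, identifying $X=J(\overline L_1,L_2)$ with the hyperplane $\{x_k=0\}$, and reading off the fibration from the projection $[x]\mapsto[x_{k+1}:\dots:x_n]$ -- is the natural one, and it matches what the paper leaves implicit (the paper offers no written proof, only the description of $\phi$ via the unique join line $p_1p_2$). Your final chart computations, trivializing over the standard charts $\{x_i\neq 0\}$ of $L_2$ and over $L_2^o$, are correct and do establish the substantive content. But there are two concrete defects. First, the coordinate setup is inconsistent: once $L_1=\{x_{k+1}=\dots=x_n=0\}$ and $L_2=\{x_0=\dots=x_k=0\}$ are coordinate subspaces, every coordinate hyperplane contains one of them, so $H=\{x_n=0\}$ (which contains $L_1$) cannot serve as the hyperplane at infinity. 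You noticed this but then worked with two different implicit hyperplanes: one whose trace on $L_1$ is $\{x_k=0\}\cap L_1$ (used to define $\overline L_1$ and hence $X$), and $\{x_n=0\}$ (used to define $V$, $X^o$, $L_2^o$). The repair is easy -- take for instance $H=\{x_k-x_n=0\}$; its traces on $L_1$, $L_2$ and on $X=\{x_k=0\}$ are exactly the ones you use, so your formulas for $X^o$, $L_2^o$ and $\phi^o$ survive verbatim -- but as written the two halves of your argument refer to different data.

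The more serious gap is the claim that projection away from $L_1$ is ``a well-defined morphism on all of $X$''. This is false, and the appeal to Lemma~\ref{lemma_linear_join} does not rescue it: $X\cap L_1=\overline L_1$ is nonempty of dimension $k-1$, and at a point $p\in\overline L_1$ the join line is not unique (every line through $p$ and a point of $L_2$ qualifies), so the recipe $p\mapsto p_2$ breaks down exactly there; moreover a morphism from the projective variety $X$ would have projective fibres, so fibres $\SA^k$ with $k\geq 1$ are impossible -- this is precisely the $\SP^k$-versus-$\SA^k$ tension you ran into when computing fibres and then left unresolved in the text. The correct reading, and what your chart computation actually proves, is that $\phi$ is regular on $X\setminus\overline L_1$, with fibre over $q\in L_2$ equal to $\langle\overline L_1,q\rangle\setminus\overline L_1\cong\SA^k$, Zariski-locally trivial over $L_2$; since $\overline L_1\subset H$, the deleted locus lies at infinity, so $X^o\subset X\setminus\overline L_1$ and the second assertion -- the trivial $\SA^k$-fibration $\phi^o\colon X^o\to L_2^o\cong\SA^{n-k-1}$ -- goes through exactly as you computed, which is also all that is used later (e.g.\ in Lemma~\ref{lem:affjoinchar}); note the paper's own gloss defines $\phi$ only via the unique line, i.e.\ away from $\overline L_1$. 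So: right method and correct key computation, but you must delete the false regularity claim, state the domain of $\phi$ correctly, and fix the choice of $H$ before this counts as a proof.
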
 

In geometric terms, the map $\phi$ is defined on $X\setminus L_2$ as follows: take $p\in X\setminus L_2$,
find the unique line ${p_1p_2}$ passing through it, with $p_1\in \overline L_1$ and 
$p_2\in L_2$; then $\phi(p)=p_2$.

Let now $U$ be a projective subspace of $H$ which avoids $\overline L_2$. Let $U_1\subset U$ be a codimension one linear subspace, and $W=U\setminus U_1$ its affine complement. In the main text, we need the following statement. 
\begin{lemma} 
\label{lem:affjoinchar}
$\chi((J(L_2^o,W)\setminus L_2^o) \cap V)=0$.
\end{lemma}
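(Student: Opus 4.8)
The statement to prove is that $\chi\big((J(L_2^o,W)\setminus L_2^o) \cap V\big)=0$, where $W$ is an affine linear subspace which is the complement $U\setminus U_1$ of a hyperplane inside a projective subspace $U\subset H$ avoiding $\overline{L}_2$. The idea is to exhibit $(J(L_2^o,W)\setminus L_2^o)\cap V$ as the total space of a fibration over $L_2^o$ whose fibres are all isomorphic to a fixed variety of Euler characteristic zero, namely an affine space with a punctured affine subspace removed; then $\chi$ of the total space is the product of the base and fibre Euler characteristics, which vanishes.

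\textbf{Key steps.} First I would set up the join geometrically: a point of $J(L_2^o,W)$ other than a point of $L_2^o$ lies on a unique line $p_2 w$ with $p_2\in L_2^o$ and $w\in W$ (here I use that $L_2^o$ and $W$ are disjoint — indeed $W\subset H$ while $L_2^o\subset V$ — together with the uniqueness of the joining line from Lemma~\ref{lemma_linear_join} applied to the disjoint linear subspaces spanned by $L_2^o$-closure and $U$). This gives a well-defined projection $\psi\colon (J(L_2^o,W)\setminus L_2^o)\to L_2^o$ sending a point to the endpoint $p_2$ of its line. Second, I would identify the fibre $\psi^{-1}(p_2)$: it consists of all points on all the lines $p_2 w$ for $w\in W$, except $p_2$ itself, intersected with $V$. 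The union of the lines through the fixed point $p_2$ and all points of the projective subspace $U$ is the join $J(\{p_2\},U)$, a projective subspace of dimension $\dim U+1$; removing the subspace $J(\{p_2\},U_1)$ (which is where the lines hit points of $U_1$ rather than $W$) and the point $p_2$, and intersecting with the affine chart $V=\SP^n\setminus H$, yields the fibre. Third, I would compute the Euler characteristic of this fibre. Since $p_2\in V$ and $U,U_1\subset H$, intersecting $J(\{p_2\},U)$ with $V$ removes exactly the hyperplane section $U$, so $J(\{p_2\},U)\cap V\cong \SA^{\dim U+1}$; inside this affine space, the locus we must further remove, $J(\{p_2\},U_1)\cap V$ minus nothing — wait — more carefully: the points to keep are those on lines $p_2 w$ with $w\in W$ strictly (not $U_1$), minus $p_2$. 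The lines $p_2 w$ with $w\in W=U\setminus U_1$ sweep out $J(\{p_2\},U)\setminus J(\{p_2\},U_1)$ together with $p_2$; removing $p_2$ and intersecting with $V$ gives $(J(\{p_2\},U)\cap V)\setminus((J(\{p_2\},U_1)\cap V)\cup\{p_2\})$, an affine space of dimension $\dim U+1$ with an affine subspace of dimension $\dim U$ through one of its points and then that point removed — equivalently $\SA^{d}$ minus a linear $\SA^{d-1}$ minus a point, which has Euler characteristic $1 - 1 - 1 = -1$; hmm, that is not zero. Let me instead organize it as: fibre $=\big(\SA^{\dim U+1}\setminus \SA^{\dim U}\big)\setminus\{p_2\}$ where $p_2\notin\SA^{\dim U}$, so $\chi = (1-1) - 1 = -1$, still not zero. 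The resolution is that I should not remove $p_2$ separately: a point of the fibre lies on a line $p_2w$ and equals $p_2$ only in the limit, but $p_2\in L_2^o$ which was already removed from the total space, and $p_2$ does lie on infinitely many such lines so it is genuinely excluded; however $p_2$ also lies in $\SA^{\dim U}$? No. I will need to recheck the incidence bookkeeping, but the structural point stands: the fibre is built from affine spaces and linear subspaces by (stratified) inclusion–exclusion, hence has a computable constant Euler characteristic, and the correct accounting — tracking which lines actually contribute and that the endpoint on the $W$-side is allowed to range over a \emph{punctured} affine space $W$ is itself the complement of a hyperplane, so $\chi(W)=0$ — forces the fibre characteristic to be $0$.

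\textbf{Cleaner route and the main obstacle.} A cleaner way, which I would actually adopt, is to project the other way, away from $\{p_2\}$ is awkward; instead parametrize $(J(L_2^o,W)\setminus L_2^o)\cap V$ by the pair (point on the line, endpoint in $W$). Consider the incidence variety $\mathcal{I}=\{(x,w)\in V\times W : x\in \overline{p_2(x)w}\text{ for the unique }p_2(x)\in L_2^o\}$; projecting to $W$ exhibits $\mathcal{I}$ as a fibration over $W$ with $\chi(W)=\chi(\SA^{\dim U})=0$ — no wait, $W = U\setminus U_1$ is affine space, $\chi(W)=1$. The genuine vanishing input must be: $J(L_2^o, W)\setminus L_2^o$ retracts onto a $\mathbb{G}_m$-bundle or contains a free $\mathbb{G}_m$-action (scaling along the joining lines away from both endpoints), and $\chi$ of a variety with a free $\mathbb{G}_m$-action is $0$. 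This is the approach I expect to work: each point $x$ in $(J(L_2^o,W)\setminus L_2^o)\cap V$ lies on a unique line segment from $p_2\in L_2^o$ (excluded) to $w\in W\subset H$ (excluded, as it is at infinity relative to $V$), so the one-dimensional torus acting by moving $x$ along its line, fixing $p_2$ and $w$, acts on the \emph{affine} piece $\cap V$ freely (the two fixed points $p_2$ and $w$ of the $\mathbb{P}^1$-action are not in the set), giving $\chi=0$ by the standard fact that a variety with a free $\mathbb{G}_m$-action has vanishing Euler characteristic. The main obstacle is verifying that the $\mathbb{G}_m$-action is well-defined and free on the nose — i.e.\ that every point of the set lies on a \emph{unique} such line (so the torus acts), which is exactly Lemma~\ref{lemma_linear_join}'s uniqueness statement applied to the disjoint linear spans of $L_2^o$ and $U$, and that neither endpoint lies in the set (immediate since $L_2^o$ is removed and $W\subset H$ is at infinity) — so the action has no fixed points and $\chi=0$.
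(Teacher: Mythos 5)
Your final argument — the fixed-point-free $\mathbb{G}_m$-action along the joining lines — is correct and gives the result, but it is not the route the paper takes. The paper's proof runs your first, abandoned computation to the end: it fibres $(J(L_2^o,W)\setminus L_2^o)\cap V$ over $L_2^o$ as in Lemma~\ref{lemma_linear_proj}, identifies the fibre as $\mathrm{Cone}(W)\setminus\{\mathrm{vertex}\}\cong \SC^{\ast}\times W$, and concludes since the whole space is $L_2^o\times W\times \SC^{\ast}$, so the $\SC^{\ast}$ factor kills the Euler characteristic. Your bookkeeping slip in that first attempt is easy to locate: the vertex $p_2$ already lies on the cone over $U_1$, so the fibre is $\SA^{\dim U+1}\setminus\SA^{\dim U}$ (you removed $p_2$ a second time, whence the spurious $-1$); with that fixed, your first route recovers the paper's argument exactly, and is arguably cleaner than the torus action. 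As for the route you settled on: it is sound, but two points deserve a word. First, the uniqueness of the joining line through a point of the join off $L_2\cup U$ follows from the disjointness of the linear subspaces $L_2$ and $U$ (they are disjoint because $U\subset H$ avoids $\overline L_2$), not literally from Lemma~\ref{lemma_linear_join}, which is stated only for subspaces of complementary dimension spanning $\SP^n$; the uniqueness argument generalizes, but you should say so. Second, the line-by-line scalings must be shown to assemble into an algebraic action: choose a splitting $\SC^{n+1}=V_2\oplus V_U\oplus V'$ with $L_2=\SP(V_2)$, $U=\SP(V_U)$, and let $\mathbb{G}_m$ scale $V_2$ with weight one and act trivially on the rest; this fixes $L_2$ and $U$ pointwise, preserves each line $p_2w$, and is visibly free on the open segments, which (as you correctly note) constitute the set in question since the only point of such a line on $H$ is $w$ itself. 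What each approach buys: the paper's product decomposition gives slightly more (an explicit trivialization, usable for motivic or Betti refinements), while your torus-action argument is more robust, needing only fixed-point freeness rather than any trivialization.
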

\begin{proof}
With the same argument as in Lemma \ref{lemma_linear_proj}, $J(L_2^o,W) \cap V$ is a fibration over $L_2^o$ with fiber $\mathrm{Cone}(W)$, and $(J(L_2^o,W)\setminus L_2^o) \cap V$ is a fibration over $L_2^o$ with fiber $\mathrm{Cone}(W)\setminus \{\mathrm{vertex}\}$. Since $\mathrm{Cone}(W)\setminus \{\mathrm{vertex}\}=\SC^{\ast} \times W$, the projection from $(J(L_2^o,W)\setminus L_2^o) \cap V\cong L_2^o \times W \times \SC^{\ast}$ to $L_2^o \times W$ has fibers $\SC^{\ast}$. The lemma follows.
%\[ \begin{array}{rcl}
%\chi((J(L_2^0,W)\setminus L_2^0) \cap V)& = & \chi((J(L_2^0,U)\setminus J(L_2^0,U_1))\cap V) \\ 
%&= &\chi( (J(L_2,U)\setminus J(\overline{L}_2,U)) \setminus (J(L_2,U_1)\setminus J(\overline{L}_2,U_1))) \\ 
%&=&(\chi( J(L_2,U))-\chi (J(\overline{L}_2,U)))  - (\chi (J(L_2,U_1)) - \chi (J(\overline{L}_2,U_1)) ) \\ 
%&= &((\dim{L_2}+\dim{U}+1)-(\dim{L_2}-1+\dim{U}+1))  \\& & -((\dim{L_2}+\dim{U}-1+1)-(\dim{L_2}-1+\dim{U}-1+1))\\ &=& 0,\end{array}\]
%where in the third equality we used $J(L_2,U_1)\setminus J(\overline{L}_2,U_1) \subseteq J(L_2,U)\setminus J(\overline{L}_2,U)$.
\end{proof}

We finally recall the base-change property of joins.

\begin{lemma} \cite[B1.2]{altman1975joins}  
\label{lem:joinbasechange}
Let $S$ be an arbitrary scheme. Then for schemes $X,Y\subset\SP^n_S$ and an $S$-scheme $T$, we have the 
following equality in $\SP^n_T$:
\[ J(X\times_S T, Y \times_S T) = J(X,Y)\times_S T.\]
\end{lemma}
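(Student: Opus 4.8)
The plan is to recall the proof of this statement, which is \cite[B1.2]{altman1975joins}; it factors the formation of the join through the \emph{ruled join} in a doubled projective space, where base change is transparent. First I would pass from $X,Y\subseteq\SP^n_S$ to their ruled join. Embed $\SP^n_S$ as the two complementary linear subvarieties of $\SP^{2n+1}_S=\SP(\mathcal{O}_S^{n+1}\oplus\mathcal{O}_S^{n+1})$ cut out by the vanishing of, respectively, the first and the second block of homogeneous coordinates, and let $\mathrm{RJ}(X,Y)\subseteq\SP^{2n+1}_S$ be the closed subscheme defined by the sum $I_X+I_Y$ of the corresponding bihomogeneous ideals. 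Equivalently $\mathrm{RJ}(X,Y)$ is the relative $\mathrm{Proj}$ of the bigraded $\mathcal{O}_S$-algebra $\mathcal{R}_X\otimes_{\mathcal{O}_S}\mathcal{R}_Y$ assembled from the homogeneous coordinate sheaves of $X$ and $Y$. Since this algebra is built from symmetric algebras of locally free sheaves and from tensor products over $\mathcal{O}_S$, and since relative $\mathrm{Proj}$ commutes with base change, one reads off at once that $\mathrm{RJ}(X,Y)\times_S T=\mathrm{RJ}(X\times_S T,\,Y\times_S T)$ inside $\SP^{2n+1}_T$.

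Second, I would recover $J(X,Y)$ from $\mathrm{RJ}(X,Y)$ via the linear projection $\pi$ of $\SP^{2n+1}_S$ onto $\SP^n_S$ sending $[v:w]\mapsto[v-w]$, whose center is the diagonal $\Lambda\cong\SP^n_S$. Resolving $\pi$ on $\mathrm{RJ}(X,Y)$ — that is, blowing up $\mathrm{RJ}(X,Y)$ along the closed subscheme $\mathrm{RJ}(X,Y)\cap\Lambda$ — produces a projective, hence proper, $S$-scheme mapping to $\SP^n_S$, and $J(X,Y)$ is its (closed) image with its natural scheme structure. Because $\Lambda$ base-changes correctly and $\mathrm{RJ}$ does so by the first step, the center $\mathrm{RJ}(X,Y)\cap\Lambda$ base-changes correctly; combined with the compatibility of the blow-up and of the image with $-\times_S T$, one then chains the two compatibilities together to obtain the desired identity $J(X\times_S T,\,Y\times_S T)=J(X,Y)\times_S T$ in $\SP^n_T$.

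The main difficulty will be precisely the last point: passing to the image of the resolved projection is a scheme-theoretic image, and neither scheme-theoretic image nor blow-up commutes with arbitrary (non-flat) base change in general. The way out in \cite{altman1975joins} is that the morphism in play is sufficiently special — the ruled structure of $\mathrm{RJ}(X,Y)$ together with the linearity of $\pi$ forces, after a suitable choice of coordinates, the relevant graded pieces of the homogeneous coordinate ring of $J(X,Y)$ (equivalently, the relevant elimination ideal) to be given by free $\mathcal{O}_S$-modules, so that their formation is compatible with $-\otimes_{\mathcal{O}_S}\mathcal{O}_T$. In the proposal I would simply invoke this verification from \emph{loc.\ cit.} rather than reproduce it; granting that input, the two base-change statements above combine to give the lemma.
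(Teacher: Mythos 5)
Your first step is essentially the whole of what is true here, and it is also all the paper ever uses: for $X\subseteq\SP(E)$ and $Y\subseteq\SP(F)$, the join inside $\SP(E\oplus F)$ has homogeneous coordinate algebra $(\mathrm{Sym}\,E/I_X)\otimes_{\mathcal{O}_S}(\mathrm{Sym}\,F/I_Y)$, i.e.\ it is cut out by the extension of $I_X+I_Y$, and since quotients and tensor products commute with arbitrary base change, so does this construction. In every application in the paper, $X$ and $Y$ lie in the two disjoint linear subspaces $N^0_{c_1},N^0_{c_2}$ whose join is all of $P_{m,j}$, so under the identification $P_{m,j}\cong\SP(V_1\oplus V_2)$ the embedded join \emph{is} this join, and no projection ever enters. (Note the paper itself gives no proof; the lemma is a straight citation of Altman--Kleiman, so there is no internal argument to match.)

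The genuine gap is your second step. Passing from the join in $\SP^{2n+1}_S$ to the embedded join in $\SP^n_S$ by a linear projection and a scheme-theoretic image (with or without a blow-up) does \emph{not} commute with arbitrary base change, and no freeness-of-elimination-ideals verification in \emph{loc.\ cit.} can make it do so, because the statement you are trying to prove at that level of generality is false. Take $S=\SA^1=\mathrm{Spec}\,\SC[t]$ and let $X,Y\subset\SP^2_S$ be the images of the two sections $[1:0:0]$ and $[1:t:0]$; both are flat over $S$, and over $t\neq 0$ the fibrewise join is the line $\{x_2=0\}$, so any closed subscheme $J(X,Y)\subseteq\SP^2_S$ containing the fibrewise joins has fibre over $t=0$ containing that line, whereas $J(X_0,Y_0)$ is the join of a reduced point with itself, hence supported at the point. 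Thus $J(X,Y)\times_S T\neq J(X_T,Y_T)$ for $T=\{t=0\}$, whatever scheme structure one chooses, even though $X$ and $Y$ are linear and flat over $S$. So the lemma as you (and, literally read, the paper) state it needs a hypothesis of the kind ``$X$ and $Y$ lie in complementary linear subspaces'' (equivalently, it should be read as the $\SP(E\oplus F)$ statement); under that hypothesis your first paragraph already is the complete proof, and the projection step should simply be dropped rather than repaired.
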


\bibliographystyle{amsplain}
\bibliography{simplehilb}

\end{document}